\newtheorem{thm}{Theorem}[section]
\newtheorem{lem}{Lemma}[section]
\newtheorem{cor}{Corollary}[section]
\newtheorem{prop}{Proposition}[section]
\newtheorem{rem}{Remark}[section]
\theoremstyle{definition}
\begin{document}
\numberwithin{equation}{section}

 \title[Octonionic Monge-Amp\`{e}re  equation  and  octonionic  plurisubharmonic functions]{ On octonionic Monge-Amp\`{e}re  equation  and   pluripotential theory
 associated to octonionic  plurisubharmonic functions of two variables }
\author {Wei Wang }
\thanks{ Department of Mathematics, Zhejiang University, Zhejiang 310058, China, Email: wwang@zju.edu.cn;}
\subjclass[2020]{Primary  32W20; Secondary 32U15.}	

\begin{abstract} Several  aspects of pluripotential theory are generalized to octonionic  plurisubharmonic ($OPSH$) functions of two variables. We prove the
comparison principle for continuous $OPSH$  functions and the
quasicontinuity of locally   bounded    ones. An important tool is a formula of integration by parts for mixed  octonionic Monge-Amp\`{e}re  operator.  Various useful
properties of
octonionic
relative extremal functions  and   octonionic  capacity are established. The main difficulty is the non-associativity of octonions. However,  some weak form
of associativity can be used to covercome this difficulty. Another important ingredient in pluripotential theory  is the solution to    the   Dirichlet
problem   for the homogeneous octonionic
Monge-Amp\`ere equation on the unit ball, for which we
  show the  $C_{loc}^{1,1}$-regularity  by applying
  Bedford-Taylor's method.
The obstacle to do so is that   an  $OPSH$  function is usually not $OPSH$ under  automorphisms of the unit ball.   This issue can be solved by finding a weighted   transformation formula of $OPSH$  functions.
\end{abstract}
\keywords{ octonionic  plurisubharmonic functions;   octonionic Monge-Amp\`{e}re  equation; octonionic Monge-Amp\`{e}re  measure;  the  comparison principle;
quasicontinuity; octonionic  capacity; Lelong number
}
\thanks{The  author  is supported
by the National Nature Science Foundation in China (NSFC) (No. 12371082). }
 \maketitle
\section{Introduction}
 Alesker  \cite{alesker1} introduced   notions of   quaternionic plurisubharmonic functions and quaternionic Monge-Amp\`ere  operator, and proved a quaternionic version of   Chern-Levine-Nirenberg estimate
so that  the definition of  quaternionic Monge-Amp\`ere  operator can be extended to continuous
quaternionic plurisubharmonic functions.
He also \cite{alesker2} used the Baston operator $\triangle$ on complexified quaternionic K\"ahler manifold  to express the quaternionic Monge-Amp\`{e}re operator. Motivated by this and  $0$-Cauchy-Fueter complex in quaternionic analysis \cite{Wang}, 
Wan-Wang  \cite{wan-wang} introduced the first-order differential operators $d_0$ and $d_1$ acting on the quaternionic version of differential forms   and the notion of closedness of  a quaternionic positve current. The behavior of $d_0,d_1$ and $\Delta=d_0d_1$ is very similar to $\partial,\overline{\partial}$ and $\partial \overline{\partial}$ in several complex variables, and many results in the complex pluripotential theory   have been also extended to the quaternionic case (cf.
\cite{alesker4,alesker2,alesker6,Bou,DN,KS,wan20,wan5,WZ} and references therein). Recently, the quaternionic pluripotential theory has been generalized to   $m$-subharmonic functions and  $m$-Hessian  operator \cite{AAB,Liu-Wang},  and also  to    the noncommutative Heisenberg group \cite{wang21}.

  Alesker  also introduced plurisubharmonic functions in two octonionic
variables and
octonionic Monge-Amp\`{e}re  operator in \cite{alesker08}.  Recently,   Alesker-Gordon \cite{alesker24}
 established an octonionic
version of the Calabi-Yau theorem on   octonionic K\"ahler manifolds. In particular, they introduced  the    octonionic  version of  closed   positive currents.
The purpose of this paper is to generalize several aspects of     pluripotential theory to
octonionic plurisubharmonic functions.
Compared to the complex and  quaternionic  cases, the octonionic case is more complicated, because octonions is neither commutative nor   associative. Moreover,
we could not use exterior form to define currents.  However, we can use some weak form of associativity to covercome this difficulty.

Recall that  $\mathbb{O}$  has a
basis $ e_0, e_1,\cdots, e_7$ over $\mathbb{R}$, where $ e_0 = 1$ is the unit. They satisfy
\begin{equation*}
 e_p^2=-1,\qquad    e_p e_ q = -e_ qe_p,
\end{equation*}for $p,q=1,\dots,7$ and  $p \neq q$. A octonion $\mathbf{x} \in  \mathbb{O}$ can be written uniquely as
$
   \mathbf{x} =\sum_{p=0}^7  x_{  p }e_p 
$
 with $x_p\in\mathbb{ R}$, and its octonionic conjugate   is
$
   \overline{\mathbf{x}}: =x_0-\sum_{p=1}^7  x_{  p }e_p.
$
  Denote by $\mathbf{x}=(\mathbf{x}_1,\mathbf{x}_2)$ a point of $\mathbb{O}^2$, where $\mathbf{x}_\alpha= \sum_{p=0}^7  x_{\alpha  p }e_p$.
For each octonionic variable, we have a Dirac  operator   acting  on $\mathbb{O}$-valued functions:
\begin{equation}\label{eq:Dirac-l}
   \overline{\partial}_\alpha =\sum_{p=0}^7 e_{  p} \frac {\partial }{\partial x_{\alpha p}},
\end{equation} $\alpha=1,2$, which  is the generalization of the Cauchy-Riemann operator on $\mathbb{C}^n$ and $k$-Cauchy-Fueter opertor on $\mathbb{H}^n$ \cite{Wang}. Let
${\partial}_\alpha f
=\sum_{p=0}^7 \frac {\partial f}{\partial x_{\alpha p}}\overline{ e}_{  p}$. For a scalar function $u$, its
{\it octonionic Hessian} is defined as
\begin{equation}\label{eq:Hessian}
   Hess_{\mathbb{O}}(u):=\left(
 \begin{matrix}\overline\partial_{1}{\partial}_{1}u  & \overline\partial_{1}{\partial}_{2}u \\
 \overline  \partial_{2}{\partial}_{1}u  &\overline\partial_{2}{\partial}_{2}u
 \end{matrix}\right).
\end{equation} Denote by $\mathcal{ H}^2(\mathbb{O})$ the space   of all octonionic Hermitian $(2 \times 2)$-matrices
$
 \left(
 \begin{matrix}a&\mathbf{x}\\
   \overline{ \mathbf{x}}&b
 \end{matrix}\right)$, where $ a , b \in \mathbb{R} ,\mathbf{x}\in \mathbb{O} .
$
The {\it determinant}   is defined as
$
   \det \left(
 \begin{matrix}a&\mathbf{x}\\
   \overline{ \mathbf{x}}&b
 \end{matrix}\right):=ab-|\mathbf{x}|^2,
$ and  mixed determinant of of two octonionic Hermitian
matrices can be defined naturally as in \eqref{eq:mixed-determinant}. The {\it octonionic Monge-Amp\`{e}re  operator} is
$
   \det(Hess_{\mathbb{O}}(u) )
$  and the {\it mixed  octonionic Monge-Amp\`{e}re  operator} is
\begin{equation*}
   \det(Hess_{\mathbb{O}}(u),Hess_{\mathbb{O}}(v)) .
\end{equation*}

For  an open subset $\Omega\subset \mathbb{O}^2 $, a function $ u : \Omega \rightarrow [-\infty,+\infty)$ is called {\it octonionic plurisubharmonic}  ($OPSH$) if $u$ is upper semi-continuous and its restrictions to any
affine octonionic line is subharmonic. The space of
all octonionic plurisubharmonic on $\Omega$ is denoted by $    OPSH(\Omega)$.
Alesker  \cite{alesker08} established   the Chern-Levine-Nirenberg estimate,  by which he could construct  octonionic Monge-Amp\`ere  measures for continuous
$OPSH$ functions,  and found    applications to valuations.

{\bf Theorem A}   \cite[Theorem 1.1.3]{alesker08}
 {\it For any continuous $OPSH$ function  $u$ and $v$ on an open subset $\Omega\subset \mathbb{O}^2 $,  there exists a non-negative measure in $\Omega$,
denoted by $ \det(Hess_{\mathbb{O}}(u),Hess_{\mathbb{O}}(v))$,  which is uniquely characterized by the following two properties:
(i) this measure has the obvious meaning if $u,v\in C^2(\Omega)$;
(ii)
if   sequences of  continuous $OPSH$ functions $u_n$ and $v_n$ converges uniformly on compact subsets to
the function  $u$ and $v $,  then
 $ \det(Hess_{\mathbb{O}}(u_n),Hess_{\mathbb{O}}(v_n))  \rightarrow  \det(Hess_{\mathbb{O}}(u),Hess_{\mathbb{O}}(v)) $
weakly in the sense of measures.
}
\vskip 2mm
The measure  in Theorem A is called the {\it mixed   octonionic Monge-Amp\`ere  measure}.
The following   comparison principle for continuous $OPSH$  functions is a fundamental tool in the pluripotential theory.
\begin{thm}\label{thm:compare} Let $\Omega$ be a bounded   domain  and let  $u,v\in OPSH(\Omega)\cap C(\Omega)$. If $\{u<v\} \Subset  \Omega$, then   we have
  \begin{equation}\label{eq:compare0}
     \int_{\{u<v\}} \det(Hess_{\mathbb{O}}(u) )  \geq \int_{\{u<v\}}\det(Hess_{\mathbb{O}}(v) ).
  \end{equation}
\end{thm}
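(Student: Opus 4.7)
\emph{Proof proposal.} The plan is to imitate Bedford--Taylor's classical proof of the comparison principle, with the integration-by-parts formula for the mixed octonionic Monge-Amp\`ere operator (the technical tool highlighted in the abstract) playing the role of Stokes' theorem. First I would set $v_\varepsilon := v - \varepsilon$ for a small $\varepsilon > 0$, so that $\{u < v_\varepsilon\} \Subset \{u<v\} \Subset \Omega$, and introduce $w := \max(u, v_\varepsilon)$. Since the pointwise maximum of two $OPSH$ functions is $OPSH$, we have $w \in OPSH(\Omega) \cap C(\Omega)$; moreover $w \equiv u$ on the open set $\{u > v_\varepsilon\}$ and $w \equiv v_\varepsilon$ on the open set $\{u < v_\varepsilon\}$. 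Because the octonionic Monge-Amp\`ere measure is a local operator for continuous $OPSH$ functions (a direct consequence of Theorem A applied to mollified approximants that coincide wherever the original functions do, and using that a constant shift has zero Hessian),
\[
\det(Hess_{\mathbb{O}}(w)) = \det(Hess_{\mathbb{O}}(u))\ \text{on}\ \{u>v_\varepsilon\},\qquad
\det(Hess_{\mathbb{O}}(w)) = \det(Hess_{\mathbb{O}}(v))\ \text{on}\ \{u<v_\varepsilon\}.
\]

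The central step is to prove the mass identity
\[
\int_{\Omega'} \det(Hess_{\mathbb{O}}(w)) = \int_{\Omega'} \det(Hess_{\mathbb{O}}(u))\qquad \text{for every open }\Omega'\text{ with }\{u\leq v_\varepsilon\}\subset \Omega'\Subset \Omega.
\]
To derive it I would use the polarisation identity $\det(A) - \det(B) = \det(A-B,\, A+B)$ for octonionic Hermitian $2\times 2$ matrices, which is an immediate consequence of the symmetry and bilinearity of the mixed determinant, to write
\[
\det(Hess_{\mathbb{O}}(w)) - \det(Hess_{\mathbb{O}}(u)) = \det(Hess_{\mathbb{O}}(w-u),\, Hess_{\mathbb{O}}(w+u)).
\]
Multiplying by a cut-off $\chi \in C_c^\infty(\Omega)$ with $\chi \equiv 1$ on $\Omega'$ and applying the integration-by-parts formula for the mixed Monge-Amp\`ere operator, the resulting integral is rewritten as $\int_\Omega (w-u)\, \det(Hess_{\mathbb{O}}(\chi), Hess_{\mathbb{O}}(w+u))$. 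Because $w-u$ is supported in $\{u<v_\varepsilon\}\Subset \Omega'$, on which $\chi \equiv 1$ and hence $Hess_{\mathbb{O}}(\chi) = 0$, this right-hand side vanishes.

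Combining locality with the mass identity then gives
\[
\int_{\{u<v_\varepsilon\}} \det(Hess_{\mathbb{O}}(v))
= \int_{\{u<v_\varepsilon\}} \det(Hess_{\mathbb{O}}(w))
\leq \int_{\{u\leq v_\varepsilon\}} \det(Hess_{\mathbb{O}}(w))
= \int_{\{u\leq v_\varepsilon\}} \det(Hess_{\mathbb{O}}(u))
\leq \int_{\{u<v\}} \det(Hess_{\mathbb{O}}(u)),
\]
where the last inequality uses $\{u \leq v_\varepsilon\} \subset \{u<v\}$. Letting $\varepsilon \downarrow 0$ and applying monotone convergence along $\{u<v_\varepsilon\}\uparrow \{u<v\}$ for the Radon measure $\det(Hess_{\mathbb{O}}(v))$ yields \eqref{eq:compare0}. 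The main obstacle I expect is the integration-by-parts step: $w-u$ is not itself $OPSH$, and the non-associativity of $\mathbb{O}$ obstructs the naive algebraic manipulations available in the complex or quaternionic settings. One must therefore invoke the weak associativity identities that are the unifying technical theme of the paper, together with an approximation of $w-u$ by a difference of smooth $OPSH$ functions controlled via Alesker's Chern-Levine-Nirenberg estimate, in order to shift the Hessian off of $w-u$ and onto the smooth cut-off $\chi$.
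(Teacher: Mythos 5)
Your proof is correct, and it follows a genuinely different route from the paper's. The paper first establishes a boundary comparison (Proposition \ref{prop:compare}): for $C^2$ functions agreeing on the boundary of a smooth domain with $u\le v$ inside, the integration-by-parts formula reduces the difference of Monge--Amp\`ere masses to a boundary integral whose integrand is pointwise nonnegative because $\mathcal{T}(d(u-v)\otimes d\varrho)=h\,\mathcal{T}(d\varrho\otimes d\varrho)$ is elementary strongly positive. It then applies this on the sublevel sets $G_\eta=\{u<v-\eta\}$, which have smooth boundary for a.e.\ $\eta$ by Sard's theorem, and finishes with a sublevel-set mollification argument for continuous $u,v$. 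Your proof instead runs the Bedford--Taylor ``mass balance'' argument: form $w=\max(u,v-\varepsilon)$, invoke locality of the octonionic Monge--Amp\`ere measure, and move the Hessian onto a cutoff to get the mass identity. Your mass identity is essentially Corollary \ref{cor:compare} of the paper, and your polarization $\det A-\det B=\det(A-B,A+B)$ is the decomposition \eqref{eq:difference} with $u_1=u_2,\ v_1=v_2$ combined with $\det(A,B)=\det(B,A)$. Two remarks on the technical points you flagged. First, the approximation step is simpler than you anticipate: mollifying $u,w$ to $u_\delta,w_\delta$ keeps $w_\delta-u_\delta$ supported in a $\delta$-neighborhood of the compact set $\{u\le v-\varepsilon\}$, so for small $\delta$ it still sits where $Hess_{\mathbb{O}}(\chi)\equiv 0$, and the integrated-by-parts expression vanishes identically at each $\delta$ before any limit is taken --- no decomposition into differences of $OPSH$ functions, no Chern--Levine--Nirenberg estimate, and no fresh non-associativity issues beyond those already absorbed into Lemma \ref{lem:parts} are needed. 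Second, the hidden costs of your route are (a) locality of the Monge--Amp\`ere measure for continuous $OPSH$ functions, which is true and follows from Theorem A by comparing mollified approximants on shrinking open sets but is not stated in the paper, and (b) making sense of the signed measure $\det(Hess_{\mathbb{O}}(\chi),Hess_{\mathbb{O}}(w+u))$ for the non-$OPSH$ cutoff $\chi$, which one reads off Corollary \ref{cor:measure} by pairing the smooth $\mathcal{H}^2(\mathbb{O})$-valued function $Hess_{\mathbb{O}}(\chi)$ with the positive $\mathcal{H}^2(\mathbb{O})$-valued measure $Hess_{\mathbb{O}}(w+u)$ via \eqref{eq:mixed-determinant}. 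The gain is that you avoid Sard's theorem and the paper's bookkeeping with the sets $G_{j,k,p}$, and your argument adapts more readily to merely locally bounded $OPSH$ functions.
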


Given a compact subset $K$ of a domain $\Omega\subset \mathbb{O}^2$, the   {\it   (octonionic)  capacity} of the condenser $(K,\Omega)$ is defined as
\begin{equation}\label{capacity defi-K}
\begin{aligned}
C  (K):=\inf\left\{\int_{\Omega}  \det(Hess_{\mathbb{O}}(u)) : u\in\mathcal{U}^*(K,\Omega)\right\},
\end{aligned}
\end{equation}where
\begin{equation}\label{eq:U*}
\mathcal{U}^*(K,\Omega)=\left\{u\in OPSH(\Omega)\cap C(\Omega), u\vert_{K}\leq   -1 , \varliminf_{\mathbf{x}\to \partial\Omega} u(\mathbf{x})\geq  0    \right \}.
\end{equation}
The     comparison principle will be used  to establish various properties of   octonionic   capacity, e.g. if $K$ is an octonionic  regular compact subset of  an
octonionic  hyperconvex domain $\Omega$ in
$\mathbb{O}^2$, then
\begin{equation}\label{eq:capacity-K}
   C  (K)= \int_{K}  \det(Hess_{\mathbb{O}}(    \omega^*( \cdot,K,\Omega))),
\end{equation}where $ \omega^* (\cdot,K,\Omega)$  is   the    (octonionic) relative extremal function of the set $K$ in $\Omega$.

Another important ingredient in  pluripotential theory is the solution to    the   Dirichlet
problem   for the homogeneous octonionic
Monge-Ampere equation on the unit ball $B^2$.

\begin{thm} \label{thm:Dirichlet-ball}    For  $\varphi\in C^2(\partial
B^2)$, let $\Psi_{B^2,\varphi} $ be the Perron-Bremermann function for $B^2$ and
$\varphi$.  Then $ 
u=  
      \Psi_{B^2,\varphi} $   on $ B^2$ with
 $u=  \varphi$ on $\partial B^2 
$ belongs to $C_{loc}^{1,1}$, and
 is the unique solution to    the   Dirichlet
problem
\begin{equation}\label{eq:Dirichlet-problem}
  \left\{
  \begin{array}{ll} u\in  OPSH(B^2) \cap  C( \overline{ B^2}),\qquad&\\
     \det(Hess_{\mathbb{O}}(u)) = 0,\qquad & {\rm on}\quad B^2,\\
    u= \varphi,& {\rm on}\quad \partial B^2.
 \end{array}
  \right.
\end{equation}
\end{thm}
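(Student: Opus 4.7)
The plan is to follow Bedford--Taylor's four-step strategy adapted to the octonionic setting. First I would study $\Psi := \Psi_{B^2,\varphi}$, defined as the upper envelope of all $u\in OPSH(B^2)$ with $\varlimsup_{\mathbf{x}\to\zeta}u(\mathbf{x})\leq \varphi(\zeta)$ on $\partial B^2$. Using that $|\mathbf{x}|^2$ is smooth and strictly $OPSH$ on $\overline{B^2}$, the functions $\varphi^{\sharp}\pm A(|\mathbf{x}|^2-1)$, where $\varphi^{\sharp}$ is a $C^2$-extension of $\varphi$ and $A$ is large depending on $\|\varphi\|_{C^2}$, serve as upper and lower barriers. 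These force $\Psi\in C(\overline{B^2})$ with $\Psi=\varphi$ on $\partial B^2$; the interior upper-regularization is $OPSH$ by the standard envelope argument, and equals $\Psi$ by continuity.

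Second, I would show $\det(Hess_{\mathbb{O}}(\Psi))=0$ by a balayage argument. On any small ball $B\Subset B^2$, I would replace $\Psi|_{\partial B}$ by a smooth approximation and solve (via smoothing of $\Psi$ followed by a limit using Theorem A) the homogeneous problem on $B$; maximality of $\Psi$ in its defining family forces this replacement to coincide with $\Psi$, so on each such $B$ the Monge--Amp\`ere measure vanishes. By Theorem A the weak-limit measure is still $\det(Hess_{\mathbb{O}}(\Psi))$, whence it is zero on $B^2$.

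The hard step is the $C^{1,1}_{loc}$-regularity, and it is exactly where octonionic non-associativity bites. The classical argument applies a one-parameter family of automorphisms $\{\phi_t\}$ of $B^2$ fixing a given interior point, estimates the second symmetric difference $\Psi\circ\phi_t+\Psi\circ\phi_{-t}-2\Psi$ on $\partial B^2$ by $Ct^2\|\varphi\|_{C^2}$ using the boundary barrier, and then extends the estimate to the interior via the comparison principle applied to $\tfrac12(\Psi\circ\phi_t+\Psi\circ\phi_{-t})$ versus $\Psi$. The obstruction, as emphasized in the abstract, is that $\Psi\circ\phi_t$ is generically \emph{not} $OPSH$, so it cannot be directly inserted into the envelope family. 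My approach would be to establish the weighted transformation formula promised in the introduction: produce a positive smooth weight $w_{\phi}(\mathbf{x})$, constructed from the Jacobian data of $\phi$ (and depending on the real-linear octonionic structure of the ball automorphism group), such that $w_{\phi}\cdot(u\circ\phi)$ is $OPSH$ whenever $u$ is. Combined with the behaviour $w_{\phi_t}=1+O(t^2)$ along the one-parameter family, this weighted composition is an admissible competitor in the Perron family, and the boundary $O(t^2)$-bound propagates by the comparison principle (Theorem~\ref{thm:compare}) to yield $\|\Psi\|_{C^{1,1}(K)}\leq C(K,\|\varphi\|_{C^2})$ for each $K\Subset B^2$.

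Finally, uniqueness follows from Theorem~\ref{thm:compare} with a standard perturbation. If $u,v$ are two continuous $OPSH$ solutions with $u=v=\varphi$ on $\partial B^2$ and $u(\mathbf{x}_0)<v(\mathbf{x}_0)$ at some interior point, then for small $\varepsilon>0$ the set $\{u+\varepsilon(|\mathbf{x}|^2-1)<v\}$ is relatively compact in $B^2$ and contains $\mathbf{x}_0$, so the comparison principle gives
\begin{equation*}
0<\varepsilon^2\int_{\{u+\varepsilon(|\mathbf{x}|^2-1)<v\}}\det(Hess_{\mathbb{O}}(|\mathbf{x}|^2))\leq \int_{\{u+\varepsilon(|\mathbf{x}|^2-1)<v\}}\det(Hess_{\mathbb{O}}(u+\varepsilon(|\mathbf{x}|^2-1)))\geq \int \det(Hess_{\mathbb{O}}(v))=0,
\end{equation*}
a contradiction, so $u\equiv v$. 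I expect the only genuinely delicate point is producing the weight $w_\phi$ and checking the weak-associativity identities needed to verify that $w_\phi(u\circ\phi)$ is $OPSH$; all other steps are routine transcriptions of the Bedford--Taylor scheme once the comparison principle and Theorem A are in hand.
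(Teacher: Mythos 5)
Your high-level plan is the right Bedford--Taylor scheme and the weighted-transformation idea for the $C^{1,1}_{loc}$ estimate coincides with the paper's key innovation (Proposition~\ref{prop:OPSH-Ta-u} together with Proposition~\ref{prop:2diff}). But there are two genuine gaps.

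First, the balayage step you use to prove $\det(Hess_{\mathbb{O}}(\Psi))=0$ is circular. You propose to ``solve the homogeneous problem on a small ball $B\Subset B^2$ (via smoothing followed by a limit using Theorem~A)'' and then invoke maximality to conclude this solution equals $\Psi$. But smoothing $\Psi$ and passing to the limit via Theorem~A only recovers $\det(Hess_{\mathbb{O}}(\Psi))$ as a weak limit of the smoothed measures --- which are not zero --- so this does not produce a solution of the homogeneous problem; and solving the homogeneous Dirichlet problem on a subball with continuous boundary data is precisely the theorem you are trying to prove, so you cannot assume it. Maximality alone does not yield $\det(Hess_{\mathbb{O}}(\Psi))=0$ for a merely continuous $OPSH$ function. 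The paper's order is the opposite and for a reason: one first proves the $C^{1,1}_{loc}$ estimate (Theorem~\ref{thm:C11}), which gives $Hess_{\mathbb{O}}(u)\in L^\infty_{loc}$, and \emph{then} uses Proposition~\ref{prop:2-derivatives} (the Bedford--Taylor pointwise second-derivative lemma) to pick a good point $\mathbf{x}_0$ where the Hessian exists classically, builds the perturbed competitor $v_j$ around $\mathbf{x}_0$, and derives a contradiction with maximality. That argument cannot get off the ground without $L^\infty_{loc}$ second derivatives in hand, so the $C^{1,1}$ estimate must come first.

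Second, your uniqueness argument perturbs in the wrong direction and does not yield a contradiction. Writing $u'=u+\varepsilon(|\mathbf{x}|^2-1)$ makes $\det(Hess_{\mathbb{O}}(u'))\geq\varepsilon^2\det(Hess_{\mathbb{O}}(|\mathbf{x}|^2))>0$, and the comparison principle on the set $\{u'<v\}$ gives $\int\det(Hess_{\mathbb{O}}(u'))\geq\int\det(Hess_{\mathbb{O}}(v))=0$; this inequality chain is internally consistent, not a contradiction. The correct version perturbs the \emph{larger} function: set $v'=v+\varepsilon|\mathbf{x}-\mathbf{x}_0|^2$ (with a constant offset $-\eta/2$ to guarantee $\{u<v'\}\Subset B^2$, as in the proposition following Theorem~\ref{thm:compare}); then $0=\int\det(Hess_{\mathbb{O}}(u))\geq\int\det(Hess_{\mathbb{O}}(v'))\geq\varepsilon^2\int\det(Hess_{\mathbb{O}}(|\cdot-\mathbf{x}_0|^2))>0$, a genuine contradiction. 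One further small wrinkle in the $C^{1,1}$ step: the weight $J(\mathbf{h},\mathbf{x})$ satisfies $J=1+O(|\mathbf{h}|)$, not $1+O(|\mathbf{h}|^2)$; the $O(|\mathbf{h}|)$ term is absorbed only after combining with the Lipschitz bound $|u(\mathbf{x}+\mathbf{h})-u(\mathbf{x}-\mathbf{h})|\leq C'|\mathbf{h}|$, so the Lipschitz estimate (Proposition~\ref{prop:Lip}) is a needed prerequisite that your sketch omits.
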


 It  is generalized to  continuous solution to    the   Dirichlet
problem   for the homogeneous octonionic
Monge-Amp\`ere equation on   general strictly octonionic pseudoconvex  domains in  Theorem \ref{thm:Dirichlet-ball-1}.

To show the  $C_{loc}^{1,1}$-regularity of the  Perron-Bremermann function, we will use the method of
  Bedford-Taylor \cite{BT} for complex Monge-Amp\`ere equation.
 The main difficulty to apply
 their method is that an  $OPSH$  function usually is not $OPSH$
  under  automorphisms of the unit ball,  unlike complex   plurisubharmonicity is preserved under biholomorphic transformations. This is similar to that the harmonicity does not preserved under conformal transformations on $\mathbb{R}^N \setminus\{0\}
  $ if  $N\geq 3$. But   multiplied by a suitable conformal factor, a harmonic function under a conformal transformation is still harmonic.
  We construct
  automorphisms of the unit ball explicitly, and find
a weighted  transformation formulae of  $OPSH$  functions  under    automorphisms (cf. Proposition \ref{prop:OPSH-Ta-u}), which allows us  to adapt  Bedford-Taylor's method to the octonionic case.

The  octonionic  capacity of an open set $U\subset\Omega $ is defined as
$
   C  (U) := \sup\{C  (K); K\subset U\}  .
$ The quasicontinuity   also holds for  locally  bounded   $OPSH$  functions.
  \begin{thm}\label{thm:quasicontinuity}
Any  locally bounded   $OPSH$  function is continuous almost everywhere with respect to octonionic  capacity, i.e., given $u\in
OPSH(\Omega)\cap L_{loc}^{\infty}(\Omega)$ and any $\epsilon>0$, there
exists an open set $U\subset \Omega$ such that $C  (U )<\epsilon$ and $u$ is continuous on $\Omega\setminus U$.
\end{thm}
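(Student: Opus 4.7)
I will adapt the classical Bedford-Taylor argument for quasicontinuity of bounded plurisubharmonic functions, using the tools the paper has already developed: Theorem~A (weak convergence of mixed octonionic Monge-Amp\`ere measures under uniform limits), the comparison principle (Theorem~\ref{thm:compare}), the integration-by-parts formula for the mixed operator alluded to in the abstract, and the basic properties of the octonionic capacity following from \eqref{capacity defi-K}--\eqref{eq:capacity-K}. Exhausting $\Omega$ by relatively compact subdomains and using countable subadditivity of capacity on open sets, I first reduce to proving the statement on a fixed $\Omega'\Subset\Omega$ on which $u$ is bounded. Let $\rho_\eta$ be a smooth radial mollifier on $\mathbb{O}^2\simeq\mathbb{R}^{16}$ and set $u_\eta:=u*\rho_\eta$. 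Since $OPSH$ is defined by subharmonicity on every affine octonionic line, Fubini combined with the classical invariance of subharmonicity under radial convolution yields $u_\eta\in OPSH(\Omega')\cap C^\infty(\Omega')$ for small $\eta$, and the submean inequality gives $u_\eta\geq u$ with $u_\eta\downarrow u$ pointwise as $\eta\downarrow 0$.

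\textbf{The core estimate.} The heart of the proof is the following lemma: for any decreasing sequence $\{v_k\}\subset OPSH(\Omega)\cap C(\Omega)$ converging pointwise to $v\in OPSH(\Omega)\cap L_{loc}^{\infty}(\Omega)$, every $\delta>0$ and every compact $K\subset\Omega$,
\begin{equation*}
  C\bigl(K\cap\{v_k>v+\delta\}\bigr)\longrightarrow 0 \quad\text{as } k\to\infty.
\end{equation*}
Given a test function $\psi\in\mathcal{U}^*(K\cap\{v_k>v+\delta\},\Omega)$ entering the definition of $C$, a Chebyshev-type inequality gives
\begin{equation*}
   \int_{K\cap\{v_k>v+\delta\}} \det(Hess_{\mathbb{O}}(\psi))\;\leq\;\delta^{-1}\int_{\Omega}(v_k-v)\,\det(Hess_{\mathbb{O}}(\psi)),
\end{equation*}
and the integration-by-parts formula for the mixed operator from the abstract rewrites the right-hand side as an expression involving the mixed Monge-Amp\`ere of $v_k$, $v$ and $\psi$; Theorem~A together with the monotone convergence $v_k\downarrow v$ drives this expression to zero. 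The uniformity over $\psi$ required to pass to the infimum defining $C$ is obtained from a Chern-Levine-Nirenberg type bound, combined with a normalization of $\psi$ against the relative extremal function via the comparison principle. Throughout, the weak associativity emphasized in the introduction is what makes these integrations by parts meaningful in the octonionic setting.

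\textbf{Borel-Cantelli assembly.} Applying the lemma with $v_k:=u_{\eta_k}$ for a sequence $\eta_k\downarrow 0$, I choose each $\eta_k$ small enough that
\begin{equation*}
  U_k:=\{u_{\eta_k}>u+1/k\}\cap\Omega'\ \text{ satisfies }\ C(U_k)<\epsilon\cdot 2^{-k}.
\end{equation*}
Each $U_k$ is open because $u_{\eta_k}$ is continuous and $u$ is upper semi-continuous, so $U:=\bigcup_{k}U_k$ is open with $C(U)<\epsilon$ by countable subadditivity. On $\Omega'\setminus U$ one has $0\leq u_{\eta_k}-u\leq 1/k$ for every $k$, so $u_{\eta_k}\to u$ uniformly on $\Omega'\setminus U$, and $u$ is continuous there as the uniform limit of continuous functions. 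A standard exhaustion of $\Omega$ by such $\Omega'$, combined again with countable subadditivity, extends the conclusion to all of $\Omega$.

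\textbf{Main obstacle.} The hard part is unmistakably the core estimate of the second step. In the complex and quaternionic theories the analogous estimate relies on free manipulation of wedge products of currents; in the octonionic setting there is no exterior calculus, so every integration by parts has to be phrased directly at the level of (mixed) Monge-Amp\`ere measures and justified through the weak form of associativity available for octonions. Once that lemma is in place, the remaining assembly via mollification, Chebyshev and Borel-Cantelli is essentially formal.
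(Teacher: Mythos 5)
Your overall strategy—reduce to a ball, mollify, bound the capacity of the sets where the mollification is far from $u$, then assemble via countable subadditivity—is indeed the paper's strategy, and you correctly single out the core capacity estimate as where all the work lives. However, the sketch of that core estimate contains a genuine logical gap. You write the Chebyshev bound $\int_{E_k}\det(Hess_{\mathbb{O}}(\psi))\leq\delta^{-1}\int_\Omega(v_k-v)\det(Hess_{\mathbb{O}}(\psi))$ for $\psi\in\mathcal{U}^*(E_k,\Omega)$ and then speak of ``passing to the infimum defining $C$.'' This does not close: the infimum definition \eqref{capacity defi-K} involves $\int_\Omega\det(Hess_{\mathbb{O}}(\psi))$, not $\int_{E_k}$, and for an infimum you need a \emph{single} good competitor, not a bound uniform over all $\psi$. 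What actually makes the Chebyshev step useful is the \emph{supremum} characterization in Proposition~\ref{prop:capacity}~(4), $C(U)=\sup_{u_1,u_2\in\mathscr{L}}\int_U\det(Hess_{\mathbb{O}}(u_1),Hess_{\mathbb{O}}(u_2))$, which is what the paper invokes (see \eqref{eq:cap-p-N}); only with that reformulation does a bound uniform in the test function translate into a bound on $C(U_k)$.

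The second gap is the claim that ``Theorem~A together with the monotone convergence $v_k\downarrow v$ drives this expression to zero.'' Theorem~A gives weak convergence under \emph{locally uniform} convergence of continuous $OPSH$ functions, which a decreasing pointwise limit does not provide, and in any case the measure $\det(Hess_{\mathbb{O}}(\psi))$ in the Chebyshev bound varies with the test function, so one cannot just apply dominated convergence per measure. The paper's actual argument is more delicate: it compares two mollifications $u_p$ and $u_{p+N}$ (so all functions involved stay continuous, unlike your $u_\eta-u$), modifies $u$ near $\partial B^2$ so that all approximants agree on a collar, and then combines the Cauchy--Schwarz inequality of Lemma~\ref{lem: inner-product} with repeated integration by parts \eqref{eq:parts} to reduce the estimate to $\int_{B^2}(u_p-u_{p+N})\det(Hess_{\mathbb{O}}(\varphi_{p,N}^+))$; this in turn is controlled by passing to a subsequence so that the ``energies'' $\int u_p\det(Hess_{\mathbb{O}}(u_p))$ converge, cf.\ \eqref{eq:lim}--\eqref{eq:difference-pN}. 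None of this machinery is visible in your sketch, and without it the core lemma does not follow from the tools you cite. The Borel--Cantelli assembly at the end is fine and matches the paper.
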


In proofs of the above theorems,  an important tool is the following formula of integration by parts.
\begin{lem} \label{lem:parts} Let $\Omega$ be a domain in $\mathbb{O} ^2$ with a $C^1(\overline{\Omega})$ defining function $\varrho$. For $u\in C^2(\overline{\Omega})$ and a    closed function $\omega  \in C^1(\overline{\Omega},
\mathcal{ H}^2(\mathbb{O} ) )$,
we have
   \begin{equation}\label{eq:parts}\begin{split}
 \int _{ \Omega}  v \det(Hess_{\mathbb{O}}(u), \omega   )dV&=-\int _{ \Omega}  \det\left(\mathcal{T} (du \otimes dv ), \omega  \right) dV + \int_{\partial\Omega}
 v  \det\left( \mathcal{T} (du \otimes d \varrho),
 \omega
   \right) \frac { dS}{|\operatorname{grad}\varrho|} ,
\end{split}\end{equation}where $dV $ is the volume form on $\mathbb{O} ^2$  and $dS$ is the surface measure of the boundary $\partial\Omega$.
\end{lem}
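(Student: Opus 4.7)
The plan is to reduce the identity to a term-by-term application of the real-variable divergence theorem on $\mathbb{R}^{16}\supset\Omega$, then reassemble the resulting pieces into the claimed octonionic shapes using the closedness of $\omega$. First, I would expand $\det(Hess_{\mathbb{O}}(u),\omega)$ as a real-linear combination of products of the form $(\overline\partial_\alpha\partial_\beta u)\,\omega_{\gamma\delta}$, via the polarization identity for the $2\times 2$ determinant on $\mathcal{H}^2(\mathbb{O})$ indicated in \eqref{eq:mixed-determinant}. The mixed determinant is real-valued even though the individual Dirac-type second derivatives and matrix entries are octonion-valued, so each resulting term, after multiplying by $v$ and integrating over $\Omega$, amounts to a real integral of ordinary $\mathbb{R}^{16}$ partial derivatives.

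Applying the divergence theorem componentwise to move one of the two differentiations off $u$ produces three kinds of pieces: (i) terms in which the derivative falls on $v$, (ii) terms in which it falls on the entries $\omega_{\gamma\delta}$, and (iii) boundary contributions on $\partial\Omega$. Reversing the polarization procedure of Step~1, the type-(i) terms should reassemble precisely into $-\det(\mathcal{T}(du\otimes dv),\omega)$; this is the raison d'\^etre of the symmetrization operator $\mathcal{T}$, which pairs a gradient of $u$ with one of $v$ in exactly the positions occupied by the corresponding second derivatives inside $Hess_{\mathbb{O}}(u)$. The type-(ii) terms must then sum to zero, and this cancellation is exactly what the closedness hypothesis on $\omega$ is intended to guarantee, playing here the role of $d_0\omega=d_1\omega=0$ in the quaternionic theory of Wan--Wang and Alesker--Gordon. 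For the type-(iii) pieces, writing the outward unit normal as $\operatorname{grad}\varrho/|\operatorname{grad}\varrho|$ and repeating the polarization argument with $d\varrho$ in place of $dv$ yields the surface integrand $v\,\det(\mathcal{T}(du\otimes d\varrho),\omega)/|\operatorname{grad}\varrho|$ against $dS$, matching the statement.

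The main obstacle is the bookkeeping forced by non-associativity of $\mathbb{O}$: unlike in the complex or quaternionic cases, one cannot rearrange parentheses freely when multiplying $(\overline\partial_\alpha\partial_\beta u)$ by $\omega_{\gamma\delta}$ and then transporting a derivative across, so the reassembly in Step~2(i) has to be checked entry-by-entry rather than imported by a tensor-calculus shortcut. The closedness condition used in Step~2(ii) provides precisely the amount of weak associativity needed for the $\omega$-hitting terms to cancel after summation; pinpointing which bracketings appear where is the delicate part of the argument. Beyond this, no extra analytic input is needed: the regularity hypotheses $u\in C^2(\overline\Omega)$ and $\omega\in C^1(\overline\Omega)$ already suffice for a clean componentwise application of the divergence theorem, and $\varrho\in C^1(\overline\Omega)$ ensures the boundary surface measure and normal direction are well-defined.
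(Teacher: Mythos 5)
Your blueprint is essentially the paper's proof: expand $\det(Hess_{\mathbb{O}}(u),\omega)$ into scalar traces $\operatorname{Re}(\omega_{\overline\alpha\beta}\,\cdot\,)$, apply the divergence theorem on $\mathbb{R}^{16}$ componentwise, reassemble the $dv$-pieces into $-\det(\mathcal{T}(du\otimes dv),\omega)$, cancel the $\omega$-derivative pieces using closedness, and collect the boundary term with $d\varrho$ in place of $dv$. Be careful, though, not to conflate two distinct ingredients: the parenthesis-shifting needed to move a derivative through an octonion product is the trace weak associativity $\operatorname{Re}((\mathbf{a}\mathbf{b})\mathbf{c})=\operatorname{Re}(\mathbf{a}(\mathbf{b}\mathbf{c}))$ and $\operatorname{Re}(ab)=\operatorname{Re}(ba)$ of Lemma \ref{lem:associative}, used at every step, whereas closedness \eqref{eq:closed} is a separate PDE hypothesis used exactly once to rewrite $\omega_{\overline\alpha\beta}\overleftarrow{\partial_{\overline\beta}}$ as $\overline\partial_\alpha\omega_{\overline\beta\beta}$; the type-(ii) terms do not then vanish individually, but rather the combination $\operatorname{Re}\bigl(\overline\partial_\alpha\omega_{\overline\beta\beta}\,\partial_\alpha u-\overline\partial_\beta\omega_{\overline\alpha\alpha}\,\partial_\beta u\bigr)$ cancels pairwise upon summing over $(\alpha,\beta)=(1,2)$ and $(2,1)$.
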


In particular, if one of $u,v,w $ is compactly supported in $\Omega$,   then
   \begin{equation}\label{eq:exchange0}\begin{split}
 \int _{ \Omega}  u \det(Hess_{\mathbb{O}}(v) , Hess_{\mathbb{O}}(w) )dV&=  \int _{ \Omega}  v \det(Hess_{\mathbb{O}}(u) , Hess_{\mathbb{O}}(w) )dV.
\end{split}\end{equation}
This was proved by  Alesker-Gordon in \cite{alesker24}. If $v\equiv 1$, the right hand side of \eqref{eq:parts} is the only an integral over the boundary, which  is also a useful formula.

 The paper is organized as follows. In Section 2, we discuss basic facts about octonionic linear algebra,  $OPSH$  functions  and octonionic
 closed positive currents.  In Section 3,  we show the
 formula \eqref{eq:parts} of integration by parts for mixed
 octonionic Monge-Amp\`{e}re  operator, which plays the key role in the paper.
 The  comparison principle is established in Section 4.
  In Section 5, we give the fundamental solution  of the   octonionic Monge-Amp\`{e}re  equation  and introduce  the  Lelong number for a closed
  positive current. In Section 6, we show the  $C_{loc}^{1,1}$ regularity for the solution to the Dirichlet problem for the homogeneous octonionic
Monge-Ampere equation on the unit ball.
Various useful   properties of  octonionic relative extremal function  and   octonionic  capacity are established in Section 7.
The quasicontinuity     is established in Section 8. In the appendix,  we construct automorphisms of the unit ball in $
  \mathbb{O}^2 $  explicitly, which can  be extended to the boundary as continuous mappings,
not as smooth as in  the complex or  quaternionic  cases.

\section{Octonionic linear algebra, $OPSH$ functions and octonionic closed positive currents}
\subsection{Octonions and octonionic Hermitian $(2 \times 2)$-matrices}
The octonions form an $8$-dimensional algebra $\mathbb{O}$ over the reals $\mathbb{R}$,  which is neither
associative nor commutative.
An octonion $\mathbf{x}\in \mathbb{O}$ can be written as  $\mathbf{x}=\mathbf{ a}+\mathbf{ b} e_5$, where $  \mathbf{ a},\mathbf{ b} $ are two quaternionic
numbers. Namely, $\mathbf{x} $ can be identify with the
vector  $ (\mathbf{ a},\mathbf{ b} )$ in $\mathbb{H}^2$.  Then the multiplication of octonions can be written as
\begin{equation}\label{eq:octonionic}
  (\mathbf{ a},\mathbf{ b} )(  \mathbf{ c},\mathbf{ d} )= (\mathbf{ ac}-
 \overline{\mathbf{ d}}\mathbf{ b}, \mathbf{d a}+\mathbf{ b}\overline{\mathbf{ c}}  ).
\end{equation}

Although octonions
is non-associative, it still satisfy
some weak form of associativity.
\begin{lem}\label{lem:associative} \cite[Lemma 2.1.1]{alesker08}
Let $\mathbf{a},\mathbf{b},\mathbf{c} \in\mathbb{ O}$. Then,
\\(i) $Re((\mathbf{a}\mathbf{b})\mathbf{c}) = Re(\mathbf{a}(\mathbf{b}\mathbf{c}))$ (this real number will be denoted by $Re(\mathbf{abc})$).
\\
(ii)     $ \mathbb{O}$  is alternative, i.e. any subalgebra of $ \mathbb{O}$ generated by any two elements and their conjugates is
associative. It is always isomorphic either to $\mathbb{R}, \mathbb{C}$, or $\mathbb{H}$.
 \end{lem}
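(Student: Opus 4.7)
The plan is to exploit the Cayley-Dickson description \eqref{eq:octonionic} of $\mathbb{O}$ as pairs of quaternions, together with the associativity of $\mathbb{H}$, to reduce both assertions to essentially quaternionic manipulations.

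For (i) I would write $\mathbf{a}=(\alpha_1,\alpha_2)$, $\mathbf{b}=(\beta_1,\beta_2)$, $\mathbf{c}=(\gamma_1,\gamma_2)$ with $\alpha_i,\beta_i,\gamma_i\in\mathbb{H}$. Since the real part of an octonion $\mathbf{x}=(x_1,x_2)$ equals the quaternionic real part of $x_1$, expanding $(\mathbf{a}\mathbf{b})\mathbf{c}$ and $\mathbf{a}(\mathbf{b}\mathbf{c})$ via \eqref{eq:octonionic} produces two sums of quaternionic triple products (together with conjugates). I would pair them up and verify term-by-term equality, using only the associativity of $\mathbb{H}$ and the cyclic identity $Re(xy)=Re(yx)$ for $x,y\in\mathbb{H}$ (and $Re(\overline{x})=Re(x)$). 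This amounts to a short but mechanical expansion.

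For (ii) the crucial intermediate step is the pair of alternativity identities
\begin{equation*}
(\mathbf{a}\mathbf{a})\mathbf{b}=\mathbf{a}(\mathbf{a}\mathbf{b}),\qquad (\mathbf{b}\mathbf{a})\mathbf{a}=\mathbf{b}(\mathbf{a}\mathbf{a}),
\end{equation*}
for all $\mathbf{a},\mathbf{b}\in\mathbb{O}$. These follow directly from \eqref{eq:octonionic} using that $\alpha\overline{\alpha}\in\mathbb{R}$ for $\alpha\in\mathbb{H}$ and that $\mathbb{H}$ is associative. Because $\overline{\mathbf{a}}=2Re(\mathbf{a})-\mathbf{a}$ and $\mathbf{a}\overline{\mathbf{a}}=|\mathbf{a}|^2\cdot 1$ forces $1$ into the subalgebra generated by any nonzero octonion, the subalgebra $A$ generated by $\mathbf{a},\mathbf{b}$ and their conjugates coincides with the unital subalgebra generated by $\mathbf{a}$ and $\mathbf{b}$.

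Next I would invoke Artin's classical theorem: linearizing the two alternativity identities in $\mathbf{a}$ shows that the associator $[\mathbf{x},\mathbf{y},\mathbf{z}]:=(\mathbf{x}\mathbf{y})\mathbf{z}-\mathbf{x}(\mathbf{y}\mathbf{z})$ is a trilinear alternating map on $\mathbb{O}$; hence its value on any triple drawn from $\{1,\mathbf{a},\mathbf{b}\}$ vanishes, and by trilinearity $A$ is associative. Finally $A$ is a finite-dimensional unital associative real subalgebra of the zero-divisor-free algebra $\mathbb{O}$, so it is a real division algebra closed under conjugation, and by the Frobenius theorem it is isomorphic to $\mathbb{R}$, $\mathbb{C}$, or $\mathbb{H}$. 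The main obstacle is the passage from the two alternativity identities to full alternation of the associator (i.e., Artin's theorem); once this is granted, both parts reduce to a direct unwinding of the Cayley-Dickson formula together with a standard invocation of Frobenius.
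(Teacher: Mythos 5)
The paper gives no proof of this lemma at all --- it is quoted from Alesker \cite{alesker08} --- so any self-contained argument is necessarily an addition. Your plan for (i) is sound: with the Cayley-Dickson product \eqref{eq:octonionic} and the fact that the real part of $(\mathbf{x}_1,\mathbf{x}_2)$ is the quaternionic real part of $\mathbf{x}_1$, both $\operatorname{Re}((\mathbf{a}\mathbf{b})\mathbf{c})$ and $\operatorname{Re}(\mathbf{a}(\mathbf{b}\mathbf{c}))$ expand into the same four quaternionic traces once one uses associativity of $\mathbb{H}$ and cyclicity of $\operatorname{Re}$ on $\mathbb{H}$. Likewise the two alternative laws do follow from \eqref{eq:octonionic}, the reduction of the conjugates to the unital subalgebra via $\overline{\mathbf{a}}=2\operatorname{Re}(\mathbf{a})-\mathbf{a}$ is fine, and finishing with Frobenius is legitimate; simply citing Artin's theorem would be no worse than the paper's own citation of \cite{alesker08}.

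The genuine gap is in the justification you sketch for Artin's theorem. From the alternating trilinear associator you conclude that it ``vanishes on any triple drawn from $\{1,\mathbf{a},\mathbf{b}\}$, and by trilinearity $A$ is associative''. Trilinearity only propagates that vanishing to the \emph{linear span} of $1,\mathbf{a},\mathbf{b}$, whereas $A$ contains all words in $\mathbf{a},\mathbf{b}$ (such as $\mathbf{a}\mathbf{b}$, $\mathbf{a}^2\mathbf{b}$, \dots), and associativity of $A$ requires the associator to vanish on triples of such words. That is precisely the nontrivial content of Artin's theorem, proved by induction on word length using the alternating property (or the Moufang identities); it is not a formal consequence of multilinearity. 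Relatedly, you place ``the main obstacle'' in the passage from the two alternative laws to the alternation of the associator, but that step is just linearization; the real work lies in going from the alternating associator to associativity of two-generated subalgebras. So either invoke Artin's theorem outright and drop the span argument, or supply the inductive proof; as written, the step ``by trilinearity $A$ is associative'' does not hold.
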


An {\it octonionic (right) line} in $ \mathbb{O}^2 $ is a real $8$-dimensional subspace $L \subset \mathbb{O}^2 $
  either of the form $\{(\mathbf{x}, \mathbf{a}\mathbf{x});\mathbf{x} \in\mathbb{O}\} $ for some $\mathbf{  a} \in\mathbb{O}  $ , or of the form $\{(\mathbf{0},
  \mathbf{x});\mathbf{x} \in\mathbb{O}\} $.
  The {\it mixed determinant} of two octonionic Hermitian
matrices $A = (
  a _{\overline{\alpha}\beta}  ) $ and $B = (
  b _{\overline{\alpha}\beta}  )$
 can be defined in analogy to the classical complex case as
\begin{equation}\label{eq:mixed-determinant}
   \det(A , B) =\frac  1
2\Big (a _{\overline{1}1}b_{\overline{2}2} + a _{\overline{2}2} b_{\overline{1}1} - 2\operatorname{Re}\left(a _{\overline{1}2}b_{\overline{2}1}\right)\Big ).
\end{equation}

An octonionic Hermitian $(2 \times 2)$-matrices $A\in \mathcal{ H}^2(\mathbb{O})$
 is called {\it positive definite} (resp. {\it nonnegative
definite}) if for any  column vector $\xi\in \mathbb{O}^2\setminus \{0\}$,
$
   Re(\xi^* A\xi) > 0,$  (resp. $ Re(\xi^* A\xi)  \geq  0).
$
For a positive definite (resp. nonnegative definite) matrix $A$,  one writes as usual $A > 0$
(resp. $ A \geq 0$). The following result is a version of the Sylvester criterion for octonionic Hermitian $(2 \times 2)$-matrices.
\begin{prop}  \label{prop:non-negative} \cite[Proposition 2.2.5]{alesker08}
   Let $A=\left(
 \begin{matrix}a&\mathbf{x}\\
   \overline{ \mathbf{x}}&b
 \end{matrix}\right)\in \mathcal{ H}^2(\mathbb{O})$. Then $A \geq 0$ if and only if $ a \geq 0$ and
$\det A\geq 0$.
\end{prop}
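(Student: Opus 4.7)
The plan is to adapt the classical Sylvester-type proof: establish necessity by evaluating the Hermitian quadratic form on cleverly chosen vectors, and sufficiency by an octonionic completion of the square, with non-associativity resolved via Lemma \ref{lem:associative}. The starting observation is that for $\xi=(\xi_1,\xi_2)^{T}\in\mathbb{O}^{2}$ one has the unambiguous identity
\begin{equation*}
\operatorname{Re}(\xi^{*}A\xi)=a|\xi_1|^{2}+2\operatorname{Re}(\overline{\xi_1}\mathbf{x}\xi_2)+b|\xi_2|^{2},
\end{equation*}
the real part of the triple product being parenthesization-independent by Lemma \ref{lem:associative}(i), and the cross-term equality $\operatorname{Re}(\overline{\xi_1}(\mathbf{x}\xi_2))=\operatorname{Re}(\overline{\xi_2}(\overline{\mathbf{x}}\xi_1))$ following by taking conjugates inside the real part.

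For necessity, I would first substitute $\xi=(1,0)^{T}$ and $\xi=(0,1)^{T}$ to extract $a\geq 0$ and $b\geq 0$. To obtain $\det A\geq 0$, I would plug in the test vector $\xi=(b,-\overline{\mathbf{x}})^{T}$; since $\mathbf{x}$ and $\overline{\mathbf{x}}$ generate an associative subalgebra by Lemma \ref{lem:associative}(ii), the resulting expression collapses cleanly to $b(ab-|\mathbf{x}|^{2})=b\cdot\det A$. In the generic case $b>0$ this yields the conclusion, while in the degenerate case $b=0$ I would test with $\xi=(-t,\overline{\mathbf{x}})^{T}$ for $t>0$ and let $t\to 0^{+}$ to force $\mathbf{x}=0$, so that $\det A=0$ trivially.

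For sufficiency, assume first $a>0$ and $\det A\geq 0$. The key step is the octonionic completion of the square: setting $\eta:=\xi_1+a^{-1}(\mathbf{x}\xi_2)$, a direct expansion using $|\mathbf{x}\xi_2|=|\mathbf{x}||\xi_2|$ (valid because $\mathbf{x},\xi_2$ generate an associative, hence normed, subalgebra by Lemma \ref{lem:associative}(ii)) yields the identity
\begin{equation*}
\operatorname{Re}(\xi^{*}A\xi)=a|\eta|^{2}+\tfrac{1}{a}(\det A)|\xi_2|^{2}\geq 0.
\end{equation*}
The degenerate case $a=0$ forces $\mathbf{x}=0$ from $\det A\geq 0$, reducing to the diagonal situation, handled by perturbing $A\rightsquigarrow A+\epsilon I$ and passing to the limit.

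The main obstacle is the bookkeeping of non-associativity: one must check that the completion-of-the-square identity is not spoiled by associator terms. Fortunately these associator contributions enter only in the imaginary parts of the triple products, and hence are killed by the real part projection thanks to Lemma \ref{lem:associative}(i); meanwhile, the norm multiplicativity needed for the $|\mathbf{x}\xi_2|$ computation survives because any pair of octonions lies in an associative subalgebra isomorphic to $\mathbb{R}$, $\mathbb{C}$, or $\mathbb{H}$ by Lemma \ref{lem:associative}(ii). Beyond these two invocations, the argument runs exactly in parallel with the classical complex Sylvester criterion.
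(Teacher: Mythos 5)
The paper does not prove this proposition; it is quoted as \cite[Proposition 2.2.5]{alesker08}, so there is no in-paper argument to compare against. Your overall strategy (expand the real quadratic form, test on well-chosen vectors for necessity, octonionic completion of the square for sufficiency) is the standard one, and the non-associativity bookkeeping is handled correctly: the expansion $\operatorname{Re}(\xi^{*}A\xi)=a|\xi_1|^{2}+2\operatorname{Re}(\overline{\xi_1}\mathbf{x}\xi_2)+b|\xi_2|^{2}$ is valid by Lemma~\ref{lem:associative}(i), and the identity $\operatorname{Re}(\xi^{*}A\xi)=a|\eta|^{2}+\tfrac{1}{a}(\det A)|\xi_2|^{2}$ with $\eta=\xi_1+a^{-1}(\mathbf{x}\xi_2)$ checks out for $a>0$ using $|\mathbf{x}\xi_2|=|\mathbf{x}||\xi_2|$. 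The necessity direction is complete (and in fact also extracts $b\geq 0$).

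There is, however, a genuine gap in your treatment of the degenerate sufficiency case $a=0$. You correctly observe that $\det A\geq 0$ then forces $\mathbf{x}=0$, leaving $A=\operatorname{diag}(0,b)$, but the proposed perturbation $A\rightsquigarrow A+\epsilon I=\operatorname{diag}(\epsilon,b+\epsilon)$ does not stay inside the hypotheses: $\det(A+\epsilon I)=\epsilon(b+\epsilon)$ is negative for small $\epsilon>0$ whenever $b<0$, so the $a>0$ case cannot be invoked. And indeed no fix can exist from the hypotheses $a\geq 0$, $\det A\geq 0$ alone, since $A=\operatorname{diag}(0,-1)$ satisfies both but $\operatorname{Re}(\xi^{*}A\xi)=-|\xi_2|^2\not\geq 0$. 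The correct Sylvester-type criterion for semi-definiteness needs all principal minors non-negative, i.e.\ also $b\geq 0$ (equivalently, $\operatorname{tr}A\geq 0$); your necessity step already produces this extra inequality, and carrying $b\geq 0$ as a hypothesis into the sufficiency argument makes the diagonal case trivial with no perturbation needed. You should either add that hypothesis explicitly or flag that the statement as quoted is missing it; as written, the sufficiency argument does not close.
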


\begin{prop} \label{prop:positive}  (1) \cite[Claim 20.1]{alesker24} For a column vector $\xi\in \mathbb{O}^2 $, the octonionic Hermitian matrix
$
   \xi\otimes \xi^*=\left(\overline\xi_j{\xi}_k\right) 
$ is non-negative definite.
    \\  (2)  \cite[Proposition 20.2]{alesker24}  Fix $0 <\lambda<\Lambda< \infty $. Then there exist a natural number $N$ and unit
vectors $\zeta_1,...,\zeta_N \in \mathbb{O}^2$ satisfying that at least one of the two coordinates of each $\zeta_i$ is
real, and $0 <\lambda_*<\Lambda_*< \infty $ depending on $ \lambda,\Lambda $ only, such that every $A \in \mathcal{ H}^2(\mathbb{O})$
whose spectrum lies in $ [\lambda,\Lambda] $ can be written
\begin{equation*}
   A=\sum_{j=1}^N\beta_j  \zeta_j\otimes \zeta_j^*\quad  with\quad\beta_j \in [\lambda_*,\Lambda_*].
\end{equation*}
Moreover, the vectors $\zeta_1,...,\zeta_N$ can be chosen to contain the vectors $(1, 0), (0, 1) \in \mathbb{O}^2$.
\end{prop}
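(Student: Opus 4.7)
For part (1), the plan is an immediate application of the Sylvester criterion of Proposition \ref{prop:non-negative}. The $(1,1)$-entry of $\xi\otimes\xi^*=(\overline{\xi}_j\xi_k)$ is $|\xi_1|^2\geq 0$, and by the multiplicativity of the octonionic norm $|\overline{\xi}_1\xi_2|=|\xi_1||\xi_2|$, so $\det(\xi\otimes\xi^*)=|\xi_1|^2|\xi_2|^2-|\overline{\xi}_1\xi_2|^2=0$. Both quantities required by the criterion are non-negative, hence $\xi\otimes\xi^*\geq 0$.

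For part (2), my plan has two stages. Stage~1 is to cover the compact set $\mathcal{K}_{\lambda',\Lambda}:=\{A\in\mathcal{H}^2(\mathbb{O}):\mathrm{spec}(A)\subset[\lambda',\Lambda]\}$, with $\lambda':=\lambda/2$, by finitely many local Carathéodory representations. The ingredients are: (i) the positive conic hull of $\{\zeta\otimes\zeta^*:\zeta\in\mathbb{O}^2 \text{ unit, one real coord}\}$ equals the full cone of non-negative definite elements of $\mathcal{H}^2(\mathbb{O})$, verified by explicitly writing any non-negative definite matrix $A=\bigl(\begin{smallmatrix}a&\mathbf{x}\\\overline{\mathbf{x}}&b\end{smallmatrix}\bigr)$ (with $b>0$) as $b\,\xi\otimes\xi^*+(a-|\mathbf{x}|^2/b)\,(1,0)\otimes(1,0)^*$ for a suitable $\xi=(\overline{\mathbf{x}}/b,1)$ via the Sylvester criterion; (ii) Carathéodory's theorem for convex cones gives, at each $A_0\in\mathcal{K}_{\lambda',\Lambda}$ (which lies in the interior of that cone), a positive $10$-vertex representation $A_0=\sum_{j=1}^{10}\beta_j^{(0)}\zeta_{k_j}\otimes\zeta_{k_j}^*$ (the real dimension of $\mathcal{H}^2(\mathbb{O})$ is $10$); and (iii) continuity of the corresponding linear inverse extends this representation to a neighborhood $U_{A_0}$ with coefficients in a fixed positive interval. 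Compactness of $\mathcal{K}_{\lambda',\Lambda}$ reduces to a finite subcover, and taking $S=\{\zeta_1,\dots,\zeta_N\}$ to be the union of the associated $10$-vertex subsets together with $(1,0)$ and $(0,1)$ produces the finite set.

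Stage~2 upgrades this to a representation in which \emph{every} $\zeta_j\in S$ carries a coefficient in a fixed positive interval. Set $T=\sum_{j=1}^N\zeta_j\otimes\zeta_j^*$ and pick $\epsilon<\lambda/(2\|T\|_{\mathrm{op}})$. For $A\in\mathcal{K}_{\lambda,\Lambda}$ the matrix $A-\epsilon T$ satisfies $A-\epsilon T\geq (\lambda-\epsilon\|T\|_{\mathrm{op}})I\geq\lambda' I$ and $A-\epsilon T\leq\Lambda I$, so it lies in $\mathcal{K}_{\lambda',\Lambda}$ and admits a decomposition $\sum_j\gamma_j\zeta_j\otimes\zeta_j^*$ from Stage~1 with $\gamma_j\geq 0$ (and most $\gamma_j$ zero). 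Adding $\epsilon\sum_j\zeta_j\otimes\zeta_j^*$ back gives $A=\sum_j(\gamma_j+\epsilon)\zeta_j\otimes\zeta_j^*$ with every coefficient in $[\epsilon,\Lambda_*]$ for a uniform $\Lambda_*$; this yields the required $\lambda_*=\epsilon$ and $\Lambda_*$.

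The main obstacle I anticipate is the concrete construction of $S$ in Stage~1 rich enough to handle every $A_0\in\mathcal{K}_{\lambda',\Lambda}$ at once. A symmetric candidate like $\{(1,0),(0,1),(1,\pm e_p)/\sqrt{2}\}_{p=0}^7$ is not enough: in $\mathcal{K}_{\lambda',\Lambda}$ the off-diagonal $|\mathbf{x}|$ may be as large as $\sqrt{\Lambda^2-(\lambda')^2}$, whereas each $(1,\pm e_p)/\sqrt{2}$ contributes only $1/2$ off-diagonal mass, and accommodating a large off-diagonal by a uniform scalar shift drives the diagonal coefficients below zero. The remedy, as in Alesker--Gordon \cite{alesker24}, is to enlarge $S$ by including vectors $(1,r\mathbf{y})$ at several scales $r$, so that for each $A_0$ some rank-1 in $S$ matches the magnitude of its off-diagonal entry. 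This scale-matching construction is the technical heart of the argument.
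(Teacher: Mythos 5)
Your part (1) matches the paper's one-line argument: apply the Sylvester criterion of Proposition \ref{prop:non-negative}, using multiplicativity of the octonionic norm to get $\det(\xi\otimes\xi^*)=|\xi_1|^2|\xi_2|^2-|\overline{\xi}_1\xi_2|^2=0$, while the $(1,1)$ entry is $|\xi_1|^2\geq 0$.

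The paper supplies no proof for part (2) --- it simply cites \cite[Proposition 20.2]{alesker24} --- so I evaluate your sketch on its own. The architecture is right: your explicit two-term conic decomposition in (i) correctly shows that the rank-1 elements $\zeta\otimes\zeta^*$ with one real coordinate generate all of $\mathcal{H}^2_+(\mathbb{O})$, and the Stage~2 ``subtract $\epsilon T$, then add it back'' shift is exactly the right device to secure a uniform lower bound $\lambda_*$. The gap is in Stage~1, steps (ii)--(iii). Carath\'eodory for cones yields $A_0=\sum_{j=1}^{10}\beta^{(0)}_j\,\zeta_{k_j}\otimes\zeta_{k_j}^*$ with $\beta^{(0)}_j\geq 0$, and some $\beta^{(0)}_j$ can vanish even though $A_0$ lies in the \emph{interior} of $\mathcal{H}^2_+(\mathbb{O})$: the simplicial cone Carath\'eodory selects may have $A_0$ on one of its facets. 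When a coefficient is zero at $A_0$, the linear inverse invoked in (iii) is not sign-preserving on any neighborhood of $A_0$, and the local covering step collapses. The repair is to enlarge the local generating set so that $A_0$ is interior to the cone it spans: since $A_0$ is interior to $\mathcal{H}^2_+(\mathbb{O})$, take a ball $B(A_0,2r)\subset\mathcal{H}^2_+(\mathbb{O})$, choose finitely many directions $v_1,\dots,v_K$ with $\mathrm{conv}\{A_0+2rv_k\}\supset B(A_0,r)$, apply Carath\'eodory to each vertex $A_0+2rv_k$, and let $S_{A_0}$ be the union of the resulting vertex sets, so $B(A_0,r)\subset\mathrm{cone}(S_{A_0})$. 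Compactness of $\mathcal{K}_{\lambda',\Lambda}$ then produces the finite $S$, and the uniform upper bound $\Lambda_*$ is read off from the finitely many local representations. Incidentally, your closing worry about ``scale-matching'' an explicit symmetric $S$ is a red herring once one takes the compactness route --- the covering manufactures $S$, and no multi-scale family $(1,r\mathbf{y})$ need be exhibited.
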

The non-negativity of $ \xi\otimes \xi^*$
follows from  Proposition \ref{prop:non-negative}.

\begin{prop}    \cite[Proposition 7.1]{alesker24}  (1) For any $A \in \mathcal{ H}^2(\mathbb{O})$ there exists a transformation in  ${\rm Spin}(9)$
which maps A to a real diagonal matrix.
(2) Let  $A , B\in \mathcal{ H}^2(\mathbb{O})$ with $A > 0$. Then there exists a transformation in ${\rm Spin}(1, 9)$
which maps $A$  to a matrix proportional to $I_2$, and $B$ to a diagonal matrix.
\end{prop}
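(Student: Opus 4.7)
The plan is to realize $\mathcal{H}^2(\mathbb{O})$ as the natural vector representation of ${\rm Spin}(1,9)$ and reduce both parts to transitivity of the associated orthogonal group actions. Parameterizing $A=\begin{pmatrix} a & \mathbf{x} \\ \overline{\mathbf{x}} & b \end{pmatrix}$ via $t=(a+b)/2$ and $s=(a-b)/2$, the determinant becomes $\det A = t^2 - s^2 - |\mathbf{x}|^2$, a quadratic form of signature $(1,9)$ on the $10$-dimensional real space $\mathcal{H}^2(\mathbb{O})$. On the trace-free hyperplane $\mathcal{H}^2_0(\mathbb{O}) = \{t=0\}$, the form $-\det$ is positive definite, making it a $9$-dimensional Euclidean space. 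One then invokes the exceptional construction (closely tied to the Tits/Freudenthal description underlying $F_4$) by which ${\rm Spin}(9)$ acts on $\mathcal{H}^2_0(\mathbb{O})$ isometrically through its vector representation, covering $\mathrm{SO}(9)$; analogously ${\rm Spin}(1,9)$ acts on all of $\mathcal{H}^2(\mathbb{O})$ preserving $\det$, covering the identity component $\mathrm{SO}_0(1,9)$, and fixing $I_2$ precisely along the subgroup ${\rm Spin}(9)$.

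For part (1), I would decompose $A = tI_2 + A_0$ with $A_0$ trace-free. Since the induced action on $\mathcal{H}^2_0(\mathbb{O})$ factors onto $\mathrm{SO}(9)$, which is transitive on spheres in $\mathbb{R}^9$, I can rotate $A_0$ onto the real diagonal trace-free matrix $\operatorname{diag}(r,-r)$ with $r=\sqrt{-\det A_0}$; adding back $tI_2$ produces a real diagonal matrix. For part (2), the hypothesis $A>0$ combined with Proposition \ref{prop:non-negative} places $A$ in the future time-like cone $\{\det>0,\ \operatorname{tr}>0\}$. Since $\mathrm{SO}_0(1,9)$ acts transitively on each hyperboloid $\{\det=c>0,\ \operatorname{tr}>0\}$ by a standard Lorentz-boost argument, I can move $A$ onto $\sqrt{\det A}\cdot I_2$. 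The stabilizer of $I_2$ in ${\rm Spin}(1,9)$ is exactly ${\rm Spin}(9)$, so applying part (1) to the transformed matrix $B'$ diagonalizes $B$ while keeping $A$ proportional to the identity.

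The main obstacle is making the action of ${\rm Spin}(9)$ (and ${\rm Spin}(1,9)$) on $\mathcal{H}^2(\mathbb{O})$ precise, since the non-associativity of $\mathbb{O}$ prevents the naive definition $A\mapsto UAU^*$ from being well-defined or from preserving $\det$. Instead one must construct the action through the spin representation of $\mathrm{Cliff}(\mathcal{H}^2_0(\mathbb{O}),-\det)$, and verify using the weak associativity of Lemma \ref{lem:associative}(i) that trace and determinant are indeed invariant. Once this representation-theoretic groundwork is in place, both diagonalizations become routine consequences of classical orbit-transitivity for $\mathrm{SO}(9)$ and $\mathrm{SO}_0(1,9)$, with no further octonionic subtlety required.
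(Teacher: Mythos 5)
The paper offers no proof of this proposition: it is cited verbatim from Alesker--Gordon \cite[Proposition 7.1]{alesker24}, so there is no in-paper argument to compare your sketch against.

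On its own terms, your reduction is structurally correct and is essentially the natural route. Writing $A=\bigl(\begin{smallmatrix}a&\mathbf{x}\\\overline{\mathbf{x}}&b\end{smallmatrix}\bigr)$, setting $t=(a+b)/2$, $s=(a-b)/2$ gives $\det A=t^2-s^2-|\mathbf{x}|^2$, a real quadratic form of signature $(1,9)$ on the $10$-dimensional space $\mathcal{H}^2(\mathbb{O})$, with $-\det$ positive definite on the trace-free hyperplane. Part (1) then follows from $\mathrm{SO}(9)$-transitivity on the sphere $\{-\det A_0=r^2\}$ in the trace-free part, since $\operatorname{diag}(r,-r)$ lies on that sphere and $I_2$ is fixed; part (2) from $\mathrm{SO}_0(1,9)$-transitivity on the forward sheet $\{\det=c>0,\ \operatorname{tr}>0\}$ (which contains $A$ by Proposition~\ref{prop:non-negative}), followed by applying (1) inside the stabilizer of $I_2$. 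All of this is standard once the group actions are in hand.

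The genuine issue, which you correctly flag but do not resolve, is precisely that ``once the group actions are in hand'': you need to construct actions of $\mathrm{Spin}(9)$ and $\mathrm{Spin}(1,9)$ on $\mathcal{H}^2(\mathbb{O})$ that preserve $\det$, fix (respectively stabilize the ray of) $I_2$, and surject onto $\mathrm{SO}(9)$ on the trace-free part, respectively onto $\mathrm{SO}_0(1,9)$ on all of $\mathcal{H}^2(\mathbb{O})$. The naive $A\mapsto UAU^*$ fails over $\mathbb{O}$, so this has to go through the $16$-dimensional spin representation / triality and the Jordan-algebra structure of $\mathcal{H}^2(\mathbb{O})$, and one must actually verify that the resulting map to $\mathrm{O}(1,9)$ lands in the identity component and is onto it. That verification, not the subsequent orbit-transitivity, is the real content of Alesker--Gordon's Proposition 7.1. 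As it stands your write-up is a correct reduction but not a proof: the one sentence ``one then invokes the exceptional construction'' is carrying all the weight, and appealing to Lemma~\ref{lem:associative}(i) alone does not obviously suffice to build or verify those representations. If you want a self-contained argument you would need to either spell out that construction or give an elementary, hands-on diagonalization (e.g.\ using that any two octonions generate an associative subalgebra to reduce the off-diagonal entry to a real or complex one, then finish with a classical rotation), which sidesteps $\mathrm{Spin}(9)$ entirely at the cost of not recovering the precise group-theoretic statement.
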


\begin{lem} \cite[Lemma 6.7]{alesker24}
The linear map $\mathcal{T}: Sym^2_{\mathbb{R}}(\mathbb{O}^2)
\rightarrow \mathcal{ H}^2(\mathbb{O})$ given by
\begin{equation}\label{eq:T-def}
   \mathcal{T} (\xi \otimes \eta ) := \frac  1
2
(\xi\otimes \eta^*    + \eta\otimes \xi^*)
\end{equation}
is ${\rm G L}_2(\mathbb{O})$-equivariant, where the action of ${\rm G L}_2(\mathbb{O})$ on the source space is the second symmetric power of the defining
representation, and
its action on the target is the adjoint
representation.
\end{lem}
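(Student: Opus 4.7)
My plan is to reduce the equivariance to a rank-one identity and then verify it using the alternative property of $\mathbb{O}$ from Lemma \ref{lem:associative}.

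First, by the polarization identity
\begin{equation*}
2\mathcal{T}(\xi\otimes\eta) = \mathcal{T}((\xi+\eta)\otimes(\xi+\eta)) - \mathcal{T}(\xi\otimes\xi) - \mathcal{T}(\eta\otimes\eta),
\end{equation*}
and the matching identity obtained by replacing $(\xi,\eta)$ with $(g\xi, g\eta)$, together with $\mathbb{R}$-linearity of $\mathcal{T}$ and of both representations, the equivariance claim $\mathcal{T}(g\xi\otimes g\eta) = g\,\mathcal{T}(\xi\otimes\eta)\,g^*$ reduces to the diagonal rank-one case
\begin{equation*}
(g\xi)\otimes(g\xi)^{*} = g\,(\xi\otimes\xi^{*})\,g^{*}, \qquad \forall\ \xi\in\mathbb{O}^{2},\ g\in GL_{2}(\mathbb{O}).
\end{equation*}

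Next, I would expand both sides entrywise. Writing $\xi = (\xi_{1},\xi_{2})^{T}$ and $g = (g_{jk})$, the $(j,k)$-entry of the left side is $\sum_{p,q}(g_{jp}\xi_{p})(\overline{\xi_{q}}\,\overline{g_{kq}})$, while that of the right side is $\sum_{p,q} g_{jp}(\xi_{p}\overline{\xi_{q}})\overline{g_{kq}}$. In the diagonal $p=q$ terms, $\xi_{p}\overline{\xi_{p}}=|\xi_{p}|^{2}$ is a real scalar, so alternativity in the subalgebra generated by $\{g_{jp},g_{kp}\}$ collapses both sides to $|\xi_{p}|^{2}\,g_{jp}\overline{g_{kp}}$. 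For the off-diagonal terms I would pair $(p,q)$ with $(q,p)$, decompose $\xi_{q}$ into real and purely imaginary parts, and invoke the Moufang middle identity $(xy)(zx) = x(yz)x$ along with the cyclic real-part identity $\operatorname{Re}((xy)z) = \operatorname{Re}(x(yz))$ from Lemma \ref{lem:associative}(i) to cancel the associator contributions after symmetrization.

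The main obstacle is that each product $(g_{jp}\xi_{p})(\overline{\xi_{q}}\,\overline{g_{kq}})$ nominally involves four independent octonions, so Artin's theorem (every two-generated octonionic subalgebra is associative) does not apply termwise. If the direct symmetrization proves too delicate, my backup strategy is to verify the identity on a generating set of $GL_{2}(\mathbb{O})$---real scalar matrices, octonionic diagonal matrices, the coordinate swap, and upper/lower unitriangular transvections $(I + cE_{12})$ and $(I + cE_{21})$---for each of which every product that arises lives in a subalgebra generated by at most two octonions $\{c, \xi_{p}\}$. Alternativity then closes the computation directly, reducing it to classical (at worst quaternionic) matrix manipulations, and the full result follows since these generate $GL_{2}(\mathbb{O})$ in the relevant sense.
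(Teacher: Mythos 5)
There is a genuine gap, and it sits at the heart of your primary route. You treat $GL_2(\mathbb{O})$ as the set of invertible $2\times 2$ matrices with arbitrary octonionic entries, acting on $\mathbb{O}^2$ by naive matrix multiplication and on $\mathcal{H}^2(\mathbb{O})$ by $A\mapsto gAg^*$, and you propose to prove the rank-one identity $(g\xi)\otimes(g\xi)^*=g(\xi\otimes\xi^*)g^*$ entrywise, hoping the associator terms cancel after symmetrization. They do not: the identity is false in that generality. Already for $g=\operatorname{diag}(q,1)$ the $(1,2)$-entries of the two sides are $(q\xi_1)\overline{\xi}_2$ and $q(\xi_1\overline{\xi}_2)$ (with either bracketing of $gAg^*$), which differ by the associator $[q,\xi_1,\overline{\xi}_2]$; choosing a non-associating triple of imaginary units makes the discrepancy nonzero, and no real-part identity or Moufang manipulation can remove it because the off-diagonal entry is not a real-valued expression. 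This also undercuts the backup as stated: your claim that for octonionic diagonal matrices ``every product that arises lives in a subalgebra generated by at most two octonions'' is false (three independent octonions $q,\xi_1,\xi_2$ occur), and in fact equivariance with the naive actions fails for exactly those elements. More fundamentally, since octonionic matrix multiplication is not associative, the invertible octonionic matrices do not form a group and $gAg^*$ is not even well defined without a bracketing convention; so the lemma has content only relative to how Alesker--Gordon define $GL_2(\mathbb{O})$ and its two representations, and a proof must verify equivariance for the specific generators (or one-parameter subgroups) through which that group and the adjoint action are defined. That is the step your proposal defers with ``these generate $GL_2(\mathbb{O})$ in the relevant sense,'' but it is the entire content; note also that the present paper gives no proof to compare against --- it quotes the statement from \cite[Lemma 6.7]{alesker24}.

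What is salvageable: the polarization reduction to the diagonal rank-one case is fine, and for some elements the generator computation genuinely closes. For the swap and for transvections $I+cE_{12}$ the verification works, not by Artin's theorem termwise (three octonions do appear), but by the linearized alternativity identity $(cx)\overline{y}+(cy)\overline{x}=2\operatorname{Re}(x\overline{y})\,c$ together with $\operatorname{Re}((ab)c)=\operatorname{Re}(a(bc))$ for the real diagonal entries --- this is the kind of ``weak associativity'' argument the paper uses elsewhere (e.g.\ in the proof of Lemma \ref{lem:parts}). A correct write-up along your backup lines would therefore have to (i) take the generating set and the definition of both actions from Alesker--Gordon rather than positing generic octonionic diagonal matrices, and (ii) carry out the two-or-three-octonion computations for each such generator with the symmetrization of $\mathcal{T}$ doing the cancellation, rather than appealing to Moufang identities for a four-entry matrix, where the statement being targeted is simply untrue.
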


\begin{prop} \label{prop:det-positive}  \cite[Lemma  5.5]{alesker24}  (1)
If  $A, B \in \mathcal{ H}^2(\mathbb{O})$ are positive (resp. non-negative) definite then
      $
       \det(A, B) > 0 $ (resp. $\det(A, B)\geq 0$).

   (2)
     if $A > 0$ then for any $B \in \mathcal{ H}^2(\mathbb{O})$ one has
$\det(A, B)^
2 \geq \det A \cdot \det B$
and the equality holds if and only if $A$ is proportional to $B$ with a real coefficient.
\end{prop}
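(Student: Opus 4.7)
My plan is to handle the three statements in the order (1) for the non-negative case, then part (2), then (1) for the strictly positive case; part (2) supplies the piece that upgrades $\ge 0$ to $>0$. For the non-negative case of (1), I would first compute directly from \eqref{eq:mixed-determinant} that for any $\xi,\zeta\in\mathbb{O}^2$
\[
  2\det(\xi\otimes\xi^*,\zeta\otimes\zeta^*)
  = |\xi_1|^2|\zeta_2|^2 + |\xi_2|^2|\zeta_1|^2
  - 2\operatorname{Re}\bigl((\bar\xi_1\xi_2)(\bar\zeta_2\zeta_1)\bigr).
\]
Because $|ab|=|a||b|$ in $\mathbb{O}$ and the real part of a four-fold octonionic product is unambiguous by Lemma~\ref{lem:associative}, the mixed term is bounded in absolute value by $|\xi_1||\xi_2||\zeta_1||\zeta_2|$, and AM--GM closes the estimate. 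For general $A,B\ge 0$ I would regularise by $A_\epsilon:=A+\epsilon I_2$, $B_\epsilon:=B+\epsilon I_2$, decompose each via Proposition~\ref{prop:positive}(2) as $\sum_i\alpha_i\xi_i\otimes\xi_i^*$ and $\sum_j\beta_j\zeta_j\otimes\zeta_j^*$ with positive real coefficients, expand $\det(A_\epsilon,B_\epsilon)$ by the $\mathbb{R}$-bilinearity of the mixed determinant, and let $\epsilon\to 0^+$.

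For (2), I would invoke \cite[Proposition~7.1]{alesker24} to pick $g\in\mathrm{Spin}(1,9)$ simultaneously sending $A$ to $cI_2$ and $B$ to $\mathrm{diag}(\mu_1,\mu_2)$ with $c,\mu_1,\mu_2\in\mathbb{R}$. Since $\det$ is a $\mathrm{Spin}(1,9)$-invariant quadratic form on $\mathcal{H}^2(\mathbb{O})$, its polarisation $\det(\cdot,\cdot)$ is invariant as well, and one reads off $\det A=c^2$, $\det B=\mu_1\mu_2$, $\det(A,B)=\tfrac{c}{2}(\mu_1+\mu_2)$, yielding
\[
  \det(A,B)^2-\det A\cdot\det B=\tfrac{c^2}{4}(\mu_1-\mu_2)^2\ge 0,
\]
with equality iff $\mu_1=\mu_2$; in that case $gB$ is a real scalar multiple of $gA$, and applying $g^{-1}$ (which is linear on $\mathcal{H}^2(\mathbb{O})$) transfers the proportionality to $B$ and $A$. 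Finally, for the strictly positive case of (1), if $A,B>0$ then part (2) already gives $\det(A,B)^2\ge\det A\cdot\det B>0$, hence $\det(A,B)\ne 0$; combined with $\det(A,B)\ge 0$ from the first step this forces $\det(A,B)>0$.

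The main obstacle is negotiating the non-associativity of $\mathbb{O}$: the four-factor expression $\operatorname{Re}\bigl((\bar\xi_1\xi_2)(\bar\zeta_2\zeta_1)\bigr)$ must be pinned down via Lemma~\ref{lem:associative} before the multiplicativity of the norm can be applied, and one must check that the decomposition of Proposition~\ref{prop:positive}(2) uses only real coefficients so that the mixed determinant is genuinely bilinear over the expansion. A minor point is to control the sign of $c$ in Step~2, but only $c^2$ enters the final Cauchy--Schwarz identity, so this plays no role.
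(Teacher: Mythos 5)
The paper does not prove this proposition: it is stated with the citation \cite[Lemma 5.5]{alesker24} and no argument is given, so there is no in-paper proof to compare against. Evaluated on its own terms, your proposal is essentially correct. The elementary-case computation of $2\det(\xi\otimes\xi^*,\zeta\otimes\zeta^*)$ matches the convention $a_{\overline{\alpha}\beta}=\overline{\xi}_\alpha\xi_\beta$ coming from Proposition~\ref{prop:positive}(1), and the bound on the mixed term follows from $|\operatorname{Re}(pq)|\le |pq|=|p||q|$ together with AM--GM; note that the ``four-fold product'' remark is not actually needed, since $(\overline{\xi}_1\xi_2)(\overline{\zeta}_2\zeta_1)$ is already a product of the two specified octonions $a_{\overline12}$ and $b_{\overline21}$, so there is no parenthesization ambiguity to resolve. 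For part (2), the simultaneous diagonalization via \cite[Proposition 7.1]{alesker24} and the observation that $\det(\cdot,\cdot)$ is the polarization of the ${\rm Spin}(1,9)$-invariant form $\det$ give exactly $\det(A,B)^2-\det A\cdot\det B=\tfrac{c^2}{4}(\mu_1-\mu_2)^2$, and the equality analysis transfers back through the linear map $g^{-1}$; since $A>0$ forces $c\ne 0$ (e.g.\ test $\xi=(x,-a)^T$ to get $\operatorname{Re}(\xi^*A\xi)=a\det A>0$), the equality case is clean. The passage from $\ge 0$ to $>0$ via part~(2) is likewise correct.

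One remark on economy: your route to the non-negative case of (1) through the decomposition of Proposition~\ref{prop:positive}(2), plus a regularization $A_\epsilon=A+\epsilon I_2$ and a limit, is heavier than necessary. The Sylvester criterion (Proposition~\ref{prop:non-negative}) gives, for $A\ge 0$, that $a_{\overline11},a_{\overline22}\ge 0$ and $|a_{\overline12}|\le\sqrt{a_{\overline11}a_{\overline22}}$, and similarly for $B$, from which
\[
2|\operatorname{Re}(a_{\overline12}b_{\overline21})|\le 2|a_{\overline12}||b_{\overline21}|\le 2\sqrt{a_{\overline11}a_{\overline22}b_{\overline11}b_{\overline22}}\le a_{\overline11}b_{\overline22}+a_{\overline22}b_{\overline11},
\]
so $\det(A,B)\ge 0$ directly, without invoking the uniform decomposition lemma or taking limits. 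Both arguments are valid; this one stays closer to the two-by-two determinant structure.
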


  An element $A \in \mathcal{ H}^2(\mathbb{O})$ is said to be {\it elementary strongly positive} if  $A=\xi \otimes \xi ^*$ for some  $\xi \in \mathbb{O}^2\setminus\{0\}$.  An
  element $A \in
  \mathcal{ H}^2(\mathbb{O})$ is called {\it strongly positive} if it belongs to the convex cone $  \mathcal{ H}_+^2(\mathbb{O})$ generated by elementary strongly
  positive elements.
  An $A \in \mathcal{ H}^2(\mathbb{O})$ is said to be {\it positive} if for any   strongly positive element  $B \in \mathcal{ H}^2(\mathbb{O})$, $ \det(A, B)\geq 0$. Then,
  span\,$  \mathcal{ H}_+^2(\mathbb{O})$= $  \mathcal{ H} ^2(\mathbb{O})$.

\subsection{$OPSH$ functions  and octonionic  closed positive currents} The   standard approximation of an  $OPSH$ function $u$  is given by $u_\epsilon:= u* \chi_\epsilon$, where $ \chi$ is smooth  with $\operatorname{supp} \chi\subset B^2, \chi\geq 0$ and $\int \chi=1$, and $ \chi_\epsilon(\cdot)=
 \frac 1{\epsilon^{16}} \chi(\frac 1\epsilon)$.

\begin{prop} \label{prop:QSH-m}
Let $\Omega$ be a domain in $\mathbb{O}^2$.  Then,
\\
(1)  $OPSH(\Omega) \subset SH(\Omega)$.\\
(2)The   standard approximation   $u_\epsilon=u\ast\chi_\epsilon$ of an  $OPSH$ function $u$ is also  $OPSH$, and satisfies $u_\epsilon\downarrow u$ as  $\epsilon\downarrow 0$.\\
(3) $au+bv \in OPSH(\Omega)$  for any $a,b\geq  0$ and $  u,v \in OPSH(\Omega)$.\\
(4) If $\gamma $ is a convex increasing function   on $\mathbb{R}$ and $u\in OPSH(\Omega)$, then $\gamma\circ u \in OPSH(\Omega)$. \\
(5) The limit of a uniformly converging or decreasing sequence of $OPSH$ functions is a  $OPSH$ function.\\
(6) The maximum of a finite number of $OPSH$ functions is an  $OPSH$ function; for a  locally uniformly bounded family $\{u_{\alpha}\}\subset OPSH$, the
regularization $u^* $ of the supremum $u  = \sup_{\alpha}u_{\alpha} $ is also an  $OPSH$ function.
\\(7)
If $D$ is an open subset  of $\Omega$, $ u \in  OPSH(\Omega)$, $ v \in  OPSH  (D)$  and $\limsup_{\mathbf{x}\rightarrow\mathbf{ x}_0} v(\mathbf{ x} ) \leq
u(\mathbf{ x}_0)$ for all $\mathbf{
x}_0\in \partial D\cap \Omega$,
then the function defined by
 \begin{equation}\label{eq:paste}
   \phi=\left\{
   \begin{array}{ll}
u, \qquad &{\rm on}\quad \Omega \setminus D,\\ \max\{u, v\}, \quad &{\rm on}\quad  D,
\end{array}
\right.
 \end{equation}
belongs to $OPSH(\Omega)$.
\end{prop}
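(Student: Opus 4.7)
The plan is to prove all seven items by reducing each to the corresponding classical fact for subharmonic (SH) functions on $\mathbb{R}^8$ (for restrictions to octonionic lines) or on $\mathbb{R}^{16}$ (for global SH on $\Omega$), using the defining property that $u\in OPSH(\Omega)$ means $u$ is USC and its restriction to every affine octonionic line is subharmonic.

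First I would dispatch (1) by averaging: the space of octonionic right lines through a fixed point is parameterised by $\mathbb{O}\cup\{\infty\}\cong S^8$, which carries a ${\rm Spin}(9)$-invariant probability measure $d\sigma$. A direct computation (or the argument used in the complex and quaternionic cases) writes the $\mathbb{R}^{16}$-Laplacian on a smooth function $f$ as a constant multiple of $\int_{S^8}\Delta_L f\, d\sigma(L)$, where $\Delta_L$ is the Laplacian along the affine line $L$ through the base point. Upgrading from the smooth case to USC $u$ is done by applying the identity to the mollification $u_\epsilon$ (which is smooth and OPSH by item (2) below) and passing to the limit. For (2) I would apply Fubini: since $u_\epsilon(\mathbf{x})=\int u(\mathbf{x}-\mathbf{y})\chi_\epsilon(\mathbf{y})dV(\mathbf{y})$, the restriction of $u_\epsilon$ to an affine octonionic line $\mathbf{x}_0+L$ is the integral, against the nonnegative density $\chi_\epsilon$, of the restrictions of $u$ to the parallel lines $\mathbf{x}_0-\mathbf{y}+L$; each of these restrictions is subharmonic, and a nonnegatively weighted integral of SH functions is SH. The monotone decrease $u_\epsilon\downarrow u$ is the usual USC fact that $u_\epsilon$ is decreasing in $\epsilon$ (because $u$ is SH on each line) and converges pointwise to $u$ (because $u$ is USC and equal to its upper semicontinuous envelope).

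Items (3)--(5) then follow directly line by line: on any affine octonionic line the restriction is SH, and nonnegative linear combinations, compositions with convex increasing $\gamma$, uniform limits, and decreasing limits of SH functions on $\mathbb{R}^8$ are again SH, while USC is preserved in each case (for decreasing limits of USC functions, USC is automatic; for uniform limits it is standard). For (6), the maximum of finitely many SH functions on a line is SH, and USC is preserved under finite max; for the regularised supremum $u^*$ over a locally uniformly bounded family, I would invoke Choquet's lemma to extract a countable subfamily with the same $u^*$, then combine standard facts about the upper semicontinuous regularisation of a supremum of SH functions together with Fubini along each line to conclude that $u^*$ is SH on every affine octonionic line and USC on $\Omega$.

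For (7), $\phi$ is USC on $\Omega$ because $u$ is USC and the hypothesis $\limsup_{\mathbf{x}\to\mathbf{x}_0}v(\mathbf{x})\le u(\mathbf{x}_0)$ for $\mathbf{x}_0\in\partial D\cap\Omega$ ensures that $\phi$ remains USC across $\partial D\cap\Omega$. Fix an affine octonionic line $L$ meeting $\Omega$; I want $\phi|_L$ to be SH on $L\cap\Omega$. On $L\cap(\Omega\setminus\overline D)$ it equals $u|_L$, on $L\cap D$ it equals $\max\{u|_L,v|_L\}$, and both are SH there, so the question reduces to the classical gluing lemma for SH functions on the open subset $L\cap\Omega$ of $\mathbb{R}^8$, applied to the boundary relation inherited from the hypothesis on $\partial D\cap\Omega$.

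The main obstacle is (1): one needs the explicit averaging identity expressing the $\mathbb{R}^{16}$-Laplacian as a ${\rm Spin}(9)$-average of line Laplacians over $\mathbb{OP}^1$. Once that identity is in place, the remaining items are routine transcriptions of the classical subharmonic theory through the octonionic line restriction, with only the USC bookkeeping requiring care in (6) and (7).
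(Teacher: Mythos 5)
Your approach is essentially the same as the paper's: reduce each item to the classical theory of subharmonic functions by restricting to affine octonionic lines (and, for item (1), by averaging the line Laplacians over $\mathbb{OP}^1\cong S^8$ with the ${\rm Spin}(9)$-invariant measure). The one place where you deviate slightly is the monotone decrease $u_\epsilon\downarrow u$ in (2): you derive it from subharmonicity along each line, which does work but requires disintegrating the spherical average over $S^{15}$ along the $S^7$-fibers of $S^{15}\to\mathbb{OP}^1$; the paper instead invokes the already-established fact (1), that $u$ is subharmonic on $\mathbb{R}^{16}$, and then quotes the standard monotonicity of mollified subharmonic functions. Both routes are valid, and the paper's version is a touch cleaner since it avoids the disintegration; your version makes the logic of (1) and (2) visibly non-circular at the cost of an extra step you should spell out rather than label ``the usual USC fact.''
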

 \begin{proof} See \cite{alesker08} for (1)-(6). (3)-(7) follows from properties of subharmonic functions on $\mathbb{R}^8$, by restricting to each octonionic
 line in $\mathbb{O}^2$.
 Decreasing  $u_\epsilon\downarrow u$ as  $\epsilon\downarrow 0$ comes from the subharmonicity of $u$ on $\mathbb{R}^{16}$.
\end{proof}

  We need both the left and right versions of   octonionic Dirac  operator acting  on $\mathbb{O}$-valued function $F$:
\begin{equation*}\begin{split}\overrightarrow{ {\partial}_{ \overline{\alpha} }}F&=\sum_{p=0}^7 e_p\frac {\partial F}{\partial x_{\alpha p}},\qquad
  F\overleftarrow{ {\partial}_{ \alpha}} =\sum_{p=0}^7\frac {\partial F}{\partial x_{\alpha p}}e_p,
\end{split}\end{equation*}where $\alpha=1,2$. Here $ \overrightarrow{ \partial _{ \overline{ \alpha }}}= \overline{\partial}_\alpha$ in \eqref{eq:Dirac-l}.
A smooth function $T : \Omega\rightarrow  \mathcal{ H}^2(\mathbb{O} ) $ is called  {\it closed
} if it satisfies
\begin{equation}\label{eq:closed}
 T_{\overline{\alpha } \beta}\overleftarrow{ {\partial}_{ \overline{\beta} }} =    \overrightarrow{ \partial _{ \overline{ \alpha }}} T_{\overline{\beta } \beta},
\end{equation} for $\alpha\neq\beta$, i.e.
$
T_{\overline{1 } 2}\overleftarrow{ {\partial}_{ \overline{2} }} =     \partial _{ \overline{ 1 }}  T_{\overline{2 } 2}$  and $
 T_{\overline{ 2 } 1}\overleftarrow{ {\partial}_{ \overline{1} }} =     \partial _{ \overline{ 2}}  T_{\overline{1 }1} .
$
For $u\in C_c^3(\Omega)$, $Hess_{\mathbb{O}}(u) $ is closed, because
\begin{equation}\label{eq:Hess-closed}
   (\partial_{\overline{\alpha}}\partial_\beta u) \overleftarrow{ {\partial}_{ \overline{\beta} }} =\partial_{\overline{\alpha}}\left(\partial_\beta
   u\overleftarrow{ {\partial}_{
   \overline{\beta} }}\right) =\partial_{\overline{\alpha}}(\partial_{\overline{\beta}}\partial_\beta u),
\end{equation}which, by Fourier transformations, comes from
\begin{equation*}
    (\xi_{\overline{\alpha}}\xi_\beta \widehat{u})  { {\xi}_{ \overline{\beta} }} =\xi_{\overline{\alpha}}\left(\xi_\beta \widehat{u }{ {\xi}_{
   \overline{\beta} }}\right) =\xi_{\overline{\alpha}}(\xi_{\overline{\beta}}\xi_\beta \widehat{u})
\end{equation*}
by the alternativity in Lemma \ref{lem:associative} (ii).

Let $  \mathcal{D}  ({\Omega},\mathcal{ H}^2(\mathbb{O} ) )$
be the space of all smooth $\mathcal{ H}^2(\mathbb{O} ) $-valued functions with compact  support.
   An element of the dual space $  \mathcal{D} ' ({\Omega},\mathcal{ H}^2(\mathbb{O} ) )$ is
called a {\it  current}.   A  current $T$ is said to be {\it positive} if we have $T(\eta)\geq 0$ for any strongly
positive  $\eta\in \mathcal{D}  ({\Omega},\mathcal{ H}^2(\mathbb{O} ) )$.   
It is called \emph{closed} if \eqref{eq:closed} holds in the sense of distributions. Let $\psi$ be a $\mathcal{ H}^2(\mathbb{O} ) $-valued  
Radon measures on $\Omega$. One can associate with $\psi$ the  current $T_\psi$ defined by
      \begin{equation}\label{eq:T-psi}
      T_\psi(\varphi)=\int_\Omega  \det(\psi, \varphi)
   \end{equation}
   for   $\varphi \in \mathcal{D}  ({\Omega},\mathcal{ H}^2(\mathbb{O} ) )$.
If a  current $T$ has a continuous extension to $ C  (\Omega, \mathcal{ H}^2(\mathbb{O} ))$,   it is called a  current of {\it order zero} or
{\it of measure type}. Namely,
for any  neighborhood $G\Subset \Omega$, there
exists a constant $K_G $ such that
$
   |T(\varphi)|\leq K_G \|\varphi\|_G
$
  for any   $\varphi\in C ({\Omega},\mathcal{ H}^2(\mathbb{O} ) ) $ with $\text{ supp}\,\varphi\subset G$, where $\|\varphi\|_G=\sum_{\alpha,\beta=1,2   }  \max_{\mathbf{ x}\in
G}|\varphi_{\overline{\alpha}\beta}(\mathbf{
x} )|$.

\begin{cor} \label{cor:measure}
   If $T \in \mathcal{D} ' ({\Omega},\mathcal{ H}^2(\mathbb{O} )) $
is a   positive current, then $T$ is a current defined by $\mathcal{ H}^2(\mathbb{O} )  $-valued measure
$
     \left( T _{\overline{\alpha}\beta}  \right)
$ via \eqref{eq:T-psi}.
\end{cor}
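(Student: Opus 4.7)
The strategy is to show that $T$ has order zero (is ``of measure type''), after which the Riesz representation theorem applied componentwise in the finite-dimensional space $\mathcal{H}^2(\mathbb{O})$ yields the required Hermitian matrix-valued Radon measure. Concretely, for every relatively compact open $G\Subset\Omega$ I need to produce a constant $K_G$ such that
\[
 |T(\varphi)|\leq K_G\,\|\varphi\|_G
\]
for all $\varphi\in\mathcal{D}(\Omega,\mathcal{H}^2(\mathbb{O}))$ with $\operatorname{supp}\varphi\subset G$.

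The heart of the matter is a domination lemma: an arbitrary test function can be squeezed between two scalar multiples of a fixed strongly positive test function. I would choose $\chi\in C_c^\infty(\Omega)$ with $0\le\chi\le 1$ and $\chi\equiv 1$ on a neighborhood of $G$, and set $\Phi:=\chi\,I_2$; since $I_2=(1,0)\otimes(1,0)^*+(0,1)\otimes(0,1)^*$, the test function $\Phi$ is strongly positive at every point of $\Omega$. The claim is that there is a universal constant $K>0$ such that $K\|\varphi\|_G\,\Phi(\mathbf{x})\pm\varphi(\mathbf{x})\in\mathcal{H}_+^2(\mathbb{O})$ for every $\mathbf{x}\in\Omega$. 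Outside $G$ the matrix reduces to a nonnegative multiple of $I_2$ and is automatic; on $G$ a direct computation of trace and determinant, together with the two-by-two quadratic formula for the eigenvalues of a Hermitian matrix, shows that for $K$ large enough the two real eigenvalues of $K\|\varphi\|_G I_2\pm\varphi(\mathbf{x})$ lie in a positive interval of the form $[c_1\|\varphi\|_G,c_2\|\varphi\|_G]$ with universal $c_1,c_2>0$. Applying Proposition \ref{prop:positive}(2) to the matrix normalized by $\|\varphi\|_G$ then gives an explicit strongly positive decomposition, and by positivity of $T$ we obtain $T(K\|\varphi\|_G\Phi\pm\varphi)\geq 0$. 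This yields $|T(\varphi)|\leq K\,T(\Phi)\,\|\varphi\|_G$, so $K_G:=K\,T(\Phi)$ does the job.

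Once this bound is in hand, $T$ extends uniquely to a bounded real-linear functional on $C_c(G,\mathcal{H}^2(\mathbb{O}))$ in the sup-norm. Specializing $\varphi$ in turn to matrices whose only nonvanishing entry is $\varphi_{\overline{2}2}$ (scalar), $\varphi_{\overline{1}1}$ (scalar), and $\varphi_{\overline{1}2}\in\mathbb{O}$ (with the conjugate entry in position $(\overline{2},1)$ prescribed by Hermitian symmetry), and unwinding the mixed-determinant definition \eqref{eq:mixed-determinant}, the classical Riesz representation theorem (in its scalar version, and in its $\mathbb{O}$-valued version obtained by decomposing in the basis $e_0,\dots,e_7$) produces real Radon measures $T_{\overline{1}1},T_{\overline{2}2}$ and an $\mathbb{O}$-valued Radon measure $T_{\overline{1}2}$ on $G$; the first two are automatically nonnegative because $f\cdot(1,0)\otimes(1,0)^*$ and $f\cdot(0,1)\otimes(0,1)^*$ are strongly positive whenever $f\geq 0$. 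Setting $T_{\overline{2}1}:=\overline{T_{\overline{1}2}}$ and gluing via an exhaustion of $\Omega$ by relatively compact sets yields the sought $\mathcal{H}^2(\mathbb{O})$-valued Radon measure $(T_{\overline{\alpha}\beta})$ representing $T$ via \eqref{eq:T-psi}. The main obstacle is the spectral-to-structural conversion in the domination step, which relies essentially on Proposition \ref{prop:positive}(2); everything afterwards is a routine Riesz representation argument.
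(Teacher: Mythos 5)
Your proposal is correct, but it takes a genuinely different route from the paper's own proof. You establish that $T$ has order zero via a domination argument (squeezing $\pm\varphi$ between multiples of the strongly positive test function $\chi I_2$, with the spectral-to-structural conversion supplied by Proposition \ref{prop:positive}(2)), and then invoke the Riesz representation theorem componentwise. The paper instead observes that $\mathcal{H}^2(\mathbb{O})$ admits a basis $\{h_1,\dots,h_{10}\}$ of elementary strongly positive elements (since $\operatorname{span}\mathcal{H}_+^2(\mathbb{O})=\mathcal{H}^2(\mathbb{O})$), notes that for each $j$ the scalar distribution $m_j(\phi):=T(\phi h_j)$ is nonnegative on nonnegative $\phi$ — because $\phi h_j$ is then pointwise strongly positive — and is therefore a Radon measure by the standard fact that nonnegative distributions are measures, and finally reconstructs $T=\sum_j T_{m_j h_j'}$ using the dual basis $\{h_j'\}$ with respect to the nondegenerate bilinear form \eqref{eq:mixed-determinant}. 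The paper's argument side-steps the order-zero estimate entirely and is considerably shorter; your argument is longer but yields the explicit quantitative bound $|T(\varphi)|\leq K_G\|\varphi\|_G$ as a by-product, which is of independent interest (it is essentially the statement that $T$ is of measure type). Both are sound.

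One point worth tightening in your write-up: on the transition region where $0<\chi<1$ and $\varphi=0$, the matrix $K\|\varphi\|_G\chi I_2$ has small eigenvalues and lies outside the range where Proposition \ref{prop:positive}(2) applies; there you should appeal directly to $I_2=(1,0)\otimes(1,0)^*+(0,1)\otimes(0,1)^*$ (as you do when defining $\Phi$) to see membership in $\mathcal{H}_+^2(\mathbb{O})$, rather than to the spectral proposition. You flag this informally (``automatic''), but it is worth stating explicitly since the spectral proposition is not uniform down to eigenvalue zero.
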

\begin{proof}
Since $\dim \mathcal{ H}^2(\mathbb{O} ) =10$, we can choose a basis of $  \mathcal{ H} ^2(\mathbb{O})$, say $\{h_1,\dots,h_{10}\}$, consisting of elementary strongly positive elements. Let $\{h_1',\dots,h_{10}'\}$
be the dual basis with respect to the nondegenerate bilinear form \eqref{eq:mixed-determinant}. Then $m_j(\phi)=T(\phi h_j)$ for any
$\phi \in \mathcal{D}  ({\Omega})$ defines a nonnegative distribution, and hence a Radon measure. Consequently, 
$
   T=\sum_{j=1}^{10} T_{m_j h_j'},
$ where $T_{m_j h_j'} $ is current defined by  \eqref{eq:T-psi}, since
\begin{equation*}
   \sum_{j }  T_{m_j h_j'}\left(\sum_k\phi_k h_k\right)=\sum_km_k(\phi_k)= T\left(\sum_k\phi_k h_k\right),
\end{equation*}
  for any
$\phi_k\in \mathcal{D}  ({\Omega})$.
\end{proof}

\begin{prop} \cite[Proposition 4.1.16]{alesker08} \label{prop:PSH-current}  Let $u: \Omega \rightarrow [-\infty,+\infty)$ be a function such that $u$ is not $
-\infty $. Then
$u$ is  $OPSH$ if and only if it satisfies the following conditions:
\\
(i) $u$ is upper semi-continuous;
\\
(ii) $u$ is locally integrable;
\\
(iii) $Hess_{\mathbb{O}}(u) $  
   is a positive current.
\end{prop}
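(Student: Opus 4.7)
The plan is to prove both directions of the equivalence by reducing everything to the smooth case via the standard mollification $u_\epsilon = u * \chi_\epsilon$. The crux is the smooth version of the statement: if $u \in C^2(\Omega)$, then $u$ is $OPSH$ iff $Hess_{\mathbb{O}}(u)(\mathbf{x})$ is a positive element of $\mathcal{H}^2(\mathbb{O})$ at every $\mathbf{x} \in \Omega$. To see this, I restrict $u$ to an octonionic line $L=\{(\mathbf{x},\mathbf{a}\mathbf{x})\}$ (or $L=\{(0,\mathbf{x})\}$) through a fixed point, compute the Euclidean Laplacian of $u|_L$ on $L\cong\mathbb{R}^8$ by the chain rule, and match the resulting expression with $\det(Hess_{\mathbb{O}}(u),\zeta\otimes\zeta^*)$ for $\zeta=(1,\mathbf{a})$ (respectively $\zeta=(0,1)$). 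The real identity $\sum_p\langle\mathbf{a}e_p,e_q\rangle\langle\mathbf{a}e_p,e_r\rangle=|\mathbf{a}|^2\delta_{qr}$, which rests on the fact that $\{\mathbf{a}e_p/|\mathbf{a}|\}$ is an orthonormal basis of $\mathbb{O}$, together with the alternativity of Lemma \ref{lem:associative}(ii), allows one to verify that the two quantities agree up to a positive real scalar. Since the elementary strongly positive elements $\xi\otimes\xi^*$ generate $\mathcal{H}^2_+(\mathbb{O})$, pointwise positivity of the matrix $Hess_{\mathbb{O}}(u)$ is then equivalent to subharmonicity of the restriction of $u$ to every octonionic line.

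For the forward direction, (i) is the definition of $OPSH$; (ii) follows from $OPSH(\Omega)\subset SH(\Omega)$ in Proposition \ref{prop:QSH-m}(1) combined with the classical fact that a subharmonic function on a domain in $\mathbb{R}^{16}$ which is not identically $-\infty$ is locally integrable; and (iii) is obtained by approximation. By Proposition \ref{prop:QSH-m}(2), $u_\epsilon$ is smooth $OPSH$ and decreases to $u$; the smooth case gives $Hess_{\mathbb{O}}(u_\epsilon)\geq 0$ pointwise, which clearly defines a positive current. Since $u_\epsilon\to u$ in $L^1_{loc}$, the distributions $Hess_{\mathbb{O}}(u_\epsilon)$ converge to $Hess_{\mathbb{O}}(u)$, and nonnegativity of the pairing against every strongly positive test section passes to the limit.

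Conversely, assume (i)--(iii). Convolving against the nonnegative kernel $\chi_\epsilon$ preserves pointwise positivity of the matrix-valued measure, giving $Hess_{\mathbb{O}}(u_\epsilon)=Hess_{\mathbb{O}}(u)*\chi_\epsilon\geq 0$ pointwise, so by the smooth case each $u_\epsilon$ is $OPSH$. Taking the trace of the positive current in (iii) shows that the real Laplacian $\Delta u$ on $\mathbb{R}^{16}$ is a nonnegative distribution; combined with (i) and (ii), classical theory forces $u$ to be subharmonic on $\Omega$, hence $u_\epsilon\searrow u$. Proposition \ref{prop:QSH-m}(5) then concludes that $u$ is $OPSH$ as a decreasing limit of $OPSH$ functions. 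The principal obstacle is the smooth characterization above: the non-associativity of $\mathbb{O}$ prevents a direct matrix calculation, and one has to check carefully that the bilinear expressions arising from the chain rule along an octonionic line involve only the associative subalgebra generated by the direction coordinates (so that the needed identities hold), in order that the Laplacian of the line restriction can be identified with $\det(Hess_{\mathbb{O}}(u),\zeta\otimes\zeta^*)$.
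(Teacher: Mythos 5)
The paper itself offers no proof of this proposition; it simply cites Alesker's Proposition~4.1.16 in \cite{alesker08}, so there is nothing internal to compare against. Your strategy—reduce to the smooth case by mollification, then characterize smooth $OPSH$ functions by matching the Laplacian of the restriction to an octonionic line with a mixed determinant $\det(Hess_{\mathbb{O}}(u),\zeta\otimes\zeta^*)$—is the natural one and is almost certainly how Alesker argues. However, there is a concrete error in the key identification: with the paper's conventions $\xi\otimes\xi^*=(\overline{\xi}_j\xi_k)$ and $\det(A,B)=\tfrac{1}{2}(a_{\overline{1}1}b_{\overline{2}2}+a_{\overline{2}2}b_{\overline{1}1}-2\operatorname{Re}(a_{\overline{1}2}b_{\overline{2}1}))$, the mixed determinant pairs the $(1,1)$-entry of $A$ with the $(2,2)$-entry of $B$ and vice versa. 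A direct chain-rule computation gives, for the line $L=\{(\mathbf{t},\mathbf{a}\mathbf{t})\}$,
\begin{equation*}
\Delta_{\mathbf{t}}\bigl(u|_L\bigr)=\Delta_1u+2\operatorname{Re}\bigl((\overline{\partial}_1\partial_2u)\,\mathbf{a}\bigr)+|\mathbf{a}|^2\Delta_2u=2\det\bigl(Hess_{\mathbb{O}}(u),\,\zeta\otimes\zeta^*\bigr)\quad\text{with }\zeta=(-\mathbf{a},1),
\end{equation*}
not $\zeta=(1,\mathbf{a})$; similarly the special line $\{(\mathbf{0},\mathbf{x})\}$ corresponds to $\zeta=(1,0)$, not $(0,1)$. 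Your choice gives the wrong diagonal weights ($|\mathbf{a}|^2$ attached to $\Delta_1u$ rather than $\Delta_2u$) and a sign mismatch in the cross term.

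A second point is glossed over: subharmonicity along all octonionic lines only yields $\det(Hess_{\mathbb{O}}(u),\zeta\otimes\zeta^*)\geq0$ for $\zeta$ of the specific shapes $(-\mathbf{a},1)$ and $(1,0)$, while positivity of the current requires $\det(Hess_{\mathbb{O}}(u),B)\geq0$ for \emph{every} strongly positive $B$. You assert that the elementary strongly positive elements generate $\mathcal{H}^2_+(\mathbb{O})$, which is true, but that does not by itself close the gap—one must check that the particular $\zeta\otimes\zeta^*$ arising from octonionic lines already exhaust the elementary strongly positive elements up to positive real multiples. This is in fact the case: for $\xi=(\xi_1,\xi_2)$ with $\xi_2\neq0$ one has $\xi\otimes\xi^*=|\xi_2|^2\,(-\mathbf{a},1)\otimes(-\mathbf{a},1)^*$ with $\mathbf{a}=-\overline{\xi}_2\xi_1/|\xi_2|^2$, and the case $\xi_2=0$ is covered by $(1,0)$. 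Without this observation the backward implication of your smooth characterization is incomplete. Aside from these two issues—the index-flip in $\zeta$ and the missing cone-generation step—the structure of the argument (forward: mollify, use that $Hess_{\mathbb{O}}(u_\epsilon)\geq0$ pointwise and pass to the distributional limit; backward: use $\det(A,I)\geq0$ to get $\Delta u\geq0$, hence subharmonicity and $u_\epsilon\searrow u$, then invoke Proposition~\ref{prop:QSH-m}(5)) is sound.
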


 Note that  $  Hess_{\mathbb{O}}(u_\varepsilon) $ is   closed, i.e.
 $
    \left  (\partial_{\overline{\alpha}}\partial_\beta u_\varepsilon\right) \overleftarrow{ {\partial}_{ \overline{\beta} }} = \partial_{\overline{\alpha}}(
  {\partial}_{ \overline{\beta} }
    \partial_\beta u_\varepsilon)
$ by \eqref{eq:Hess-closed}. It follows that  $  Hess_{\mathbb{O}}(u) $ is   closed by the continuity of
  differentiation   on distributions as $\varepsilon\rightarrow 0$, and
$
    Hess_{\mathbb{O}}(u_\varepsilon)= Hess_{\mathbb{O}}(u)_\varepsilon
$.

\section{ Integration by parts }

Denote $\partial_{ \alpha p}:=\frac {\partial }{\partial x_{\alpha p}}$. For a $C^1$ scalar function $u$, we identify $du$ with the column vector
$
  \left (
  \begin{matrix}
     \overline{\partial}_{1}u\\\overline{\partial}_{2}u \end{matrix}
\right)
$ in $\mathbb{O}^2$. Then $\mathcal{T} (du \otimes dv )$ defined by \eqref{eq:T-def} is
\begin{equation}\label{eq:T-def}
   \mathcal{T} (du \otimes dv )= \frac  1
2
 \left(
 \begin{matrix}\overline \partial_{1}u{\partial}_{1}v  &\overline\partial_{1}u{\partial}_{2}v \\
 \overline \partial_{2}u{\partial}_{1}v  &\overline\partial_{2}u{\partial}_{2}v
 \end{matrix}\right)+\frac  1
2
 \left(
 \begin{matrix}\overline\partial_{1}v{\partial}_{1}u  &\overline\partial_{1}v{\partial}_{2}u \\
\overline  \partial_{2}v{\partial}_{1} u  &\overline\partial_{2}v{\partial}_{2}u
 \end{matrix}\right).
\end{equation}
$\mathcal{T} (du \otimes du ) $ is   elementary   strongly positive by   Proposition
\ref{prop:positive} (1).
By definition,
 \begin{equation*}
    \mathcal{T} (du \otimes dv )=  \mathcal{T} (dv \otimes du ) .
 \end{equation*}

\begin{proof}[Proof of Lemma \ref{lem:parts}]  Write $\omega=\left(
 \omega_{ \overline{\alpha}\beta}   \right) $.
Note that $\partial_{ \beta p}$ commutes $e_{p'}$. For $\alpha\neq\beta$, we have
\begin{equation*}\begin{split} \int _{ \Omega}  v \operatorname{ Re} (
\omega_{\overline\alpha\beta} &
 \overline{\partial}_{\beta}{\partial}_{\alpha}u ) dV=
 \int _{ \Omega} \sum_{p=0}^7  v\operatorname{ Re}\left(\omega_{\overline\alpha
 \beta}  {e}_{p}\partial_{\beta p}\partial_{\alpha}u\right)dV \\
 &=- \int _{ \Omega}\sum_{p=0}^7 \partial_{ \beta p} v \operatorname{ Re}\left(\omega_{\overline\alpha  \beta}   {e}_{p} \partial_{\alpha}u\right)dV -\int _{ \Omega}  v
 \operatorname{ Re}\left(\sum_{p=0}^7 \partial_{\beta
 p} \omega_{\overline\alpha\beta} {e}_{p}\partial_{\alpha}u\right)dV
 \\&\qquad + \int  _{\partial\Omega}\sum_{p=0}^7 v \operatorname{ Re}\left( \omega_{\overline\alpha\beta} {e}_{p}\partial_{\beta
 p}\varrho\,\partial_{\alpha}u\right) \frac { dS}{|\operatorname{grad}\varrho|}
 \\&= -\int _{ \Omega}   \operatorname{ Re}\left(\omega_{\overline\alpha\beta} \overline{\partial}_{ \beta  } v \partial_{\alpha}u\right)dV-\int v \operatorname{
 Re}\left( \left(\omega_{\overline\alpha\beta}
 \overleftarrow{ \partial_{\overline{\beta}} } \right ) \partial_{\alpha}u\right)dV+\int _{\partial\Omega} v \operatorname{ Re}\left(\omega_{\overline\alpha \beta}
 \overline{\partial}_{ \beta  } \varrho\,
 \partial_{\alpha}u\right) \frac { dS}{|\operatorname{grad}\varrho|}
 \\&=- \int _{ \Omega}   \operatorname{ Re}\left(\omega_{\overline\alpha\beta} \overline{\partial}_{ \beta  } v
 \partial_{\alpha}u\right)dV-\int v \operatorname{ Re}\left( \overline{ \partial}_{\alpha} \omega_{\overline\beta\beta}    \partial_{\alpha}u\right)dV+\int
 _{\partial\Omega} v
 \operatorname{ Re}\left(\omega_{\overline\alpha\beta} \overline{\partial}_{ \beta } \varrho \,\partial_{\alpha}u\right) \frac { dS}{|\operatorname{grad}\varrho|},
 \end{split}\end{equation*}
 by integration by part, the associativity of $Re( \mathbf{a}\mathbf{b} \mathbf{c}) $ in Lemma \ref{lem:associative}  
  (i)  and  the closedness \eqref{eq:closed} of $\omega$, where real valued function  $\partial_{ \beta p} v$   commutes each other
 term. Similarly,
\begin{equation*}\begin{split}
 \int _{ \Omega}  v \operatorname{ Re} (\omega_{\overline\alpha\alpha}& \overline\partial_{\beta}{\partial}_{\beta}u   )dV
=\int _{ \Omega} \sum_{p=0}^7 v \operatorname{ Re}\left( \omega_{\overline\alpha\alpha} {e}_{p}\partial_{\beta p}  {\partial}_{\beta}u\right)dV
\\&= -\int _{ \Omega}   \operatorname{ Re}\left(\omega_{\overline\alpha\alpha} \overline{\partial}_{ \beta  } v \partial_{ \beta}u\right)-\int _{ \Omega}  v
\operatorname{ Re}\left( \left(\omega_{\overline\alpha\alpha}
 \overleftarrow{ \partial_{\overline{\beta }} } \right ) \partial_{\beta}u\right)+\int _{\partial\Omega} v \operatorname{ Re}\left(\omega_{\overline\alpha\alpha}
 \overline{\partial}_{ \beta } \varrho\,
 \partial_{\beta}u\right) \frac { dS}{|\operatorname{grad}\varrho|}
 \\&=-\int _{ \Omega}   \operatorname{ Re}\left(\omega_{\overline\alpha\alpha}  \overline{ \partial}_{\beta } v {\partial}_{\beta}u\right)-\int _{ \Omega}  v
 \operatorname{ Re}\left(  \overline{ \partial}_{\beta  }
  \omega_{\overline\alpha\alpha}  {\partial}_{\beta}u\right)+ \int_{\partial\Omega}  v \operatorname{ Re}\left(\omega_{\overline\alpha\alpha}  \overline{
  \partial}_{\beta } \varrho\,
  {\partial}_{\beta}u \right)\frac { dS}{|\operatorname{grad}\varrho|},
\end{split}\end{equation*}since $ \omega_{\overline\alpha\alpha}  $ is real.
Then, their difference gives us
\begin{equation}\label{eq:summation-02}\begin{split}
 \int _{ \Omega}  v  \operatorname{ Re}\left(\omega_{\overline\alpha\alpha} \overline\partial_{\beta}{\partial}_{\beta}u  -\omega_{\overline\alpha\beta}
 \overline{\partial}_{\beta}{\partial}_{\alpha}u
\right )dV=&\int _{ \Omega}    \operatorname{ Re}\left(\omega_{\overline\alpha\beta} \overline{\partial}_{ \beta  } v \partial_{\alpha}u -
\omega_{\overline\alpha\alpha}  \overline{
 \partial}_{\beta } v {\partial}_{\beta}u\right) \\&+
 \int_\Omega v   \operatorname{ Re}\left( \overline{ \partial}_{\alpha} \omega_{\overline\beta\beta}    \partial_{\alpha}u -   \overline{ \partial}_{\beta  }
 \omega_{\overline\alpha\alpha}  {\partial}_{\beta}u\right)
 \\&+ \int_{\partial\Omega}  v  \operatorname{ Re}\left(\omega_{\overline\alpha\alpha}  \overline{ \partial}_{\beta  } \varrho\, {\partial}_{\beta}u  -
 \omega_{\overline\alpha\beta} \overline{\partial}_{ \beta } \varrho \,\partial_{\alpha}u\right) \frac { dS}{|\operatorname{grad}\varrho|}.
\end{split}\end{equation}On the other hand, note that by definition and  Lemma \ref{lem:associative} (1),
\begin{equation}\label{eq:Re}
   \operatorname{ Re}(abc)=Re(\overline{c}\overline{b }\overline{a}),\qquad \operatorname{ Re}(a )= \operatorname{ Re}( \overline{ a}) , \qquad   \operatorname{
   Re}(ab)= \operatorname{ Re}(b a),
\end{equation}for any $a,b\in \mathbb{O}$, by   which we get
$
   \operatorname{ Re}\left( \omega_{\overline1  2} \overline{\partial}_{2}{\partial}_{1}u
 \right)=  \operatorname{ Re}\left(\omega_{\overline2  1} \overline{\partial}_{1}{\partial}_{2}u
\right ).
$
Take the summation of the identity \eqref{eq:summation-02} for $(\alpha,\beta )=(1,2)$ and $(2,1) $ to get
 \begin{equation}\label{eq:summation-2}\begin{split}&
2\int _{ \Omega}  v \det(\omega ,Hess_{\mathbb{O}}(u) ) \\=&\int _{ \Omega} v Re(\omega_{\overline11} \overline\partial_{2}{\partial}_{2}u  -\omega_{\overline1  2}
\overline{\partial}_{2}{\partial}_{1}u
 )dV +\int _{ \Omega} v Re(\omega_{\overline22} \overline\partial_{1}{\partial}_{1}u  -\omega_{\overline2  1} \overline{\partial}_{1}{\partial}_{2}u
 )dV
\\=&\int _{ \Omega} Re\left(\omega_{\overline12} \overline{\partial}_{ 2  } v \partial_{1}u -
 \omega_{\overline11}  \overline{ \partial}_{2 } v {\partial}_{2}u +
  \omega_{\overline21} \overline{\partial}_{ 1  } v \partial_{2}u -   \omega_{\overline22}  \overline{ \partial}_{1 } v {\partial}_{1}u\right)
\\ &\quad  +\int_{\partial\Omega} v   Re\left(\omega_{\overline11}  \overline{ \partial}_{2 } \varrho \, {\partial}_{ 2}u
 -   \omega_{\overline12} \overline{\partial}_{ 2 } \varrho \, \partial_{1}u  +    \omega_{\overline22}  \overline{ \partial}_{1 } \varrho \, {\partial}_{1}u
 -   \omega_{\overline21} \overline{\partial}_{ 1  } \varrho\, \partial_{2}u\right) \frac { dS}{|\operatorname{grad}\varrho|}
  .
\end{split}\end{equation}In the above summation, the second terms on the R. H. S. of \eqref{eq:summation-02}
are cancelled each other.
Note that
\begin{equation*}\begin{split}
    Re\left(\omega_{\overline 2 1 }  \overline\partial_{1} v\partial_{2}u\right)&=Re\left(\overline \partial_{2}u\partial_{1} v  \omega_{\overline  1
    2 }  \right)= Re\left(\omega_{\overline  12}  \overline \partial_{2}u\partial_{1} v  \right),\\
  Re  \left(\omega_{\overline\alpha\alpha}  \overline{ \partial}_{\beta  } v {\partial}_{\beta}u\right)&
  =\omega_{\overline\alpha\alpha} Re\left( \overline{ \partial}_{\beta  } v
  {\partial}_{\beta}u \right)= Re\left(\omega_{\overline\alpha\alpha} \overline{ \partial}_{\beta } u
  {\partial}_{\beta} v\right),
 \end{split}\end{equation*}by using \eqref{eq:Re}   again.
  Therefore, we have
  \begin{equation}\label{eq:conjugate} \begin{split}Re\left( \omega_{\overline11} \overline{\partial}_{ 2  } v \partial_{2}u +\omega_{\overline22}
  \overline{\partial}_{ 1  } v \partial_{1}u\right)&=
  \omega_{\overline11}\mathcal{T} (du \otimes dv )_{\overline22} + \omega_{\overline2 2}\mathcal{T} (du \otimes dv )_{\overline1 1} ,\\
      Re\left( \omega_{\overline12} \overline{\partial}_{ 2  } v \partial_{1}u +\omega_{\overline21} \overline{\partial}_{ 1  } v \partial_{2}u\right)&= 2
      Re\left(\omega_{\overline12}\mathcal{T} (du \otimes dv )_{\overline21}\right)= 2 Re\left(\omega_{\overline2 1}\mathcal{T} (du \otimes dv )_{\overline1
      2}\right).
 \end{split}\end{equation}Substitute \eqref{eq:conjugate} into \eqref{eq:summation-2} to get the resulting identity \eqref{eq:parts}, since $ \det(A, B)= \det(B ,A  )$ by definition.
\end{proof}

 \begin{cor}\label{cor:exchange} Assume as in Lemma \ref{lem:parts}. Then,  (1)\begin{equation}\label{cor:boundary}\begin{split}
 \int _{ \Omega}    \det(Hess_{\mathbb{O}}(u),\omega  )dV& =\int_{\partial\Omega}  \det\left( \mathcal{T} (du \otimes d \varrho),  \omega  \right) \frac {
 dS}{|\operatorname{grad}\varrho|} .
\end{split}\end{equation}
(2) If one of $u,v $ is compactly supported in $\Omega$,   then \eqref{eq:exchange0} holds.
\end{cor}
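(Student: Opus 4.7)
The plan is to derive both statements directly from Lemma \ref{lem:parts} by specialising its inputs, so essentially no new computation is needed — the work is in choosing the right substitutions.

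For part (1), I would simply set $v\equiv 1$ in \eqref{eq:parts}. Then $dv=0$, and by the definition \eqref{eq:T-def} of $\mathcal{T}$ one has $\mathcal{T}(du\otimes dv)=0$, so the interior integral on the right of \eqref{eq:parts} drops out. What remains is precisely the desired identity \eqref{cor:boundary}. (Strictly speaking Lemma \ref{lem:parts} is stated for generic $v\in C^2(\overline{\Omega})$, but its proof proceeds term by term and so applies to the constant function $v\equiv 1$ without modification.)

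For part (2), I would apply Lemma \ref{lem:parts} with $\omega:=Hess_{\mathbb{O}}(w)$. Since $w$ is assumed $C^3(\overline{\Omega})$ (needed so that $\omega\in C^1$), the form $\omega$ is closed by the computation \eqref{eq:Hess-closed}, so the hypotheses of the lemma are met. The compact-support hypothesis is what kills the boundary integral: if $v$ has compact support in $\Omega$ then the factor $v$ in the boundary integrand vanishes on $\partial\Omega$; if $u$ has compact support, then $du$ vanishes on $\partial\Omega$, again killing the boundary integrand; and in either case we obtain
\begin{equation*}
\int_\Omega v\det(Hess_{\mathbb{O}}(u),Hess_{\mathbb{O}}(w))\,dV=-\int_\Omega\det\bigl(\mathcal{T}(du\otimes dv),Hess_{\mathbb{O}}(w)\bigr)\,dV.
\end{equation*}
Swapping the roles of $u$ and $v$ (the lemma is then applied with the pair $(v,u)$ in place of $(u,v)$, which is again legal since the surviving compactly supported function can play either role) yields
\begin{equation*}
\int_\Omega u\det(Hess_{\mathbb{O}}(v),Hess_{\mathbb{O}}(w))\,dV=-\int_\Omega\det\bigl(\mathcal{T}(dv\otimes du),Hess_{\mathbb{O}}(w)\bigr)\,dV.
\end{equation*}
Because $\mathcal{T}$ is symmetric, $\mathcal{T}(du\otimes dv)=\mathcal{T}(dv\otimes du)$, the two right-hand sides coincide, and \eqref{eq:exchange0} follows.

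There is essentially no obstacle beyond bookkeeping: the real work has already been done in Lemma \ref{lem:parts}, and the only subtlety is checking that the compact-support assumption on one factor is enough to kill the boundary integrand in every case (which it is, since that integrand is linear in each of $v$ and $du$). Regularity is mildly delicate — one needs $w\in C^3$ to invoke the lemma with $\omega=Hess_{\mathbb{O}}(w)$ — but this matches the standing $C^2$/$C^1$ hypotheses of the lemma and will be upgraded later via approximation when the operators are extended to continuous $OPSH$ functions.
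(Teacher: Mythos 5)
Your proof is correct and is exactly the intended route: the paper states the corollary without proof because both parts are immediate specialisations of Lemma~\ref{lem:parts} — part~(1) by taking $v\equiv 1$ (a perfectly legitimate $C^2$ function, so no caveat is needed), and part~(2) by taking $\omega=Hess_{\mathbb{O}}(w)$, noting that compact support of whichever of $u,v$ kills the boundary term in both applications, and then invoking $\mathcal{T}(du\otimes dv)=\mathcal{T}(dv\otimes du)$, which the paper records just before the proof of the lemma. Your remark about needing $w\in C^3$ for $\omega\in C^1$ and closed is the right observation; nothing further is required.
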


\section{Octonionic  mixed   Monge-Amp\`{e}re   measure and the  comparison principle}

\subsection{A   uniform estimate}

We need the following key integral estimate (cf. e.g.
Sadullaev-Abdullaev
\cite[Theorem 16.2]{sadullaev0} for  complex plurisubharmonic   functions and \cite{Liu-Wang} for quaternionic   $m$-subharmonic  functions).
    
 \begin{prop}\label{prop:uniformly-bounded} For any compact subset $K\Subset\Omega$, the integrals
   \begin{equation}\label{eq:uniformly-bounded}
   \int_K u_1 \det(Hess_{\mathbb{O}}(u_2 ),Hess_{\mathbb{O}}(u_3 )) dV \quad {\rm and}    \quad \int_K\det\left(  \mathcal{T} (du_1   \otimes d u_1),
   Hess_{\mathbb{O}}(u_2 ) \right)dV
  \end{equation}
   for $u_1,u_2,u_3\in L_M:=\{u\in OPSH(\Omega)\cap C^2 (\Omega) : |u|\leq   M\} $ are
uniformly bounded.
\end{prop}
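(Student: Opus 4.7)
The plan is a Chern--Levine--Nirenberg style argument reducing both integrals to a common scalar building block. For any \emph{constant} elementary strongly positive matrix $h=\xi\otimes\xi^*\in\mathcal{H}_+^2(\mathbb{O})$, the mixed determinant $\det(h,Hess_{\mathbb{O}}(u))$ is, by formula \eqref{eq:mixed-determinant} together with the $Re$-associativity in Lemma~\ref{lem:associative}(i), a second-order linear differential operator $L_h u$ with \emph{constant real} coefficients; Proposition~\ref{prop:det-positive}(1) makes $L_h u$ non-negative pointwise for every $u\in L_M$, and as a constant-coefficient operator of even order $L_h$ is formally self-adjoint. Consequently, for any $\chi_0\in C_c^\infty(\Omega)$ and any $u\in L_M$,
\begin{equation*}
0 \le \int_\Omega \chi_0\, L_h u\,dV = \int_\Omega u\, L_h\chi_0\,dV \le M\|L_h\chi_0\|_\infty|\mathrm{supp}\,\chi_0|.
\end{equation*}
This is the scalar uniform bound I will invoke twice.

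For the first integral, pick $\chi\in C_c^\infty(\Omega)$ with $\chi\equiv 1$ on $K$, $0\le\chi\le 1$. Since $\det(Hess_{\mathbb{O}}(u_2),Hess_{\mathbb{O}}(u_3))\ge 0$ by Proposition~\ref{prop:det-positive}(1) and $|u_1|\le M$, matters reduce to bounding $\int_\Omega\chi\det(Hess_{\mathbb{O}}(u_2),Hess_{\mathbb{O}}(u_3))\,dV$. I apply the exchange identity \eqref{eq:exchange0}, taking $\chi$ as the compactly supported function, to convert this to $\int_\Omega u_2\det(Hess_{\mathbb{O}}(\chi),Hess_{\mathbb{O}}(u_3))\,dV$. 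Since $\mathcal{H}_+^2(\mathbb{O})$ spans $\mathcal{H}^2(\mathbb{O})$, I fix once and for all a basis $\{h_j\}_{j=1}^{10}$ of constant elementary strongly positive matrices and decompose $Hess_{\mathbb{O}}(\chi)(x)=\sum_j a_j(x)h_j$ with $a_j\in C_c^\infty(\Omega)$ uniformly bounded by a constant depending only on $\chi$. Each summand $\int u_2 a_j L_{h_j}(u_3)\,dV$ is then controlled by the building block, after bounding $|u_2|\le M$, $|a_j|\le C_\chi$, and majorising $|a_j|\le C_\chi\chi_1$ for a further cutoff $\chi_1\equiv 1$ on $\mathrm{supp}\,a_j$.

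For the second integral the cutoff alone is insufficient because $du_1$ enters explicitly. I instead use $\chi^2$ and apply \eqref{eq:parts} with $v=\chi^2u_1\in C_c^2(\Omega)$, $u=u_1$, $\omega=Hess_{\mathbb{O}}(u_2)$; the boundary integral vanishes by compact support. Expanding $d(\chi^2u_1)=\chi^2du_1+2\chi u_1\,d\chi$ and using the bilinearity of $\mathcal{T}$ rearranges the identity into
\begin{equation*}
J := \int\chi^2\det(\mathcal{T}(du_1\otimes du_1),Hess_{\mathbb{O}}(u_2))\,dV = -\int\chi^2 u_1\det(Hess_{\mathbb{O}}(u_1),Hess_{\mathbb{O}}(u_2))\,dV-2\int\chi u_1\det(\mathcal{T}(du_1\otimes d\chi),Hess_{\mathbb{O}}(u_2))\,dV.
\end{equation*}
The first term on the right is bounded by $M$ times the first-integral estimate already proved. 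For the second, I absorb the cutoff via $\chi\mathcal{T}(du_1\otimes d\chi)=\mathcal{T}(\chi du_1\otimes d\chi)$ and invoke the Cauchy--Schwarz inequality
\begin{equation*}
|\det(\mathcal{T}(\xi\otimes\eta),A)|^2\le\det(\mathcal{T}(\xi\otimes\xi),A)\det(\mathcal{T}(\eta\otimes\eta),A),\qquad A\ge 0,
\end{equation*}
which follows by applying Proposition~\ref{prop:det-positive}(1) to the non-negative quadratic form $(t,s)\mapsto\det(\mathcal{T}((t\xi+s\eta)\otimes(t\xi+s\eta)),A)\ge 0$ and computing its discriminant. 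Taking $\xi=\chi du_1$, $\eta=d\chi$ and weighted AM--GM with $\varepsilon=1/(2M)$ bounds the second term by $\tfrac12 J+2M^2\int\det(\mathcal{T}(d\chi\otimes d\chi),Hess_{\mathbb{O}}(u_2))\,dV$; the remaining integral is in turn controlled by the paragraph-2 argument applied to $\mathcal{T}(d\chi\otimes d\chi)\in C_c^\infty(\Omega,\mathcal{H}_+^2(\mathbb{O}))$ in place of $Hess_{\mathbb{O}}(\chi)$. Absorbing $\tfrac12 J$ into the left-hand side yields the uniform bound on $J\ge\int_K\det(\mathcal{T}(du_1\otimes du_1),Hess_{\mathbb{O}}(u_2))\,dV$. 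The only non-routine step is this absorption, which forces the weight $\chi^2$ rather than $\chi$ so that $\chi du_1$ enters $\mathcal{T}$ and reproduces $\chi^2\det(\mathcal{T}(du_1\otimes du_1),\cdot)$ after Cauchy--Schwarz; a minor technical point is that $Hess_{\mathbb{O}}(u_i)$ is only distributionally closed for $u_i\in C^2$, handled by first running the argument on the smooth mollifications $(u_i)_\varepsilon$ and passing to the limit.
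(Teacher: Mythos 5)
Your proof is correct, and it takes a genuinely different route from the paper's. For the first integral, both of you convert $\int\chi\det(Hess_{\mathbb{O}}(u_2),Hess_{\mathbb{O}}(u_3))\,dV$ to $\int u_2\det(Hess_{\mathbb{O}}(\chi),Hess_{\mathbb{O}}(u_3))\,dV$ via \eqref{eq:exchange0}, but then diverge: the paper chooses $\varepsilon_0$ with $-Hess_{\mathbb{O}}(|\mathbf{x}|^2)\leq\varepsilon_0 Hess_{\mathbb{O}}(\chi)\leq Hess_{\mathbb{O}}(|\mathbf{x}|^2)$ and iterates once more to reach the constant kernel $\det(Hess_{\mathbb{O}}(|\mathbf{x}|^2),Hess_{\mathbb{O}}(|\mathbf{x}|^2))$, while you decompose $Hess_{\mathbb{O}}(\chi)$ over a fixed basis of constant elementary strongly positive matrices and exploit that each $L_h=\det(h,Hess_{\mathbb{O}}(\cdot))$ is a pure second-order constant-coefficient (hence formally self-adjoint) operator --- same underlying idea, slightly more general packaging; your parenthetical ``even order implies self-adjoint'' heuristic is imprecise as stated, but the conclusion is correct here because $L_h$ has no lower-order terms. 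The real difference is in the second integral: the paper replaces $u_j$ by $\max\{u_j,v\}$ with $v=9(|\mathbf{x}|^2-\tfrac59)$ to force $u_j\equiv v$ near $\partial B^2$ and then uses the full integration-by-parts formula \eqref{eq:parts} including its boundary term, which becomes an absolute constant by construction; you instead keep the interior and use a $\chi^2$-weighted integration by parts combined with the pointwise Cauchy--Schwarz bound $|\det(\mathcal{T}(\xi\otimes\eta),A)|^2\leq\det(\mathcal{T}(\xi\otimes\xi),A)\det(\mathcal{T}(\eta\otimes\eta),A)$ and an absorption. This is the classical Bedford--Taylor/Chern--Levine--Nirenberg cutoff argument; it is more self-contained (no explicit boundary term, no pasting construction) but needs the Cauchy--Schwarz inequality, which the paper only formulates later as an integral identity in Lemma~\ref{lem: inner-product}. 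You correctly noticed that the pointwise form follows from the same discriminant argument and that the application of \eqref{eq:parts} to a $C^2$ (rather than $C^3$) test function requires a mollification-and-limit step, which is the same regularity device the paper uses when it passes $u_j^t\to u_j$ in $C^2(B')$.
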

\begin{proof} Let $0\leq \chi\in C_0^\infty (\Omega)$ with $\chi\equiv 1 $ on $K$, supp $ \chi\subset K'\Subset \Omega$. We can assume $0\leq u_1,u_2\leq 2M$ if  replace $u_j$ by $u_j+M$. Then,
 \begin{equation*}\label{eq:uniformly-bounded0}\begin{split}
  \pm \int_K u_1 \det(Hess_{\mathbb{O}}(u_2 ),Hess_{\mathbb{O}}(u_3 )) dV &\leq M \int_\Omega \chi \det(Hess_{\mathbb{O}}(u_2 ),Hess_{\mathbb{O}}(u_3 )) dV\\
   &=M \int_\Omega u_2\det(Hess_{\mathbb{O}}(\chi ),Hess_{\mathbb{O}}(u_3 )) dV\\
   &\leq \frac {2 M^2}{\varepsilon_0 } \int_{K'}  \det(Hess_{\mathbb{O}}(|\mathbf{x}|^2 ),Hess_{\mathbb{O}}(u_3 )) dV,
 \end{split} \end{equation*}
by using \eqref{eq:exchange0} and
\begin{equation*}
 -  Hess_{\mathbb{O}}(|x|^2 )\leq \varepsilon_0  Hess_{\mathbb{O}}(\chi ) \leq Hess_{\mathbb{O}}(|x|^2 ),
\end{equation*} for some $\varepsilon_0>0$.
 Repeating the procedure, we get
\begin{equation*}\label{eq:uniformly-bounded0}\begin{split}
   \pm \int_K u_1 \det(Hess_{\mathbb{O}}(u_2 ),Hess_{\mathbb{O}}(u_3 )) dV &\leq  \frac {4 M^2}{\varepsilon_0\varepsilon_0' }   \int_{K''}  
   \det(Hess_{\mathbb{O}}(|\mathbf{x}|^2 ),Hess_{\mathbb{O}}(|\mathbf{x}|^2 )) dV 
 \end{split} \end{equation*}for some $\varepsilon_0'>0$ and $K\Subset K'\Subset \Omega$.
The uniformly boundedness of the first integral in \eqref{eq:uniformly-bounded} follows.

For the second integral, we may assume $B^2\Subset \Omega$, and
$-1\leq   u_j\leq   0$ on $B^2 $ without loss of generality. If replace $u_j$ by $\max\{u_j  ,v \}$ with
$v(\mathbf{x})=9(|\mathbf{x}|^2-\frac{5}{9}),$ then values of $u_j$'s  inside $B(\mathbf{0},
2/3)$ are unchanged, while $u_j\equiv v$ in a neighborhood of the sphere $ \partial B^2$, by $v \vert_{\partial
B^2}=4>0>u\vert_{\partial
B^2}$. But $u_j$'s may be only continuous now. Consider  the standard approximations $u _{j}^t\downarrow u $  by smooth $  OPSH $ functions in   a neighborhood of the ball $  B^2$. 
Noting that  $u _{j}^t\rightarrow u_j $ in $C^2( B')$ with $ B'=B(\mathbf{0},
1/2)$  and   $ \mathcal{T} (du_1^t   \otimes d u_1^t) ,Hess_{\mathbb{O}}(u_2^t)\geq 0$, we have
\begin{equation*}\begin{split}0\leq \int_{B'}\det (  \mathcal{T} (du_1   \otimes d u_1),Hess_{\mathbb{O}}(u_2) )=&\lim_{t\rightarrow+\infty}\int_{B'}\det (  \mathcal{T} (du_1^t \otimes d u_1^t),Hess_{\mathbb{O}}(u_2^t) )\\
\leq&\lim_{t\rightarrow+\infty}\int_{B^2 }\det\left(  \mathcal{T} (du_1 ^t  \otimes d
u_1^t), Hess_{\mathbb{O}}(u_2^t) \right)\\= &-
 \lim_{t\rightarrow+\infty}\int _{B^2 } u_1^t \det(Hess_{\mathbb{O}}(u_1 ^t),Hess_{\mathbb{O}}(u_2 ^t)) \\&  + \int_{\partial B^2}  v \det\left(
   \mathcal{T} (dv  \otimes d \varrho) , Hess_{\mathbb{O}}(v ) \right) \frac { dS}{|\operatorname{grad}\varrho|} ,
\end{split}
\end{equation*}by applying integration by parts in Lemma \ref{lem:parts} and $u_1^t,u_2^t=v^t$ on the boundary for large $t$. We have proved the first integral over  $ B^2$ on the
R. H. S.  uniformly  bounded, while the second
integral over  $\partial B^2$ is bounded by an absolute constant, which only depends on  $v,\varrho$    on the boundary. The result follows.
 \end{proof}

 The following
  proposition  follows from    Proposition  \ref{prop:uniformly-bounded} and Theorem A.

\begin{cor}\label{cor:uniformly-bounded-C}
In the function class $L_M=\{u\in OPSH(\Omega)\cap C (\Omega) : |u|\leq   M\},$ the  families of  measures $  \det(Hess_{\mathbb{O}}(u),Hess_{\mathbb{O}}(v)) $ are
locally uniformly  bounded.
\end{cor}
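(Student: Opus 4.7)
The plan is to reduce the continuous case to the smooth case of Proposition~\ref{prop:uniformly-bounded} via standard mollification, and then transfer the bound to the limit measures using the weak-convergence property of Theorem~A.

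First, I would fix a compact set $K \Subset \Omega$ and a subdomain $\Omega'$ with $K \Subset \Omega' \Subset \Omega$. For any $u,v \in L_M$, form the mollifications $u_\varepsilon = u \ast \chi_\varepsilon$ and $v_\varepsilon = v \ast \chi_\varepsilon$, well-defined on $\Omega'$ for small $\varepsilon$. By Proposition~\ref{prop:QSH-m}(2) these are smooth $OPSH$ functions decreasing monotonically to $u,v$, and since convolution against a nonnegative bump of mass one preserves $L^\infty$-bounds, they still lie in $L_M$. Continuity of $u,v$ combined with monotone decrease and Dini's theorem then yields uniform convergence on $\overline{\Omega'}$.

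Next I would apply Proposition~\ref{prop:uniformly-bounded} with $u_2 = u_\varepsilon$, $u_3 = v_\varepsilon$, and with $u_1$ taken to be the constant function $-M$, which trivially belongs to $L_M \cap C^2$. Writing $\mu_\varepsilon := \det(Hess_{\mathbb{O}}(u_\varepsilon), Hess_{\mathbb{O}}(v_\varepsilon))$, this yields
\begin{equation*}
M \cdot \mu_\varepsilon(K) = \left| \int_K (-M) \, d\mu_\varepsilon \right| \leq C
\end{equation*}
for some $C = C(K,M,\Omega)$ independent of the particular $u,v \in L_M$ and of $\varepsilon$. Hence $\mu_\varepsilon(K)$ is uniformly bounded on $L_M \times L_M$.

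Finally, property (ii) of Theorem~A together with the uniform convergence established above gives $\mu_\varepsilon \to \mu := \det(Hess_{\mathbb{O}}(u), Hess_{\mathbb{O}}(v))$ weakly as measures on $\Omega'$. Choosing $\varphi \in C_c(\Omega')$ with $0 \leq \varphi \leq 1$ and $\varphi \equiv 1$ on $K$, nonnegativity of the measures gives
\begin{equation*}
\mu(K) \leq \int \varphi \, d\mu = \lim_\varepsilon \int \varphi \, d\mu_\varepsilon \leq \limsup_\varepsilon \mu_\varepsilon(\operatorname{supp}\varphi) \leq C',
\end{equation*}
uniformly for $u,v \in L_M$. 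I do not foresee a substantial analytic obstacle; this is a standard density plus weak-convergence bootstrap. The only point requiring a little care is converting the $u_1$-weighted estimate of Proposition~\ref{prop:uniformly-bounded} into a total-mass bound, which the choice $u_1 \equiv -M$ accomplishes cleanly.
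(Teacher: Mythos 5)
Your argument is correct and is precisely the one-line derivation the paper intends: the paper simply remarks that Corollary~\ref{cor:uniformly-bounded-C} ``follows from Proposition~\ref{prop:uniformly-bounded} and Theorem~A,'' and you have filled in exactly those two steps — the $u_1\equiv -M$ choice in Proposition~\ref{prop:uniformly-bounded} to get a uniform mass bound on the smooth mollified measures, and the weak convergence from Theorem~A (valid by Dini's theorem since $u,v$ are continuous) to transfer the bound to $\det(Hess_{\mathbb{O}}(u),Hess_{\mathbb{O}}(v))$.
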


\subsection{The  comparison principle }
 We need the following proposition to prove the  comparison principle.
\begin{prop}\label{prop:compare}
 Let $\Omega$ be a bounded   domain with a $C^2$ defining function $\varrho$, and let $u ,v\in C^2(\overline{\Omega})\cap OPSH(\Omega)$. If $u=v$ on $\partial\Omega$ and
 $u\leq  v$ in $\Omega$, then
 \begin{equation}\label{eq:compare}
   \int_\Omega \det(Hess_{\mathbb{O}}(u) )\geq  \int_\Omega\det(Hess_{\mathbb{O}}(v) ).
 \end{equation}
\end{prop}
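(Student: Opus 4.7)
The plan is to express the difference $\int_\Omega[\det(Hess_{\mathbb{O}}(v))-\det(Hess_{\mathbb{O}}(u))]\,dV$ as a boundary integral of definite sign. Using the bilinearity and symmetry of the mixed determinant (so that $\det A-\det B=\det(A-B,A+B)$), I have the pointwise identity
\[
\det(Hess_{\mathbb{O}}(v))-\det(Hess_{\mathbb{O}}(u))=\det(Hess_{\mathbb{O}}(w),Hess_{\mathbb{O}}(s)),
\]
with $w:=v-u$ and $s:=u+v$. Here $w\geq 0$ in $\Omega$ with $w|_{\partial\Omega}=0$, and $Hess_{\mathbb{O}}(s)\geq 0$ in $\Omega$ by the $OPSH$ hypothesis on $u,v$. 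I would apply Corollary \ref{cor:exchange}(1) to the pair (scalar $w$, closed form $\omega=Hess_{\mathbb{O}}(s)$) to convert the integral of the left-hand side into
\[
\int_{\partial\Omega}\det\bigl(\mathcal{T}(dw\otimes d\varrho),Hess_{\mathbb{O}}(s)\bigr)\,\frac{dS}{|\operatorname{grad}\varrho|}.
\]

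For the boundary integrand, since $w\in C^2(\overline{\Omega})$ vanishes on $\partial\Omega=\{\varrho=0\}$ and is nonnegative on $\Omega=\{\varrho<0\}$, a local factorization produces $\lambda\in C^1$ with $\lambda\leq 0$ such that $w=\lambda\,\varrho$ in a tubular neighborhood of $\partial\Omega$. Differentiating componentwise and restricting to $\partial\Omega$ gives $\overline{\partial}_\alpha w=\lambda\,\overline{\partial}_\alpha\varrho$ for $\alpha=1,2$, i.e.\ $dw=\lambda\,d\varrho$ as vectors in $\mathbb{O}^2$, and hence
\[
\mathcal{T}(dw\otimes d\varrho)=\lambda\,d\varrho\otimes(d\varrho)^*.
\]
Since $d\varrho\neq 0$ on $\partial\Omega$ (as $\varrho$ is a defining function), $d\varrho\otimes(d\varrho)^*$ is non-negative definite by Proposition \ref{prop:positive}(1); combined with $Hess_{\mathbb{O}}(s)\geq 0$, Proposition \ref{prop:det-positive}(1) gives $\det(d\varrho\otimes(d\varrho)^*,Hess_{\mathbb{O}}(s))\geq 0$ pointwise on $\partial\Omega$. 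Multiplied by $\lambda\leq 0$, the boundary integrand is $\leq 0$, which yields \eqref{eq:compare}.

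The main technical point is that Lemma \ref{lem:parts}, and hence Corollary \ref{cor:exchange}(1), formally requires $\omega\in C^1(\overline{\Omega},\mathcal{H}^2(\mathbb{O}))$ closed, i.e.\ $s\in C^3$, whereas the hypothesis only provides $s\in C^2(\overline{\Omega})$. I would resolve this by approximation: first extend $s$ across $\partial\Omega$ to a $C^2$ function on a neighborhood of $\overline{\Omega}$ (possible since $\varrho\in C^2$ makes $\partial\Omega$ a $C^2$ hypersurface), then mollify to obtain smooth $s_\delta\to s$ in $C^2(\overline{\Omega})$. The mollifications $Hess_{\mathbb{O}}(s_\delta)$ are smooth and closed, so Corollary \ref{cor:exchange}(1) applies to $(w,s_\delta)$; letting $\delta\to 0$, uniform convergence of $Hess_{\mathbb{O}}(s_\delta)$ on $\overline{\Omega}$ justifies passage to the limit in both the interior and boundary integrals. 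Once this approximation step is in place, the sign analysis above completes the proof.
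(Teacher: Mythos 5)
Your argument is correct and follows the same overall strategy as the paper (bilinearity of the mixed determinant, integration by parts to reduce to a boundary integral, boundary factorization to get the sign), but it differs in the details in a few useful ways. First, you polarize as $\det A-\det B=\det(A-B,A+B)$, producing a single boundary term $\int_{\partial\Omega}\det(\mathcal{T}(dw\otimes d\varrho),Hess_{\mathbb{O}}(u+v))\,dS/|\operatorname{grad}\varrho|$, whereas the paper writes $\det(Hess_{\mathbb{O}}(u))-\det(Hess_{\mathbb{O}}(v))=\det(Hess_{\mathbb{O}}(u-v),Hess_{\mathbb{O}}(u))+\det(Hess_{\mathbb{O}}(v),Hess_{\mathbb{O}}(u-v))$ and handles two boundary integrals; the content is the same but yours is slightly tidier. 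Second, you invoke a global Hadamard-type factorization $w=\lambda\varrho$ with $\lambda\in C^1$ on a tubular neighborhood (valid because $\varrho$ is a $C^2$ defining function and $w\in C^2$ vanishes on $\{\varrho=0\}$), which automatically covers the points where $\operatorname{grad}w=0$; the paper instead factors locally at points where $\operatorname{grad}(u-v)\neq0$ and disposes of the remaining points by noting the integrand vanishes there. Third, and most substantively, you explicitly flag and fix the regularity mismatch: Lemma \ref{lem:parts} wants $\omega\in C^1$ closed, hence $s\in C^3$, while the hypotheses only give $s\in C^2(\overline\Omega)$; the paper applies Corollary \ref{cor:exchange}(1) directly without comment, whereas you justify the passage by extension across $\partial\Omega$ and mollification, which is the right way to close the gap. (For the paper's own purposes this doesn't matter, since in the proof of Theorem \ref{thm:compare} the proposition is applied only to smooth standard regularizations, but your treatment makes the proposition true as stated.)
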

\begin{proof}   Since
\begin{equation}\label{eq:difference}\begin{split}
    \det(Hess_{\mathbb{O}}(u_1),Hess_{\mathbb{O}}(u_2))- \det(Hess_{\mathbb{O}}( v_1),Hess_{\mathbb{O}}(v_2)) = &
    \det(Hess_{\mathbb{O}}(u_1-v_1),Hess_{\mathbb{O}}(u_2)) \\
    &+\det(Hess_{\mathbb{O}}( v_1),Hess_{\mathbb{O}}(u_2-v_2))  ,
\end{split}\end{equation}by the linearity \eqref{eq:mixed-determinant} of the mixed determinant, we get
\begin{equation}\label{eq:boundary-term}\begin{split}  \int_\Omega\det(Hess_{\mathbb{O}}(u) )-\det(Hess_{\mathbb{O}}(v) ) = &
\int_\Omega \det(Hess_{\mathbb{O}}(u-v),Hess_{\mathbb{O}}(u))\\  & +\int_\Omega \det(Hess_{\mathbb{O}}( v),Hess_{\mathbb{O}}(u-v))
  \\=& \int_{\partial\Omega } \det (\mathcal{T} (d(u-v) \otimes d\varrho ) ,Hess_{\mathbb{O}}(u))\frac { dS}{|\operatorname{grad}\varrho|}
  \\ &  + \int_{\partial\Omega } \det(Hess_{\mathbb{O}}(v),  \mathcal{T} (d(u-v) \otimes d\varrho ) )\frac { dS}{|\operatorname{grad}\varrho|}
\end{split}\end{equation}
  by   Corollary \ref{cor:exchange} (1).  Since $u\leq v$ in $\Omega$, then for a point $\mathbf{x}\in \partial\Omega$ with ${\rm grad} (u -v)(\mathbf{x})\neq 0$,
  we can
  write  $u-v= h \varrho$ on a neighborhood of $\mathbf{x}$ for some   smooth function $h\geq0$. Consequently, we have $\partial_{\alpha p} (u -v) =
  h  \partial_{\alpha p} \varrho$  on
  $\partial\Omega$, and so
\begin{equation}\label{eq:boundary-term2}\begin{split}
  \mathcal{T} (d(u-v) \otimes d\varrho )   = h  \mathcal{T} (d\varrho \otimes d\varrho ),
\end{split}\end{equation}on $\partial\Omega$  near $\mathbf{x}$, 
which is elementary   strongly positive by Proposition \ref{prop:positive}.
 Since $Hess_{\mathbb{O}}(u)$ is   also   nonnegative, we see that
 $\det (\mathcal{T} (d(u-v) \otimes d\varrho ) ,Hess_{\mathbb{O}}(u))\geq0$   by definition. So the integrands in the R. H. S. of   (\ref{eq:boundary-term}) on
 $\partial\Omega$  is nonnegative if ${\rm grad} (u-v)(\mathbf{x})\neq 0$. While if ${\rm grad} (u-v)(\mathbf{x})= 0$,   they  vanish. Therefore,   (\ref{eq:boundary-term}) is  nonnegative.
\end{proof}

{\it Proof of Theorem \ref{thm:compare}}. At first, we assume that $u,v\in OPSH(\Omega)\cap C^\infty(\Omega)$. Then 
  \begin{equation*}
    G:=\{\mathbf{x}\in \Omega;u(\mathbf{x}) < v (\mathbf{x}) \} = \cup_{\eta>0}G _\eta ,
 \end{equation*}where  $G_\eta
 := \{\mathbf{x}\in \Omega;u(\mathbf{x}) < v (\mathbf{x})-\eta\}$.
 By Sard's theorem,
 $G_\eta$
 are open sets with smooth boundaries
for almost all $\eta>0$. Namely,  $G_\eta$ for such a $\eta$ has $C^\infty$ defining function $u-v+\eta$. For such $\eta$, we have
\begin{equation*}
   \int_{ G_\eta }\det(Hess_{\mathbb{O}}(u) )  \geq \int_{G_\eta}\det(Hess_{\mathbb{O}}(v) ),
\end{equation*}
by Proposition  \ref{prop:compare}. By taking limit $\eta\rightarrow 0$, we get
\begin{equation}\label{eq:compare-G-eta}
   \int_{ G  }\det(Hess_{\mathbb{O}}(u) )  \geq \int_{G }\det(Hess_{\mathbb{O}}(v) ).
\end{equation}

Now if $u,v\in OPSH(\Omega)\cap C (\Omega)$, consider  the standard approximations $u _{j}\downarrow u $, $v _{j}\downarrow v $ by smooth $  OPSH $ functions in an open subset $E$ such that $G\Subset E\Subset \Omega$.
Denote
\begin{equation*}
   G_p:=\{ \mathbf{x}\in
G; u(\mathbf{x}) < v(\mathbf{x}) -1/p\} \quad {\rm and} \quad G_{j,k,p}:=\{\mathbf{x}\in G; u_{j}(\mathbf{x}) < v_{k}(\mathbf{x}) -1/p\} .
\end{equation*}
For any open set $G'\Subset  G$ we can choose positive integers $p_0$ and $p_1$ such
that $G' \Subset  G _{p_0}
 \Subset  G_{p_1}
 \Subset  G$. Since  $u _{j}$ and $ v _{j} $
 converge locally uniformly in $G$,
there exist   $k_0$
 such that $G'\Subset G_{j,k,{p_0}}\subset  G_{p_1}\Subset  G$
 for all $j,k >k_0$. Then
 \begin{equation*}
   \int_{ G_{j,k,{p_0}}} \det(Hess_{\mathbb{O}}(u_j) )   \geq \int_{G_{j,k,{p_0}}} \det(Hess_{\mathbb{O}}(v_k ) )
\end{equation*}
 for all $j,k >k_0$ by using \eqref{eq:compare-G-eta}. Consequently,
  \begin{equation*}
   \int_{  {G}_{p_1}}\det(Hess_{\mathbb{O}}(u_j) )  \geq\int_{ {G}'}\det(Hess_{\mathbb{O}}(v_k ) )  .
\end{equation*}
By  convergence of   octonionic Monge-Amp\`ere  measures in Theorem A, we get
 \begin{equation*}
   \int_{ {G }}\det(Hess_{\mathbb{O}}(u) )  \geq \int_{ {G}_{p_1}}\det(Hess_{\mathbb{O}}(u) )  \geq \int_{G' }         \det(Hess_{\mathbb{O}}(v) ).
\end{equation*}The result follows since the  $G'\Subset  G$ is arbitrarily chosen.
\qed

\begin{prop}Let $\Omega$ be a bounded   domain with smooth boundary  and let $u ,v\in C (\overline{\Omega})\cap OPSH(\Omega)$. Suppose that
$\det(Hess_{\mathbb{O}}(u) )  \leq \det(Hess_{\mathbb{O}}(v) )$ on $\Omega$, and
  $\underline{\lim}_{\mathbf{x}\in \Omega}(u(\mathbf{x})- v(\mathbf{x})) \geq 0$. Then $u \geq v $ in $\Omega$.
\end{prop}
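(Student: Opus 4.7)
The plan is to argue by contradiction using a standard perturbation trick that converts the hypothesis $\det(Hess_{\mathbb{O}}(u))\le\det(Hess_{\mathbb{O}}(v))$ into a \emph{strict} inequality, so that the comparison principle (Theorem \ref{thm:compare}) forces the set $\{u<v\}$ to have measure zero.

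Suppose for contradiction that $u(\mathbf{x}_0)<v(\mathbf{x}_0)$ at some interior point $\mathbf{x}_0\in\Omega$. Pick $R$ large enough that $|\mathbf{x}|^2<R^2$ on $\overline{\Omega}$, and set
\[
   v_\epsilon(\mathbf{x}):=v(\mathbf{x})+\epsilon\bigl(|\mathbf{x}|^2-R^2\bigr),\qquad \epsilon>0.
\]
Since $|\mathbf{x}|^2$ is smooth and $OPSH$, Proposition \ref{prop:QSH-m}(3) gives $v_\epsilon\in OPSH(\Omega)\cap C(\overline{\Omega})$. The constant $-R^2$ has trivial Hessian, so $Hess_{\mathbb{O}}(v_\epsilon)=Hess_{\mathbb{O}}(v)+\epsilon c_0 I_2$ with $c_0>0$ (where $Hess_{\mathbb{O}}(|\mathbf{x}|^2)=c_0 I_2$). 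Using bilinearity of the mixed determinant together with Proposition \ref{prop:det-positive}(1) applied to the non-negative matrix $Hess_{\mathbb{O}}(v)$ and $I_2\ge 0$,
\[
 \det(Hess_{\mathbb{O}}(v_\epsilon))=\det(Hess_{\mathbb{O}}(v))+2\epsilon c_0\det(Hess_{\mathbb{O}}(v),I_2)+\epsilon^2 c_0^{\,2}\ge \det(Hess_{\mathbb{O}}(v))+\epsilon^2 c_0^{\,2}
\]
as Radon measures on $\Omega$, interpreting the left-hand side through Theorem A.

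Next, fix $\epsilon>0$ small. On $\overline{\Omega}$ the term $|\mathbf{x}|^2-R^2$ is bounded above by some $-\delta<0$, while the hypothesis $\varliminf_{\mathbf{x}\to\partial\Omega}(u-v)\ge 0$ lets us arrange $u-v\ge -\epsilon\delta/2$ in a neighborhood of $\partial\Omega$. Thus $u-v_\epsilon\ge\epsilon\delta/2>0$ near $\partial\Omega$, so the open set $G_\epsilon:=\{u<v_\epsilon\}$ is relatively compact in $\Omega$. For $\epsilon$ small enough we also have $v_\epsilon(\mathbf{x}_0)>u(\mathbf{x}_0)$ by continuity, so $G_\epsilon\ni\mathbf{x}_0$ is nonempty and in particular has positive Lebesgue measure.

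Applying Theorem \ref{thm:compare} on $G_\epsilon\Subset\Omega$ and combining with the two inequalities above,
\[
   \int_{G_\epsilon}\!\det(Hess_{\mathbb{O}}(u))\ge\int_{G_\epsilon}\!\det(Hess_{\mathbb{O}}(v_\epsilon))\ge\int_{G_\epsilon}\!\det(Hess_{\mathbb{O}}(v))+\epsilon^2c_0^{\,2}\,|G_\epsilon|\ge\int_{G_\epsilon}\!\det(Hess_{\mathbb{O}}(u))+\epsilon^2c_0^{\,2}\,|G_\epsilon|,
\]
a contradiction since $|G_\epsilon|>0$. Hence $u\ge v$ on $\Omega$. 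The only genuinely delicate point is the inequality $\det(Hess_{\mathbb{O}}(v_\epsilon))\ge\det(Hess_{\mathbb{O}}(v))+\epsilon^2c_0^{\,2}$ at the level of measures; this must be verified by approximating $v$ by its smooth regularizations $v_s=v\ast\chi_s$, checking the inequality pointwise for the smooth $v_s+\epsilon(|\mathbf{x}|^2-R^2)$ via Proposition \ref{prop:det-positive}(1), and passing to the limit using the weak convergence in Theorem A.
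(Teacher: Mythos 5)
Your proposal is correct and takes essentially the same approach as the paper: perturb $v$ by a small multiple of $|\mathbf{x}|^2$ (the paper uses $v-\eta/2+\varepsilon|\mathbf{x}-\mathbf{x}_0|^2$, you use $v+\epsilon(|\mathbf{x}|^2-R^2)$, but the role is identical), apply the comparison principle of Theorem~\ref{thm:compare} on the resulting relatively compact sublevel set, and use bilinearity of the mixed determinant together with positivity of $\det(Hess_{\mathbb{O}}(|\mathbf{x}|^2))$ to force a contradiction. Your closing remark about verifying the measure inequality through smooth regularization and Theorem~A is a welcome extra degree of care; the paper simply cites \eqref{eq:difference} for the same step.
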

\begin{proof} Assume that $v(\mathbf{x}_0)-u(\mathbf{x}_0) =\eta > 0$ at
some point $\mathbf{x}_0\in \Omega$. Thus the open set $G:=\{\mathbf{x}\in\Omega:   u(\mathbf{x} )< v(\mathbf{x}) - \eta /4 \}$
is not empty and relatively compact in $\Omega$. Then for sufficiently small $\varepsilon>0$, we have
\begin{equation*}
   G_1:=\{ D:   u(\mathbf{x} )< v(\mathbf{x}) - \eta /2 +\varepsilon|\mathbf{x}-\mathbf{x}_0|^2\}\Subset G,
\end{equation*}
    and it obviously contains $\mathbf{x}_0$. By applying the  comparison principle  in Theorem \ref{thm:compare}, we get
\begin{equation*}\begin{split}
   \int_{ G_1}\det(Hess_{\mathbb{O}}(u ) )  & \geq  \int_{ G_1}  \det(Hess_{\mathbb{O}}(  v  +\varepsilon |\cdot-\mathbf{x}_0|^2 ) )  \\&  \geq \int_{
   G_1}\det(Hess_{\mathbb{O}}(  v    ) )  +\varepsilon^2  \int_{ G_1} \det(Hess_{\mathbb{O}}( |\cdot-\mathbf{x}_0|^2 ) ) ,
\end{split}\end{equation*}where the second inequality follows from the  identity \eqref{eq:difference}.
This contradicts to the assumption that $\det(Hess_{\mathbb{O}}(u) )  \leq \det(Hess_{\mathbb{O}}(v) )$.
\end{proof}

We also need the following  proposition for two $OPSH$ functions.
\begin{cor} \label{cor:compare}
Let $\Omega$ be a bounded   domain and let $u_j ,v_j\in C ( {\Omega})\cap OPSH(\Omega)$. If $u_j=v_j$ outside a compact subset of  $ \Omega$, then
 \begin{equation}\label{eq:compare=}
    \int_\Omega  \det(Hess_{\mathbb{O}}(u_1),Hess_{\mathbb{O}}(u_2))=  \int_\Omega \det(Hess_{\mathbb{O}}(v_1),Hess_{\mathbb{O}}(v_2))  .
 \end{equation}
\end{cor}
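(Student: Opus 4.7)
The plan is to reduce the equality to two vanishing terms by linearity and then kill each term by integration by parts, exploiting the compact support of $u_j - v_j$. By the bilinear identity \eqref{eq:difference},
\begin{equation*}
\det(Hess_{\mathbb{O}}(u_1), Hess_{\mathbb{O}}(u_2)) - \det(Hess_{\mathbb{O}}(v_1), Hess_{\mathbb{O}}(v_2)) = \det(Hess_{\mathbb{O}}(u_1 - v_1), Hess_{\mathbb{O}}(u_2)) + \det(Hess_{\mathbb{O}}(v_1), Hess_{\mathbb{O}}(u_2 - v_2)),
\end{equation*}
so the task reduces to showing that each of these two terms integrates to zero over $\Omega$.

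First I would dispatch the smooth case $u_j, v_j \in C^{\infty}(\Omega) \cap OPSH(\Omega)$. Let $K \Subset \Omega$ be a compact set outside which $u_j \equiv v_j$, and pick $\chi \in C_c^{\infty}(\Omega)$ with $\chi \equiv 1$ on a neighborhood $U$ of $K$. Split $1 = \chi + (1-\chi)$: on $\operatorname{supp}(1-\chi) \subset \Omega \setminus K$ one has $u_j = v_j$, so the two integrands coincide pointwise and the $(1-\chi)$-integrals of both sides of \eqref{eq:compare=} agree. For the $\chi$-piece, set $w = u_1 - v_1 \in C_c^{\infty}(\Omega)$. The exchange formula \eqref{eq:exchange0} gives
\begin{equation*}
\int_\Omega \chi \det(Hess_{\mathbb{O}}(w), Hess_{\mathbb{O}}(u_2))\, dV = \int_\Omega w \det(Hess_{\mathbb{O}}(\chi), Hess_{\mathbb{O}}(u_2))\, dV,
\end{equation*}
and since $\chi \equiv 1$ on $U \supset \operatorname{supp} w$, one has $Hess_{\mathbb{O}}(\chi) \equiv 0$ on $U$, so the right-hand side vanishes. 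A symmetric argument handles the term involving $u_2 - v_2$.

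For the continuous case, take the standard approximations $u_j^{\varepsilon}, v_j^{\varepsilon}$ from Proposition \ref{prop:QSH-m}, which are smooth $OPSH$ and converge uniformly on compact subsets to $u_j, v_j$. For sufficiently small $\varepsilon$, $u_j^{\varepsilon} \equiv v_j^{\varepsilon}$ outside $K_{\varepsilon} := K + \overline{B(\mathbf{0},\varepsilon)} \Subset \Omega$. Fix a cutoff $\chi$ with $\chi \equiv 1$ on a neighborhood of $K_{\varepsilon_0}$ for some small $\varepsilon_0 > 0$; the smooth case applied to $u_j^{\varepsilon}, v_j^{\varepsilon}$ for $\varepsilon < \varepsilon_0$ yields equality of the two $\chi$-integrals. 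Letting $\varepsilon \to 0$ and invoking the weak convergence from Theorem A against the continuous test function $\chi$, the same equality holds for $u_j, v_j$. The $(1-\chi)$-integrals agree because on $\operatorname{supp}(1-\chi)$ the approximations satisfy $u_j^{\varepsilon} = v_j^{\varepsilon}$, so the limiting mixed Monge-Amp\`ere measures agree on that open set.

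The main obstacle I anticipate is cleanly establishing the local nature of $\det(Hess_{\mathbb{O}}(u_1), Hess_{\mathbb{O}}(u_2))$ for continuous $OPSH$ functions --- namely, that agreement of $(u_1, u_2)$ on an open subset forces the mixed Monge-Amp\`ere measure to agree there. This should follow from Theorem A together with the fact that the standard convolutions depend only on local function values, but one must construct the approximations on a common subdomain contained in $\Omega$ before passing to the weak limit against $\chi$.
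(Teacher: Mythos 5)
Your proof is correct, and the starting point is the same as the paper's --- both use the bilinear identity \eqref{eq:difference} to reduce the claim to showing that $\int_\Omega \det(Hess_{\mathbb{O}}(u_1-v_1),Hess_{\mathbb{O}}(u_2))$ and $\int_\Omega \det(Hess_{\mathbb{O}}(v_1),Hess_{\mathbb{O}}(u_2-v_2))$ vanish. The implementation then genuinely diverges. The paper first treats the case of a domain with smooth boundary and $u_j,v_j\in C^2(\overline{\Omega})$, applies Corollary \ref{cor:exchange}(1) to turn each of the two integrals into a boundary integral over $\partial\Omega$ involving $d(u_j-v_j)$, observes that this vanishes because $u_j-v_j\equiv 0$ near $\partial\Omega$, and then handles the general case by approximation. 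You instead avoid the boundary altogether: you multiply by a cutoff $\chi$ equal to $1$ on a neighborhood of the compact set where $u_j$ and $v_j$ differ, apply the interior exchange identity \eqref{eq:exchange0} with the two compactly supported functions $\chi$ and $u_1-v_1$, and use $Hess_{\mathbb{O}}(\chi)\equiv 0$ on $\operatorname{supp}(u_1-v_1)$ to kill the integrand. This has the modest advantage of working directly on an arbitrary bounded domain without requiring a smooth defining function for the intermediate step, and it makes the ``locality'' of the mixed Monge--Amp\`ere measure the explicit mechanism. Your final paragraph correctly identifies the one point requiring care in the approximation step: the standard convolutions are only defined on a shrinking subdomain, so the smooth-case argument must be carried out on a compactly contained subdomain $\Omega'\Supset\operatorname{supp}\chi$ on which all of $u_j^\varepsilon, v_j^\varepsilon$ and the cutoff live, before invoking Theorem~A's weak convergence against $\chi\in C_c(\Omega')$; once this is set up as you indicate, the argument closes.
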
 \begin{proof}  If the domain has smooth boundary and $u_j ,v_j\in C^2(\overline{\Omega})\cap OPSH(\Omega)$, this identity
is obtained as in \eqref{eq:boundary-term} by applying \eqref{eq:difference},
since the  boundary terms vanish. The general case easily follows from approximation.
\end{proof}
  \section{The fundamental solution  of the   octonionic Monge-Amp\`{e}re  equation  and the  Lelong number for a closed positive current}

\begin{prop} \label{prop:fundamental-solution}
The function
\begin{equation*}
   K (\mathbf{x}): =-\frac{1}{ |\mathbf{x}-\mathbf{a} |^{6}}
\end{equation*}
  is $OPSH$ on $\mathbb{O}^2$ and is the fundamental solution to the  octonionic Monge-Amp\`{e}re  equation,
i.e.
  \begin{equation}
  \begin{aligned}\label{LLLL}
 \det(Hess_{\mathbb{O}}( K ) ) = C \delta _\mathbf{a},
\end{aligned}
  \end{equation} for some constant $C>0$.
   \end{prop}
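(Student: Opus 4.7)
\smallskip
\noindent\textbf{Proof plan.} By translation invariance we may assume $\mathbf{a}=\mathbf{0}$. The plan is to verify the $OPSH$ property by restriction to octonionic lines, then to carry out an explicit Hessian computation on a smooth regularization $K_\varepsilon$, and finally to identify the weak limit of $\det(Hess_{\mathbb{O}}(K_\varepsilon))\,dV$ with a positive multiple of $\delta_0$.

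\smallskip
\noindent\textbf{Step 1 ($OPSH$ property).} Parameterize an affine octonionic line in $\mathbb{O}^2$ by $\mathbf{t}\in\mathbb{O}\cong\mathbb{R}^8$. Using the multiplicativity $|\mathbf{at}|=|\mathbf{a}||\mathbf{t}|$, the function $|\mathbf{x}|^2$ restricted to the line becomes a positive-definite real quadratic polynomial of the form $c|\mathbf{t}-\mathbf{t}_*|^2+c'$ with $c>0$ and $c'\geq0$. Hence $K$ restricts to $-(c|\mathbf{t}-\mathbf{t}_*|^2+c')^{-3}$. A direct calculation on $\mathbb{R}^8$ gives
\begin{equation*}
\Delta_{\mathbb{R}^8}\!\left(-(c|\mathbf{t}-\mathbf{t}_*|^2+c')^{-3}\right)=48\,cc'\,(c|\mathbf{t}-\mathbf{t}_*|^2+c')^{-5}\geq 0
\end{equation*}
when $c'>0$; when $c'=0$ the restriction is a positive multiple of the standard fundamental solution of the Laplacian on $\mathbb{R}^8$, hence subharmonic. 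Together with upper semi-continuity, this gives $K\in OPSH(\mathbb{O}^2)$.

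\smallskip
\noindent\textbf{Step 2 (regularization and explicit Hessian).} Set $K_\varepsilon(\mathbf{x}):=-(|\mathbf{x}|^2+\varepsilon^2)^{-3}$ for $\varepsilon>0$; by the argument of Step 1 with $c'\geq\varepsilon^2>0$, each $K_\varepsilon$ is smooth and $OPSH$, and $K_\varepsilon\downarrow K$ with locally uniform convergence on compact subsets of $\mathbb{O}^2\setminus\{\mathbf{0}\}$. Writing $h:=|\mathbf{x}|^2+\varepsilon^2$ and applying the Dirac operators $\overline\partial_\alpha,\partial_\beta$, the chain rule together with the identity $\sum_{p=0}^{7}e_p\overline{e}_p=8$ and the alternativity of Lemma \ref{lem:associative} yields
\begin{equation*}
\overline\partial_\alpha\partial_\beta K_\varepsilon=48\,h^{-5}\bigl(h\,\delta_{\alpha\beta}-\mathbf{x}_\alpha\overline{\mathbf{x}}_\beta\bigr).
\end{equation*}
Its determinant, using $|\mathbf{x}_1\overline{\mathbf{x}}_2|^2=|\mathbf{x}_1|^2|\mathbf{x}_2|^2$, simplifies to
\begin{equation*}
\det(Hess_{\mathbb{O}}(K_\varepsilon))=48^2\varepsilon^2\,(|\mathbf{x}|^2+\varepsilon^2)^{-9}.
\end{equation*}

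\smallskip
\noindent\textbf{Step 3 (weak convergence to a Dirac mass).} The scaling $\mathbf{x}=\varepsilon\mathbf{y}$ with Jacobian $\varepsilon^{16}$ shows
\begin{equation*}
\int_{\mathbb{O}^2}\det(Hess_{\mathbb{O}}(K_\varepsilon))\,dV=48^2\!\int_{\mathbb{R}^{16}}(|\mathbf{y}|^2+1)^{-9}\,dV=:C>0,
\end{equation*}
independent of $\varepsilon$. On the other hand, for any fixed $r>0$ the inequality $48^2\varepsilon^2(|\mathbf{x}|^2+\varepsilon^2)^{-9}\leq 48^2\varepsilon^2|\mathbf{x}|^{-18}$ on $\{|\mathbf{x}|>r\}$ yields $\int_{|\mathbf{x}|>r}\det(Hess_{\mathbb{O}}(K_\varepsilon))\,dV\to 0$. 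These two facts force $\det(Hess_{\mathbb{O}}(K_\varepsilon))\,dV\to C\delta_{\mathbf{0}}$ weakly.

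\smallskip
\noindent\textbf{Step 4 (identification).} On $\mathbb{O}^2\setminus\{\mathbf{0}\}$, $K$ is smooth and $K_\varepsilon\to K$ locally uniformly; by Alesker's Theorem A, the measures $\det(Hess_{\mathbb{O}}(K_\varepsilon))$ converge weakly on this open set to $\det(Hess_{\mathbb{O}}(K))$, which vanishes identically since the determinant of the $\varepsilon=0$ Hessian computed in Step 2 is zero off the origin. Thus $\det(Hess_{\mathbb{O}}(K))$, defined as the weak limit of the regularized measures, is supported at $\mathbf{0}$ and equals $C\delta_{\mathbf{0}}$, proving \eqref{LLLL}.

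\smallskip
\noindent\textbf{Main difficulty.} The delicate point is giving meaning to $\det(Hess_{\mathbb{O}}(K))$ as a measure, since $K$ fails to be continuous (or even locally bounded) at $\mathbf{a}$, so Theorem A applies only away from $\mathbf{a}$. The workaround is to \emph{define} the Monge-Amp\`ere measure of $K$ as the weak limit of $\det(Hess_{\mathbb{O}}(K_\varepsilon))$, whose existence and value are secured by the explicit computation in Step 2 and the mass concentration in Step 3. Everything else is a matter of careful bookkeeping with the non-associative Dirac calculus, where the associativity of $\mathrm{Re}(\mathbf{abc})$ and the alternativity of $\mathbb{O}$ must be invoked at each step the multiplication order is rearranged.
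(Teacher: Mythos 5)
Your proof is correct, and Steps 2--4 essentially reproduce the paper's argument: regularize as $K_\varepsilon=-(|\mathbf{x}|^2+\varepsilon^2)^{-3}$ (the paper writes $\epsilon$ for your $\varepsilon^2$), compute $\overline\partial_\alpha\partial_\beta K_\varepsilon$ explicitly, evaluate the determinant via alternativity (or, as you note, via $|\mathbf{x}_1\overline{\mathbf{x}}_2|^2=|\mathbf{x}_1|^2|\mathbf{x}_2|^2$), and rescale to obtain the Dirac mass. Where you diverge is Step 1: you verify $K\in OPSH$ directly from the definition by restricting to affine octonionic lines, completing the square to reduce to $-(c|\mathbf{t}-\mathbf{t}_*|^2+c')^{-3}$ on $\mathbb{R}^8$, and computing its $8$-dimensional Laplacian to be $48\,cc'(c|\mathbf{t}-\mathbf{t}_*|^2+c')^{-5}\geq 0$ (including the degenerate $c'=0$ case via the fundamental solution of $\Delta_{\mathbb{R}^8}$). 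The paper instead reads off positivity of the full Hessian of $K_\varepsilon$ from its entries and determinant via the Sylvester criterion (Proposition \ref{prop:non-negative}) and the positive-current characterization (Proposition \ref{prop:PSH-current}), then passes to the decreasing limit. Your route is more elementary (no appeal to the current-theoretic characterization) and directly exhibits what is gained and lost along each line, but it does require a separate argument from the Hessian computation that the paper reuses for both purposes. You also make explicit the point the paper leaves implicit — that $\det(Hess_{\mathbb{O}}(K))$ near the singularity must be \emph{interpreted} as the weak limit of the regularized measures, since Theorem A does not apply to the unbounded function $K$ across $\mathbf{a}$ — which is a useful clarification.
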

\begin{proof}
Without loss of generality, we may assume that $\mathbf{a}=\mathbf{0}$. Denote
$
   K_{ \epsilon} (\mathbf{x}):=-\frac{ 1}{({|\mathbf{x}|^2+\epsilon})^{3}}.
$
  Then,
\begin{equation}
  \begin{aligned}\overline
 \partial_\alpha{\partial}_\beta K_{ \epsilon} (\mathbf{x})
  = \overline\partial_\alpha \frac{6\overline{\mathbf{x}}_\beta}{(|\mathbf{x}|^2+\epsilon)^{4}} 
= -\frac{48}{(|\mathbf{x}|^2+\epsilon)^{5}}{\mathbf{x}}_\alpha\overline{\mathbf{x}}_\beta+\frac{ 48\delta_{\alpha\beta}}{(|\mathbf{x}|^2+\epsilon)^{4} }.
\end{aligned}
  \end{equation}
 Hence,
 $
 \overline
 \partial_\alpha{\partial}_\alpha  K_{ \epsilon}=48\frac{  |\mathbf{x}_\beta|^2+\epsilon }{(|\mathbf{x}|^2+\epsilon)^{5}}> 0,
  $
where $\beta\neq\alpha$,   and
  \begin{equation*}
  \begin{aligned}
 2\det(Hess_{\mathbb{O}}( K_{ \epsilon}) )
                          =& \frac{48^2 (|\mathbf{x}_1|^2+\epsilon) (|\mathbf{x}_2|^2+\epsilon)}{(|\mathbf{x}|^2+\epsilon)^{10}}-\frac{48^2\operatorname{ Re} ( (\mathbf{x}_1\overline{\mathbf{x}}_2) (\mathbf{x}_2 \overline{\mathbf{x}}_1) )                          }
                          {(|\mathbf{x}|^2+\epsilon)^{10}} 
                          = \frac{48^2  \epsilon }{(|\mathbf{x}|^2+\epsilon)^{9}} > 0,
\end{aligned}
  \end{equation*} by the alternativity.  Thus, $Hess_{\mathbb{O}}( K_{ \epsilon})$ is positive by Proposition \ref{prop:non-negative}, and so $K_{ \epsilon}\in
  OPSH(\mathbb{O}^{2})$ by Proposition \ref{prop:PSH-current}. Then,  $K
  \in OPSH(\mathbb{O}^{2})$ by  Proposition \ref{prop:QSH-m} (5), since $K_{ \epsilon} \downarrow K $.
  Now letting $\epsilon\to 0$, we get
$
\det(Hess_{\mathbb{O}}( K ) ) =0 $ on $ \mathbb{O}^2\setminus \{0\}.
$

  To show  \eqref{LLLL}, for any $\varphi \in  {C}_0 (\mathbb{O}^{2})$,
  we get
   \begin{equation*}
   \int_{\mathbb{R}^{16}}  \frac{\epsilon}{(|\mathbf{x}|^2+\epsilon )^{9}} \varphi(\mathbf{x})dV
  =  \int_{\mathbb{R}^{16}} \frac{\varphi(\mathbf{x}'\epsilon^{\frac{1}{2}}) }{( |\mathbf{x}' |^2+1)^{9} }
dV(\mathbf{x}') \rightarrow C\varphi(0) ,
     \end{equation*}as $\epsilon\to0 $, by rescaling   $\mathbf{x}=\mathbf{x}'\epsilon^{\frac{1}{2}}$.
Thus $(\ref{LLLL})$ follows.
\end{proof}
  \begin{prop} \label{prop:real-form}
  Suppose that $\Omega \subseteq\mathbb{O}^{2}$ is a domain and $B(\mathbf{a},R)\Subset \Omega$ for some $R > 0$. For a closed positive current $\omega$ on $\Omega
  $ and   $0< r < R$,
  denote
\begin{equation}\label{eq:sigma}
  \begin{aligned}
   \sigma(\mathbf{a},r)=\int_{B(\mathbf{a},r)} \det(Hess_{\mathbb{O}}(|\mathbf{x}|^2),\omega ).
\end{aligned}
  \end{equation}
Then, $\frac{\sigma(\mathbf{a},r)} {{r}^8}$ is an increasing function of $r$ for $0< r < R$, and
\begin{equation}
v_\mathbf{a}(u)=\lim_{r\to0+} \frac{\sigma(\mathbf{a},r)}{{r}^8}
\end{equation}
exists and is
  nonnegative. It is called the octonionic  Lelong number of the closed positive current $\omega$  at $\mathbf{a}$.
\end{prop}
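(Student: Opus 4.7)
The plan is to apply the integration-by-parts formula of Corollary \ref{cor:exchange}(1) twice: first with $u=|\mathbf{x}-\mathbf{a}|^2$ to express $\sigma(\mathbf{a},r)$ as a boundary integral over $\partial B(\mathbf{a},r)$, and then with $u = K(\mathbf{x})=-|\mathbf{x}-\mathbf{a}|^{-6}$ (the fundamental solution of Proposition \ref{prop:fundamental-solution}) on the spherical annulus $A_{r_1,r_2}=\{r_1<|\mathbf{x}-\mathbf{a}|<r_2\}$ to compare two such boundary integrals. I may assume $\mathbf{a}=0$.

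The crucial algebraic observation is that $d|\mathbf{x}|^2=2\mathbf{x}$ and $dK=6\mathbf{x}/|\mathbf{x}|^8$ are both real scalar multiples of $\mathbf{x}$ viewed as a column vector in $\mathbb{O}^2$, so on the sphere $|\mathbf{x}|=r$ they are proportional, $dK=(3/r^8)d|\mathbf{x}|^2$, which avoids any non-associativity issue. For smooth closed positive $\omega$, Corollary \ref{cor:exchange}(1) with $u=|\mathbf{x}|^2$ on $B(0,r)$ (defining function $\varrho_r=|\mathbf{x}|^2-r^2$, $|\operatorname{grad}\varrho_r|=2r$) yields
$$\sigma(0,r)=\frac{2}{r}\int_{|\mathbf{x}|=r}\det(\mathbf{x}\otimes\mathbf{x}^*,\omega)\,dS.$$
Applying the same corollary to $K\in C^\infty(\overline{A_{r_1,r_2}})$ on the annulus, with outward-pointing defining functions on the two boundary spheres (so that $d\varrho=\pm 2\mathbf{x}$, but $\mathcal{T}$ is quadratic and the sign cancels), the two boundary contributions evaluate to $\pm(6/r_j^9)\int_{|\mathbf{x}|=r_j}\det(\mathbf{x}\otimes\mathbf{x}^*,\omega)\,dS$; combining with the previous display gives the clean identity
$$\int_{A_{r_1,r_2}}\det(Hess_{\mathbb{O}}(K),\omega)\,dV=\frac{3\sigma(0,r_2)}{r_2^8}-\frac{3\sigma(0,r_1)}{r_1^8}.$$
Since $Hess_{\mathbb{O}}(K)\geq 0$ on $\mathbb{O}^2\setminus\{0\}$ (established in the proof of Proposition \ref{prop:fundamental-solution}, where $Hess_{\mathbb{O}}(K_\epsilon)>0$ passes to the limit) and $\omega\geq 0$, Proposition \ref{prop:det-positive}(1) forces the left-hand side to be nonnegative, proving monotonicity of $\sigma(0,r)/r^8$ in the smooth case.

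For a general closed positive current $\omega$, which is of measure type by Corollary \ref{cor:measure}, I approximate it by the componentwise convolutions $\omega_\epsilon=\omega\ast\chi_\epsilon$; these are smooth, closed (convolution commutes with $\overline\partial_\alpha$ and $\overleftarrow{\partial_{\overline{\beta}}}$), and positive. Applying the smooth monotonicity to each $\omega_\epsilon$ and passing to the limit via weak convergence, $\sigma_\epsilon(0,r)\to\sigma(0,r)$ holds at every $r$ for which the positive Borel measure $\operatorname{tr}(\omega)=\omega_{\overline 1 1}+\omega_{\overline 2 2}$ gives no mass to the sphere $\partial B(0,r)$, i.e.\ for all but countably many $r$. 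This gives monotonicity on a dense subset of $(0,R)$, which extends to all $r$ using the left-continuity of $r\mapsto\sigma(0,r)$ inherent in the open-ball definition. The existence of the nonnegative limit $v_{\mathbf{a}}(\omega)=\lim_{r\to 0^+}\sigma(\mathbf{a},r)/r^8$ then follows as the infimum over $r>0$ of the monotone quantity.

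The main technical obstacles are correctly handling the sign convention on the inner boundary of the annulus (where the outward normal of $A_{r_1,r_2}$ points toward $\mathbf{a}$, not away) and confirming that convolution mollification preserves the distributional closedness condition \eqref{eq:closed}; both are standard but require care given the non-associative setting.
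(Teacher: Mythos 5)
Your proof takes essentially the same route as the paper's: apply Corollary \ref{cor:exchange}(1) to the fundamental solution $K$ on the annulus, use the proportionality $dK = (3/|\mathbf{x}|^8)\,d|\mathbf{x}|^2$ to turn the two boundary terms into the quantities $\sigma(0,r_j)/r_j^8$, deduce monotonicity from $Hess_{\mathbb{O}}(K)\geq 0$ and $\omega\geq 0$ via Proposition \ref{prop:det-positive}(1), and then mollify to pass from smooth to general closed positive currents. One small slip in wording: $\mathcal{T}$ is bilinear, not quadratic, so the sign coming from $d\varrho = -2\mathbf{x}$ on the inner sphere does not ``cancel''---it is precisely what produces the subtraction in your annulus identity---but the displayed formula you arrive at is correct, and your extra care about the countably many exceptional radii and the left-continuity of $r\mapsto\sigma(\mathbf{a},r)$ in the limiting step is welcome rigor that the paper's proof glosses over.
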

\begin{proof}    Without loss of generality, we may assume that $\mathbf{a}=0$. Firstly, assume  $\omega $ is a smooth closed positive
current.
Consider
  \begin{equation*}
  \begin{aligned}
v_\mathbf{a}(r_1,r_2):=\int_{r_1<|\mathbf{x}|< r_2} \det(Hess_{\mathbb{O}}(K),\omega ),
   \end{aligned}
  \end{equation*}for $0< r_1< r_2 < R$. Since on the ring  $r_1<|\mathbf{x}|< r_2$, $K$ is smooth and $dK=\frac {3d |\mathbf{x}|^2 }{ |\mathbf{x}|^8 }$,
  we have
  \begin{equation*}
 \begin{split}
 v_\mathbf{a}(r_1,r_2) &=
\int_{|\mathbf{x}|=r_2}        \det\left( \mathcal{T} (d K \otimes d |\mathbf{x}|^2),\omega \right) ~
\frac{dS}{2r_2} - \int_{|\mathbf{x}|=r_1}        \det\left( \mathcal{T} (d K \otimes d |\mathbf{x}|^2), \omega \right) ~
\frac{dS}{2r_1}  \\
 &=\frac { 3  }{2r_2^{ 9}}
\int_{|\mathbf{x}|=r_2}        \det\left( \mathcal{T} (d  |\mathbf{x}|^2 \otimes d |\mathbf{x}|^2), \omega \right) ~
dS -\frac { 3  }{2r_1^{ 9}}
\int_{|\mathbf{x}|=r_1}        \det\left( \mathcal{T} (d  |\mathbf{x}|^2 \otimes d |\mathbf{x}|^2), \omega\right) ~
dS \\
 &=\frac { 3  }{ r_2^{ 8}}
\int_{|\mathbf{x}|<r_2}        \det(Hess_{\mathbb{O}}(|\mathbf{x}|^2),\omega )
  -\frac { 3  }{ r_1^{ 8}}
\int_{|\mathbf{x}|<r_1}     \det(Hess_{\mathbb{O}}(|\mathbf{x}|^2),\omega )
   \\
&=3\left(\frac{\sigma(\mathbf{a},r_2)}{ r_2 ^ 8}-\frac{\sigma(\mathbf{a},r_1)}{ r_1 ^{8}}\right)> 0,
\end{split}
\end{equation*}by using Corollary \ref{cor:exchange} (1) repeatedly. Thus, the result holds for smooth currents.

By Corollary \ref{cor:measure},  a closed positive current $\omega$ on $\Omega $ is given by a $\mathcal{ H}^2(\mathbb{O} )  $-valued measure
$
     \omega  =    (
 \omega _{\overline{\alpha}\beta}   )
$.
By using the convergence of  smooth closed positive currents
 $
  \lim\limits_{\epsilon\to0}   \omega \ast{\chi_\epsilon}=  \lim\limits_{\epsilon\to0}   \left(
   \omega _{\overline{\alpha}\beta} \ast{\chi_\epsilon} \right) =  \omega 
$ in the sense of measures, 
 we get the result.
\end{proof}

 \section{The Dirichlet problem for the homogeneous octonionic
Monge-Amp\`ere equation  }

  On a bounded domain $\Omega$   in  $\mathbb{O}^2$, a function $u \in  OPSH(\Omega)$ is called {\it maximal}
if it satisfies the  {\it maximum principle} in the class $    OPSH(\Omega)$, i.e. for any $D\Subset\Omega$, if $v\in  OPSH(D)$ and
$\underline{\lim}_{\mathbf{x}\in \partial D}(u(\mathbf{x})- v(\mathbf{x})) \geq 0$, then $u \geq v $ in $D$.
 The {\it generalized Dirichlet problem} is that of   finding   an
upper semicontinuous function $u:\overline{\Omega} \rightarrow  \mathbb{R}  $ such that $u|_{\Omega}$ is maximal $OPHS$
and $u|_{\partial\Omega}\equiv \varphi$, for given $\varphi\in C( \partial\Omega)$.  We
 denote by $\mathscr  B(\Omega, \varphi)$ the family of all functions $u\in OPHS(\Omega)$ such that
  $
    \limsup_{  \Omega \ni\mathbf{y}\rightarrow \mathbf{x}}u(\mathbf{y})\leq \varphi(\mathbf{x}),
 $
for all $\mathbf{x}\in \partial\Omega $. The {\it Perron-Bremermann function} for $\Omega$ and
$\varphi$  is defined as
\begin{equation}\label{eq:Perron-Bremermann}
   \Psi_{\Omega,\varphi}(\mathbf{x}):=\sup\{u( \mathbf{x} );u\in \mathscr B  (\Omega, \varphi)\}.
\end{equation}
  This function for complex $PHS$ functions  was introduced by Bremermann  \cite{Bre} in analogy to the
classical Perron function used in  real  potential theory.  The continuity and maximality of the  Perron-Bremermann function can be proved as in the complex case
by Walsh \cite{Walsh}.
In fact there is a more general theorem for  $\mathbb{ G}$-plurisubharmonic function by Harvey-Lawson, which includes our case as a special case.

A domain  $\Omega\subset \mathbb{O}^2$ is called {\it strictly  (octonionic)   pseudoconvex}
if near each  boundary point $\mathbf{x}\in \partial \Omega$, there exists a $C^2$ defining function $\varrho$ such that  $ Hess_{\mathbb{O}}(\varrho)$ is positive
definite.

\begin{thm} \label{thm:Perron-Bremermann} \cite[Theorem 7.6]{Harvey} Suppose that $\Omega$ is a strictly  octonionic   pseudoconvex  domain in $\mathbb{O}^2$. For  $\varphi \in C( \Omega)$,
the  Perron-Bremermann  function $ \Psi_{\Omega,\varphi}
$ is continuous on $\overline{\Omega}$ and is the solution to the generalized Dirichlet
problem, i.e. it is   maximal.
 \end{thm}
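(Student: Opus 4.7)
The plan is to adapt Walsh's classical argument from the complex case, using the strict octonionic pseudoconvexity of $\Omega$ to build barriers, and then exploiting the invariance of the class $OPSH$ under small translations to upgrade semi-continuity to continuity. Throughout, the basic closure properties of $OPSH$ already collected in Proposition \ref{prop:QSH-m} do most of the structural work.

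First I would check that the upper semi-continuous regularization $\Psi^*$ of $\Psi:=\Psi_{\Omega,\varphi}$ lies in $OPSH(\Omega)$ and still belongs to $\mathscr B(\Omega,\varphi)$. The first is immediate from Proposition \ref{prop:QSH-m} (6) applied to the locally uniformly bounded family defining $\Psi$, where uniform boundedness comes by sandwiching $\Psi$ between an obvious subsolution (a large negative constant) and an obvious supersolution (a continuous extension of $\varphi$). Next I would construct boundary barriers: at each $\mathbf{x}_0\in\partial\Omega$ strict octonionic pseudoconvexity gives a local $C^2$ defining function $\varrho$ with $Hess_{\mathbb{O}}(\varrho)>0$, and for large constants $A,B$ the function $w_{\mathbf{x}_0}(\mathbf{x}):=\tilde\varphi(\mathbf{x}_0)-\epsilon+A\varrho(\mathbf{x})-B|\mathbf{x}-\mathbf{x}_0|^2$ (cut off away from $\mathbf{x}_0$ and then pasted with a large negative constant via Proposition \ref{prop:QSH-m} (7)) is a global $OPSH$ subsolution in $\mathscr B(\Omega,\varphi)$ with $w_{\mathbf{x}_0}(\mathbf{x}_0)\geq \varphi(\mathbf{x}_0)-2\epsilon$. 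This forces $\liminf_{\mathbf{x}\to\mathbf{x}_0}\Psi(\mathbf{x})\geq \varphi(\mathbf{x}_0)$, while $\Psi^*(\mathbf{x}_0)\leq\varphi(\mathbf{x}_0)$ comes from testing against a continuous extension of $\varphi$ that is $OPSH$ after subtracting $A\varrho$ (again using $Hess_{\mathbb{O}}(\varrho)>0$). Hence $\Psi=\Psi^*=\varphi$ on $\partial\Omega$.

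For interior continuity I would run Walsh's translation argument in the octonionic setting. For a small $\mathbf{h}\in\mathbb{O}^2$ let $\Omega_{\mathbf{h}}:=\{\mathbf{x}\in\Omega: \mathbf{x}+\mathbf{h}\in\Omega\}$ and define
\begin{equation*}
u_{\mathbf{h}}(\mathbf{x}):=\Psi(\mathbf{x}+\mathbf{h})-\eta(|\mathbf{h}|),
\end{equation*}
where $\eta$ is a modulus of continuity of the boundary values (available by the previous paragraph). Because translations are octonionic-affine isomorphisms, $u_{\mathbf{h}}\in OPSH(\Omega_{\mathbf{h}})$; if I paste $u_{\mathbf{h}}$ with $\Psi-\eta(|\mathbf{h}|)$ on $\Omega\setminus\Omega_{\mathbf{h}}$ (which is legal by Proposition \ref{prop:QSH-m} (7) once $\eta$ dominates the oscillation of $\Psi$ near $\partial\Omega$) the result lies in $\mathscr B(\Omega,\varphi)$ and is therefore $\leq \Psi$. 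Swapping the role of $\mathbf{h}$ and $-\mathbf{h}$ gives $|\Psi(\mathbf{x}+\mathbf{h})-\Psi(\mathbf{x})|\leq \eta(|\mathbf{h}|)$, i.e.\ uniform continuity of $\Psi$ on $\overline{\Omega}$, and in particular $\Psi=\Psi^*\in OPSH(\Omega)\cap C(\overline{\Omega})$.

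Finally, I would check maximality by a standard pasting argument. If $D\Subset\Omega$ and $v\in OPSH(D)$ with $\limsup_{\mathbf{y}\to\mathbf{x}}v(\mathbf{y})\leq\Psi(\mathbf{x})$ for all $\mathbf{x}\in\partial D$, then Proposition \ref{prop:QSH-m} (7) guarantees that
\begin{equation*}
\tilde v(\mathbf{x}):=\begin{cases}\max\{v(\mathbf{x}),\Psi(\mathbf{x})\}, & \mathbf{x}\in D,\\ \Psi(\mathbf{x}), & \mathbf{x}\in\Omega\setminus D,\end{cases}
\end{equation*}
lies in $OPSH(\Omega)$; since $\tilde v\in\mathscr B(\Omega,\varphi)$ we obtain $\tilde v\leq\Psi$ and hence $v\leq\Psi$ on $D$. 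The main obstacle, as in the complex and quaternionic cases, is the boundary barrier construction: here it is made delicate by the fact that $OPSH$ is not preserved under general changes of coordinates, so one must work directly with the Euclidean local defining function $\varrho$ and verify that the quadratic correction $-B|\mathbf{x}-\mathbf{x}_0|^2$ remains small compared to the positivity of $Hess_{\mathbb{O}}(\varrho)$; once that is in place, the rest of the argument follows the Walsh template verbatim.
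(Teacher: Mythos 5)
The paper does not actually supply a proof of this theorem: it is quoted as an instance of Harvey--Lawson's general Dirichlet duality result for $\mathbb{G}$-plurisubharmonic subequations, with the remark that a Walsh-style argument would also work. Your proposal fills in exactly that alternative route, adapting Walsh's classical barrier-plus-translation scheme to the octonionic setting. Structurally this is sound and is, in fact, more self-contained than the paper's bare citation: the barrier construction exploits $Hess_{\mathbb{O}}(\varrho)>0$ to get $A\varrho - B|\mathbf{x}-\mathbf{x}_0|^2 \in OPSH$ near $\mathbf{x}_0$ for $A\gg B$, translation invariance of $OPSH$ is clear from the definition via affine octonionic lines, and the pasting lemma Proposition \ref{prop:QSH-m}(7) does the gluing.

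Two details need repair, though neither is fatal. First, in the upper boundary estimate you say the extension $\tilde\varphi$ is ``$OPSH$ after subtracting $A\varrho$''; the sign is backwards. What you want is that $-\tilde\varphi + A\varrho \in OPSH$ (hence subharmonic) for $A$ large, so that $u + (-\tilde\varphi + A\varrho)$ is subharmonic for every $u\in\mathscr B(\Omega,\varphi)$ and the real maximum principle gives $u\leq \tilde\varphi - A\varrho$, hence $\Psi^*(\mathbf{x}_0)\leq\varphi(\mathbf{x}_0)$. As written, $\tilde\varphi - A\varrho \in OPSH$ would be a \emph{sub}solution and give a lower bound, not the needed upper bound. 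Second, in the translation step you paste $u_{\mathbf{h}}=\Psi(\cdot+\mathbf{h})-\eta$ against $\Psi-\eta(|\mathbf{h}|)$ on $\Omega\setminus\Omega_{\mathbf{h}}$. The pasting hypothesis at $\mathbf{x}\in\Omega\cap\partial\Omega_{\mathbf{h}}$ (where $\mathbf{x}+\mathbf{h}\in\partial\Omega$) becomes $\varphi(\mathbf{x}+\mathbf{h})-\eta \leq \Psi(\mathbf{x})-\eta$, i.e.\ $\varphi(\mathbf{x}+\mathbf{h})\leq\Psi(\mathbf{x})$, and the $\eta$'s cancel: the barrier estimate only gives $\Psi(\mathbf{x})\geq\varphi(\mathbf{x}+\mathbf{h})-\omega(|\mathbf{h}|)$, which is not enough. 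The fix is the standard one: paste $u_{\mathbf{h}}$ against $\Psi$ itself (not $\Psi-\eta$), i.e.\ take $\phi=\Psi$ on $\Omega\setminus\Omega_{\mathbf{h}}$ and $\phi=\max\{\Psi(\cdot+\mathbf{h})-\eta,\Psi\}$ on $\Omega_{\mathbf{h}}$; then the pasting condition reads $\varphi(\mathbf{x}+\mathbf{h})-\eta\leq\Psi(\mathbf{x})$, which holds once $\eta\geq\omega(|\mathbf{h}|)$. With these corrections the argument goes through and reproduces the conclusion of Harvey--Lawson in the special case at hand.
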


\subsection{Weighted transformation formula of $OPSH$ functions under automorphisms of the unit ball  }To  use the method of
  Bedford-Taylor \cite{BT} for complex Monge-Amp\`ere equation,  we need the automorphisms of the unit ball $
  B^2 .
$ It is  a model for  octonionic hyperbolic space. Automorphisms of  octonionic hyperbolic space is much complicated than
  the complex or  quaternionic  cases \cite{All,SW}. They are relatively easier written down in the model of {\it octonionic  Siegel upper half space}:
  \begin{equation}\label{eq:Siegel}
  \mathcal U:=\left\{( {\mathbf{y}}_{1}, {\mathbf{y}}_{2})\in\mathbb{O}^{2}:
2\operatorname{ Re} {\mathbf{y}}_{2}-| {\mathbf{y}}_{1}|^{2}>0\right\}.
 \end{equation}

For each $\mathbf{a} \in B^2$, we can construct a diffeomorphism from $B^2$ to itself (cf. Appendix)
\begin{equation}\label{eq:T--a}
   T_{\mathbf{a}}:= C^{-1}\circ D_{ \delta _{\mathbf{a}}} \circ  \tau_{ \zeta_{\mathbf{a}}}\circ  C  
\text{ such that } 
   T_{\mathbf{a}}(\mathbf{a})=\mathbf{0},
\end{equation}
  where   \eqref{eq:T--a}, the number $\delta _{\mathbf{a}}>0$ and  the boundary point
  $ \zeta _{\mathbf{a}} \in \partial \mathcal U $ are defined in \eqref{eq:delta-a} and
 \eqref{eq:zeta-a}, respectively, and  $C$ is the   \emph{Cayley transform} $C:B^2\longrightarrow \mathcal{ U} $   defined by
\begin{equation} \label{eq:Cayley}
\quad(\mathbf{x}_{1},\mathbf{x}_{2})\longmapsto\left(\sqrt 2 \mathbf{x}_{1}(1+\mathbf{x}_{2})^{-1},
(1-\mathbf{x}_{2})(1+\mathbf{x}_{2})^{-1}
\right):=( {\mathbf{y}}_{1}, {\mathbf{y}}_{2}).
\end{equation} It is a diffeomorphism  from the ball $B^2$  to $\mathcal U$
because   $1-| {\mathbf{x}} |^{2}>0$ if and only if 
\begin{equation}\label{eq:Siegel-diff}\begin{split}2 \operatorname{ Re} {\mathbf{y}}_{2}-| {\mathbf{y}}_{1}|^{2}=& 2\operatorname{ Re}\frac {1-\mathbf{x}_{2}
+\overline{\mathbf{x}}_{2}-|\mathbf{x}_{2}|^2 }{|1+\mathbf{x}_{2}|^2} -\frac {2| {\mathbf{x}}_{1}|^{2}}{|1+\mathbf{x}_{2}|^2}
= 2\frac {1-| {\mathbf{x}} |^{2}  }{|1+\mathbf{x}_{2}|^2} >0.
\end{split} \end{equation}  $C$ has  the inverse $ C^{-1}:\mathcal U \longrightarrow B^2$   given by
\begin{equation}\label{eq:C-1}
 ({\mathbf{y}}_{1},{\mathbf{y}}_{2})\longmapsto C^{-1}( \mathbf{y}_{1}, \mathbf{y}_{2}):=  \left(\sqrt 2 {\mathbf{y}}_{1}(1+{\mathbf{y}}_{2})^{-1}, (1-{\mathbf{y}}_{2})(1+{\mathbf{y}}_{2})^{-1}
\right),
\end{equation}by $(1+{\mathbf{y}}_{2})^{-1}$ commuting $ 1-{\mathbf{y}}_{2}  $.
For $\zeta=(\zeta_1,\zeta_2)\in\partial \mathcal U $, i.e.   $2\operatorname{ Re} \zeta_2= |
 \zeta_1|^2 $, the
 left translate  $\tau_\zeta:\mathcal U\rightarrow \mathcal U$ is given by
\begin{equation}\label{eqLeft-translate}
(\mathbf{x}_1, \mathbf{x}_2)\mapsto( {\mathbf{y}}_1,  {\mathbf{y}}_2):=\left (\mathbf{x}_1+\zeta_1, \mathbf{x}_2+\zeta_2+ \overline{\zeta}_1 \mathbf{x}_1\right ),
\end{equation} and for given  positive number $\delta$, the dilations is given by
$
D_{\delta}:(\mathbf{x}_1, \mathbf{x}_2)\longrightarrow  (\delta\mathbf{x}_1, \delta^{2}\mathbf{x}_2).
$
 We have
\begin{equation}\label{eq:C0}
  C(\mathbf{0})=(0,1 ) \qquad  {\rm and}   \qquad  C^{-1}(0,1 )=\mathbf{0}.
\end{equation}

We begin with the  transformation formula of the standard Laplacian operator on $\mathbb{R}^N$ under the inversion $\iota:\mathbb{R}^N \setminus\{0\} \rightarrow
\mathbb{R}^N \setminus\{0\} $ defined by $   x\mapsto \frac x{|x|^2}$. Obviously, $\iota\circ \iota=id_{\mathbb{R}^N \setminus\{0\}}$.
\begin{prop}\label{prop:conformal-covariance} Let $\Omega$ be a domain in $\mathbb{R}^N \setminus\{0\} $ for $N\geq 3$.  Then for $x\in \iota (\Omega)$ and $f\in C^2(\Omega)$, we have
\begin{equation}\label{eq:Laplacian-inversion}
   \triangle\left(\frac 1{|x|^{N-2}}f\left(\frac x{|x|^2}\right)\right)=\frac 1{|x|^{N +2}}(\triangle f)\left(\frac x{|x|^2}\right).
\end{equation}
where $\triangle$ be the standard Laplacian
operator on $\mathbb{R}^N$.
\end{prop}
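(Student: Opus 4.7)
The plan is to verify this classical Kelvin-transform identity by a direct computation in spherical coordinates, where the inversion $\iota$ takes a particularly simple form. Writing $r=|x|$, $s=|y|=1/r$, and $\omega=x/|x|=y/|y|$, the inversion fixes the angular variable and only inverts the radius. If $\tilde f(s,\omega)$ denotes $f$ expressed in spherical coordinates, then $f(\iota(x))=\tilde f(1/r,\omega)$, so $\partial_r$ acting on this composition equals $-s^{2}\partial_s$ by the chain rule, and therefore $\partial_r^{2}=s^{4}\partial_s^{2}+2s^{3}\partial_s$.

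First I would apply the spherical decomposition
\begin{equation*}
\triangle = \partial_r^{2}+\frac{N-1}{r}\partial_r+\frac{1}{r^{2}}\triangle_{S^{N-1}}
\end{equation*}
to $u(x)=r^{2-N}\tilde f(1/r,\omega)$ and expand by the Leibniz rule, organizing the result according to the order of derivative of $\tilde f$ that appears. The key algebraic cancellation is that the terms containing $\tilde f$ alone have coefficient
\begin{equation*}
(2-N)(1-N)r^{-N}+\frac{N-1}{r}(2-N)r^{1-N}=(2-N)\bigl[(1-N)+(N-1)\bigr]r^{-N}=0,
\end{equation*}
which is precisely what singles out the Kelvin exponent $2-N$ in dimension $N$.

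Second, I would substitute $\partial_r=-s^{2}\partial_s$ in the surviving terms and use $r=1/s$ to collect powers of $s$. After gathering the contributions from $\partial_r^{2}$, $\frac{N-1}{r}\partial_r$, and $\frac{1}{r^{2}}\triangle_{S^{N-1}}$, one arrives at
\begin{equation*}
\triangle u = s^{N+2}\left[\partial_s^{2}\tilde f+\frac{N-1}{s}\partial_s\tilde f+\frac{1}{s^{2}}\triangle_{S^{N-1}}\tilde f\right]=s^{N+2}(\triangle f)(y)=|x|^{-(N+2)}(\triangle f)(y),
\end{equation*}
which is the claimed identity. The computation is routine and presents no genuine obstacle; the only work is bookkeeping of the three types of terms produced by the Leibniz rule, and the entire content of the proposition is captured by the algebraic identity displayed above, which forces the weight to be $|x|^{2-N}$ in dimension $N$.
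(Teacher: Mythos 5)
Your proof is correct, and it takes a genuinely different route from the paper's. The paper works entirely in Cartesian coordinates: it expands $\triangle(gh)$ by the Leibniz rule, drops the $h\triangle g$ term using the harmonicity of $g=|x|^{2-N}$, then performs a direct index computation of $\triangle\bigl(f(x/|x|^2)\bigr)$ and of $2\nabla g\cdot\nabla\bigl(f(x/|x|^2)\bigr)$, watching the first-order terms cancel. You instead pass to spherical coordinates, exploit that $\iota$ fixes the angular variable and acts on the radius by $r\mapsto 1/r$, and reduce the problem to a one-dimensional radial Leibniz expansion plus the chain-rule substitution $\partial_r=-s^2\partial_s$. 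Both arguments hinge on the same underlying fact — the vanishing of the zeroth-order coefficient, which you exhibit as $(2-N)\bigl[(1-N)+(N-1)\bigr]=0$ and the paper exhibits as $\triangle|x|^{2-N}=0$ — but your decomposition makes it conceptually clearer why the weight $|x|^{2-N}$ is forced, and it replaces the paper's double-sum index bookkeeping with a scalar ODE-style computation. The trade-off is that your argument requires the reader to accept the spherical form of the Laplacian, whereas the paper's is elementary calculus from scratch; on the other hand, your route isolates the essential one-variable phenomenon and would generalize more cleanly to other conformally covariant operators. Your derivation checks out: the coefficient of $\partial_r[\tilde f]$ collects to $2(2-N)+(N-1)=3-N$, and after substituting $\partial_r^2=s^4\partial_s^2+2s^3\partial_s$ the combination $(N-3)s^{N+1}\partial_s\tilde f+2s^{N+1}\partial_s\tilde f=(N-1)s^{N+1}\partial_s\tilde f$ regenerates the spherical Laplacian scaled by $s^{N+2}$, as you claim.
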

\begin{proof} Note that
\begin{equation}\label{eq:Laplacian-inversion0}
  \triangle\left(\frac 1{|x|^{N-2}}f\left(\frac x{|x|^2}\right)\right)=\frac 1{|x|^{N-2}}\triangle \left( f\left(\frac x{|x|^2}\right)\right)+ 2 \nabla\frac
  1{|x|^{N-2}}\cdot \nabla \left(f\left(\frac
  x{|x|^2}\right)\right)
\end{equation}
since   $\triangle\left(\frac 1{|x|^{N-2}}\right) =0$, where $\nabla$ is the gradient on $\mathbb{R}^N$. It is direct to see that
\begin{equation}\label{eq:Laplacian-inversion1}\begin{split}
   \triangle \left( f\left(\frac x{|x|^2}\right)\right)=&\sum_{j=1}^N\frac {\partial}{\partial x_j}\sum_{k=1}^N\left(\frac {\delta_{jk}}{|x|^{2}}-\frac {2x_{j
   }x_k}{|x|^{4}}\right)\frac {\partial f}{\partial x_k}\left(\frac x{|x|^2}\right)\\
=&\sum_{j=1}^N \sum_{k,l=1}^N\left(\frac {\delta_{jk}}{|x|^{ 2}}-\frac {2x_{j }x_k}{|x|^{4}}\right)\left(\frac {\delta_{jl}}{|x|^{ 2}}-\frac {2x_{j
}x_l}{|x|^{4}}\right)\frac {\partial^2 f}{\partial x_l\partial
x_k}\left(\frac x{|x|^2}\right)\\
&+\sum_{k=1}^N\left(-\sum_{j=1}^N\frac {2\delta_{jk}x_j}{|x|^{ 4}}-\sum_{j\neq k}\frac {2 x_k}{|x|^{4}} -\frac {4 x_k}{|x|^{4}} +\sum_{j=1}^N\frac {8x_{j
}^2x_k}{|x|^{ 6}}  \right)\frac {\partial f}{\partial
x_k}\left(\frac x{|x|^2}\right)\\
=&\frac 1{|x|^{4 }}\triangle f  \left(\frac x{|x|^4}\right)-2(N-2)\sum_{k=1}^N\frac { x_k}{|x|^{ 4}}\frac {\partial f}{\partial x_k}\left(\frac x{|x|^2}\right).
\end{split} \end{equation}But
\begin{equation}\label{eq:Laplacian-inversion2}\begin{split}
   2\nabla\frac 1{|x|^{N-2}}\cdot \nabla \left(f\left(\frac x{|x|^2}\right)\right)=&2(N-2)\sum_{j=1}^N \frac {- x_j}{|x|^{N }}\sum_{l=1}^N\left(\frac
   {\delta_{jl}}{|x|^{2}}-\frac
   {2x_{j }x_l}{|x|^{4}}\right)\frac {\partial f}{\partial x_l}\left(\frac x{|x|^2}\right)\\
   =&2(N-2) \sum_{l=1}^N\frac { x_l}{|x|^{ N+2}}\frac {\partial f}{\partial x_l}\left(\frac x{|x|^2}\right).
\end{split} \end{equation}
Substituting \eqref{eq:Laplacian-inversion1}-\eqref{eq:Laplacian-inversion2} to \eqref{eq:Laplacian-inversion0}, we get \eqref{eq:Laplacian-inversion}.
\end{proof}

\begin{cor}\label{cor:conformal-covariance}
If $f$ is  subharmonic on $\Omega $, then $\frac 1{|x|^{N-2}}f\left(\frac x{|x|^2}\right)$ is subharmonic on $\iota(\Omega)$.
\end{cor}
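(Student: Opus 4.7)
The plan is to deduce the corollary from the pointwise transformation formula \eqref{eq:Laplacian-inversion} in Proposition \ref{prop:conformal-covariance} in two steps: first treat $f \in C^2(\Omega)$, then extend to arbitrary $f \in SH(\Omega)$ by mollification and a decreasing-limit argument.

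First, suppose $f \in C^2(\Omega)$ is subharmonic, so $\triangle f \geq 0$ on $\Omega$. Since $\iota$ is a smooth diffeomorphism of $\mathbb{R}^N \setminus\{0\}$ and the weight $|x|^{-(N-2)}$ is smooth and positive away from the origin, the transformed function
$$g(x) := \frac{1}{|x|^{N-2}} f\!\left(\frac{x}{|x|^{2}}\right)$$
lies in $C^2(\iota(\Omega))$. Applying Proposition \ref{prop:conformal-covariance} pointwise gives
$$\triangle g(x) \;=\; \frac{1}{|x|^{N+2}}(\triangle f)\!\left(\frac{x}{|x|^{2}}\right) \;\geq\; 0$$
on $\iota(\Omega)$, so $g$ is subharmonic there.

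Next, for general $f \in SH(\Omega)$ (not identically $-\infty$ on any component), consider the standard mollifications $f_\epsilon := f * \chi_\epsilon$, which are smooth and subharmonic on $\Omega_\epsilon := \{x \in \Omega : \mathrm{dist}(x,\partial\Omega) > \epsilon\}$ and decrease pointwise to $f$ as $\epsilon \downarrow 0$. By the smooth case just treated,
$$g_\epsilon(x) := \frac{1}{|x|^{N-2}} f_\epsilon\!\left(\frac{x}{|x|^{2}}\right)$$
is subharmonic on $\iota(\Omega_\epsilon)$. Since $|x|^{-(N-2)} > 0$, the sequence $g_\epsilon$ decreases pointwise to $g$ on $\iota(\Omega)$ as $\epsilon \downarrow 0$. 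Because the family $\{\iota(\Omega_\epsilon)\}_\epsilon$ exhausts $\iota(\Omega)$, on any fixed compact subset $g$ is the decreasing limit of a sequence of subharmonic functions, and such a limit is either identically $-\infty$ or subharmonic; the former is excluded since $f \not\equiv -\infty$. Upper semicontinuity of $g$ follows independently from upper semicontinuity of $f$ combined with continuity of $\iota$ and of the weight $|x|^{-(N-2)}$.

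The only real obstacle is the bookkeeping in the approximation step, namely ensuring that the pointwise $C^2$ identity \eqref{eq:Laplacian-inversion} transfers to the distributional/weak inequality $\triangle g \geq 0$ for merely subharmonic $f$. The mollification route above handles this cleanly by reducing to the smooth case; alternatively, one could interpret \eqref{eq:Laplacian-inversion} distributionally on $\iota(\Omega)$ and invoke upper semicontinuity directly to conclude $g \in SH(\iota(\Omega))$.
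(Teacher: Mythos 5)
Your proof is correct and follows the same approach as the paper: apply Proposition \ref{prop:conformal-covariance} for $C^2$ subharmonic functions, then pass to the general case by the decreasing mollification $f_\epsilon \downarrow f$. The paper states this in two lines; your version simply fills in the routine details of the approximation argument.
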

\begin{proof}   Proposition \ref{prop:conformal-covariance} gives the result for $f\in C^2(\Omega)$. The general case follows from smooth approximation
$f_\epsilon\downarrow f$.
\end{proof}
The following is  weighted  transformation formula of  $OPSH$  functions    under   the transformation $ T_{\mathbf{a}}$.
\begin{prop}\label{prop:OPSH-Ta-u} For given $\mathbf{a} \in B^2$ and $u\in OPSH(B^2)$,  the function
\begin{equation}\label{eq:T-a-u}
   (T_{\mathbf{a}}^*u)(\mathbf{x}):=
   \frac { 1  }{ |   \Psi_{\mathbf{a}}(\mathbf{x})   |^6 } u( T_{\mathbf{a}}(\mathbf{x})),\qquad {\rm where}\quad \Psi_{\mathbf{a}}(\mathbf{x}) = 2(1+ [T_{\mathbf{a}}(\mathbf{x} )]_2 )^{-1} (1+
   \mathbf{x}_2)   ,
\end{equation}
  is also $OPSH$ on $ B^2$.
\end{prop}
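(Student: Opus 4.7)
The plan is to verify the $OPSH$ property of $T_{\mathbf{a}}^*u$ by checking subharmonicity on every affine octonionic line, using the Kelvin-type conformal covariance of the $\mathbb{R}^8$-Laplacian (Proposition \ref{prop:conformal-covariance}, Corollary \ref{cor:conformal-covariance}) to absorb the weight $|\Psi_{\mathbf{a}}|^{-6}$. The exponent $6$ matches $N-2$ for $N=8$, the real dimension of an octonionic slice, which is the precise Kelvin exponent required.

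First I reduce to smooth $u$ via the standard mollification $u_\epsilon=u*\chi_\epsilon\in OPSH(B^2)\cap C^\infty$ (Proposition \ref{prop:QSH-m}(2)). Since $T_{\mathbf{a}}(B^2)\subset B^2$, we have $|[T_{\mathbf{a}}(\mathbf{x})]_2|<1$ for $\mathbf{x}\in B^2$, so $1+[T_{\mathbf{a}}(\mathbf{x})]_2\ne 0$ and $|\Psi_{\mathbf{a}}|^{-6}$ is smooth and strictly positive on $B^2$. Hence $T_{\mathbf{a}}^*u_\epsilon\downarrow T_{\mathbf{a}}^*u$, and by Proposition \ref{prop:QSH-m}(5) the limit is $OPSH$ provided each $T_{\mathbf{a}}^*u_\epsilon$ is; so we may assume $u\in C^\infty(B^2)$.

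For smooth $u$, fix an arbitrary affine octonionic line $L$ through $\mathbf{x}_0\in B^2$, parametrized by $\mathbf{t}\in\mathbb{O}\cong\mathbb{R}^8$. The restriction of $T_{\mathbf{a}}=C^{-1}\circ D_{\delta_{\mathbf{a}}}\circ\tau_{\zeta_{\mathbf{a}}}\circ C$ to $L$ involves only two octonionic parameters, namely the direction of $L$ and $\mathbf{t}$, so by alternativity (Lemma \ref{lem:associative}(ii)) all products occurring on the slice lie inside an associative subalgebra isomorphic to $\mathbb{R}$, $\mathbb{C}$, or $\mathbb{H}$. Within this subalgebra the pieces $\tau_{\zeta_{\mathbf{a}}}$ and $D_{\delta_{\mathbf{a}}}$ act as affine maps that preserve affine octonionic lines (verified by distributivity alone), while the Cayley transform restricts to the classical Möbius map $\mathbf{y}\mapsto(1-\mathbf{y})(1+\mathbf{y})^{-1}$ in the second coordinate together with the twist $\mathbf{x}_1\mapsto\sqrt{2}\mathbf{x}_1(1+\mathbf{x}_2)^{-1}$ in the first. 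A direct calculation (the model case $L=\{(\mathbf{t},\mathbf{t})\}$ already exhibits the pattern) then shows that $T_{\mathbf{a}}(L)$ lies inside another affine octonionic line of $B^2$, reparametrized by a Möbius transformation of $\mathbb{R}^8$.

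It remains to transfer subharmonicity: by Corollary \ref{cor:conformal-covariance} with $N=8$, pullback of a subharmonic function by an inversion of $\mathbb{R}^8$ yields a subharmonic function after multiplication by $|x|^{-(N-2)}=|x|^{-6}$, and a general $\mathbb{R}^8$-Möbius map contributes the analogous conformal factor by composition. Tracking the cumulative Kelvin factor through the four pieces of $T_{\mathbf{a}}$ on each $8$-dimensional slice identifies it with $|\Psi_{\mathbf{a}}(\mathbf{x})|^{-6}$, matching \eqref{eq:T-a-u}; combined with subharmonicity of $u$ on the image affine octonionic line, this gives subharmonicity of $T_{\mathbf{a}}^*u|_L$, which is what we need. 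The hardest step is exactly this identification: the non-associativity of $\mathbb{O}$ forbids a single global manipulation, so the argument must proceed line by line, with alternativity ensuring that the products appearing on each slice are unambiguous and compatible with standard Möbius-transformation calculus.
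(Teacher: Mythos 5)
Your overall strategy---restrict to affine octonionic lines, split $T_{\mathbf{a}}$ into its Cayley/translation/dilation pieces, and absorb the weight via the $N=8$ Kelvin transform---is exactly the paper's route (Proposition \ref{prop:OPSH-conformal-covariance} feeding into the composition \eqref{eq:T-a-u'}), so you have the right architecture. However, there are two genuine gaps.

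First, the alternativity claim as stated is wrong. You write that the restriction of $T_{\mathbf{a}}$ to a line ``involves only two octonionic parameters, namely the direction of $L$ and $\mathbf{t}$,'' and hence ``all products occurring on the slice lie inside an associative subalgebra.'' This is false: the translation parameter $\zeta_{\mathbf{a}}$, the base point $(\mathbf{a}_1,\mathbf{a}_2)$ of $L$, and the direction $\alpha$ are all independent octonions, so Artin's theorem gives no global associative subalgebra. What actually saves the Cayley computation is a single localized alternativity step, $(\alpha\mathbf{w})\mathbf{w}^{-1}=\alpha(\mathbf{w}\mathbf{w}^{-1})=\alpha$ with $\mathbf{w}=1+\mathbf{a}_2+\mathbf{t}$ (two elements only), combined with distributivity for everything else---precisely the manipulation in \eqref{eq:Cayley-line-t}. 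The ``model case'' $L=\{(\mathbf{t},\mathbf{t})\}$ you cite is misleadingly easy because its direction is real, so products with it trivially associate; it does not exhibit the obstruction that alternativity must resolve in the general case.

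Second, the identification of the cumulative weight with $|\Psi_{\mathbf{a}}(\mathbf{x})|^{-6}$ is asserted rather than proven, and this is really the content of the proposition. One must show (i) that $\tau_{\zeta}$ and $D_\delta$ pull back $OPSH$ to $OPSH$ with \emph{trivial} weight, which already requires the change-of-variables argument on lines to pass between the parametrizations $(\alpha\mathbf{t},\mathbf{t})$ and $(\mathbf{t},\alpha\mathbf{t})$; (ii) that $C$ and $C^{-1}$ each contribute a weight $|1+[C(\mathbf{x})]_2|^6$ via the one-variable Kelvin transform applied to $v(\mathbf{s})=u(\sqrt2\alpha+\beta\mathbf{s},-1+2\mathbf{s})$ and the inversion $\mathbf{s}=(1+\mathbf{a}_2+\mathbf{t})^{-1}$; and (iii) that the product of the two Cayley weights, using $1+[C(\mathbf{x})]_2=2(1+\mathbf{x}_2)^{-1}$, simplifies (up to the irrelevant constant $4^6$) to $|\Psi_{\mathbf{a}}(\mathbf{x})|^{-6}$ with $\Psi_{\mathbf{a}}$ as in \eqref{eq:T-a-u}. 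Until these three computations are carried out, ``tracking the cumulative Kelvin factor'' is a restatement of the goal, not a proof.
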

$ T_{\mathbf{a}}$ is less smooth than
  complex or quaternionic  automorphisms of the unit ball,    because of the
nonassociativity of octonions
(cf. Remark \ref{rem:smooth}). However, the following smoothness, which   will be proved in the appendix, is sufficient for our purpose.
\begin{prop} \label{prop:Psi-a}  Fix a $0<\varepsilon<1$. Then,
(1) $\Psi_{\mathbf{a}}(\mathbf{ x})$ is smooth for $\mathbf{a},\mathbf{ x}\in \overline{B}(\mathbf{0}, 1-\varepsilon)$.
\\
(2) For  $\mathbf{a}\in B(\mathbf{0}, 1-\varepsilon)$,   $\Psi_{\mathbf{a}}( \cdot)$ is
continuous on $\overline{B^2}$; There exists a constant $C_\varepsilon>0$ only depending on $\varepsilon$ such that $|\Psi_{\mathbf{a}}( \mathbf{x})|\geq C_\varepsilon$ for any $\mathbf{x}\in
B^2,\mathbf{a}\in B(\mathbf{0}, 1-\varepsilon) $.
\\
(3) For   $\mathbf{x}\in \partial B^2$,  $\Psi_{\mathbf{a}}( \mathbf{x} )$ is smooth for $\mathbf{a}\in B(\mathbf{0}, 1-\varepsilon)$ with $C^k$-norm  
bounded by a constant only depending on $\varepsilon$ and given $k\in \mathbb{N}$.
   \end{prop}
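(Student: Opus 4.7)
The plan is to compute $\Psi_{\mathbf{a}}(\mathbf{x})$ explicitly by composing the four factors in $T_{\mathbf{a}}=C^{-1}\circ D_{\delta_{\mathbf{a}}}\circ \tau_{\zeta_{\mathbf{a}}}\circ C$, and to deduce all three statements from the resulting closed-form expression. As a preliminary step, I would determine $\delta_{\mathbf{a}}$ and $\zeta_{\mathbf{a}}$ from the normalization $T_{\mathbf{a}}(\mathbf{a})=\mathbf{0}$: working in the Siegel model, the equation $D_{\delta_{\mathbf{a}}}(\tau_{\zeta_{\mathbf{a}}}(C(\mathbf{a})))=(0,1)$ forces $\delta_{\mathbf{a}}^{2}=|1+\mathbf{a}_2|^{2}/(1-|\mathbf{a}|^{2})$, $(\zeta_{\mathbf{a}})_1=-\sqrt{2}\,\mathbf{a}_1(1+\mathbf{a}_2)^{-1}$, and an analogous formula for $(\zeta_{\mathbf{a}})_2$. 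For $\mathbf{a}\in \overline{B}(\mathbf{0},1-\varepsilon)$ one has $|\mathbf{a}|\leq 1-\varepsilon$ and $|1+\mathbf{a}_2|\geq \varepsilon$, so these quantities are smooth in $\mathbf{a}$ with $C^k$-norms, and $\delta_{\mathbf{a}}$ itself is bounded above and below by constants depending only on $\varepsilon$.

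Next, setting $z:=[D_{\delta_{\mathbf{a}}}\circ \tau_{\zeta_{\mathbf{a}}}\circ C(\mathbf{x})]_2$, the identity $(1-z)(1+z)^{-1}+1=2(1+z)^{-1}$, valid in the associative subalgebra generated by $z$, gives $1+[T_{\mathbf{a}}(\mathbf{x})]_2=2(1+z)^{-1}$, hence $\Psi_{\mathbf{a}}(\mathbf{x})=(1+z)(1+\mathbf{x}_2)$. Inserting the definitions of $C$, $\tau_{\zeta_{\mathbf{a}}}$ and $D_{\delta_{\mathbf{a}}}$, I would distribute and simplify each factor of the form $(\cdot)(1+\mathbf{x}_2)^{-1}(1+\mathbf{x}_2)$: by alternativity (Lemma \ref{lem:associative}(ii)), $\mathbf{x}_2$ together with any single extra octonion generates an associative subalgebra, so the pattern collapses to $(\cdot)$. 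The result should be the degree-one polynomial
\begin{equation*}
\Psi_{\mathbf{a}}(\mathbf{x})=(1+\mathbf{x}_2)+\delta_{\mathbf{a}}^{2}(1-\mathbf{x}_2)+\delta_{\mathbf{a}}^{2}(\zeta_{\mathbf{a}})_2(1+\mathbf{x}_2)+\sqrt{2}\,\delta_{\mathbf{a}}^{2}\,\overline{(\zeta_{\mathbf{a}})_1}\,\mathbf{x}_1,
\end{equation*}
whose octonionic coefficients are smooth in $\mathbf{a}$ with bounds uniform over $\overline{B}(\mathbf{0},1-\varepsilon)$.

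From this representation, (1) and (3) are immediate: the right-hand side is polynomial in $\mathbf{x}$ and smooth in $\mathbf{a}$, with $C^k$-norms controlled by $\varepsilon$ alone. For (2), continuity on $\overline{B^2}$ is clear, and the uniform lower bound follows by joint compactness of $\overline{B}(\mathbf{0},1-\varepsilon)\times \overline{B^2}$ once nonvanishing of $\Psi_{\mathbf{a}}(\mathbf{x})$ is verified pointwise. On the open ball the original factored form $2(1+[T_{\mathbf{a}}(\mathbf{x})]_2)^{-1}(1+\mathbf{x}_2)$ is a product of two nonzero octonions; at a boundary point $\mathbf{x}\in\partial B^2$ with $\mathbf{x}\neq(0,-1)$, we still have $(1+\mathbf{x}_2)\neq 0$ and $T_{\mathbf{a}}(\mathbf{x})\neq(0,-1)$ (since $(0,-1)$ is the unique $C$-preimage of the Siegel point at infinity, which is preserved by translations and dilations), so both factors remain nonzero; at the exceptional point $\mathbf{x}=(0,-1)$ the polynomial formula yields $\Psi_{\mathbf{a}}(0,-1)=2\delta_{\mathbf{a}}^{2}>0$.

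The main obstacle is octonionic bookkeeping: each instance of $((\cdot)(1+\mathbf{x}_2)^{-1})(1+\mathbf{x}_2)=(\cdot)$ must be justified by explicitly identifying a two-generator subalgebra to which alternativity applies, and similarly for the identity $(1-z)(1+z)^{-1}+1=2(1+z)^{-1}$ used to pass from $[T_{\mathbf{a}}(\mathbf{x})]_2$ to a polynomial in $\mathbf{x}$. A secondary subtlety is checking that $T_{\mathbf{a}}$ genuinely fixes $(0,-1)$, which rests on the elementary observation that $C$ sends $(0,-1)$ to the point at infinity of $\mathcal U$ and that both $\tau_{\zeta_{\mathbf{a}}}$ and $D_{\delta_{\mathbf{a}}}$ preserve infinity; once the parenthesizations are tracked consistently through the composition, the remainder is routine computation.
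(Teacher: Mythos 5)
There is a genuine gap at the heart of the argument: the claimed collapse of $\Psi_{\mathbf{a}}(\mathbf{x})$ to a degree-one polynomial in $\mathbf{x}$ is false, and it is false precisely because the step you describe as ``octonionic bookkeeping'' misapplies alternativity. Writing $\Psi_{\mathbf{a}}(\mathbf{x})=(1+z)(1+\mathbf{x}_2)$ is fine, and distributivity splits $z(1+\mathbf{x}_2)$ into three pieces. The first piece $\big[(1-\mathbf{x}_2)(1+\mathbf{x}_2)^{-1}\big](1+\mathbf{x}_2)=1-\mathbf{x}_2$ does collapse, because it involves only the single octonion $\mathbf{x}_2$ and alternativity applies. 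But the last piece is
\begin{equation*}
\mathcal{R}(\mathbf{x})=\sqrt 2\,\delta_{\mathbf{a}}^{2}\Big(\overline{[\zeta_{\mathbf{a}}]}_1\big(\mathbf{x}_1(1+\mathbf{x}_2)^{-1}\big)\Big)(1+\mathbf{x}_2),
\end{equation*}
which is a product of the \emph{three} independent octonions $\overline{[\zeta_{\mathbf{a}}]}_1$, $\mathbf{x}_1$, $1+\mathbf{x}_2$. Alternativity (Lemma~\ref{lem:associative}(ii)) only guarantees associativity of the subalgebra generated by \emph{two} elements; there is no subalgebra containing all three here, so $\big(a(bc^{-1})\big)c\neq ab$ in general, and $\mathcal{R}(\mathbf{x})$ does not reduce to $\sqrt 2\,\delta_{\mathbf{a}}^{2}\,\overline{[\zeta_{\mathbf{a}}]}_1\mathbf{x}_1$.

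This is not merely a pedantic point: the paper's Remark~\ref{rem:smooth} gives an explicit computation in the quaternionic-pair coordinates $(\mathbf{c},\mathbf{d})$ showing that the directional quotient of $\mathcal{R}$ near $\mathbf{x}=(0,-1)$ has no limit, so $\mathcal{R}$ is only $\mathrm{Lip}^{1/2}$ on $\overline{B^2}$ and is not differentiable at that point. If your polynomial formula were correct, $\Psi_{\mathbf{a}}$ would be smooth on all of $\overline{B^2}$, which contradicts the careful phrasing of the proposition itself (smoothness is asserted only on $\overline{B}(\mathbf{0},1-\varepsilon)$; on $\overline{B^2}$ only continuity is claimed). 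The paper's actual proof keeps $\mathcal{R}$ as it stands: part (1) is smoothness away from $\partial B^2$, where $1+\mathbf{x}_2$ is bounded away from zero; part (2) requires the separate $\mathrm{Lip}^{1/2}$ estimate for $\mathcal{R}$ (using $|\mathbf{x}_1|\leq(1-|\mathbf{x}_2|^2)^{1/2}$ to control the derivative) to get continuity up to the boundary, plus a separate argument splitting $\overline{B^2}$ into $\{|1+\mathbf{x}_2|<\eta_0\}$ (where $\mathcal R$ is small and $\operatorname{Re} G_{\mathbf{a}}\geq 1$ drives the lower bound) and its complement (where one uses compactness and smoothness); and part (3) uses only smoothness in $\mathbf{a}$ with $\mathbf{x}$ fixed. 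Your suggested computation of $\delta_{\mathbf{a}}$, $\zeta_{\mathbf{a}}$, and the identity $1+[T_{\mathbf{a}}(\mathbf{x})]_2=2(1+z)^{-1}$ are all correct and agree with the paper; it is only the final associativity-based simplification that fails, and repairing it requires replacing the polynomial claim with the $\mathrm{Lip}^{1/2}$ analysis of $\mathcal{R}$.
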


   \begin{cor}\label{cor:OPSH-Ta0} (1) $T_{\mathbf{a}} $ is a diffeomorphism from $B^2$ to itself; $T_{\mathbf{a}}(\mathbf{ x})$ is smooth for $\mathbf{a},\mathbf{
   x}\in \overline{B}(\mathbf{0}, 1-\varepsilon)$.
\\
(2) $T_{\mathbf{a}} $ and $T_{\mathbf{a}}^{-1} $ are continuous  on  $\overline{B^2}$ for fixed $\mathbf{a} \in \overline{B}(\mathbf{0}, 1-\varepsilon)$.
\\
(3) For $\mathbf{x}\in \partial B^2$,  $T_{\mathbf{a}}(\mathbf{x})$ and $T_{\mathbf{a}}^{-1}(\mathbf{x})$  are smooth in $\mathbf{a}\in B(\mathbf{0}, 1-\varepsilon) $ with uniformly bounded $C^k$-norm.
\end{cor}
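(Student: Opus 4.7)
The plan is to exploit the factorization $T_{\mathbf{a}}=C^{-1}\circ D_{\delta_{\mathbf{a}}}\circ \tau_{\zeta_{\mathbf{a}}}\circ C$ and verify the three assertions one factor at a time, using Proposition \ref{prop:Psi-a} to supply the nontrivial $\mathbf{a}$-dependent control. First I note the building blocks. By \eqref{eq:Cayley}--\eqref{eq:Siegel-diff} and the explicit inverse \eqref{eq:C-1}, $C$ is a smooth diffeomorphism from $B^2$ onto $\mathcal{U}$; both $C$ and $C^{-1}$ are smooth wherever $1+\mathbf{x}_2$, respectively $1+\mathbf{y}_2$, is invertible in $\mathbb{O}$, i.e.\ nonzero. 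The translation $\tau_{\zeta}$ in \eqref{eqLeft-translate} is polynomial with polynomial inverse, and $D_{\delta}$ with $\delta>0$ is linear and invertible; both preserve $\mathcal{U}$ and $\overline{\mathcal{U}}$, and both depend smoothly on their parameters. The parameters $\zeta_{\mathbf{a}}\in\partial\mathcal{U}$ and $\delta_{\mathbf{a}}>0$ are smooth functions of $\mathbf{a}\in B^2$ by their explicit formulae \eqref{eq:delta-a}, \eqref{eq:zeta-a}. The arrangement $T_{\mathbf{a}}(\mathbf{a})=\mathbf{0}$ is guaranteed by how $\delta_{\mathbf{a}},\zeta_{\mathbf{a}}$ were chosen.

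For (1), joint smoothness in $(\mathbf{a},\mathbf{x})\in\overline{B}(\mathbf{0},1-\varepsilon)^{2}$ follows once we check that no intermediate value meets the bad locus of $C$ or $C^{-1}$. For $\mathbf{x}\in\overline{B}(\mathbf{0},1-\varepsilon)$ one has $|1+\mathbf{x}_{2}|\ge\varepsilon$, so $C$ applies smoothly. The final $C^{-1}$ application requires $1+[T_{\mathbf{a}}(\mathbf{x})]_{2}\ne 0$, which is precisely the content of the defining identity $\Psi_{\mathbf{a}}(\mathbf{x})=2(1+[T_{\mathbf{a}}(\mathbf{x})]_{2})^{-1}(1+\mathbf{x}_{2})$ combined with Proposition \ref{prop:Psi-a}(1): the smoothness of $\Psi_{\mathbf{a}}(\mathbf{x})$ and of $1+\mathbf{x}_{2}$ forces smoothness of $(1+[T_{\mathbf{a}}(\mathbf{x})]_{2})^{-1}$. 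For (2), fix $\mathbf{a}\in\overline{B}(\mathbf{0},1-\varepsilon)$ and notice that $C$ and $C^{-1}$ extend continuously to boundary maps between $\partial B^{2}\setminus\{(0,-1)\}$ and $\partial\mathcal{U}$, while $\tau_{\zeta_{\mathbf{a}}},D_{\delta_{\mathbf{a}}}$ extend continuously to $\partial\mathcal{U}$. The only delicate point is $\mathbf{x}\to(0,-1)$: here $C(\mathbf{x})$ escapes to infinity in $\mathcal{U}$, the affine maps $\tau_{\zeta_{\mathbf{a}}},D_{\delta_{\mathbf{a}}}$ preserve the escape direction, and $C^{-1}$ maps these escaping points back to $(0,-1)$. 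The required non-degeneracy at this step is the uniform lower bound $|\Psi_{\mathbf{a}}(\mathbf{x})|\ge C_{\varepsilon}$ from Proposition \ref{prop:Psi-a}(2): it prevents a second, non-cancelling, singularity of $C^{-1}$ from appearing. Continuity of $T_{\mathbf{a}}^{-1}$ follows from the same argument because $T_{\mathbf{a}}^{-1}=C^{-1}\circ\tau_{\zeta_{\mathbf{a}}}^{-1}\circ D_{\delta_{\mathbf{a}}}^{-1}\circ C$ is of the identical form.

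For (3), fix $\mathbf{x}\in\partial B^{2}$ and vary $\mathbf{a}\in B(\mathbf{0},1-\varepsilon)$. Because $T_{\mathbf{a}}(\mathbf{x})$ is a composition of maps that depend smoothly on $\mathbf{a}$, with the only potentially singular step being the final $C^{-1}$ for which $1+[T_{\mathbf{a}}(\mathbf{x})]_{2}$ is controlled via $\Psi_{\mathbf{a}}(\mathbf{x})$, the desired smoothness and the uniform $C^{k}$-bound transfer directly from Proposition \ref{prop:Psi-a}(3). The same reasoning handles $T_{\mathbf{a}}^{-1}$. The main obstacle will be making the cancellation at $(0,-1)\in\partial B^{2}$ in (2) rigorous: because $C^{-1}$ involves the octonionic inverse of $1+\mathbf{y}_{2}$ and $\mathbf{y}_{2}$ here is a rational expression already containing another octonionic inverse from $C$, the manipulation of the nested expressions must be justified using the alternativity of $\mathbb{O}$ (Lemma \ref{lem:associative}(ii)) to ensure naive cancellations live inside an associative subalgebra generated by two elements — this is exactly the ingredient that forces the argument to be less smooth than in the complex or quaternionic case and explains the weaker regularity asserted across the boundary.
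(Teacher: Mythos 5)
The paper's proof of this corollary is a one-liner: read off the three assertions from the explicit closed-form expression
\[
[T_{\mathbf{a}}(\mathbf{x})]_1=\tfrac 1{\sqrt 2}\,\delta_{\mathbf{a}}\bigl\{\sqrt 2\,\mathbf{x}_1(1+\mathbf{x}_2)^{-1}+[\zeta_{\mathbf{a}}]_1\bigr\}\bigl[(1+\mathbf{x}_2)\Psi_{\mathbf{a}}(\mathbf{x})^{-1}\bigr],\qquad
[T_{\mathbf{a}}(\mathbf{x})]_2=-1+2(1+\mathbf{x}_2)\Psi_{\mathbf{a}}(\mathbf{x})^{-1},
\]
i.e.\ \eqref{eq:T-a}, together with Proposition \ref{prop:Psi-a}. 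You instead argue factor by factor through the composition $C^{-1}\circ D_{\delta_{\mathbf{a}}}\circ\tau_{\zeta_{\mathbf{a}}}\circ C$. That route works for part (1) (indeed more easily than you indicate: since $\overline{\mathcal U}\subset\{\operatorname{Re}\mathbf y_2\ge 0\}$ one has $|1+\mathbf y_2|\ge 1$ there, so $C^{-1}$ is smooth on all of $\overline{\mathcal U}$ with no need of Proposition \ref{prop:Psi-a}(1)), but it has a genuine gap for parts (2) and (3).

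For (2), the entire content is the behaviour at the single boundary point $(0,-1)$, and your treatment of it is a heuristic, not a proof. The sentence ``$C(\mathbf{x})$ escapes to infinity, the affine maps preserve the escape direction, and $C^{-1}$ maps these escaping points back to $(0,-1)$'' is exactly the claim that needs to be established, not a reason. When $\mathbf{x}\to(0,-1)$ on $\partial B^2$, \emph{both} coordinates $[C(\mathbf{x})]_1$ and $[C(\mathbf{x})]_2$ diverge (at rates $|1+\mathbf{x}_2|^{-1/2}$ and $|1+\mathbf{x}_2|^{-1}$ respectively), and after applying $\tau_{\zeta_{\mathbf{a}}}$ and $D_{\delta_{\mathbf{a}}}$ one must verify that the resulting point of $\mathcal U$ still escapes in a direction that $C^{-1}$ collapses to $(0,-1)$. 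This is precisely what the explicit formula \eqref{eq:T-a} makes manifest: one expands $[T_{\mathbf a}(\mathbf x)]_1$ into a term $\delta_{\mathbf a}\bigl[\mathbf x_1(1+\mathbf x_2)^{-1}\bigr]\bigl[(1+\mathbf x_2)\Psi_{\mathbf a}(\mathbf x)^{-1}\bigr]$ plus a manifestly continuous term, and then uses $|\mathbf x_1|\le\sqrt{2}\,|1+\mathbf x_2|^{1/2}$ together with the lower bound $|\Psi_{\mathbf a}|\ge C_\varepsilon$ of Proposition \ref{prop:Psi-a}(2) to bound the product by $O(|1+\mathbf x_2|^{1/2})\to 0$. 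You invoke the right lower bound from Proposition \ref{prop:Psi-a}(2), but without deriving \eqref{eq:T-a} you have no place to use it. You flag this yourself (``the main obstacle will be making the cancellation at $(0,-1)$ rigorous''), so you know the gap exists; but the phrase ``must be justified using alternativity'' names an ingredient without doing the computation — and the computation is exactly what the paper's derivation of \eqref{eq:T-a} from \eqref{eq:T-G}--\eqref{eq:Ga} (and the $Lip^{1/2}$ analysis in Remark \ref{rem:smooth}) accomplishes.

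For (3), the factor-by-factor argument fails outright at $\mathbf{x}=(0,-1)\in\partial B^2$: the first map $C$ in the chain is undefined there, so there is nothing to trace. With the formula \eqref{eq:T-a} this case is trivial: $\mathbf{x}_1=0$ and $1+\mathbf{x}_2=0$ force $T_{\mathbf a}((0,-1))=(0,-1)$ for every $\mathbf a$, a constant (hence smooth with all $C^k$-norms zero). For general $\mathbf{x}\in\partial B^2$ the uniform $C^k$-bound in $\mathbf a$ comes from substituting Proposition \ref{prop:Psi-a}(3) together with the smooth $\mathbf a$-dependence of $\delta_{\mathbf a},\zeta_{\mathbf a}$ into \eqref{eq:T-a}, again something the compositional route cannot deliver near $(0,-1)$. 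In short: replace the composition-tracing by the explicit formula \eqref{eq:T-a}; once you have it, all three parts are immediate consequences of Proposition \ref{prop:Psi-a}.
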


To prove $T_{\mathbf{a}}^*u$ is
  (\ref{eq:T-a-u}) is $OPSH$, by definition of $T_{\mathbf{a}} $ in \eqref{eq:T--a}, we need to know the transformation formula of $OPSH$ functions under
  dilations, left translations and the Cayley transformation.
\begin{prop}\label{prop:OPSH-conformal-covariance} (1) For $\zeta\in \partial \mathcal U$ and $\delta>0$, if $u\in OPSH(\mathcal U )$, then
$u \circ    \tau_{ \zeta } , u \circ D_{ \delta }  \in OPSH(\mathcal U )$.
\\
(2)
 If $u\in OPSH(\mathcal U )$, then $|1+[C(\mathbf{x})]_2|^6u(C(\mathbf{x}))$ is $OPSH$ on $B^2$; Conversely, If $u\in OPSH( B^2)$, then
 $|1+[C^{-1}(\mathbf{x})]_2|^6u(C^{-1}(\mathbf{x}))$ is $OPSH$ on $\mathcal U$.
\end{prop}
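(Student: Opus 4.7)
For Part (1), the plan is to verify that both $D_\delta$ and $\tau_\zeta$ send affine octonionic lines to affine octonionic lines. Under $D_\delta$, the line $\mathbf{p}+\{(t,\mathbf{a}t):t\in\mathbb{O}\}$ maps to $(\delta\mathbf{p}_1,\delta^2\mathbf{p}_2)+\{(s,\delta\mathbf{a}s):s\in\mathbb{O}\}$ via $s=\delta t$; under $\tau_\zeta$, formula \eqref{eqLeft-translate} shows the same line maps to $\tau_\zeta(\mathbf{p})+\{(t,(\mathbf{a}+\overline{\zeta}_1)t):t\in\mathbb{O}\}$, where only left multiplications appear so no associativity issue arises. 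Both reparametrizations are affine bijections of $\mathbb{R}^8$, so subharmonicity of $u$ along the image line yields subharmonicity of $u\circ D_\delta$ and $u\circ\tau_\zeta$ along the original line; upper semi-continuity is preserved by continuity of the maps.

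For Part (2), since $\mathbf{x}_2$ and $1$ generate a commutative associative subalgebra of $\mathbb{O}$, $1+[C(\mathbf{x})]_2=1+(1-\mathbf{x}_2)(1+\mathbf{x}_2)^{-1}=2(1+\mathbf{x}_2)^{-1}$, so the function in question equals $v(\mathbf{x})=64|1+\mathbf{x}_2|^{-6}u(C(\mathbf{x}))$. I would restrict $v$ to an affine octonionic line $L_0\subset\mathbb{O}^2$. For a Case 1 line $L_0=\mathbf{p}+\{(t,\mathbf{a}t):t\in\mathbb{O}\}$ with $\mathbf{a}\neq 0$, introduce $\mathbf{w}:=1+\mathbf{p}_2+\mathbf{a}t$, an $\mathbb{R}$-affine bijection from $t$ to an open subset of $\mathbb{O}$ bounded away from $0$ on $L_0\cap B^2$ (since $|\mathbf{x}_2|<1$). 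Working in the associative subalgebra generated by $\mathbf{a},\mathbf{w}$ (justified by alternativity, Lemma \ref{lem:associative}(ii)), a direct computation yields
\begin{equation*}
C(\mathbf{p}+(t,\mathbf{a}t))=(\sqrt{2}\mathbf{a}^{-1},-1)+(\sqrt{2}\mathbf{q}_1\mathbf{w}^{-1},2\mathbf{w}^{-1}),\qquad \mathbf{q}_1:=\mathbf{p}_1-\mathbf{a}^{-1}(1+\mathbf{p}_2),
\end{equation*}
so the image traces (a subset of) an affine octonionic line in $\mathcal{U}$ as $\mathbf{z}=\mathbf{w}^{-1}$ varies (a Case 1 line with direction $\sqrt{2}\mathbf{q}_1^{-1}$ when $\mathbf{q}_1\neq 0$, a Case 2 line when $\mathbf{q}_1=0$). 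Case 2 lines in $B^2$ and the degenerate case $\mathbf{a}=0$ are handled analogously and more easily.

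The restriction of $u\circ C$ to $L_0$, re-expressed as a function of $\mathbf{w}$, then equals $\hat u(\mathbf{w}^{-1})$ for some $\hat u$ subharmonic on the image of $\mathbf{w}^{-1}$, obtained from $u$ along the image line composed with an $\mathbb{R}$-linear bijection of $\mathbb{O}$. Setting $\hat u_1(\mathbf{w}):=\hat u(\overline{\mathbf{w}})$ (still subharmonic as conjugation is orthogonal on $\mathbb{R}^8$), the restriction of $v$ to $L_0$ reads
\begin{equation*}
64\,|\mathbf{w}|^{-6}\,\hat u_1(\mathbf{w}/|\mathbf{w}|^2),
\end{equation*}
which is precisely the form in Corollary \ref{cor:conformal-covariance} with $N=8$ (the exponent $N-2=6$ matches the weight). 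Hence $v|_{L_0}$ is subharmonic in $\mathbf{w}$ on its domain, and therefore subharmonic in $t$ via the affine change. Upper semi-continuity of $v$ follows from the continuity of $C$ on $B^2$, and the converse statement is proved identically using the symmetric identity $1+[C^{-1}(\mathbf{y})]_2=2(1+\mathbf{y}_2)^{-1}$. The main obstacle is the explicit computation of $C$ along an affine octonionic line in the non-associative setting: each reordering of products must be justified by restricting to a two-generator subalgebra, and the various degenerate configurations (of $\mathbf{a}$, of $\mathbf{q}_1$, and the Case 2 lines) must be treated separately.
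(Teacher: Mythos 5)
Your proof is correct and follows essentially the same route as the paper's: for Part (1), both observe that $D_\delta$ and $\tau_\zeta$ send affine octonionic lines to affine octonionic lines by affine reparametrizations (though the paper works with the auxiliary parametrization $(\mathbf{a}_1+\alpha\mathbf{t},\mathbf{a}_2+\mathbf{t})$ and has to argue its equivalence, whereas you work directly with the defining parametrization $\mathbf{p}+(t,\mathbf{a}t)$); and for Part (2), both compute $C$ along a line, recognize the image as an affine octonionic line parametrized by $\mathbf{w}^{-1}$ with $\mathbf{w}=1+\mathbf{x}_2$ restricted to the line, and invoke Corollary \ref{cor:conformal-covariance} with $N=8$ to absorb the weight $|\mathbf{w}|^{-6}$. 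Your extra step of conjugating (introducing $\hat u_1(\mathbf{w})=\hat u(\overline{\mathbf{w}})$) to reconcile octonionic inversion $\mathbf{w}\mapsto\overline{\mathbf{w}}/|\mathbf{w}|^2$ with the Euclidean inversion $\mathbf{w}\mapsto\mathbf{w}/|\mathbf{w}|^2$ used in the corollary is a detail the paper glosses over, so that is a small improvement in rigor rather than a different method.
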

\begin{proof} (1) By definition,   if $u\in OPSH(\mathcal U)$, then for any $\alpha \in \mathbb{O}$ and  $\mathbf{a}=(\mathbf{a}_1  ,\mathbf{a}_2)\in \mathcal U$,
$u(\mathbf{a}_1+ \mathbf{\alpha}
\mathbf{t},\mathbf{a}_2+\mathbf{t})$ and  $u(\mathbf{a}_1+
\mathbf{t},\mathbf{a}_2 )$  are  subharmonic   for   $\mathbf{t}$ near the origin $\mathbf{0} $. It is equivalent to require $u(\mathbf{a}_1+
\mathbf{t},\mathbf{a}_2+\mathbf{\alpha}\mathbf{t})$  and $u(\mathbf{a}_1,\mathbf{a}_2+
\mathbf{t} )$ to be subharmonic  for any $\alpha \in \mathbb{O}$,  $\mathbf{a} \in \Omega$ and     $\mathbf{t}$ near the origin $\mathbf{0}\in \mathbb{O}$. This is   because
\begin{equation*}\begin{split}
u(\mathbf{a}  ) &\leq \frac 1 {|D(\mathbf{0},r)|}\int_{|\mathbf{t}|\leq r}u(\mathbf{a}_1+ \mathbf{\alpha} \mathbf{t},\mathbf{a}_2+\mathbf{t})dV(\mathbf{t})\\
&=\frac 1 {|D(\mathbf{0},|\alpha|r)|}\int_{|\mathbf{t}'|\leq |\alpha|r}u(\mathbf{a}_1+   \mathbf{t}',\mathbf{a}_2+\mathbf{\alpha}^{-1}\mathbf{t}')dV(\mathbf{t}'),
 \end{split}\end{equation*}
if we  take transformation $  \mathbf{t}'=\mathbf{\alpha} \mathbf{t}$, where $D(\mathbf{0},r)$ is the   ball in $\mathbb{O}$ centered at $\mathbf{0}$  with radius $r$. It follows that for fixed $\zeta=(\zeta_1,\zeta_2)\in \partial \mathcal U$,
\begin{equation*}
  u\circ    \tau_{\zeta }|_{(\mathbf{a}_1+\mathbf{t},\mathbf{a}_2+\mathbf{\alpha}\mathbf{t})} = u\left(   \mathbf{a}_1 +\zeta_1 + \mathbf{t},\mathbf{a}_2+\zeta_2+ \overline{ \zeta}_1\mathbf{a}_1
  +\left(\mathbf{\alpha}+  \overline{\zeta}_1 \right  )\mathbf{t}\right ),
\end{equation*} is  subharmonic for   $\mathbf{t}$ near the origin $\mathbf{0} $,  and so is $ u(     \tau_{\zeta }(\mathbf{a}_1 ,\mathbf{a}_2+ \mathbf{t}))$.
Similarly,
\begin{equation*}
  u\circ     D_{ \delta } |_{ ( \mathbf{a}_1+\mathbf{t},\mathbf{a}_2+\mathbf{\alpha}\mathbf{t})} = u\left(
  \delta\mathbf{a}_1+\delta\mathbf{t},\delta^2\mathbf{a}_2+\delta^2\mathbf{\alpha}\mathbf{t} \right )
\end{equation*} and $u\circ    D_{ \delta }  |_{( \mathbf{a}_1 ,\mathbf{a}_2+ \mathbf{t})} $ are  subharmonic for   $\mathbf{t}$ near the origin $\mathbf{0} $.
Therefore, $u\circ    \tau_{ \zeta } , u\circ D_{ \delta }  \in OPSH(\mathcal{ U})$.

(2) Note that
\begin{equation}\label{eq:Cayley-line-t}\begin{split}
   u\circ C|_{(\mathbf{a}_1+ \alpha \mathbf{t},\mathbf{a}_2+\mathbf{t})}&=u\left(\sqrt 2 (\mathbf{a}_1+\alpha
   \mathbf{t})(1+\mathbf{a}_2+\mathbf{t})^{-1},(1-\mathbf{a}_2-\mathbf{t})(1+\mathbf{a}_2+\mathbf{t})^{-1}\right)
   \\&=u\left(\sqrt 2  \alpha+\beta(1+\mathbf{a}_2+\mathbf{t})^{-1},-1+2(1+\mathbf{a}_2+\mathbf{t})^{-1}\right)
 \end{split} \end{equation}
by the definition of Cayley transform \eqref{eq:Cayley} and the alternativity, where $\beta:= \sqrt 2 (\mathbf{a}_1-\alpha-\alpha  \mathbf{a}_2)$. If take
\begin{equation*}
   v(\mathbf{s}):=u\left(\sqrt 2  \alpha+ \beta\mathbf{s} ,-1+2 \mathbf{s} \right),
\end{equation*}then $v$ is subharmonic on $\mathbb{O}$ near the point $(1+\mathbf{a}_2 )^{-1}$,
since $u$ is $OPSH$   near $C(\mathbf{a})\in \mathcal{U}$   and
  $  v((1+\mathbf{a}_2 )^{-1}) = u(  C(\mathbf{a}))  $ by \eqref{eq:Cayley-line-t}.  Apply  Corollary \ref{cor:conformal-covariance}   to $v$ to see that
$|1+\mathbf{a}_2+\mathbf{t}|^{-6}v((1+\mathbf{a}_2+\mathbf{t})^{-1})$
is subharmonic. Consequently, 
  \begin{equation}\label{eq:u-C}
 |1+\mathbf{a}_2+\mathbf{t}|^{-6}  u\left(\sqrt 2  \alpha+ \beta(1+\mathbf{a}_2+\mathbf{t})^{-1},-1+2(1+\mathbf{a}_2+\mathbf{t})^{-1}\right)
 \end{equation}is subharmonic in  $\mathbf{t}$ near the origin $\mathbf{0} $. But
  \begin{equation}\label{eq:[C]2}
    1+[C(\mathbf{x})]_2=2(1+\mathbf{x}_2)^{-1},
 \end{equation}
   by definition \eqref{eq:Cayley}. \eqref{eq:Cayley-line-t}-\eqref{eq:u-C} implies that $|1+[C(\mathbf{x})]_2|^6u(C(\mathbf{x}))$ is subharmonic on the line
 $\{(\mathbf{a}_1+ \alpha \mathbf{t},\mathbf{a}_2+\mathbf{t}); \mathbf{t}\in \mathbb{O}\}$. By the same argument,  it is subharmonic on the line
 $\{(\mathbf{a}_1+ \mathbf{t}, \mathbf{a}_2  ); \mathbf{t}\in \mathbb{O}\}$.
 The result for $C^{-1}$   also holds because $C^{-1}$ has the same expression \eqref{eq:C-1} as  $C $.
 The proposition is proved.
\end{proof}

\begin{proof}[Proof of Proposition \ref{prop:OPSH-Ta-u}] For $\mathbf{a},\mathbf{x}\in  B^2 $,
 denote
\begin{equation}\label{eq:def-T-u}\begin{split}
   (C^*u)(\mathbf{x})&:=|1+[C(\mathbf{x})]_2|^6u\circ C(\mathbf{x})  ,\\
   (\tau_{\zeta_{\mathbf{a}}}^*u)(\mathbf{x})&:= u\circ \tau_{\zeta_{\mathbf{a}}}(\mathbf{x})  ,\\(D_{ \delta _{\mathbf{a}}}^*u)(\mathbf{x})&:= u \circ D_{ \delta
   _{\mathbf{a}}}(\mathbf{x})   ,
\\
   ( (C^{-1 })^*u)(\mathbf{x})&:=|1+[C^{-1 }(\mathbf{x})]_2|^6u\circ C^{-1 }(\mathbf{x}) .
\end{split}\end{equation}
They are all $OPSH$ by Proposition \ref{prop:OPSH-conformal-covariance}.
It follows definition \eqref{eq:def-T-u} and \eqref{eq:[C]2} that
\begin{equation}\label{eq:T-a-u'}\begin{split}
 (C^{ *} \circ\tau_{\zeta_{\mathbf{a}}}^*\circ  D_{\delta _{\mathbf{a}}}^* \circ (C^{-1 })^*u)(\mathbf{x})&=|1+[C(\mathbf{x})]_2|^6|1+[ T_{\mathbf{a}}(\mathbf{x}
 )]_2|^6u( T_{\mathbf{a}}(\mathbf{x}))\\
 &=\frac {2^6|1+ [T_{\mathbf{a}}(\mathbf{x} )]_2|^6}{|1+ \mathbf{x}_2|^6}u( T_{\mathbf{a}}(\mathbf{x})),
\end{split}\end{equation}is $OPSH$,
which is exactly \eqref{eq:T-a-u} up to a factor $4^6$.
\end{proof}

\begin{cor}\label{cor:OPSH-Ta} For given $\mathbf{a} \in B^2$, if $u\in OPSH(B^2)$, then
\begin{equation}\label{eq:T-1-a-u}
  \left |\Psi_{\mathbf{a}}\left (T_{\mathbf{a}}^{-1}(\mathbf{x})\right) \right|^6  u\left( T_{\mathbf{a}}^{-1}(\mathbf{x})\right)
\end{equation}
 is also $OPSH$ on $ B^2$.
\end{cor}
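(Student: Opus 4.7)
The plan is to mirror the proof of Proposition~\ref{prop:OPSH-Ta-u}, applied now to the inverse $T_{\mathbf{a}}^{-1}$, whose factorization
\[
T_{\mathbf{a}}^{-1} = C^{-1} \circ \tau_{\zeta_{\mathbf{a}}}^{-1} \circ D_{1/\delta_{\mathbf{a}}} \circ C,
\]
obtained by inverting each factor of \eqref{eq:T--a}, is structurally of the same type. The key preliminary observation is that $\tau_{\zeta_{\mathbf{a}}}^{-1}$ is itself a left translation $\tau_{\zeta'}$ by a boundary point of $\mathcal{U}$: solving \eqref{eqLeft-translate} for $\mathbf{x}$ in terms of $\mathbf{y}$ gives $\zeta' = (-\zeta_{\mathbf{a},1},\,-\zeta_{\mathbf{a},2}+|\zeta_{\mathbf{a},1}|^{2})$, and a quick check shows $2\operatorname{Re}\zeta'_{2} = |\zeta'_{1}|^{2}$. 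Thus $\tau_{\zeta_{\mathbf{a}}}^{-1}$ and the positive dilation $D_{1/\delta_{\mathbf{a}}}$ are both of the types covered by Proposition~\ref{prop:OPSH-conformal-covariance}(1).

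With this in hand, starting from $u \in OPSH(B^{2})$ I would chain the pullbacks $(C^{-1})^{*}$, then composition with $\tau_{\zeta_{\mathbf{a}}}^{-1}$, then with $D_{1/\delta_{\mathbf{a}}}$, and finally $C^{*}$, each preserving OPSH by Proposition~\ref{prop:OPSH-conformal-covariance}. Tracing through the composition and using the intertwining $C \circ T_{\mathbf{a}}^{-1}(\mathbf{x}) = \tau_{\zeta_{\mathbf{a}}}^{-1}(D_{1/\delta_{\mathbf{a}}}(C(\mathbf{x})))$, the resulting $OPSH(B^{2})$ function is
\[
|1+[C(\mathbf{x})]_{2}|^{6} \, |1+[T_{\mathbf{a}}^{-1}(\mathbf{x})]_{2}|^{6} \, u(T_{\mathbf{a}}^{-1}(\mathbf{x})).
\]

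The final step is to identify this with \eqref{eq:T-1-a-u}. Using the multiplicativity of the octonionic norm and the identity $1+[C(\mathbf{x})]_{2} = 2(1+\mathbf{x}_{2})^{-1}$ from \eqref{eq:[C]2}, the weight $|1+[C(\mathbf{x})]_{2}|^{6}$ equals $2^{6}/|1+\mathbf{x}_{2}|^{6}$. Substituting $T_{\mathbf{a}}^{-1}(\mathbf{x})$ into the definition of $\Psi_{\mathbf{a}}$ from \eqref{eq:T-a-u} (and using that $T_{\mathbf{a}} \circ T_{\mathbf{a}}^{-1}$ is the identity) yields $|\Psi_{\mathbf{a}}(T_{\mathbf{a}}^{-1}(\mathbf{x}))|^{6} = 2^{6} |1+[T_{\mathbf{a}}^{-1}(\mathbf{x})]_{2}|^{6} / |1+\mathbf{x}_{2}|^{6}$, matching the displayed expression exactly. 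The only genuine technical point in the whole argument is the group-theoretic check that $\tau_{\zeta_{\mathbf{a}}}^{-1}$ remains in the translation family of Proposition~\ref{prop:OPSH-conformal-covariance}(1); the rest is direct bookkeeping parallel to \eqref{eq:T-a-u'}.
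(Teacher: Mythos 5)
Your proof is correct and takes essentially the same approach as the paper's: factor $T_{\mathbf{a}}^{-1}$ through the Cayley transform, observe that the inverse translation $\tau_{\zeta_{\mathbf{a}}}^{-1}$ is again a left translation by a point of $\partial\mathcal{U}$, and chain the weighted pullbacks from Proposition~\ref{prop:OPSH-conformal-covariance} exactly as in \eqref{eq:T-a-u'}. Your $\zeta'=(-\zeta_1,\,-\zeta_2+|\zeta_1|^2)$ coincides with the paper's $\zeta_{\mathbf{a}}^{-1}=(-\zeta_1,\overline{\zeta}_2)$ because $2\operatorname{Re}\zeta_2=|\zeta_1|^2$ on $\partial\mathcal{U}$, so the two computations match.
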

\begin{proof} For  $\zeta\in \partial \mathcal{U}$, the inverse of $\tau_{\zeta}$ is  $\tau_{\zeta^{-1}}$, where $\zeta^{-1}=(-\zeta_1,\overline{\zeta}_2)$ by
definition \eqref{eqLeft-translate}. By definition \eqref{eq:T--a},
$
  T_{\mathbf{a}}^{-1}:= C^{-1}\circ D_{ \delta _{\mathbf{a}}^{-1}} \circ  \tau_{  \zeta_{\mathbf{a}}^{-1} }\circ  C .
$
Similar to \eqref{eq:T-a-u'}, we see that
\begin{equation} \label{eq:T-1-a-u''}\begin{split} 
\left (C^{ *} \circ\tau_{  \zeta_{\mathbf{a}} ^{-1} }^*\circ  D_{ \delta _{\mathbf{a}}^{-1}}^* \circ (C^{-1 })^*u\right)(\mathbf{x})  =\frac {2^6|1+
[T_{\mathbf{a}}^{-1}(\mathbf{x} )]_2|^6}{|1+ \mathbf{x}_2|^6}u( T^{-1}_{\mathbf{a}}(\mathbf{x}))
\end{split}\end{equation} is $OPSH$.
By the expression of $ \Psi_{\mathbf{a}}$ in \eqref{eq:T-a-u}, we have
\begin{equation*}\Psi_{\mathbf{a}}(T^{-1}_{\mathbf{a}}(\mathbf{x}))
= 2 (1+\mathbf{x}_2)^{-1}
  \left  (1+ [T^{-1}_{\mathbf{a}}(\mathbf{x})]_{2}\right)
.
\end{equation*}
 Thus the right hand side of \eqref{eq:T-1-a-u''} is exactly \eqref{eq:T-1-a-u}.
\end{proof}
\subsection{ The regularity of the Perron-Bremermann function }
\begin{prop}  \label{prop:Lip}
  Suppose that $ \varphi \in C^2(\partial B^2)$ and $u = \Psi_{B^2,\varphi}$. Then  
  $u\in Lip( \overline{ B^2})$.
 \end{prop}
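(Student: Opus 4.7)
The plan is to follow the Bedford--Taylor two-step strategy: first establish Lipschitz regularity up to the boundary via explicit barriers, then propagate this into an interior Lipschitz estimate using the automorphism family $\{T_{\mathbf{a}}\}$ together with the weighted transformation formula of Proposition~\ref{prop:OPSH-Ta-u}.

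For the boundary estimate, extend $\varphi$ to some $\tilde\varphi \in C^2(\overline{B^2})$. The defining function $\rho(\mathbf{x}) := |\mathbf{x}|^2 - 1$ satisfies $Hess_{\mathbb{O}}(\rho) = 16\, I_2 > 0$. For $A > 0$ large enough that $\pm Hess_{\mathbb{O}}(\tilde\varphi) + 16 A\, I_2 \geq 0$, both $v_- := \tilde\varphi + A\rho$ and $-v_+ := -\tilde\varphi + A\rho$ lie in $OPSH(B^2)\cap C(\overline{B^2})$. Since $v_-|_{\partial B^2} = \varphi$, we have $v_- \in \mathscr B(B^2,\varphi)$, so $u \geq v_-$. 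For the upper barrier, given any $w \in \mathscr B(B^2,\varphi)$, the sum $w + (-v_+)$ is $OPSH$ with boundary limsup $\leq 0$, so the maximum principle for subharmonic functions on $\mathbb{R}^{16}$ (Proposition~\ref{prop:QSH-m}(1)) yields $w \leq v_+$; taking the supremum gives $u \leq v_+$. Since $|\rho(\mathbf{x})| \leq 2|\mathbf{x} - \mathbf{x}_0|$ for $\mathbf{x}_0 \in \partial B^2$, one obtains the uniform boundary estimate $|u(\mathbf{x}) - \varphi(\mathbf{x}_0)| \leq C|\mathbf{x} - \mathbf{x}_0|$ for $\mathbf{x} \in \overline{B^2}$.

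For the interior estimate, apply Proposition~\ref{prop:OPSH-Ta-u} to get that $T_{\mathbf{a}}^* u = |\Psi_{\mathbf{a}}|^{-6}(u\circ T_{\mathbf{a}})$ is $OPSH$. The normalization $T_{\mathbf{0}} = \mathrm{id}$ forces $\Psi_{\mathbf{0}} \equiv 2$, so $64\, T_{\mathbf{0}}^* u = u$. Since $T_{\mathbf{a}}$ preserves $\partial B^2$ and the pair $(T_{\mathbf{a}}(\mathbf{x}), \Psi_{\mathbf{a}}(\mathbf{x}))$ depends smoothly on $\mathbf{a}$ with derivatives uniformly bounded for $\mathbf{x} \in \partial B^2$ (Proposition~\ref{prop:Psi-a}(3), Corollary~\ref{cor:OPSH-Ta0}(3)), for $|\mathbf{a}|$ small and $\mathbf{x} \in \partial B^2$,
\begin{equation*}
 \bigl| 64\, T_{\mathbf{a}}^* u(\mathbf{x}) - \varphi(\mathbf{x}) \bigr|
 = \bigl| 64\,|\Psi_{\mathbf{a}}(\mathbf{x})|^{-6}\varphi(T_{\mathbf{a}}(\mathbf{x})) - \varphi(\mathbf{x}) \bigr| \leq C|\mathbf{a}|.
\end{equation*}
Hence $64\, T_{\mathbf{a}}^* u - C|\mathbf{a}| \in \mathscr B(B^2,\varphi)$, and the sup-definition of $u$ gives $64\, T_{\mathbf{a}}^* u \leq u + C|\mathbf{a}|$ on $\overline{B^2}$. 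Rearranging and using $\bigl| |\Psi_{\mathbf{a}}(\mathbf{x})|^6/64 - 1\bigr| = O(|\mathbf{a}|)$ on compact subsets (Proposition~\ref{prop:Psi-a}(1)) yields $u(T_{\mathbf{a}}(\mathbf{x})) - u(\mathbf{x}) \leq C'|\mathbf{a}|$. Since $\mathbf{a}\mapsto T_{\mathbf{a}}(\mathbf{x})$ is a local diffeomorphism at $\mathbf{a} = \mathbf{0}$, writing $\mathbf{y} = T_{\mathbf{a}}(\mathbf{x})$ for $\mathbf{y}$ close to $\mathbf{x}$ gives $|\mathbf{a}| \asymp |\mathbf{y}-\mathbf{x}|$, yielding the interior Lipschitz estimate on compact subsets; swapping $\mathbf{x}$ and $\mathbf{y}$ gives the two-sided bound, and a standard covering argument combines the two regimes to conclude $u \in Lip(\overline{B^2})$.

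The main obstacle is the weight $|\Psi_{\mathbf{a}}|^{-6}$, which has no analogue in the complex or quaternionic settings where the automorphisms of the ball preserve plurisubharmonicity outright. Handling it requires tracking not only the perturbation of $T_{\mathbf{a}}$ from the identity but also the perturbation of $|\Psi_{\mathbf{a}}|^{-6}$ from $64^{-1}$, uniformly for $\mathbf{x}$ on the boundary where the Cayley transform is singular. This uniform control, compensating for the reduced smoothness of octonionic automorphisms caused by non-associativity, is precisely what Proposition~\ref{prop:Psi-a} and Corollary~\ref{cor:OPSH-Ta0} supply.
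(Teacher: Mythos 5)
Your proof reaches the right conclusion, but your interior estimate takes a genuinely different (and heavier) route than the paper's.

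For the boundary estimate your approach is equivalent in spirit: you use a single pair of $C^2$ barriers $v_\pm=\pm\tilde\varphi+A\rho$, whereas the paper builds $v$ as a supremum of linear functions $v_{\mathbf{x}_0}$ obtained from Taylor expansion of $\varphi$ at each boundary point. Both yield $|u(\mathbf{x})-\varphi(\mathbf{x}_0)|\leq C'|\mathbf{x}-\mathbf{x}_0|$ for $\mathbf{x}\in\overline{B^2}$, $\mathbf{x}_0\in\partial B^2$. The interesting divergence is in the interior estimate. The paper's mechanism is elementary: for fixed $\mathbf{y}$, it forms $H_{\mathbf{y}}(\mathbf{x})=\max\{u(\mathbf{x}),\,u(\mathbf{x}+\mathbf{y})-C'|\mathbf{y}|\}$ on $\overline{B^2}\cap(-\mathbf{y}+\overline{B^2})$ and $u(\mathbf{x})$ elsewhere, noting that Euclidean translations trivially preserve $OPSH$, and uses the boundary Lipschitz estimate together with the pasting lemma (Proposition~\ref{prop:QSH-m}(7)) to see $H_{\mathbf{y}}\in\mathscr B(B^2,\varphi)$, hence $H_{\mathbf{y}}\leq u$. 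You instead invoke the full automorphism family $\{T_{\mathbf{a}}\}$ and the weighted transformation formula of Proposition~\ref{prop:OPSH-Ta-u}, the very machinery the paper reserves for Proposition~\ref{prop:2diff} (the second-order interior estimate), where translations are inadequate because one must compare $u$ at $\mathbf{x}+\mathbf{h}$, $\mathbf{x}-\mathbf{h}$, and $\mathbf{x}$ against the boundary data. For a first-order (Lipschitz) estimate this is overkill, and it imports technical burdens the paper deliberately avoids at this stage: you must check that $\mathbf{a}\mapsto T_{\mathbf{a}}(\mathbf{x})$ is a nondegenerate local diffeomorphism at $\mathbf{a}=\mathbf{0}$ uniformly on compact subsets (so that $|\mathbf{a}|\asymp|T_{\mathbf{a}}(\mathbf{x})-\mathbf{x}|$), a fact not explicitly recorded in Proposition~\ref{prop:Psi-a} or Corollary~\ref{cor:OPSH-Ta0}; and you must control the weight perturbation $||\Psi_{\mathbf{a}}(\mathbf{x})|^6/64-1|=O(|\mathbf{a}|)$, which requires the boundedness of $u$ to absorb lower-order terms. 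None of these is a fatal flaw, but each is a step the translation argument bypasses entirely. The paper's choice to use translations for Lipschitz and automorphisms only for $C^{1,1}$ is the more economical division of labor, and it mirrors Bedford--Taylor's original paper.
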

 \begin{proof}
We can extend $\varphi$ to a $C_c^2(\mathbb{O}^{2})$ function which coincides with $\varphi$
on $\partial B^2$,  also denoted by $\varphi$.  Then
\begin{equation*}
  |\varphi(\mathbf{x})-\varphi(\mathbf{x}_0)-\nabla\varphi(\mathbf{x}_0)\cdot(\mathbf{x} - \mathbf{x}_0)|\leq C|\mathbf{x} - \mathbf{x}_0|^2
\end{equation*}for $\mathbf{x} , \mathbf{x}_0\in\partial B^2$
  by Taylor's expansion, where $C=\|\varphi\|_{C^2}$. Now for each $  \mathbf{x}_0\in\partial B^2$, define a real linear function
    \begin{equation}\label{eq:v-x0}
    v_{\mathbf{x}_0}(\mathbf{x} ): =\varphi(\mathbf{x}_0)+\nabla\varphi(\mathbf{x}_0)\cdot(\mathbf{x} - \mathbf{x}_0)-2C(1-\operatorname{Re}(\mathbf{x} \cdot
    \overline{ \mathbf{x}}_0))
  \end{equation}
  for $\mathbf{x} \in\mathbb{O}^{2}$, which is obvious $OPSH$. But for $|\mathbf{x} |=|\mathbf{x}_0 |=1$,
    \begin{equation*}\begin{split}
|\mathbf{x} - \mathbf{x}_0|^2 =  &|\mathbf{x} |^2+|\mathbf{x}_0 |^2-2\operatorname{Re}(\mathbf{x} \cdot \overline{ \mathbf{x}}_0)   =2(1-\operatorname{Re}(\mathbf{x} \cdot
    \overline{ \mathbf{x}}_0)).
 \end{split} \end{equation*}We see that for $\mathbf{x} ,\mathbf{x}_0\in  B^2 $,  
  \begin{equation*}
     v_{\mathbf{x}_0}(\mathbf{x} )=\varphi(\mathbf{x}_0)+\nabla\varphi(\mathbf{x}_0)\cdot(\mathbf{x} - \mathbf{x}_0)- C |\mathbf{x} - \mathbf{x}_0|^2\leq
     \varphi(\mathbf{x}  ),
 \end{equation*}and  $v_{\mathbf{x}_0}(\mathbf{x}_0 )=\varphi(\mathbf{x}_0  )$. So $v_{\mathbf{x}_0}$ is a smooth  function  and belongs to the
   family $\mathscr{B }(B^2 , \varphi)$. Set
\begin{equation*}
  v(\mathbf{x} ):=\sup\{ v_{\mathbf{x}_0}(\mathbf{x} );\mathbf{x}_0\in\partial B^2 \}, \qquad \mathbf{x} \in \overline{B^2} .
\end{equation*}It is obviously a  Lipschitzian function $\widetilde{v} $ on $\overline{B^2} $ with Lipschitzian constant $C'>0$.
Similarly, one can construct a  Lipschitzian function $\widetilde{v} $ on $\overline{B^2} $ with Lipschitzian constant $C'$, such that $-\widetilde{v} \in
\mathscr{B }(B^2, -\varphi)$ and
$\widetilde{v} = \varphi$ on $\partial B^2$. Thus ${v}\leq u\leq \widetilde v$ on $\overline{B^2} $  by the definition of  Perron-Bremermann function, and so we
get the boundary estimate
\begin{equation}\label{eq:Lipschitzian}
   |u(\mathbf{x})-u(\mathbf{x}_0)|\leq C' |\mathbf{x} -\mathbf{x} _0|, \qquad {\rm for \quad any}\quad  \mathbf{x} \in\overline{B^2} , \mathbf{x}_0\in\partial B^2 .
\end{equation}

Now for fixed  $\mathbf{y} \in {B^2}  $, let
 \begin{equation}\label{eq:H-y}
    H_{\mathbf{y} }(\mathbf{x} ):=\left\{
  \begin{array}{ll}
     \max\left\{ u(\mathbf{x}) ,u(\mathbf{x}+\mathbf{y} )-C' |\mathbf{y} |\right\}, \qquad & {\rm if}\quad \mathbf{x}\in  \overline{B^2} \cap( -\mathbf{y}+
     \overline{B^2}) ,\\
   u(\mathbf{x}),& {\rm if}\quad  \mathbf{x}\in  \overline{B^2} \setminus( -\mathbf{y}+ \overline{ B^2}) .
 \end{array}
  \right.
  \end{equation}
We claim that the restriction of  $H_{\mathbf{y} }$ to $  B^2$ belongs to $\mathscr {B }(B^2, \varphi)$. For a point $\mathbf{x}$ in the part of the boundary
$\partial( -\mathbf{y}+ \overline{B^2} )$
inside $B^2$, we have $\mathbf{x}\in   {B^2}$ and $ \mathbf{x}+\mathbf{y}\in  \partial{B^2}$. Thus $u(\mathbf{x}+\mathbf{y} )-C' |\mathbf{y} |\leq
u(\mathbf{x})$ by the boundary estimate
\eqref{eq:Lipschitzian}. Thus,  $ H_{\mathbf{y} }$ is $OPSH$ on $ {B^2}  $ by Proposition \ref{prop:QSH-m} (7).

On the other hand,  for $\mathbf{x}\in \partial{B^2} \cap ( -\mathbf{y}+ \overline{B^2} )$, we have   $\mathbf{x}\in \partial{B^2}$ and $ \mathbf{x}+\mathbf{y}\in
\overline{B^2}$, and so $u(\mathbf{x}) \geq u(\mathbf{x}+\mathbf{y} )-C' |\mathbf{y} | $ by the  boundary  estimate \eqref{eq:Lipschitzian} again. Therefore, $
H_{\mathbf{y} }(\mathbf{x} )=u(\mathbf{x})=\varphi(\mathbf{x})$ for $\mathbf{x}\in \partial{B^2}$. The claim is proved.
Consequently,  $H_{\mathbf{y} }(\mathbf{x} ) \leq u(\mathbf{x}) $ for all   $\mathbf{x}  \in  B^2$. In particular, for   $\mathbf{x} , \mathbf{x}+\mathbf{y}\in
B^2$
we have $u(\mathbf{x}+\mathbf{y} )-C' |\mathbf{y} |\leq u(\mathbf{x}) $, i.e.
\begin{equation*}
   u(\mathbf{x}+\mathbf{y} )- u(\mathbf{x}) \leq  C' |\mathbf{y} |.
\end{equation*}
By reversing the roles of $\mathbf{x} $ and $\mathbf{x}+\mathbf{y} $, we obtain $ u(\mathbf{x} )- u(\mathbf{x}+\mathbf{y}) \leq  C '|\mathbf{y} |$. Thus,
$u$ is a  Lipschitzian function with Lipschitzian constant $C' $.
\end{proof}

Now let us prove the interior estimate for second order difference of $u$.
\begin{prop}\label{prop:2diff}
    If $ \varphi \in C^2(  B^2)$ and $u = \Psi_{ B^2,\varphi}$, then for each
$\varepsilon \in (0,1)$,  there exists $ C > 0$
  such that for all $(\mathbf{x}, \mathbf{h}) \in B(\mathbf{0}, 1-\varepsilon) \times B(\mathbf{0}, \varepsilon/2) $
we have the estimate
\begin{equation}\label{eq:2diff}
   u(\mathbf{x} + \mathbf{h}) + u(\mathbf{x} - \mathbf{h}) - 2u(\mathbf{x}) < C|\mathbf{h}|^2 .
\end{equation}
\end{prop}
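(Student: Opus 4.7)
The plan is to adapt Bedford--Taylor's symmetrization argument, with the weighted pullback from Corollary \ref{cor:OPSH-Ta} replacing the (non-existent) ordinary $OPSH$-preserving pullback under the ball automorphisms $T_{\mathbf{a}}$. For small $\mathbf{a}\in B^{2}$ I define
\[
w_{\mathbf{a}}(\mathbf{x})\;:=\;\tfrac{1}{128}\Bigl[\bigl\lvert\Psi_{\mathbf{a}}(T_{\mathbf{a}}^{-1}(\mathbf{x}))\bigr\rvert^{6}u(T_{\mathbf{a}}^{-1}(\mathbf{x}))+\bigl\lvert\Psi_{-\mathbf{a}}(T_{-\mathbf{a}}^{-1}(\mathbf{x}))\bigr\rvert^{6}u(T_{-\mathbf{a}}^{-1}(\mathbf{x}))\Bigr],
\]
each summand of which is $OPSH$ on $B^{2}$ by Corollary \ref{cor:OPSH-Ta} and continuous on $\overline{B^{2}}$ by Proposition \ref{prop:Psi-a}(2) and Corollary \ref{cor:OPSH-Ta0}(2). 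Since $T_{\mathbf{0}}=\mathrm{id}$ and $\Psi_{\mathbf{0}}\equiv 2$, the factor $\tfrac{1}{128}$ makes $w_{\mathbf{0}}=u$. On $\partial B^{2}$, since $T_{\pm\mathbf{a}}^{-1}$ preserves the boundary and $u=\varphi$ there, writing $\Phi(\mathbf{a},\mathbf{x}):=\lvert\Psi_{\mathbf{a}}(T_{\mathbf{a}}^{-1}(\mathbf{x}))\rvert^{6}\varphi(T_{\mathbf{a}}^{-1}(\mathbf{x}))$ and using the uniform $C^{2}$ dependence on $\mathbf{a}$ provided by Proposition \ref{prop:Psi-a}(3) and Corollary \ref{cor:OPSH-Ta0}(3), a second-order Taylor expansion in $\mathbf{a}$ kills the odd linear term under the symmetrization and yields $w_{\mathbf{a}}(\mathbf{x})\leq\varphi(\mathbf{x})+K_{1}\lvert\mathbf{a}\rvert^{2}$ for $\mathbf{x}\in\partial B^{2}$. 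Hence $w_{\mathbf{a}}-K_{1}\lvert\mathbf{a}\rvert^{2}\in\mathscr{B}(B^{2},\varphi)$, and the Perron--Bremermann definition \eqref{eq:Perron-Bremermann} forces $w_{\mathbf{a}}\leq u+K_{1}\lvert\mathbf{a}\rvert^{2}$ on all of $B^{2}$.

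\medskip

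\noindent\textbf{Interior expansion.} Fix $\mathbf{x}_{0}\in B(\mathbf{0},1-\varepsilon)$. The joint smoothness of $T_{\mathbf{a}}^{-1}(\mathbf{x})$ and $\Psi_{\mathbf{a}}(\mathbf{x})$ near $(\mathbf{a},\mathbf{x})=(\mathbf{0},\mathbf{x}_{0})$ (Proposition \ref{prop:Psi-a}(1), Corollary \ref{cor:OPSH-Ta0}(1)) produces expansions
\[
T_{\pm\mathbf{a}}^{-1}(\mathbf{x}_{0})=\mathbf{x}_{0}\pm Y_{\mathbf{x}_{0}}(\mathbf{a})+O(\lvert\mathbf{a}\rvert^{2}),\qquad\bigl\lvert\Psi_{\pm\mathbf{a}}(T_{\pm\mathbf{a}}^{-1}(\mathbf{x}_{0}))\bigr\rvert^{6}=64\pm L_{\mathbf{x}_{0}}(\mathbf{a})+O(\lvert\mathbf{a}\rvert^{2}),
\]
where $Y_{\mathbf{x}_{0}}\colon\mathbb{O}^{2}\to\mathbb{O}^{2}$ is real-linear and $L_{\mathbf{x}_{0}}$ is a real-linear functional. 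Applying the Lipschitz estimate of Proposition \ref{prop:Lip} to replace $u(T_{\pm\mathbf{a}}^{-1}(\mathbf{x}_{0}))$ by $u(\mathbf{x}_{0}\pm Y_{\mathbf{x}_{0}}(\mathbf{a}))$ up to $O(\lvert\mathbf{a}\rvert^{2})$, expanding the product of weight and value, and observing that the odd-in-$\mathbf{a}$ cross term $L_{\mathbf{x}_{0}}(\mathbf{a})\bigl[u(\mathbf{x}_{0}+Y_{\mathbf{x}_{0}}(\mathbf{a}))-u(\mathbf{x}_{0}-Y_{\mathbf{x}_{0}}(\mathbf{a}))\bigr]$ is itself $O(\lvert\mathbf{a}\rvert^{2})$ by the same Lipschitz bound, one obtains
\[
w_{\mathbf{a}}(\mathbf{x}_{0})=\tfrac{1}{2}\bigl[u(\mathbf{x}_{0}+Y_{\mathbf{x}_{0}}(\mathbf{a}))+u(\mathbf{x}_{0}-Y_{\mathbf{x}_{0}}(\mathbf{a}))\bigr]+O(\lvert\mathbf{a}\rvert^{2}).
\]
Combining with the bound $w_{\mathbf{a}}(\mathbf{x}_{0})\leq u(\mathbf{x}_{0})+K_{1}\lvert\mathbf{a}\rvert^{2}$ and setting $\mathbf{h}:=Y_{\mathbf{x}_{0}}(\mathbf{a})$ produces \eqref{eq:2diff}.

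\medskip

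\noindent\textbf{Main obstacle.} The principal technical point is the uniform invertibility of $Y_{\mathbf{x}_{0}}$ as $\mathbf{x}_{0}$ varies in $B(\mathbf{0},1-\varepsilon)$, so that $\mathbf{h}=Y_{\mathbf{x}_{0}}(\mathbf{a})$ forces $\lvert\mathbf{a}\rvert\leq C_{\varepsilon}\lvert\mathbf{h}\rvert$ with $C_{\varepsilon}$ independent of $\mathbf{x}_{0}$. At $\mathbf{x}_{0}=\mathbf{0}$, the relation $T_{\mathbf{a}}^{-1}(\mathbf{0})=\mathbf{a}$ gives $Y_{\mathbf{0}}=\mathrm{id}$, and the transitivity of the $T$-family together with continuous dependence on $\mathbf{x}_{0}$ (compact in $\overline{B(\mathbf{0},1-\varepsilon)}$) extends uniform invertibility everywhere; but verifying this requires unwinding $T_{\mathbf{a}}=C^{-1}\circ D_{\delta_{\mathbf{a}}}\circ\tau_{\zeta_{\mathbf{a}}}\circ C$ and the dependence of $(\delta_{\mathbf{a}},\zeta_{\mathbf{a}})$ on $\mathbf{a}$ from the appendix. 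A secondary subtlety is the interaction of the weight $\lvert\Psi_{\pm\mathbf{a}}\rvert^{6}$ with $u$ in the interior expansion, which is exactly what forces the boundary Lipschitz estimate (Proposition \ref{prop:Lip}) to be proved first.
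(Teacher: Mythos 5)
Your proposal shares the paper's core strategy — weighted pullback of $u$ under ball automorphisms via Corollary \ref{cor:OPSH-Ta}, symmetrization to kill the odd first-order term, a boundary Taylor expansion in the automorphism parameter, and Perron--Bremermann maximality to convert a boundary inequality into an interior one, with Proposition \ref{prop:Lip} controlling the error from the Jacobian factor. But the two proofs diverge in the crucial choice of automorphism family, and the divergence is not cosmetic. The paper pulls back by the two-parameter composite $T_{\mathbf{a}+\mathbf{h}}^{-1}\circ T_{\mathbf{a}}$, symmetrizes in $\pm\mathbf{h}$, and only at the very end substitutes $\mathbf{a}=\mathbf{x}$. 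Because $T_{\mathbf{x}}(\mathbf{x})=\mathbf{0}$ and $T_{\mathbf{x}\pm\mathbf{h}}^{-1}(\mathbf{0})=\mathbf{x}\pm\mathbf{h}$, the composite maps $\mathbf{x}$ to $\mathbf{x}\pm\mathbf{h}$ \emph{exactly}, with no linearization. The second-order difference quotient in \eqref{eq:2diff} then appears on the nose in $J(\mathbf{h},\mathbf{x})u(\mathbf{x}+\mathbf{h})+J(-\mathbf{h},\mathbf{x})u(\mathbf{x}-\mathbf{h})-2u(\mathbf{x})$, and the only remaining work is to expand the scalar weight $J(\pm\mathbf{h},\mathbf{x})$ to first order in $\mathbf{h}$ (this is where the Lipschitz bound enters).

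You instead use the one-parameter family $T_{\pm\mathbf{a}}^{-1}$ and symmetrize in $\pm\mathbf{a}$. The price is that $T_{\pm\mathbf{a}}^{-1}(\mathbf{x}_0)$ only equals $\mathbf{x}_0\pm Y_{\mathbf{x}_0}(\mathbf{a})$ up to $O(|\mathbf{a}|^2)$, so the translation vector $\mathbf{h}$ is produced only as $Y_{\mathbf{x}_0}(\mathbf{a})$, and to read off \eqref{eq:2diff} for an arbitrary given $\mathbf{h}\in B(\mathbf{0},\varepsilon/2)$ you must solve $Y_{\mathbf{x}_0}(\mathbf{a})=\mathbf{h}$ with $|\mathbf{a}|\leq C_\varepsilon|\mathbf{h}|$ uniformly over $\mathbf{x}_0\in \overline{B}(\mathbf{0},1-\varepsilon)$. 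You correctly flag this uniform invertibility as the main obstacle, but you do not prove it: you gesture at $Y_{\mathbf{0}}=\mathrm{id}$, transitivity of the family, and compactness, yet none of this yields a \emph{uniform lower bound} on the singular values of $Y_{\mathbf{x}_0}$ without actually computing the differential of $T_{\mathbf{a}}^{-1}(\mathbf{x}_0)$ in $\mathbf{a}$ at $\mathbf{a}=\mathbf{0}$ from the formulas \eqref{eq:delta-a}--\eqref{eq:zeta-a} and \eqref{eq:T-a}, which is a nontrivial computation the paper was designed to avoid. As it stands this is a genuine gap; to close it, either supply the explicit lower bound on $Y_{\mathbf{x}_0}$, or, more efficiently, switch to the two-parameter pullback $T_{\mathbf{a}}^*\circ(T_{\mathbf{a}+\mathbf{h}}^{-1})^*$ and set $\mathbf{a}=\mathbf{x}$, which makes the issue disappear.
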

\begin{proof}
Fix $\varepsilon \in (0,1)$.
For $(\mathbf{a}, \mathbf{h}) \in B(\mathbf{0}, 1-\varepsilon) \times B(\mathbf{0}, \varepsilon/2) $,  define
\begin{equation*}
   L(\mathbf{a}, \mathbf{h},\mathbf{ x}): = T_{\mathbf{a}+\mathbf{h}}^{-1}\circ T_{\mathbf{a}}(\mathbf{x}),\qquad \mathbf{x}\in B^2,
\end{equation*}which is a diffeomorphism of  $ B^2$ to itself by   Corollary
\ref{cor:OPSH-Ta0}.
Then,   we see that
\begin{equation}\label{eq:U-a-h}\begin{split}
   U(\mathbf{a}  , \mathbf{h},\mathbf{x}):&=\left(T_{\mathbf{a}}^* \circ \left(T_{\mathbf{a}+\mathbf{h}}^{-1 }\right)^* u\right)(\mathbf{x}) = \left|\Psi_{\mathbf{a} }(\mathbf{x} \right)|^{-6}
  \left ((T_{\mathbf{a}+\mathbf{h}}^{-1 })^* u\right )\left(T_{\mathbf{a} } (\mathbf{x})\right) 
  = J_{\mathbf{a},\mathbf{h}}(\mathbf{x})u\left( L(\mathbf{a}, \mathbf{h},\mathbf{ x})\right)
 \end{split}  \end{equation}is $OPSH$ in $\mathbf{x}\in B^2$ by Proposition \ref{prop:OPSH-Ta-u}, where
\begin{equation*}
   J_{\mathbf{a},\mathbf{h}}(\mathbf{x}):= \left|\frac {\Psi_{\mathbf{a}+\mathbf{h}} ( L(\mathbf{a}, \mathbf{h},\mathbf{ x}))} { 
   \Psi_{\mathbf{a}}( \mathbf{x})  }\right|^6
\end{equation*}is continuous in $\mathbf{x}\in B^2$ by Proposition \ref{prop:Psi-a} (2)  and Corollary \ref{cor:OPSH-Ta} (2), when $(\mathbf{a}, \mathbf{h}) \in B(\mathbf{0}, 1-\varepsilon) \times B(\mathbf{0}, \varepsilon/2) $. Set
\begin{equation*}
  V(\mathbf{a}  , \mathbf{h},\mathbf{x}) :=\frac 12( U(\mathbf{a}  , \mathbf{h},\mathbf{x})+ U(\mathbf{a}  , -\mathbf{h},\mathbf{x})).
\end{equation*}which is also $OPSH$ on $ B^2$. In particular, for   $\mathbf{h} =0  $,
 \begin{equation*} L(\mathbf{a}, \mathbf{0},\mathbf{ x})=\mathbf{ x},\qquad
    V(\mathbf{a}  , \mathbf{0},\mathbf{x}) =u(\mathbf{x}).
 \end{equation*}

Note that $\mathbf{a} + \mathbf{h}\in \overline{ {B}}(\mathbf{0},1-\varepsilon/2)$ and $T_{\mathbf{a}+\mathbf{h}}^{-1}(\mathbf{x})\in \partial B^2$ if $  \mathbf{x}\in \partial B^2$.
For  $\mathbf{x}\in
\partial B^2$, by Proposition \ref{prop:Psi-a} (3) and Corollary \ref{cor:OPSH-Ta0} (3),
  we see that 
$V(\mathbf{a}  , \mathbf{h},\mathbf{x}) $ is $C^2(\overline{B}(\mathbf{0},1-\varepsilon) \times \overline{B}(\mathbf{0},\varepsilon/2) )$  with $C^2$ norms   only depending on  $\varepsilon$. Thus,  by Taylor's expansion for variable $\mathbf{h}$, we get
\begin{equation}\label{eq:V-V0}
  | V(\mathbf{a}  , \mathbf{h},\mathbf{x})- V(\mathbf{a}  , \mathbf{0},\mathbf{x})|\leq \frac {C_\varepsilon}2|\mathbf{h}|^2,
\end{equation}for   $\mathbf{a}   \in \overline{B}(\mathbf{0},1-\varepsilon) , \mathbf{h}\in  \overline{B}(\mathbf{0},\varepsilon/2)  $ and  $\mathbf{x}\in
\partial B^2$,
where
\begin{equation*}
  C_\varepsilon:=\sup_{} \left\{\left|\frac {\partial^2 V(\mathbf{a}  , \mathbf{h},\mathbf{x})}{\partial h_{\alpha p}\partial h_{\beta p'}}\right|; \mathbf{a}
  \in  \overline{B}(\mathbf{0},1-\varepsilon)  ,
   \mathbf{h}\in\overline{B}(\mathbf{0},\varepsilon/2),\mathbf{x}  \in\partial B^2,\alpha,\beta=1,2, p,p'=0,\dots,7\right\}<+\infty.
\end{equation*} Consequently, we have
  \begin{equation*}
     V(\mathbf{a}  , \mathbf{h},\mathbf{x})-   \frac {C_\varepsilon}2|\mathbf{h}|^2\leq V(\mathbf{a}  , \mathbf{0},\mathbf{x})=\varphi(\mathbf{x})\quad {\rm for}\quad  \mathbf{x}\in \partial B^2  .
 \end{equation*}
 
 On the other hand, $ V(\mathbf{a}  , \mathbf{h}, \cdot)$ is continuous on $\overline{B^2}$ for fixed  $\mathbf{a}, \mathbf{h}$ by by Proposition \ref{prop:Psi-a} (2) and Corollary \ref{cor:OPSH-Ta0} (2).
 Thus, for fixed $\mathbf{a}  , \mathbf{h}$,  we have  $V(\mathbf{a}  , \mathbf{h}, \cdot)-   \frac {C_\varepsilon}2|\mathbf{h}|^2\in\mathscr{B }(B^2 , \varphi)$. Therefore,
  \begin{equation}\label{eq:estimate-V}
     V(\mathbf{a}  , \mathbf{h},\mathbf{x})-   \frac {C_\varepsilon}2|\mathbf{h}|^2\leq u(\mathbf{x}) ,
 \end{equation}for all $(\mathbf{a}  , \mathbf{h},\mathbf{x})\in\overline{B}(\mathbf{0},1-\varepsilon) \times \overline{B}(\mathbf{0},\varepsilon/2)\times
 \overline{B^2 } $. Letting   $\mathbf{a}=\mathbf{x}\in{B}(\mathbf{0},1-\varepsilon) $ in \eqref{eq:U-a-h}, we get  
  \begin{equation}\label{eq:J}
    U(\mathbf{x}  , \mathbf{h},\mathbf{x})= J(  \mathbf{h},\mathbf{x})u(\mathbf{x} + \mathbf{h}) ,\qquad{\rm with}\quad J(  \mathbf{h},\mathbf{x}):=\left |\frac
    {\Psi_{\mathbf{x}+\mathbf{h}} ( \mathbf{x}+\mathbf{h} ) } {  \Psi_{\mathbf{x}}( \mathbf{x})  }\right|^6,
 \end{equation}by $T_{\mathbf{x} } (\mathbf{x})=\mathbf{0}$ and   $T_{\mathbf{x}+\mathbf{h}}^{-1}(\mathbf{0})=
 \mathbf{x}+\mathbf{h}$ by \eqref{eq:T--a}.
 Thus,  \eqref{eq:estimate-V} implies that for $(  \mathbf{h},\mathbf{x})\in  \overline{ {B}}(\mathbf{0},\varepsilon/2)\times \overline{ B}   (\mathbf{0},1-\varepsilon) $,
\begin{equation}\label{eq:J-h-x}\begin{split}
     J(  \mathbf{h},\mathbf{x}) u(\mathbf{x} + \mathbf{h}) +J(  -\mathbf{h},\mathbf{x}) u(\mathbf{x} - \mathbf{h}) - 2u(\mathbf{x}) \leq &   C_\varepsilon
     |\mathbf{h}|^2.
\end{split} \end{equation}

Note that by the expression \eqref{eq:J} of $J$, have  $J( \mathbf{0},\mathbf{x})=1$ and  $J(  \mathbf{h},\mathbf{x})$   smooth on
$\overline{B}(\mathbf{0}, \varepsilon/2) \times \overline{B}(\mathbf{0},1-\varepsilon)$ by Proposition \ref{prop:Psi-a} (1)-(2). So,
  \begin{equation*}
   | J(  \mathbf{h},\mathbf{x})- 1-\nabla J(  \mathbf{0},\mathbf{x})\cdot \mathbf{h}| \leq C'_\varepsilon|\mathbf{h}|^2
 \end{equation*} for some constant $C'_\varepsilon$ only depending on $\varepsilon$.
 Therefore, for $ (  \mathbf{h},\mathbf{x}) \in \overline{B}(\mathbf{0}, \varepsilon/2) \times \overline{B}(\mathbf{0},1-\varepsilon)$, we have
 \begin{equation*}\begin{split}
      u(\mathbf{x} + \mathbf{h}) + u(\mathbf{x} - \mathbf{h}) - 2u(\mathbf{x})   \leq & C_\varepsilon|\mathbf{h}|^2+|\nabla J(  \mathbf{0},\mathbf{x})\cdot
     \mathbf{h}|\cdot |u(\mathbf{x} + \mathbf{h}) -u(\mathbf{x} -
     \mathbf{h})|\\
     &+C'_\varepsilon|\mathbf{h}|^2 (| u(\mathbf{x} + \mathbf{h})| + |u(\mathbf{x} - \mathbf{h})|) = O(|\mathbf{h}|^2),
\end{split} \end{equation*}
by \eqref{eq:J-h-x} and
  $u\in Lip( \overline{ B^2})$  by Proposition \ref{prop:Lip}.
 The estimate \eqref{eq:2diff} follows.
\end{proof}

For $u\in L^\infty_{loc}(\Omega)$,  where $\Omega$ is a domain of $\mathbb{R}^N$,  define a smoothing of $u$
\begin{equation*}
  u_\varepsilon(\mathbf{x}):=\frac 1{\tau_N \varepsilon^N}\int_{|\mathbf{y}|<\varepsilon  }u(\mathbf{x}+\mathbf{y})dV(\mathbf{y})   \qquad{\rm and} \qquad
  T_{\varepsilon }u: =2(N+2)\frac {u_\varepsilon -u}{\varepsilon^2},
\end{equation*}where $\tau_N $ is the volume of unit ball.

\begin{thm}\label{thm:C11}
  Suppose that $ \varphi \in C^2( B^2)$ and $u = \Psi_{B^2,\varphi}$. Then the
weak second order derivatives  of $u$ belong to $ L^\infty_{loc}$, i.e.  $u\in C_{loc}^{1,1} (B^2)$.
\end{thm}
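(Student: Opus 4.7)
The plan is to establish a uniform $L^\infty_{loc}$ bound on the real Hessian of the standard mollifications $u_\delta := u \ast \chi_\delta$, which are smooth $OPSH$ functions decreasing to $u$ as $\delta\downarrow 0$ by Proposition \ref{prop:QSH-m}(2). A uniform two-sided matrix bound $-KI \leq \operatorname{Hess}_{\mathbb{R}} u_\delta \leq KI$ on each $B(\mathbf{0},1-\varepsilon)$ will pass to the weak limit and put the weak second derivatives of $u$ in $L^\infty_{loc}(B^2)$, hence $u\in W^{2,\infty}_{loc}(B^2) \subset C^{1,1}_{loc}(B^2)$.

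For the upper bound I would mollify the pointwise inequality of Proposition \ref{prop:2diff}: integrating \eqref{eq:2diff} against $\chi_\delta(\mathbf{y})$ in $\mathbf{y}$ yields
\[
u_\delta(\mathbf{x}+\mathbf{h}) + u_\delta(\mathbf{x}-\mathbf{h}) - 2u_\delta(\mathbf{x}) \leq C|\mathbf{h}|^2
\]
for $\mathbf{x}$ in a slightly smaller ball and $|\mathbf{h}|$ small. Since $u_\delta$ is smooth, setting $\mathbf{h}=t\mathbf{v}$ and letting $t\to 0$ gives $\partial^2_{\mathbf{v}} u_\delta(\mathbf{x}) \leq C|\mathbf{v}|^2$ for every $\mathbf{v}\in\mathbb{R}^{16}$, i.e.\ $\operatorname{Hess}_{\mathbb{R}} u_\delta \leq C\,I$ uniformly in $\delta$.

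For the lower bound I use that every nonzero direction $\mathbf{v}=(\mathbf{v}_1,\mathbf{v}_2)\in\mathbb{O}^2$ lies in the tangent space of some affine octonionic line $L$ through $\mathbf{x}$: if $\mathbf{v}_1\neq 0$, take $\mathbf{a}=\mathbf{v}_2\mathbf{v}_1^{-1}$ and $L=\mathbf{x}+\{(\mathbf{y},\mathbf{a}\mathbf{y}):\mathbf{y}\in\mathbb{O}\}$; otherwise use $\mathbf{x}+\{(\mathbf{0},\mathbf{y})\}$. The frame $\{(e_p,\mathbf{a}e_p)\}_{p=0}^{7}$ spans the tangent space of $L$ and is orthogonal with common squared norm $1+|\mathbf{a}|^2$, since
\[
\operatorname{Re}\bigl((\bar{e}_p\bar{\mathbf{a}})(\mathbf{a}e_q)\bigr)=\operatorname{Re}\bigl(\bar{e}_p(\bar{\mathbf{a}}(\mathbf{a}e_q))\bigr)=|\mathbf{a}|^2\operatorname{Re}(\bar{e}_p e_q)=|\mathbf{a}|^2\delta_{pq},
\]
which uses Lemma \ref{lem:associative}(i) together with the alternativity that makes the subalgebra generated by $\mathbf{a}$ and $e_q$ associative. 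Since $u_\delta$ is $OPSH$, its restriction to $L$ is subharmonic on $\mathbb{R}^8$, so the trace of $\operatorname{Hess}_{\mathbb{R}} u_\delta$ over an orthonormal basis $\{\mathbf{w}_0,\ldots,\mathbf{w}_7\}$ of $L$ with $\mathbf{w}_0$ parallel to $\mathbf{v}/|\mathbf{v}|$ is nonnegative; combined with the upper bound applied to the remaining seven vectors this gives
\[
\partial^2_{\mathbf{v}/|\mathbf{v}|} u_\delta=\partial^2_{\mathbf{w}_0}u_\delta \geq -\sum_{p=1}^{7}\partial^2_{\mathbf{w}_p}u_\delta \geq -7C,
\]
so $\operatorname{Hess}_{\mathbb{R}} u_\delta \geq -7C\,I$, completing the uniform two-sided bound. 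The main subtlety I anticipate is the orthogonality identity above: it is not automatic because the octonions are non-associative, but it is exactly the sort of weak associativity furnished by Lemma \ref{lem:associative}, and poses no real obstruction.
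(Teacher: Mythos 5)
Your proof is correct and reaches the theorem by a genuinely different (and somewhat cleaner) route than the paper's, although both rest on the same two pillars: Proposition \ref{prop:2diff} for the upper bound and the subharmonicity implicit in $OPSH$ for the lower bound. The paper first deduces from $0\leq T_\varepsilon u\leq C$ that $\triangle u\in L^\infty_{loc}$, then uses Riesz transforms (Calder\'on--Zygmund theory) to obtain $\partial^2 u\in L^p_{loc}$ for finite $p$, upgrades the diagonal derivatives $\partial^2 u/\partial x_{\alpha k}^2$ to $L^\infty$ via a.e.\ convergence of difference quotients, extracts their lower bound from $\triangle_\alpha u\geq 0$, and finally handles mixed derivatives through the identity $2\partial_{\alpha k}\partial_{\beta l}u=(\partial_{\alpha k}+\partial_{\beta l})^2 u-\partial^2_{\alpha k}u-\partial^2_{\beta l}u$. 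You instead work on the smooth mollifications $u_\delta$ (legitimate by Proposition \ref{prop:QSH-m}(2)), bound the full real Hessian matrix in one stroke, and pass to the weak-* limit, thereby avoiding the $L^p$/Riesz detour and the mixed-derivative trick. The extra insight that makes this possible is your use of subharmonicity on an \emph{arbitrary} octonionic line through $\mathbf{x}$ containing the chosen direction $\mathbf{v}$ -- the paper's lower bound $\triangle_\alpha u\geq 0$ only exploits the two coordinate octonionic lines, which is why it then needs the algebraic trick for the off-diagonal entries -- and this yields $\partial^2_{\mathbf{v}}u_\delta\geq -7C$ in every direction simultaneously. A few small points worth flagging: the identity $(\mathbf{v}_2\mathbf{v}_1^{-1})\mathbf{v}_1=\mathbf{v}_2$, needed to place $\mathbf{v}$ on the chosen line, again requires alternativity (the subalgebra generated by $\mathbf{v}_1$ and $\mathbf{v}_2$ is associative, and $\mathbf{v}_1^{-1}=\overline{\mathbf{v}}_1/|\mathbf{v}_1|^2$ lies in it); and your orthogonality computation is valid -- the first equality is a legitimate application of Lemma \ref{lem:associative}(i) treating $\mathbf{a}e_q$ as one factor, the second is alternativity -- though it is also immediate from polarizing the normed-algebra identity $|\mathbf{a}\mathbf{x}|=|\mathbf{a}||\mathbf{x}|$, which gives $\langle\mathbf{a}\mathbf{x},\mathbf{a}\mathbf{y}\rangle=|\mathbf{a}|^2\langle\mathbf{x},\mathbf{y}\rangle$ directly.
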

\begin{proof} Since $u$ is $OPSH$,
\begin{equation}\label{eq:T-varepsilon} \begin{split}
0\leq T_{ \varepsilon }u = &\frac {36}{\tau_{16} \varepsilon ^{18}}\int_{B(\mathbf{0},\varepsilon )}\left(u(\mathbf{x}+   \mathbf{t})-
u(\mathbf{x})\right)dV(\mathbf{t}) \\
 = &\frac {36}{\tau_{16} \varepsilon ^{18}}\int_{B(\mathbf{0},\varepsilon )}\frac 12 \left(u(\mathbf{x}+ \mathbf{t})+u(\mathbf{x}-  \mathbf{t})- 2
 u(\mathbf{x})\right)dV(\mathbf{t}) \\
 \leq & \frac {18}{ \varepsilon ^{18}}\int_0^\varepsilon  Cr^2 r^{15}dr\leq C,
 \end{split}\end{equation}
  by    Proposition \ref{prop:2diff}. On the other hand, it is direct to see that
  \begin{equation*}\begin{split}
 \int_{\mathbb{O}^2} T_{ \varepsilon }u\cdot \varphi dV=\int_{\mathbb{O}^2}u\cdot T_{ \varepsilon }\varphi dV\rightarrow\int_{\mathbb{O}^2}u\cdot\Delta\varphi dV,
 \end{split}\end{equation*}
 for $\varphi\in C_c^2(B^2)$. This together with estimate \eqref{eq:T-varepsilon} implies $\triangle u\in L^\infty_{loc}(B^2)$. 
 Recall that for a domain $\Omega$  of $\mathbb{R}^N$, if $u$ is subharmonic and  $\triangle u\in L^\infty_{loc}(\Omega) $, then $\lim_{\varepsilon\rightarrow
0}  T_{\varepsilon }u =\triangle u $ in $L^\infty_{loc}(\Omega) $ \cite[Corollary 4.2.3]{klimek}.

  By using Riesz transformation, we see that  $\frac {\partial^2 u}{\partial x_{\alpha
k}\partial x_{\beta l}}  \in   L^p_{loc} (B^2)$ for any $1<p<\infty$. But if $\frac {\partial^2 u}{ \partial
  x_{1k}^2}\in  L^p_{loc} (B^2)$, we always have
\begin{equation*}
  \frac {u(\mathbf{x}_1+\delta e_k, \mathbf{x}_2)+u(\mathbf{x}_1-\delta e_k, \mathbf{x}_2) -2 u(\mathbf{x})}{ \delta^2 }\rightarrow\frac {\partial^2 u}{ \partial
  x_{1k}^2} (\mathbf{x})
\end{equation*}
in  $ L^p_{loc} (B^2)$ as $\delta\rightarrow 0$. Therefore, there exists a subsequence converges almost everywhere in $ B(\mathbf{0},
1-\varepsilon)$ for fixed $\varepsilon>0$. Thus, by the estimate
\eqref{eq:2diff},
 $\frac {\partial^2 u}{ \partial x_{\alpha k}^2} \leq C  $ almost everywhere on $ B(\mathbf{0},
1-\varepsilon)$ for some constant $C>0$. Consequently, $0\leq \triangle_\alpha u\leq 8C$. This also implies $\frac {\partial^2 u}{ \partial x_{\alpha k}^2}   $ are also lower bounded, because
\begin{equation*}
  - \frac {\partial^2 u}{ \partial x_{\alpha p}^2}\leq \sum_{k\neq p}\frac {\partial^2 u}{ \partial x_{\alpha k}^2}\leq 7 C.
\end{equation*}
To see the mixed   partial derivatives $\frac {\partial^2 u}{\partial x_{\alpha k}\partial x_{\beta l}}$ locally  bounded, let $X=\frac {\partial }{ \partial
x_{\alpha k} }+ \frac {\partial }{ \partial x_{\beta l} } $. Then $X^2u$ is
locally  bounded  by $C$ by the above argument. But
\begin{equation*}
  2 \frac {\partial^2 u}{\partial x_{\alpha k}\partial x_{\beta l}}=X^2u-\frac {\partial^2 u}{ \partial x_{\alpha k}^2} -\frac {\partial^2 u}{ \partial x_{\beta
  l}^2}.
\end{equation*}So $\frac {\partial^2 u}{\partial x_{\alpha k}\partial x_{\beta l}}$ is locally  bounded.
\end{proof}

The following result  due to Bedford-Taylor \cite[Theorem 7.3]{BT}
  plays a crucial role in the   proof of the
Perron-Bremermann function satisfying the homogeneous Monge-Amp\`ere
equation.

\begin{prop} \label{prop:2-derivatives} Suppose that   $\Omega$ is a  bounded   open subset of  $\mathbb{R}^N$.  Let $u: \Omega\rightarrow \mathbb{R}$ be a
subharmonic function such that
$\frac {\partial^2 u}{\partial x_a \partial x_b}\in L^\infty_{loc}(\Omega)$ for $a,b = 1, \dots ,N$.
  Let $\eta > 0$  and let $\{\varepsilon_j\}$ be a sequence of positive numbers converging
to zero. Then there exists a compact set $ K\subset \Omega$ and a natural number $j_0 $
such that;
\\
(i) $|\Omega\setminus K|<\eta$;
\\
(ii)
$\left.\frac {\partial^2 u}{\partial x_a \partial x_b}\right|_K\in C(K)$ for $a,b = 1, \dots ,N$;
\\
(iii) for all $ x \in K$,  $j> j_0 $ and $y \in \overline{B}( {x},\varepsilon_j)$,
\begin{equation*}
  \left |T_{\varepsilon_j}u( {y})-\triangle u( {x})\right|\leq \eta,\qquad \left|\frac {\partial^2 u_{\varepsilon_j}}{\partial x_a \partial x_b}(y)-\frac
  {\partial^2 u }{\partial x_a \partial
  x_b}(x)\right|\leq \eta,\qquad a,b = 1, \dots ,N.
\end{equation*}
\end{prop}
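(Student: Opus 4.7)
The plan is to exploit the classical measure-theoretic tools of Lusin and Egorov, applied to the finitely many locally bounded second partials $f_{ab} := \partial^2 u/\partial x_a \partial x_b$. After shrinking to a slightly smaller bounded open set $\Omega' \Subset \Omega$ with $|\Omega \setminus \Omega'| < \eta/3$ on which all $f_{ab} \in L^\infty$, the argument splits into three steps corresponding to items (i)--(iii) of the statement.

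First, Lusin's theorem applied successively to each of the finitely many $f_{ab}$ produces a compact $K_1 \subset \Omega'$ with $|\Omega' \setminus K_1| < \eta/3$ on which every $f_{ab}$ is continuous; this handles (ii). Second, by the Lebesgue differentiation theorem, a.e.\ point of $\Omega'$ is a common Lebesgue point of all the $f_{ab}$. For the mollified partials, using $\partial^2 u_\varepsilon/\partial x_a\partial x_b = (f_{ab})_\varepsilon$ together with $B(y,\varepsilon)\subset B(x,2\varepsilon)$ whenever $y\in B(x,\varepsilon)$, one estimates
\begin{equation*}
|(f_{ab})_\varepsilon(y)-f_{ab}(x)| \leq \frac{2^N}{|B(x,2\varepsilon)|}\int_{B(x,2\varepsilon)}|f_{ab}(z)-f_{ab}(x)|\,dz,
\end{equation*}
which tends to $0$ at every Lebesgue point $x$, uniformly in $y\in \overline{B}(x,\varepsilon)$. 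For the Laplacian part, since $u\in W^{2,\infty}_{loc}$ (hence $C^{1,1}_{loc}$) one uses the identity $u(z)-u(y)=\int_0^1 \nabla u(y+s(z-y))\cdot (z-y)\,ds$ and a second application to $\nabla u$ at its Lebesgue points to obtain the pointwise Taylor expansion $u(y+h)=u(y)+\nabla u(y)\cdot h+\tfrac12 h^{T}\nabla^{2}u(x)h+o(|h|^{2})$ uniformly for $y\in \overline{B}(x,\varepsilon_j)$ at a.e.\ $x$; integration over $|h|<\varepsilon_j$ then yields $T_{\varepsilon_j}u(y)\to \Delta u(x)$ as $j\to\infty$, again uniformly in such $y$.

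Third, defining the measurable function
\begin{equation*}
\psi_{j}(x):=\max_{a,b}\sup_{y\in \overline{B}(x,\varepsilon_{j})}\Bigl(|(f_{ab})_{\varepsilon_{j}}(y)-f_{ab}(x)|+|T_{\varepsilon_{j}}u(y)-\Delta u(x)|\Bigr),
\end{equation*}
the previous step shows $\psi_j(x)\to 0$ for a.e.\ $x\in\Omega'$. Egorov's theorem then supplies a compact set $K_2\subset \Omega'$ with $|\Omega'\setminus K_2|<\eta/3$ on which the convergence is uniform. Taking $K:=K_1\cap K_2$ yields $|\Omega\setminus K|<\eta$, and choosing $j_0$ with $\sup_K \psi_{j}\leq \eta$ for $j>j_0$ gives (iii).

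The main obstacle is the ``uniform in $y\in \overline{B}(x,\varepsilon_j)$'' content of (iii): plain a.e.\ convergence of $\partial^{2}u_\varepsilon/\partial x_a\partial x_b$ to $f_{ab}$, and of $T_\varepsilon u$ to $\Delta u$, is standard, but one needs the stronger convergence evaluated at nearby points $y$, not merely at $x$. The geometric inclusion $B(y,\varepsilon)\subset B(x,2\varepsilon)$ bridges this gap for the mollified partials, while the $C^{1,1}$ regularity of $u$ allows the Taylor-type expansion to be valid uniformly for $y$ in a ball of radius $\varepsilon_j$ around a.e.\ $x$; Egorov then upgrades the resulting pointwise control to uniform control on the sought compact set.
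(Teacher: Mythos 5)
The paper does not prove this proposition itself: it is stated as a citation of Bedford--Taylor \cite[Theorem 7.3]{BT}, so there is no in-text proof to compare against. Your argument is essentially the standard one (and the one Bedford--Taylor themselves give): Lusin's theorem for (ii), Lebesgue differentiation together with the inclusion $B(y,\varepsilon)\subset B(x,2\varepsilon)$ for the a.e.\ pointwise version of (iii), and Egorov's theorem to upgrade the a.e.\ convergence of $\psi_j$ to uniform convergence on a large compact set. The decomposition into $K_1$, $K_2$ and the final intersection $K=K_1\cap K_2$ with the $\eta/3$-budget is correct, as is the explicit $2^N$ doubling estimate for the mollified second partials; this part needs nothing beyond Lebesgue points of the finitely many $f_{ab}\in L^\infty_{loc}$.

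The one place you should be more careful is the $T_{\varepsilon_j}u$ estimate. The phrase ``a second application to $\nabla u$ at its Lebesgue points'' does not by itself yield the uniform second-order expansion $u(y+h)=u(y)+\nabla u(y)\cdot h+\tfrac12 h^{T}\nabla^{2}u(x)h+o(|h|^{2})$ for $y\in\overline{B}(x,\varepsilon_j)$: Lebesgue points of $\nabla u$ only give $L^1$-continuity of averages, not differentiability of $\nabla u$ at $x$. What you actually need is the Calder\'on--Zygmund theorem that a $W^{2,\infty}_{loc}$ function admits an a.e.\ second-order Taylor expansion (equivalently, $\nabla u$ is differentiable a.e.); then expanding $u$ around the center $x$ at both $y$ and $y+h$, the terms $\nabla u(x)\cdot h$ and $(y-x)^{T}D^{2}u(x)h$ are odd in $h$ and drop out of the ball average, leaving $T_{\varepsilon_j}u(y)=\triangle u(x)+o(1)$ uniformly in $y\in\overline{B}(x,\varepsilon_j)$. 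Alternatively, one can bypass second-order differentiability entirely by writing $T_\varepsilon u(y)$ as a positive normalized kernel average of $\{A_t(\triangle u)(y):0<t\le\varepsilon\}$ (where $A_t$ is the ball mean), splitting the $t$-range at $\delta\varepsilon$, bounding the small-$t$ contribution by $\|\triangle u\|_{L^\infty}$ times the small kernel mass there, and controlling the large-$t$ contribution by the Lebesgue-point estimate for $\triangle u$ at $x$ over $B(x,2\varepsilon)$; this only uses Lebesgue points. Either route closes the step; as written your sketch has the right idea but understates the tool it rests on.
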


\begin{proof}[Proof of Theorem \ref{thm:Dirichlet-ball}]
Since $u\in L^\infty_{2,loc}(B^2)$ by Theorem \ref{thm:C11}, we can write $ \det(Hess_{\mathbb{O}}(u)) =gdV$ for some non-negative function $g\in L^\infty_{ loc}(B^2)$.
 Suppose that $g$ does not vanish. Then there exists some constant $c\in (0,1)$ such that
  \begin{equation*}
    |\{\mathbf{x}\in B^2; g(\mathbf{x})>c \}|>20\tau_{16}c.
 \end{equation*}

Let $\{\varepsilon_j\}$ be a sequence of positive numbers converging
to zero and let
\begin{equation}\label{eq:esssup}
  M:={\rm esssup}\left\{|\overline{\partial}_\alpha\partial_\beta u (\mathbf{y})|; |\mathbf{y}|\leq1-\frac c2,\alpha,\beta=1,2\right\},\qquad 
   a<\frac c{2M+1},\qquad \eta<\frac a5 .
\end{equation}
 Then, by Proposition \ref{prop:2-derivatives},   there exists a compact set $ K\subset B^2$ and a natural number $j_0 $
such that $(i)$
 $|B^2\setminus K| <\eta$, $(ii)$ $\overline{\partial}_\alpha\partial_\beta u\in C(K)$,
 and $(iii)$ for all $ \mathbf{x} \in K$,  $j> j_0 $ and $\mathbf{y} \in \overline{B}(\mathbf{x},\varepsilon_j)$,
\begin{equation}\label{eq:2-o-derivatives} \begin{split}
  \left |T_{\varepsilon_j}u(\mathbf{y})-\triangle u(\mathbf{x})\right|\leq \eta&,
 \\
\overline{\partial}_\alpha\partial_\beta u_{\varepsilon_j} (\mathbf{y})=\overline{\partial}_\alpha\partial_\beta  u  (\mathbf{x})+\mathcal{E}_{\alpha\beta}(\mathbf{x}, \mathbf{y})&,\qquad{\rm with }\quad
  \left|\mathcal{E}_{\alpha\beta}(\mathbf{x}, \mathbf{y})\right|\leq \eta,\qquad \alpha,\beta=1,2.
\end{split} \end{equation}Here $\mathcal{E}_{\alpha\beta}$ is defined by the second identity. Since $|B(\mathbf{0}, 1  )\setminus B(\mathbf{0}, 1 - c)|<16\tau_{16}c$
The choice of $\eta$ in \eqref{eq:esssup} implies  that we have  $ | K \cap B(\mathbf{0}, 1 - c)\cap\{\mathbf{x}\in B^2; g(\mathbf{x})>c \}|\neq 0$ and  $g\in C(K)$. So there exists a point $\mathbf{x}_0 \in K \cap
B(\mathbf{0}, 1 - c)$ such
that $u$ is second-order differentiable at this point, $g(\mathbf{x}_0 ) > c$ and $|\overline{\partial}_\alpha\partial_\beta u (\mathbf{x}_0)|<M$. Now
 let
\begin{equation*}
 v_{ j}(\mathbf{x})=   u_{\varepsilon_j}(\mathbf{x})-(a-2\eta)|\mathbf{x}-\mathbf{x}_0|^2/8+(a-4\eta)\varepsilon_j^2/8-\varepsilon_j^2\frac {\triangle
 u(\mathbf{x}_0)}{36},
\end{equation*}for $\mathbf{x}\in {B}(\mathbf{x}_0, \varepsilon_j)$.
Then for $\mathbf{x}\in {B}(\mathbf{x}_0, \varepsilon_j)$,
\begin{equation}\label{eq:Hessian-x-x0}
 \left(\overline{\partial}_\alpha\partial_\beta  v_{ j}(\mathbf{x})\right)= \left(\overline{\partial}_\alpha\partial_\beta u(\mathbf{x}_0)\right)-aI+
 (\mathcal{E}_{\alpha\beta})+2\eta I
\end{equation}by \eqref{eq:2-o-derivatives}.
Let $\lambda_2\geq\lambda_1 $ be two eigenvalues of $\left(\overline{\partial}_\alpha\partial_\beta u(\mathbf{x}_0)\right)$, i.e. they are solutions to
\begin{equation*}
   \lambda^2- {\rm tr} \left(\overline{\partial}_\alpha\partial_\beta u(\mathbf{x}_0)\right)\lambda+ \det \left(\overline{\partial}_\alpha\partial_\beta u(\mathbf{x}_0)\right)=0.
\end{equation*}
Since   $\left(\overline{\partial}_\alpha\partial_\beta u(\mathbf{x}_0)\right)$ is nonnegative,  we must have  $\lambda_2\geq\lambda_1>0$ by Proposition  \ref{prop:non-negative}, and 
\begin{equation*}
    \lambda_1< \lambda_1+\lambda_2 =tr \left(\overline{\partial}_\alpha\partial_\beta u(\mathbf{x}_0)\right)\leq 2M, 
\end{equation*}by \eqref{eq:esssup}. Then,  $\det\left(\overline{\partial}_\alpha\partial_\beta u(\mathbf{x}_0)\right)= g(\mathbf{x}_0 )>c>0 $ implies
\begin{equation*}
    2M \lambda_1\geq \lambda_1 \lambda_2 =g(\mathbf{x}_0 ) > c.
\end{equation*}Thus, $\lambda_1\geq  \frac c{2M}>a$, and so  $ \left(\overline{\partial}_\alpha\partial_\beta u(\mathbf{x}_0)\right)-aI$ is also positive.
Consequently, \eqref{eq:Hessian-x-x0} is  positive and so
$v_j\in  OPSH( {B}(\mathbf{x}_0, \varepsilon_j)) $. But we can write
\begin{equation}\label{eq:vj-u}
 v_{ j}(\mathbf{x})- u (\mathbf{x})= \frac 1{36} (T_{\varepsilon_j}u(\mathbf{x}) -\triangle u(\mathbf{x}_0))
 \varepsilon_j^2-(a-2\eta)|\mathbf{x}-\mathbf{x}_0|^2/8+(a-4\eta)\varepsilon_j^2 /8.
\end{equation}
So if  $\mathbf{x}\in \partial{B}(\mathbf{x}_0, \varepsilon_j)$, we have
\begin{equation*}
 v_{ j}(\mathbf{x})- u (\mathbf{x}) \leq \varepsilon_j^2\left(\frac 1{36} \eta - \frac 1{4}\eta\right)<0,
\end{equation*} Therefore $v_{ j}\leq u$ on $  {B}(\mathbf{x}_0, \varepsilon_j)  $ by the maximality of $u$  on $B^2$. But \eqref{eq:vj-u} together with
\eqref{eq:2-o-derivatives} implies
\begin{equation*}
 v_{ j}(\mathbf{x}_0)- u (\mathbf{x}_0)\geq\frac  { \varepsilon_j^2}8\left(a-4\eta-\frac 2{9} \eta  \right)>0,
\end{equation*}by \eqref{eq:eta}.
This is a contradiction.
\end{proof}

  Theorem \ref{thm:Dirichlet-ball} can be generalized  to  general strictly octonionic pseudoconvex  domains  as in \cite[Theorem 8.2-8.3]{BT}
for complex Monge-Ampere equation. We omit details.

\begin{thm} \label{thm:Dirichlet-ball-1}  Suppose that $\Omega$ is a strictly  octonionic   pseudoconvex  domain in $\mathbb{O}^2$. For  $\varphi \in C( \Omega)$,
the  Perron-Bremermann  function $ \Psi_{\Omega,\varphi}
$ is  $C (\overline{\Omega})$ and
 is the unique solution to    the   Dirichlet
problem \begin{equation}\label{eq:Dirichlet-problem2}
  \left\{
  \begin{array}{ll} u\in  OPSH(\Omega) \cap  C( \overline{ \Omega}),\qquad&\\
     \det(Hess_{\mathbb{O}}(u)) = 0,\qquad & {\rm on}\quad \Omega,\\
    u= \varphi,& {\rm on}\quad \partial \Omega.
 \end{array}
  \right.
\end{equation}
\end{thm}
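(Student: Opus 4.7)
The plan is to leverage Theorem \ref{thm:Perron-Bremermann} of Harvey--Lawson, which already guarantees that $u := \Psi_{\Omega,\varphi}$ lies in $OPSH(\Omega) \cap C(\overline\Omega)$, attains the boundary values $\varphi$, and is maximal. What remains is to show (a) $\det(Hess_{\mathbb{O}}(u)) = 0$ on $\Omega$, and (b) uniqueness; both are expected to be standard consequences of the tools already developed in the paper.

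For (a), I would argue locally by the classical Bedford--Taylor envelope trick. Fix $\mathbf{x}_0 \in \Omega$ and a small ball $B := B(\mathbf{x}_0,r) \Subset \Omega$. The class $OPSH$ is invariant under affine maps $\mathbf{x}\mapsto \lambda \mathbf{x}+\mathbf{c}$, since affine octonionic lines are mapped to affine octonionic lines (up to reparametrization). Hence Theorem \ref{thm:Dirichlet-ball}, transplanted by an affine change of variables, yields on $B$ a continuous $OPSH$ solution of the homogeneous octonionic Monge--Amp\`ere equation for any $C^2$ boundary datum. Approximating the continuous boundary datum $u\vert_{\partial B}$ uniformly by $C^2$ functions $\varphi_n$, the comparison principle (Theorem \ref{thm:compare}) implies that the associated Perron--Bremermann solutions form a uniform Cauchy sequence, and Theorem A guarantees that the condition $\det(Hess_{\mathbb{O}}(\cdot))=0$ is preserved in the limit; this produces $v \in OPSH(B) \cap C(\overline B)$ with $v = u$ on $\partial B$ and $\det(Hess_{\mathbb{O}}(v)) = 0$ on $B$. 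Since $v$ is itself the Perron--Bremermann function on $(B, u\vert_{\partial B})$ and $u\vert_B \in \mathscr B(B, u\vert_{\partial B})$, we have $u \leq v$ on $B$. Pasting via Proposition \ref{prop:QSH-m}(7),
\[
\tilde u(\mathbf{x}) := \begin{cases} \max\{u(\mathbf{x}), v(\mathbf{x})\}, & \mathbf{x} \in B,\\ u(\mathbf{x}), & \mathbf{x} \in \Omega\setminus B, \end{cases}
\]
defines an $OPSH$ function on $\Omega$ that agrees with $u$ outside $B$, and hence lies in $\mathscr B(\Omega,\varphi)$. By definition of $\Psi_{\Omega,\varphi}$ as a supremum, $\tilde u \leq u$, which forces $v \leq u$ on $B$ and therefore $v = u$ on $B$. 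Thus $\det(Hess_{\mathbb{O}}(u)) = 0$ on $B$, and since $\mathbf{x}_0$ was arbitrary, on all of $\Omega$.

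For (b), if $u_1,u_2$ both solve \eqref{eq:Dirichlet-problem2} then the proposition stated immediately after the proof of Theorem \ref{thm:compare} (applied with the roles $u\leftrightarrow v$ exchanged) gives, from $\det(Hess_{\mathbb{O}}(u_i))=0$ and $u_1=u_2=\varphi$ on $\partial\Omega$, both $u_1 \geq u_2$ and $u_2 \geq u_1$ on $\Omega$, hence $u_1 = u_2$.

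The main obstacle I anticipate is the passage in (a) from $C^2$ boundary data to merely continuous boundary data on an arbitrary ball $B$: affine invariance of $OPSH$ is routine, but one must verify that the Perron--Bremermann solutions for $\varphi_n$ converge uniformly on $\overline B$ (through a sup-norm contraction estimate derived from the comparison principle) and that both continuity and the vanishing of the Monge--Amp\`ere measure survive the limit, the latter via the weak continuity in Theorem A. Once this local ball solvability is in hand, the envelope/pasting step reduces the general strictly pseudoconvex case to the unit-ball result \`a la Bedford--Taylor, with no further use of the global geometry of $\Omega$.
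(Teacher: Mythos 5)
Your proposal is correct and follows essentially the same route the paper intends: the paper omits the proof of this theorem, citing only \cite[Theorems 8.2--8.3]{BT}, and your argument (local solvability on balls by uniform $C^2$-approximation of continuous data combined with Theorem A, the pasting step forcing $v=u$ via the supremum characterization of $\Psi_{\Omega,\varphi}$, and uniqueness via the post-comparison-principle proposition) is precisely the Bedford--Taylor balayage scheme adapted to the octonionic setting. The one point worth making explicit is that the dilation scalar $\lambda$ in your affine maps must be taken real so that octonionic lines are carried to octonionic lines, which is exactly what is needed to transplant Theorem \ref{thm:Dirichlet-ball} to an arbitrary ball.
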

\begin{rem} The weighted transformation formula of quaternionic Monge-Amp\`ere  operator can be found in 
   \cite{wang25}. So the  $C_{loc}^{1,1}$-regularity of solution to the homogeneous quaternionic
Monge-Amp\`ere equation on the unit ball also holds.
\end{rem}

 \section{Octonionic relative extremal function  and   octonionic  capacity}
\subsection{Octonionic relative extremal function  } For a domain $\Omega$ in $\mathbb{O}^2$ and $E\subset \Omega$, let
 \begin{equation}\label{eq:U}
   \mathcal{U}( E ,\Omega): =\{  u\in  OPSH(\Omega) , u\vert_{\Omega}\leq    0,u\vert_{E}\leq   -1 \},
 \end{equation}
and
$
    \omega (\mathbf{x},E,\Omega):=\sup \{u(\mathbf{x}); u\in \mathcal{U}( E ,\Omega) \},
$
whose   upper semicontinuous regularization $ \omega^* (\mathbf{x},E,\Omega)$  is called the {\it (octonionic) relative extremal function}  of the set $E$ in
$\Omega$. It is $OPSH $ by Proposition
\ref{prop:QSH-m} (6).  

A domain $\Omega$ is called {\it  octonionic    hyperconvex}  if there exists
a continuous function  $\varrho\in   OPSH (\Omega)$  such that $\varrho < 0 $ in $\Omega$, $\lim_{\mathbf{x}\rightarrow \partial\Omega} \varrho(\mathbf{x})=0$ and
$\{\mathbf{x}\in\Omega;\varrho(\mathbf{x})<c\}$ is
relatively
compact in $ \Omega$ for any $c < 0$. It is called  {\it strongly octonionic  hyperconvex}
if there exists $\varrho\in      OPSH (G)$ satisfying the above condition for some open set $  G \Supset\Omega$.
The relative extremal function has the following simple properties.

  \begin{prop} \label{prop:relative-extremal}
(1) {\it (Monotonicity)  if $E_1\subseteq E_2\subset\Omega$, then  $ \omega^* (\mathbf{x},E_1,\Omega)  \geq \omega^* (\mathbf{x},E_2,\Omega) $;  if $E \subseteq
\Omega_1\subset  \Omega_2$, then  $ \omega^*
(\mathbf{x},E ,\Omega_1)  \geq \omega^* (\mathbf{x},E ,\Omega_2) $ for $\mathbf{x}\in \Omega_1$}.

(2)  {\it $ \omega^* (\mathbf{x},E,\Omega)\equiv 0$  if and only if $E$ is octonionic  polar in $\Omega$}, i.e. $E$  is contained in
$\{\mathbf{x};v(\mathbf{x})=-\infty\}$ for some $OPSH $ function $v$.

(3) {\it  Let $    \Omega =\{\varrho<0\}$ be    octonionic  hyperconvex.
If   $ E\Subset  \Omega $, then
$ \omega^* (\mathbf{x},E,\Omega)\rightarrow 0$ as $\mathbf{x}\rightarrow \partial\Omega$}.

(4)  {\it Let $    \Omega =\{\varrho<0\}$ be   strongly octonionic  hyperconvex.
If   $ E\Subset  \Omega $,
then  $ \omega^* (\cdot,E,\Omega)$ admits an  $OPHS$ extension to a neighborhood of   $\overline{\Omega}$}.
\end{prop}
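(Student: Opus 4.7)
The plan is to treat parts (1)--(4) in turn; each assertion has a standard analogue in complex or quaternionic pluripotential theory, and the proof adapts with only the tools already developed in the paper, namely Proposition~\ref{prop:QSH-m} (monotonicity of OPSH under sup, scaling, and pasting), Choquet's lemma, and the hyperconvex barrier $\varrho$.

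Part (1) is a formal consequence of the definitions. For the first inequality the inclusion $E_1\subseteq E_2$ forces $\mathcal{U}(E_2,\Omega)\subseteq \mathcal{U}(E_1,\Omega)$, since the stricter requirement $u|_{E_2}\le -1$ automatically enforces $u|_{E_1}\le -1$; the pointwise supremum can only grow, and USC regularization preserves the inequality. For the second, restriction to $\Omega_1$ sends $\mathcal{U}(E,\Omega_2)$ into $\mathcal{U}(E,\Omega_1)$. Part (3) uses a barrier: because $E\Subset\Omega$, the continuous OPSH function $\varrho$ attains a strictly negative supremum $-\delta$ on $E$, so $M\varrho\in \mathcal{U}(E,\Omega)$ whenever $M\ge 1/\delta$, giving $\omega^*\ge M\varrho$. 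Since $M\varrho(x)\to 0$ as $x\to\partial\Omega$ while $\omega^*\le 0$ throughout, the boundary limit of $\omega^*$ is forced to be $0$.

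For the forward direction of (2), suppose $v\in OPSH(\Omega)$ satisfies $v\not\equiv -\infty$ and $E\subseteq\{v=-\infty\}$; after a truncate-and-shift normalization one may take $v\le 0$, so $\epsilon v\in \mathcal{U}(E,\Omega)$ for every $\epsilon>0$ and hence $\omega(\cdot,E,\Omega)\ge \epsilon v$ pointwise. Letting $\epsilon\downarrow 0$ yields $\omega\ge 0$ off the Lebesgue-null set $\{v=-\infty\}$, and upper semicontinuity of $\omega^*$ then forces $\omega^*\equiv 0$ on $\Omega$. For the converse, I would apply Choquet's lemma to extract a countable family $\{u_j\}\subset \mathcal{U}(E,\Omega)$ with $(\sup_j u_j)^*=\omega^*\equiv 0$ and form $v:=\sum_j c_j u_j$ with positive weights $c_j$ tuned so that $\sum_j c_j\|u_j\|_{L^1(K_m)}<\infty$ on an exhaustion $\{K_m\}$ of $\Omega$, which forces $v\in L^1_{\mathrm{loc}}(\Omega)$ and hence $v\in OPSH(\Omega)$ (as the decreasing $L^1_{\mathrm{loc}}$-limit of its OPSH partial sums, invoking Proposition~\ref{prop:QSH-m}), while $\sum_j c_j=\infty$ combined with $u_j\le -1$ on $E$ forces $v\equiv -\infty$ on $E$.

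Finally, for (4) I invoke the pasting principle Proposition~\ref{prop:QSH-m}(7) on the enlargement $G\Supset\Omega$ on which $\varrho$ extends as an OPSH function, taking the ambient function to be $M\varrho\in OPSH(G)$ with $M$ so large that $M\varrho\le \omega^*$ on $\Omega$ (as in part (3)) and the subordinate function to be $\omega^*$ on the subdomain $D=\Omega$. The boundary hypothesis $\limsup_{x\to x_0\in \partial\Omega}\omega^*(x)\le 0=M\varrho(x_0)$ needed by the paste lemma is supplied by part (3). The pasted function equals $\max(M\varrho,\omega^*)=\omega^*$ on $\Omega$ and $M\varrho$ on $G\setminus \Omega$, giving the desired OPSH extension to a neighborhood of $\overline{\Omega}$. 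The main obstacle lies in the converse of (2): the weights $c_j$ must be chosen to be simultaneously summable against the locally varying $L^1$-norms of $u_j$ and divergent as a numerical series, which requires a diagonal argument against the exhaustion; the normalization $v\le 0$ in the forward direction also deserves care, since an OPSH function on $\Omega$ is only locally bounded above, and the adjustment must be made so that the competitor $\epsilon v$ still lies in $\mathcal{U}(E,\Omega)$ globally.
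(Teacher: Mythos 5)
Your proposal is correct and follows essentially the same route as the paper: parts (1), (3), (4) match the paper's barrier-and-pasting argument verbatim (the paper's $w$ is exactly the pasted function you produce via Proposition~\ref{prop:QSH-m}(7)), and for part (2) the paper simply cites the complex case in Klimek, so your Choquet-lemma sketch with weighted sum is the standard filling-in of that reference. The normalization subtlety you flag in the forward direction of (2) is real but also elided by the paper's one-line citation, so it does not constitute a divergence from the paper's approach.
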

\begin{proof} (2) The proof is the same as  the complex case \cite{klimek}.

(3) Note that $M\varrho \in\mathcal{U}( E ,\Omega)$ for a suitable $M>0$ since $E\Subset \Omega$. Then
\begin{equation}\label{eq:omega*varrho}
   0\geq \omega^* (\cdot,E,\Omega)\geq M\varrho,
\end{equation}
on $\Omega$, and so we must have
 $\omega^* (\mathbf{x},E,\Omega)\rightarrow 0$ as $\mathbf{x}\rightarrow \partial\Omega$.

(4) By  \eqref{eq:omega*varrho} above, the $  OPSH $  function
\begin{equation*}
  w( \mathbf{x})=\left\{ \begin{array}{ll}
  \omega^* (\mathbf{x},E,\Omega) ,\qquad &\mathbf{x}\in \Omega,
  \\ M\varrho,\qquad &\mathbf{x} \in \Omega^c,
   \end{array}\right.
\end{equation*}
gives an extension to a neighborhood of $\overline{\Omega}$.
\end{proof}

A point $\mathbf{x}_0\in   K$ is called an {\it (octonionic)  regular point} of a compact  subset $K \Subset\Omega$ if  $\omega^* (\mathbf{x}_0,K,\Omega) = -1$. A
compact subset $K\Subset
\Omega$ is called {\it  (octonionic)  regular} in $\Omega$  if each point   of $   K$ is  octonionic  regular.

\begin{prop} \label{prop:regular} Suppose that    $\Omega$  is an octonionic  hyperconvex  domain.  Let $K$ be a compact subset of  $\Omega$  and be the union of a
family of closed balls. Then
$\omega^*  (\cdot,K,\Omega) $ is continuous and $\omega^*  (\cdot,K,\Omega)|_K\equiv- 1$.
\end{prop}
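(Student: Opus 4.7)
The plan is to reduce the statement to the case of a single closed ball via monotonicity and then handle that case by combining an explicit continuous barrier coming from the fundamental solution of Proposition \ref{prop:fundamental-solution} with a Perron--Bremermann argument on the exterior shell. The strategy closely parallels Klimek's treatment in the complex case, but the required barriers must be built from the octonionic fundamental kernel.

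For the reduction, every point $\mathbf{x}_0\in K$ lies in some closed ball $\bar B(\mathbf{a},r)$ from the family whose union is $K$, and $\bar B(\mathbf a,r)\subset K$ gives $\mathcal U(K,\Omega)\subset\mathcal U(\bar B(\mathbf a,r),\Omega)$, so by the monotonicity part of Proposition \ref{prop:relative-extremal}(1),
\[
\omega^*(\mathbf{x}_0,K,\Omega)\;\leq\;\omega^*(\mathbf{x}_0,\bar B(\mathbf a,r),\Omega).
\]
Once I show that the right-hand side is continuous and equals $-1$ on $\bar B(\mathbf a,r)$ (the single-ball case treated below), this forces $\omega^*|_K\leq -1$; together with the trivial lower bound $\omega^*\geq -1$ coming from the constant function $-1\in\mathcal U(K,\Omega)$, this yields $\omega^*|_K\equiv -1$. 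Continuity of $\omega^*(\cdot,K,\Omega)$ at any $\mathbf{x}_0\in K$ then follows by sandwiching: the continuous lower barrier $v_{\mathbf a,r}(\mathbf x):=\max\{-1,-r^6/|\mathbf x-\mathbf a|^6\}$ (constructed below) satisfies $v_{\mathbf a,r}(\mathbf{x}_0)=-1$, while the upper bound $\omega^*(\cdot,\bar B(\mathbf a,r),\Omega)$ is continuous with value $-1$ at $\mathbf{x}_0$, pinching $\omega^*(\cdot,K,\Omega)$ to $-1$ at $\mathbf{x}_0$. Continuity at $\partial\Omega$ is Proposition \ref{prop:relative-extremal}(3), and continuity at points of $\Omega\setminus K$ will be obtained by identifying $\omega^*(\cdot,K,\Omega)$ with the Perron--Bremermann envelope on $\Omega\setminus K$ extended by $-1$ on $K$, as in the single-ball case.

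For the single-ball case $\bar B(\mathbf a,r)\subset\Omega$, Proposition \ref{prop:fundamental-solution} gives $-r^6/|\mathbf x-\mathbf a|^6\in OPSH(\mathbb O^2)$; hence by Proposition \ref{prop:QSH-m}(6) the function $v_{\mathbf a,r}(\mathbf x)=\max\{-1,-r^6/|\mathbf x-\mathbf a|^6\}$ is continuous and $OPSH$ on $\mathbb O^2$, equals $-1$ exactly on $\bar B(\mathbf a,r)$ and lies in $(-1,0)$ outside, so $v_{\mathbf a,r}\in\mathcal U(\bar B(\mathbf a,r),\Omega)\cap C(\Omega)$, giving $\omega^*(\cdot,\bar B(\mathbf a,r),\Omega)\geq v_{\mathbf a,r}$. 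For the matching upper bound I would construct the Perron--Bremermann solution $\Psi$ on the shell $\Omega\setminus\bar B(\mathbf a,r)$ with boundary data $0$ on $\partial\Omega$ and $-1$ on $\partial\bar B(\mathbf a,r)$, extend it by $\Psi\equiv -1$ on $\bar B(\mathbf a,r)$ to obtain a continuous function $\tilde\Psi$ on $\Omega$, and check via Proposition \ref{prop:QSH-m}(7) (the pasting lemma, applied with the two boundary values matching on $\partial\bar B(\mathbf a,r)$) that $\tilde\Psi\in OPSH(\Omega)$. A standard Walsh-type comparison then shows that every $u\in\mathcal U(\bar B(\mathbf a,r),\Omega)$ satisfies $u\leq\tilde\Psi$ on $\Omega$: indeed $u\leq -1=\tilde\Psi$ on $\bar B(\mathbf a,r)$, while on $\Omega\setminus\bar B(\mathbf a,r)$ the restriction $u$ is $OPSH$, $\leq 0$, and upper semicontinuous with $\limsup u\leq -1$ at $\partial\bar B(\mathbf a,r)$ (since $\bar B(\mathbf a,r)$ is closed and $u\leq -1$ there), so maximality of the Perron--Bremermann envelope gives $u\leq\Psi$ there. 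The two inequalities force $\omega^*(\cdot,\bar B(\mathbf a,r),\Omega)=\tilde\Psi$, which is continuous and identically $-1$ on $\bar B(\mathbf a,r)$.

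The main obstacle is that Theorem \ref{thm:Perron-Bremermann} does \emph{not} directly apply to the shell $\Omega\setminus\bar B(\mathbf a,r)$: the inner boundary $\partial\bar B(\mathbf a,r)$ is in fact pseudoconcave, since the natural defining function $r^2-|\mathbf x-\mathbf a|^2$ has negative-semidefinite octonionic Hessian, so the shell fails to be strictly octonionic pseudoconvex. I would circumvent this by proving continuity of $\Psi$ at the inner boundary \emph{by hand}, using $-r^6/|\mathbf x-\mathbf a|^6$ as an explicit two-sided barrier at $\partial\bar B(\mathbf a,r)$ (it is $OPSH$, continuous, and attains the value $-1$ on $\partial\bar B(\mathbf a,r)$), while the hyperconvex defining function of $\Omega$ plays the same role at $\partial\Omega$. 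Once the single-ball case is established, the identical Perron--Bremermann argument applied to the whole domain $\Omega\setminus K$, with barriers $v_{\mathbf a_i,r_i}$ at each point $\mathbf{x}_0\in\partial K$ (every such $\mathbf{x}_0$ lies in some $\bar B(\mathbf a_i,r_i)$ of the family, providing the required barrier from outside) and $M\varrho$ at $\partial\Omega$, identifies $\omega^*(\cdot,K,\Omega)$ with the continuous extension by $-1$ of the Perron--Bremermann envelope on $\Omega\setminus K$, finishing continuity on all of $\Omega$.
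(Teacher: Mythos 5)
Your proposal takes a genuinely different route from the paper's for the continuity part, and it contains a gap that is acknowledged but not actually closed.

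The paper proves the identity $\omega^*|_K\equiv-1$ exactly as you do (monotonicity plus an explicit barrier built from the fundamental solution), except that the paper compares $K$ to a pair of \emph{concentric} balls $\overline{B}(\mathbf a,r)\subset B(\mathbf a,R)\subset\Omega$ and works out the exact formula
$\omega(\cdot,\overline{B}(\mathbf a,r),B(\mathbf a,R))=\max\bigl\{(\tfrac1{r^6}-\tfrac1{R^6})^{-1}(-|\mathbf x-\mathbf a|^{-6}+R^{-6}),-1\bigr\}$,
verified by restricting to octonionic lines through $\mathbf a$ and applying the one-variable maximum principle on the annulus. Because that comparison domain is a ball (not $\Omega$ itself), no Perron--Bremermann argument on a shell is needed. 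For the global continuity the paper then uses hyperconvexity directly: it fixes $\varrho\leq\omega^*$, applies Dini's theorem to the standard regularizations $u_\delta\downarrow\omega^*$ on the compact $\overline\Omega_\eta\supset K$, and pastes $\varrho$ with $\max\{u_\delta-2\varepsilon,\varrho\}$ to produce a continuous competitor $v_\varepsilon\in\mathcal U(K,\Omega)$ with $\omega^*-2\varepsilon\leq v_\varepsilon\leq\omega^*$; this sandwiches $\omega^*$ between continuous functions uniformly. Your alternative is to realize $\omega^*$ as a Perron--Bremermann envelope on $\Omega\setminus K$ extended by $-1$.

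The gap is in the Perron--Bremermann step. You correctly observe that the shell $\Omega\setminus\overline{B}(\mathbf a,r)$ is not strictly octonionic pseudoconvex so Theorem \ref{thm:Perron-Bremermann} does not apply, and you propose to prove continuity of $\Psi$ at the inner boundary using $-r^6/|\mathbf x-\mathbf a|^6$ as a ``two-sided barrier.'' But this function is only a \emph{lower} barrier: it is $OPSH$, lies in the competing family, and gives $\liminf_{\mathbf x\to\mathbf y}\Psi(\mathbf x)\geq-1$ for $\mathbf y\in\partial\overline{B}(\mathbf a,r)$. It says nothing about $\limsup\Psi\leq-1$ there, and in general a supremum of functions each with $\limsup\leq-1$ at a boundary point need not retain that bound. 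To close this you would need an \emph{upper} barrier --- for instance the classical harmonic solution on the shell with the given boundary data, which dominates every subharmonic competitor by the maximum principle --- but you neither supply it nor note that (i) this requires classical Dirichlet regularity of $\partial K$, which is not automatic for an arbitrary union of closed balls, and (ii) the subsequent claim that the extension $\tilde\Psi$ is $OPSH$ and equals $\omega^*$ needs the upper barrier to make $\tilde\Psi$ upper semicontinuous across $\partial K$. Until the upper-barrier half is actually carried out, the single-ball case is unproved, and since the rest of your argument rests on it, the proof is incomplete as written. The paper's hyperconvexity-plus-Dini argument sidesteps all of this, which is why it does not need any Dirichlet-problem machinery for this proposition.
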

\begin{proof} See \cite{klimek} for the proof in the complex case. We claim that $\omega
(\mathbf{x},\overline{B}(\mathbf{a},r),{B}(\mathbf{a},R)) $, the relative extremal function for concentric balls $\overline{{B}}(\mathbf{a},r) \subset {B}(\mathbf{a},R)$,  is
given by
\begin{equation}\label{eq:u-r-R}
u_0(\mathbf{x}):=\max \left\{ \frac 1{  \frac{1}{ r^{6}}-\frac 1{ R ^6} } \left(-\frac{1}{ | \mathbf{x}-\mathbf{a}  |^{6}}+\frac 1{
R ^6}\right)
, -1\right\}.
\end{equation}
Since $-\frac{1}{ |\mathbf{x}-\mathbf{a} |^{6}}
$
  is $OPSH$ on $\mathbb{O}^2$ by Proposition \ref{prop:fundamental-solution},  so is $u_0$. The set of  octonionic (right) lines in $
  \mathbb{O}^2 $
  can be parameterized by $\mathbf{b}=(b_1,\mathbf{b}_2)$ with $b_1\in \mathbb{R},\mathbf{b}_2\in \mathbb{O}$ and $b_1^2+|\mathbf{b}_2|^2=1$:
  \begin{equation}\label{eq:octonionic-line}
 \{\mathbf{a}+\mathbf{b}\mathbf{t};\mathbf{t}\in \mathbb{O} \} ,
\end{equation}
Note that for any $v\in  OPSH({B}(\mathbf{a},R))$ such that $ v\vert_{{B}(\mathbf{a},R)}\leq    0$ and $v\vert_{{B}(\mathbf{a},r)}\leq   -1 $ and for given $\mathbf{b}
$ as above,   the function $v$ is subharmonic on the
octonionic   line \eqref{eq:octonionic-line}. So
$
  f(\mathbf{t})=v(\mathbf{a}+\mathbf{b}\mathbf{t})-u_0(\mathbf{a}+\mathbf{b}\mathbf{t})$
is subharmonic on the annulus $D(\mathbf{0},R)\setminus \overline{D}(\mathbf{0},r)$ in $\mathbb{O}$. This is  because
\begin{equation*}
   u_0(\mathbf{a}+\mathbf{b}\mathbf{t})=\frac{ -\frac{1}{ |
\mathbf{t}  |^{6}}+\frac 1{
R ^6} } {  \frac{1}{ r^{6}}-\frac 1{ R ^6} }
\end{equation*}
  is harmonic in $\mathbb{O}\setminus \overline{D}(\mathbf{0},r)$. Moreover, by the assumption of $v$,  $\limsup_{\mathbf{t}\rightarrow \mathbf{c}} f(\mathbf{t})\leq 0$ for each $\mathbf{c}$ in
  the boundary of $D(\mathbf{0},R)\setminus
\overline{D}(\mathbf{0},r)$. It follows from the maximum principle for subharmonic functions that $ f \leq 0$ on $D(\mathbf{0},R)\setminus
\overline{D}(\mathbf{0},r)$. Since each
point of ${B}(\mathbf{a},R)\setminus
{B}(\mathbf{a},r)$ belongs to such an  annulus in some octonionic   line  \eqref{eq:octonionic-line}, we see that  $ v(\mathbf{x}) \leq u_0(\mathbf{x})$ on
${B}(\mathbf{a},R) \setminus
{B}(\mathbf{a},r) $. Therefore, $u_0$ is exactly the relative extremal function $\omega  (\cdot,\overline{{B}}(\mathbf{a},r) , {B}(\mathbf{a},R))$. It is continuous.

Let $\mathbf{y}\in K$. By assumption, there exists $\mathbf{a}\in K$ and $R>r>0$ such that $\mathbf{y}\in\overline{B}(\mathbf{a},r)\subset K$ and
${B}(\mathbf{a},R)\subset\Omega$. Now if $\mathbf{x}\in
{B}(\mathbf{a},R)$, we have
\begin{equation*}
 \omega  (\mathbf{x},K,\Omega) \leq   \omega  (\mathbf{x},\overline{B}(\mathbf{a},r),\Omega) \leq   \omega
 (\mathbf{x},\overline{B}(\mathbf{a},r),{B}(\mathbf{a},R)) ,
\end{equation*}by the monotonicity in Proposition \ref{prop:regular}.  But,
 \begin{equation*}
    \omega  (\cdot,\overline{{B}}(\mathbf{a},r) , {B}(\mathbf{a},R))|_{\partial{{B}}(\mathbf{a},r)}=-1\qquad {\rm   and }\qquad  \omega
(\cdot,\overline{{B}}(\mathbf{a},r) , {B}(\mathbf{a},R))|_{\partial{{B}}(\mathbf{a},R)}=0,
 \end{equation*}
   by \eqref{eq:u-r-R}. Thus
$\limsup_{\mathbf{x}\rightarrow\mathbf{y}} \omega ^* (\mathbf{x},K,\Omega)\leq-1$. It is obvious that $\omega ^* (\mathbf{y},K,\Omega)\geq-1$.

To show the continuity, note that we can choose a continuous defining function $\varrho$ of  the hyperconvex  domain such that $\varrho\leq u:=\omega^*
(\cdot,K,\Omega)$ in $\Omega$. Given $\varepsilon>0$,
there  exists $\eta>0$ such that $u- \varepsilon<\varrho$ on $\Omega\setminus\Omega_\eta$ and $K\subset \Omega_\eta:=\{\mathbf{x}\in \Omega;{\rm
dist}(\mathbf{x},\partial \Omega)>\eta\}$.
By the smooth approximation of $OPSH$, $u _\delta\downarrow u$ as $\delta\downarrow  0$, and Dini's theorem, we see that for sufficiently small $\delta>0$, we
have $u _\delta-2\varepsilon<\varrho$ on
$\partial\Omega_\eta$ and
$u _\delta-2\varepsilon<-1$ on $K$. Now define
\begin{equation*}
 v_\varepsilon( \mathbf{x})=\left\{ \begin{array}{ll}
 \varrho ,\qquad & {\rm in}\quad  \Omega\setminus\Omega_\eta,
  \\\max\{u _\delta-2\varepsilon,\varrho\},\qquad &  {\rm in}\quad  \Omega_\eta,
   \end{array}\right.
\end{equation*}which is also continuous. Then $v_\varepsilon \in\mathcal{U}( K ,\Omega)$ and $u -2\varepsilon \leq  v_\varepsilon\leq u$. Since
$\varepsilon>0$ is arbitrarily chosen, $u$ is continuous.
\end{proof}

\begin{cor} \label{cor:regular} For any    compact  subset $ K$ of an open set  $U$,
there exists an  octonionic  regular compact subset $ E$ such that $K \subset E \Subset U$. In particular, $\omega^*  (\mathbf{x},E,\Omega)=-1$ for $\mathbf{x} \in
E$.
\end{cor}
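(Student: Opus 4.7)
The statement is a direct consequence of Proposition \ref{prop:regular}, and the plan is simply to engineer a compact set $E$ that simultaneously (i) contains $K$, (ii) is relatively compact in $U$, and (iii) is a finite union of closed balls, so that Proposition \ref{prop:regular} applies verbatim.

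First, I would reduce to the case in which there is a fixed octonionic hyperconvex ambient domain $\Omega$ with $E \Subset \Omega$, since Proposition \ref{prop:regular} is stated in that generality. In practice, either $U$ already lies in such an $\Omega$, or one shrinks $U$ so that it does; any open neighborhood of $E$ that is octonionic hyperconvex (e.g.\ a small enlargement that is a union of balls contained in $U$) will suffice. Next, since $K$ is compact and contained in the open set $U$, the distance
\[
\delta := \operatorname{dist}(K,\mathbb{O}^{2}\setminus U) > 0
\]
(taking $\delta=+\infty$ if $U=\mathbb{O}^{2}$). Fix any $0 < r < \delta$. Then for every $\mathbf{x}\in K$ we have $\overline{B}(\mathbf{x},r)\subset U$.

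The family $\{B(\mathbf{x},r)\}_{\mathbf{x}\in K}$ is an open cover of the compact set $K$, so it admits a finite subcover corresponding to points $\mathbf{x}_{1},\dots,\mathbf{x}_{N}\in K$. I then set
\[
E := \bigcup_{j=1}^{N}\overline{B}(\mathbf{x}_{j},r).
\]
By construction $E$ is compact (finite union of compact sets), $K \subset \bigcup_{j} B(\mathbf{x}_{j},r) \subset E$, and $E \Subset U$ since each $\overline{B}(\mathbf{x}_{j},r)\subset U$ and there are only finitely many of them. Crucially, $E$ is a (finite) union of closed balls, which is exactly the hypothesis of Proposition \ref{prop:regular}.

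Applying that proposition in the hyperconvex domain $\Omega$ yields that $\omega^{*}(\cdot,E,\Omega)$ is continuous on $\Omega$ and satisfies $\omega^{*}(\cdot,E,\Omega)\equiv -1$ on $E$. In particular every point of $E$ is octonionic regular, so $E$ is an octonionic regular compact subset with $K\subset E \Subset U$, which is the required conclusion. There is no real obstacle here: the only subtle point is choosing the radii $r$ small enough (uniformly, below $\delta$) so that the resulting finite union of closed balls is still compactly contained in $U$, and this is automatic once one uses $\operatorname{dist}(K,\mathbb{O}^{2}\setminus U)>0$.
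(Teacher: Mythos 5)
Your proof is correct and follows essentially the same approach as the paper: produce a compact $E$ with $K\subset E\Subset U$ that is a union of closed balls, then invoke Proposition \ref{prop:regular}. The only (cosmetic) difference is that you extract a finite subcover and take $E=\bigcup_{j=1}^{N}\overline{B}(\mathbf{x}_{j},r)$, whereas the paper takes the full $\eta$-neighborhood $K_\eta=\bigcup_{\mathbf{a}\in K}\overline{B}(\mathbf{a},\eta)$; both satisfy the hypothesis of Proposition \ref{prop:regular}, which only requires $E$ to be \emph{a} union of closed balls, not a finite one.
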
\begin{proof} Take $E$ to be
 $K_\eta:=\{\mathbf{x}\in \Omega;{\rm dist}(\mathbf{x},K)\leq\eta\}=\bigcup_{\mathbf{a}\in K}\overline{B}(\mathbf{a},\eta)$ for sufficiently small $\eta>0$ and use
 Proposition
 \ref{prop:regular}.
\end{proof}

\begin{prop} \label{prop:maximal}  Let $K$ be an octonionic  regular compact subset of   an octonionic  hyperconvex  domain $\Omega$.
  Then, (1) $\omega^*( \cdot, K, \Omega)\in C(\Omega)$; (2) relative   extremal function $\omega^*( \cdot, K, \Omega)   $ is maximal in $ \Omega\setminus K$;
  (3)
  \begin{equation}\label{eq:omega=0}
   \det(Hess_{\mathbb{O}}( \omega^*( \cdot,  K, \Omega)  )  )=0\qquad \text { on }\quad \Omega\setminus K.
\end{equation}
\end{prop}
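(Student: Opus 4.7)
The plan is to prove \textbf{(1)}, \textbf{(2)}, \textbf{(3)} in order, each building on the previous together with the Dirichlet theory of Section 6.

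\emph{Part (1): continuity.} I will adapt the sandwich argument from the end of the proof of Proposition~\ref{prop:regular}. Using octonionic hyperconvexity, pick a continuous $OPSH$ defining function $\varrho<0$ of $\Omega$; after multiplying by a constant $M>0$ large enough that $M\varrho|_K\leq -1$, we have $M\varrho\in\mathcal{U}(K,\Omega)$, so renaming $M\varrho\mapsto\varrho$ we may assume $\varrho\leq u:=\omega^*(\cdot,K,\Omega)$ on $\Omega$. Given $\varepsilon>0$, choose $\eta>0$ small enough that $K\subset\Omega_\eta:=\{\operatorname{dist}(\cdot,\partial\Omega)>\eta\}$ and $u-\varepsilon<\varrho$ on $\Omega\setminus\Omega_\eta$ (possible since $u,\varrho\to0$ at $\partial\Omega$). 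The standard approximations $u_\delta\downarrow u$ from Proposition~\ref{prop:QSH-m}(2) are continuous and $OPSH$; although convergence is only pointwise to the USC limit $u$, for a decreasing net of continuous functions on a compact set the supremum converges to the supremum of the USC limit. Applied on $K$ (where $u\equiv -1$ by regularity) and on $\partial\Omega_\eta$ (where $u-\varrho<\varepsilon$), this yields $u_\delta-2\varepsilon<-1$ on $K$ and $u_\delta-2\varepsilon<\varrho$ on $\partial\Omega_\eta$ for $\delta$ small. Defining $v_\varepsilon:=\varrho$ on $\Omega\setminus\Omega_\eta$ and $v_\varepsilon:=\max\{u_\delta-2\varepsilon,\varrho\}$ on $\Omega_\eta$ gives a continuous $OPSH$ function in $\mathcal{U}(K,\Omega)$ (using Proposition~\ref{prop:QSH-m}(7)), so $v_\varepsilon\leq u$ by definition of $\omega^*$, while by construction $v_\varepsilon\geq u-2\varepsilon$. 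Letting $\varepsilon\to 0$ displays $u$ as a uniform limit of continuous functions, hence $u\in C(\Omega)$.

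\emph{Part (2): maximality.} Let $D\Subset\Omega\setminus K$ and $v\in OPSH(D)$ with $\underline{\lim}_{\mathbf{x}\to\partial D}(u(\mathbf{x})-v(\mathbf{x}))\geq 0$. Since $u$ is continuous by \textbf{(1)} and $u\leq 0$, this gives $\limsup_{\mathbf{x}\to\mathbf{x}_0}v(\mathbf{x})\leq u(\mathbf{x}_0)\leq 0$ at every $\mathbf{x}_0\in\partial D$; by Proposition~\ref{prop:QSH-m}(1), $v$ is subharmonic, so the maximum principle forces $v\leq 0$ on $D$. Set $w=u$ on $\Omega\setminus D$ and $w=\max\{u,v\}$ on $D$. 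Proposition~\ref{prop:QSH-m}(7) gives $w\in OPSH(\Omega)$, $w\leq 0$, and, because $D\cap K=\emptyset$, $w|_K=u|_K\leq -1$; so $w\in\mathcal{U}(K,\Omega)$. As $w$ is upper semicontinuous, $w=w^*\leq\omega^*(\cdot,K,\Omega)=u$ on $\Omega$, while $w\geq v$ on $D$. Hence $v\leq u$ on $D$.

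\emph{Part (3): vanishing of the Monge--Amp\`ere measure.} Fix any ball $B$ with $\overline{B}\Subset\Omega\setminus K$; $B$ is strictly octonionic pseudoconvex (via the defining function $|\mathbf{x}-\mathrm{center}|^2-r^2$, whose octonionic Hessian is a positive multiple of $I_2$). By \textbf{(1)}, $\varphi:=u|_{\partial B}$ is continuous, so Theorem~\ref{thm:Dirichlet-ball-1} produces a unique $h\in OPSH(B)\cap C(\overline B)$ with $h|_{\partial B}=\varphi$ and $\det(Hess_{\mathbb{O}}(h))=0$ on $B$. Since $u|_B$ lies in the defining family $\mathscr{B}(B,\varphi)$ of the Perron--Bremermann function, $u\leq h$ on $B$; conversely, by \textbf{(2)} applied with $h$ as competitor, $u\geq h$ on $B$. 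Hence $u\equiv h$ on $B$, and the locality of the octonionic Monge--Amp\`ere measure (Theorem~A) yields $\det(Hess_{\mathbb{O}}(u))=\det(Hess_{\mathbb{O}}(h))=0$ as measures on $B$. Exhausting $\Omega\setminus K$ by such balls yields \eqref{eq:omega=0}.

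\emph{Main obstacle.} The most delicate point, which I would verify carefully, is the one-sided Dini step in Part~(1): the smoothings $u_\delta$ converge only pointwise, in a decreasing manner, to the merely USC limit $u$, so uniform convergence on compacta is unavailable. The remedy is the sup-over-compacta statement noted above (which requires only a compactness/subsequence argument), and this is exactly what makes the pasted function $v_\varepsilon$ continuous across $\partial\Omega_\eta$ and lie in $\mathcal{U}(K,\Omega)$.
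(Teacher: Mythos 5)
Your proof is correct, and the overall architecture (continuity $\Rightarrow$ maximality via a pasting argument $\Rightarrow$ vanishing of the Monge--Amp\`ere measure via the Dirichlet solution on balls) is the same as the paper's. The differences are in implementation. For part (1) you transplant the ``sandwich'' construction from Proposition~\ref{prop:regular}, controlling the approximants $u_\delta$ via a one-sided Dini argument on $K$ and on $\partial\Omega_\eta$, whereas the paper works on the sub-level sets $\Omega_j=\{\omega^*<-1/j\}$, applies Hartogs' lemma (for subharmonic functions) twice to the smoothing sequence, and then pastes; these two routes are interchangeable here, and your choice is slightly more self-contained since it reuses a construction already written out. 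Part (2) is essentially identical to the paper's argument, only phrased directly rather than by contradiction. For part (3) you go one step further than the paper and actually identify $\omega^*$ with the Perron--Bremermann solution $h$ on each ball $B\Subset\Omega\setminus K$, using $u\le h$ (from $u|_B\in\mathscr B(B,\varphi)$) and $u\ge h$ (from maximality), and then invoke locality of the measure; the paper instead assumes the measure is nonzero on a ball, solves the Dirichlet problem there, derives $v\ge\omega^*$ with strict inequality somewhere, and contradicts maximality. Both rely on Theorem~\ref{thm:Dirichlet-ball-1}, but your direct version is a bit cleaner and in fact yields the stronger statement $\omega^*\equiv\Psi_{B,\omega^*|_{\partial B}}$ on each such $B$. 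One small point worth making explicit when you invoke the one-sided Dini step on $\partial\Omega_\eta$: the quantity whose negativity propagates is $u-(\varrho+2\varepsilon)$, with $\varrho$ continuous, so the decreasing continuous approximants $u_\delta-(\varrho+2\varepsilon)$ indeed fall below $0$ there for small $\delta$ — exactly the version of Hartogs/Dini the paper also uses.
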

\begin{proof} (1) Consider $\Omega_j:=\{\mathbf{x}\in\Omega; \omega^*(\mathbf{x}, K, \Omega)<-1/j\}$ for positive integers $j$. Then $\Omega_j\subset \Omega_{j+1}$
and $\Omega_j\Subset\Omega$
by \eqref{eq:omega*varrho} for hyperconvex  domain $\Omega$. Fixed a $j_0$,  the relative   extremal function can be approximated on  $\overline{\Omega}_{j_0}$ by smooth $OPSH   $
  functions $v_{t}\downarrow  \omega^* (\cdot ,K,\Omega)$. Applying Hartogs' lemma \cite[Theorem 2.6.4]{klimek} for subharmonic functions twice to this sequence, we  see that
  there exists $t_0$ such that for
  $t> t_0$, we have $v_t\leq0$ on  $\overline{\Omega}_{j_0}$ and simultaneously, $v_t\leq -1+ 1/j_0$ on  $K$. Then the function
  \begin{equation*}
   \widetilde{w}(\mathbf{x})=\left\{\begin{array}{ll}
   \max \left\{v_t(\mathbf{x} )-1/j_0,\omega^*(\mathbf{x}, K, \Omega)\right\},\qquad &\mbox{if }\mathbf{x}\in {\Omega}_{j_0},\\ \omega^*(\mathbf{x}, K, \Omega)
   ,\qquad &\mbox{if }
   \mathbf{x}\notin {\Omega}_{j_0},
\end{array}
\right.
\end{equation*}is $    OPSH $ by Proposition \ref{prop:QSH-m}, and so belongs to
$\mathcal{U}( K ,\Omega)$. Thus, 
\begin{equation*}
   \omega^*(\mathbf{x}, K, \Omega) -1/j_0\leq v_t(\mathbf{x} )-1/j_0\leq  \widetilde{w}(\mathbf{x})\leq \omega^*(\mathbf{x}, K, \Omega)\qquad {\rm for}\quad \mathbf{x}\in \overline{\Omega}_{j_0} .
\end{equation*}
Consequently, $v_t $ converges uniformly to $  \omega^*(\cdot, K, \Omega)$ on compact
subsets of $\Omega$. So it is continuous.

(2) Suppose that $\omega^*(\cdot, K, \Omega)  $ is not maximal. Then
  there exists a domain $G \Subset  D\setminus K$ and a
function $v\in   OPSH(G)$ such that $\underline{\lim}_{\mathbf{x}\in \partial G}(\omega^*(\mathbf{x}, K, \Omega) - v(\mathbf{x})) \geq 0$, but $v(\mathbf{x}_0) >
\omega^*(\mathbf{x}_0, K, \Omega)$
at some point $\mathbf{x}_0\in G$. Since $\omega^*(\cdot, K, \Omega)|_K\equiv-1 $ by (1), the function
\begin{equation*}
   w(\mathbf{x})=\left\{\begin{array}{ll}
   \max \left\{ v(\mathbf{x} ),\omega^*(\mathbf{x}, K, \Omega)\right\},\qquad &\mbox{if } \mathbf{x}\in G,\\\omega^*(\mathbf{x}, K, \Omega) ,\qquad &\mbox{if }
   \mathbf{x}\notin G,
\end{array}
\right.
\end{equation*}   belongs to
   $  \mathcal{U}( K ,\Omega)$ by definition, and so $w\leq \omega^*(\cdot, K, \Omega)$. This contradicts to $w(\mathbf{x}_0)=v(\mathbf{x}_0) >
\omega^*(\mathbf{x}_0, K, \Omega)$.

 (3) Suppose that $  \det(Hess_{\mathbb{O}}(\omega^*(\cdot,  K, \Omega)  ))$ does not vanish on $\Omega\setminus K$. There exists a ball $B(\mathbf{x}_0,r)\subset
 \Omega\setminus K$
 where
  \begin{equation}\label{eq:not-equiv-0}
   \det(Hess_{\mathbb{O}}( \omega^*(\cdot ,  K, \Omega)  )) \neq 0
 \end{equation}as a measure.
 Let $v(\mathbf{x})
$ be the continuous  Perron-Bremermann  solution to the   Dirichlet
problem  with continuous boundary value:
\begin{equation*}
  \left\{
  \begin{array}{ll}
     \det(Hess_{\mathbb{O}}(v)) = 0,\qquad & {\rm on}\quad B(\mathbf{x}_0,r),\\
    v= \omega^*(\cdot,  K, \Omega),& {\rm on}\quad \partial B(\mathbf{x}_0,r),
 \end{array}
  \right.
\end{equation*}given by Theorem \ref{thm:Dirichlet-ball-1}.  It is
is maximal by construction. In particular,  $v \geq \omega^*( \cdot,  K, \Omega)$ on $B(\mathbf{x}_0,r)$.   But $v\not\equiv\omega^*(\cdot,  K, \Omega)$ on
$B(\mathbf{x}_0,r)$ by  \eqref{eq:not-equiv-0}.
 Therefore, $v(\mathbf{x}') >  \omega^*( \mathbf{x}' ,  K, \Omega)$ for some $ \mathbf{x}' \in B(\mathbf{x}_0,r)$. But
 \begin{equation*}
    w(\mathbf{x})=\left\{ \begin{array}{ll}
  \omega^* (\mathbf{x},E,\Omega) ,\qquad &\mathbf{x}\in \Omega\setminus B(\mathbf{x}_0,r),
  \\ \max\{v(\mathbf{x} ),\omega^* (\mathbf{x},E,\Omega)\},\qquad &\mathbf{x}\in B(\mathbf{x}_0,r),
   \end{array}\right.
 \end{equation*}belongs to
 $ \mathcal{U}( K ,\Omega) $. Then $w(\mathbf{x}') >  \omega^*( \mathbf{x}' ,  K, \Omega)$ contradicts to the
  maximality of $\omega^*( \cdot,  K, \Omega)$ in (2).
\end{proof}

  \subsection{ Octonionic  capacity} See \cite[Section 3]{sadullaev1} for complex   capacity. The
{\it octonionic exterior capacity } of a set $E\subset\Omega$ is defined as
$
   C  ^*(E) = \inf\{ C  (U);  \mbox{ open   } U\supset E\}
$.
 It is obviously monotonic  by definition.
\begin{prop}\label{prop:capacity}
Let $\Omega$ be an   octonionic  hyperconvex domain in $\mathbb{O}^2$. Then,

(1)
For any octonionic  regular compact subset $K \subset \Omega$,  \eqref{eq:capacity-K} holds.

(2)  For any compact subset  $K \subset \Omega$, $C  (K)=\inf\{C  (E); \Omega \supset E\supset K$ and $ E $ is an octonionic  regular   compact subset$\}$. In
particular, $C  ^*(K)=C
(K)$.

 (3) If $K$ is an octonionic  regular compact subset, then
\begin{equation}\label{eq:capacity-K-2}
   C  (K)= \sup_{u_1,u_2\in  \mathscr  L}  \int_{K}   \det(Hess_{\mathbb{O}}(u_1 ),Hess_{ \mathbb{O}}(u_2 )) ,
\end{equation}where
$
 \mathscr  L:=\left\{  u \in OPSH(\Omega)\cap C(\Omega); -1\leq u < 0\right\}.
$

(4) Suppose that $\Omega$ is strongly octonionic  hyperconvex.   If $U\subset \Omega$ is an open set, then
\begin{equation}\label{capacity defi}
 C  (U) =\sup_{u_1,u_2 \in  \mathscr  L} \int_{U}  \det(Hess_{\mathbb{O}}(u_1), Hess_{\mathbb{O}}(u_2))  =\sup_{u_1,u_2 \in  \mathscr  L_\infty} \int_{U}
 \det(Hess_{\mathbb{O}}(u_1),Hess_{\mathbb{O}}(u_2)),
\end{equation}
where
$
 \mathscr  L_\infty:=\left\{  u\in OPSH(\Omega)\cap C^{\infty}(\Omega); -1\leq u < 0\right\}.
$

(5) The exterior capacity   is monotonic, i.e.  if $E_1\subseteq E_2$, then  $C  ^*(E_1)\subseteq C  ^*(E_2)$, and
countably subadditive, i.e. $C  ^*(\cup_j E_j)\leq \sum_j C  ^*(  E_j)$.

(6) If $U_1\subset U_2\subset\dots$ are open subsets of $\Omega$, then
$
C  \left(\bigcup_{j=1}^\infty U_j,\Omega\right)=\lim\limits_{j\to\infty} C  (U_j,\Omega).
$

(7) If $E\subset  D \subset \Omega$, then $C  ^*(E,D)\leq C  ^*(E,\Omega)$.
\end{prop}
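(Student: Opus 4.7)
The plan is to derive each of the seven parts by combining the comparison principle (Theorem \ref{thm:compare}) with the properties of the relative extremal function (Propositions \ref{prop:relative-extremal} and \ref{prop:maximal}), using the polarisation identity \eqref{eq:difference} and integration by parts (Lemma \ref{lem:parts}) to push the argument through in the mixed setting, and smooth approximation (Proposition \ref{prop:QSH-m} (2)) to pass between $\mathscr{L}$ and $\mathscr{L}_\infty$. For (1), regularity of $K$ gives $\omega^* := \omega^*(\cdot,K,\Omega) \in C(\Omega)$ with $\omega^*|_K \equiv -1$, $\det(Hess_{\mathbb{O}}(\omega^*))=0$ off $K$ (Proposition \ref{prop:maximal}), and $\omega^* \to 0$ at $\partial\Omega$ (Proposition \ref{prop:relative-extremal} (3)); hence $\omega^* \in \mathcal{U}^*(K,\Omega)$, yielding $C(K) \leq \int_K \det(Hess_{\mathbb{O}}(\omega^*))$. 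For the reverse, any $u \in \mathcal{U}^*(K,\Omega)$ satisfies $u \leq \omega^*$ by admissibility, and on the open set $\{u < (1-\varepsilon)\omega^*\} \Subset \Omega$ (relative compactness from $\omega^* \to 0$ versus $u$ bounded below) the comparison principle combined with the vanishing of $\det(Hess_{\mathbb{O}}(\omega^*))$ off $K$ and letting $\varepsilon \to 0$ closes the bound. Part (2) is a sandwich: Corollary \ref{cor:regular} supplies regular $K_\eta \supset K$ with $K_\eta \downarrow K$; monotonicity of $\mathcal{U}^*$ gives $C(K) \leq C(K_\eta)$, and the formula in (1) together with weak convergence of Monge-Amp\`ere measures (Theorem A) gives $C(K_\eta) \to C(K)$, hence also $C^*(K)=C(K)$.

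Part (3) is the main technical step and, in my view, the crux of the proposition. The upper bound $C(K) \leq \sup_{u_1,u_2 \in \mathscr{L}} \int_K \det(Hess_{\mathbb{O}}(u_1), Hess_{\mathbb{O}}(u_2))$ is immediate via $u_1=u_2=\omega^*$. For the reverse, I plan to establish a mixed comparison principle
\begin{equation*}
    \int_K \det(Hess_{\mathbb{O}}(v), Hess_{\mathbb{O}}(w)) \leq \int_K \det(Hess_{\mathbb{O}}(\omega^*), Hess_{\mathbb{O}}(w))
\end{equation*}
whenever $\omega^* \leq v$ and $w$ is $OPSH$. This should follow by repeating the argument of Proposition \ref{prop:compare} in the mixed setting: apply the polarisation identity \eqref{eq:difference}, Lemma \ref{lem:parts}, and positivity of the resulting boundary term $\det(\mathcal{T}(d(v-\omega^*) \otimes d\varrho), Hess_{\mathbb{O}}(w))$ as in \eqref{eq:boundary-term2} using Proposition \ref{prop:positive} (1). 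Applying this twice (first substituting $\omega^*$ for $u_1$, then for $u_2$, after a $(1-\varepsilon)$-perturbation to ensure strict inequalities on $K$) yields $\int_K \det(Hess_{\mathbb{O}}(u_1), Hess_{\mathbb{O}}(u_2)) \leq C(K)$. Part (4) then follows by inner regularity $C(U)=\sup\{C(K):K \Subset U\}$, (3) on each compact, and smooth approximation $u \mapsto u \ast \chi_\epsilon$; strong hyperconvexity ensures the mollifications stay in $\mathscr{L}_\infty$ on compacts of $\Omega$.

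Part (6) is a monotone exhaustion: any compact $K \subset \bigcup U_j$ lies in some $U_j$ by compactness, so $C(\bigcup U_j) \leq \lim C(U_j)$, while the reverse is monotonicity. For (5), monotonicity of $C^*$ is immediate from the definition, and countable subadditivity reduces via (4) to the trivial bound $\int_{\bigcup U_j} \det(Hess_{\mathbb{O}}(u_1),Hess_{\mathbb{O}}(u_2)) \leq \sum_j \int_{U_j} \det(Hess_{\mathbb{O}}(u_1),Hess_{\mathbb{O}}(u_2))$; the extension to arbitrary $E_j$ is by outer regularity. For (7), the monotonicity in Proposition \ref{prop:relative-extremal} (1) gives $\omega^*(\cdot,E,D) \geq \omega^*(\cdot,E,\Omega)|_D$ for regular compact $E \subset D$, and the mixed comparison argument of (3) on $D$ combined with (1) produces $C(E,D) \leq C(E,\Omega)$; the general case follows by passing to open neighbourhoods and taking infima. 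The chief obstacle throughout is that octonionic nonassociativity rules out the wedge-product manipulations standard in the complex and quaternionic theories; at each step I must invoke the weak associativity in Lemma \ref{lem:associative} and, where mixed-type estimates appear, the decomposition of Proposition \ref{prop:positive} (2) into elementary strongly positive directions $\zeta_j \otimes \zeta_j^*$ along which Lemma \ref{lem:parts} applies directly.
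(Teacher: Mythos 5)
Your overall plan has the right ingredients, but there are concrete gaps in parts (2) and (3) (and a minor one in (1)), and the argument for (3) diverges from the paper in a way that does not go through as stated.

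In part (1), the claim that $u \leq \omega^*$ for every $u\in\mathcal{U}^*(K,\Omega)$ is not justified: elements of $\mathcal{U}^*(K,\Omega)$ are not required to be $\leq 0$, so they need not lie in the defining class $\mathcal{U}(K,\Omega)$ for $\omega^*$. This is harmless, since the comparison principle is applied on a sublevel set rather than through that inequality, but the set you take, $\{u<(1-\varepsilon)\omega^*\}$, need not be relatively compact: both sides tend to $0$ at $\partial\Omega$. The paper subtracts a constant $\varepsilon/2$ (using $O=\{u<(1-\varepsilon)\omega^*-\varepsilon/2\}$) precisely to force $O\Subset\Omega$, and this detail is needed.

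In part (2), the step ``$C(K_\eta)\to C(K)$ by \eqref{eq:capacity-K} and Theorem A'' is a genuine gap. Theorem A requires uniform convergence of continuous $OPSH$ functions; you would need $\omega^*(\cdot,K_\eta,\Omega)\to\omega^*(\cdot,K,\Omega)$ locally uniformly, which is not established, and for a general (possibly non-regular) compact $K$ the function $\omega^*(\cdot,K,\Omega)$ may not even be continuous, so its Monge--Amp\`ere measure is not defined and \eqref{eq:capacity-K} is not available. The paper's proof goes the other way: choose $u\in\mathcal{U}^*(K,\Omega)$ nearly optimal for $C(K)$, take a regular $E$ with $K\subset E\Subset\{u<-1+\varepsilon/2\}$ via Corollary \ref{cor:regular}, and bound $C(E)$ from above by the comparison principle applied on $\{u<(1-\varepsilon)\omega^*(\cdot,E,\Omega)-\varepsilon/2\}$. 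No limit of extremal functions is needed.

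The most serious gap is in part (3). Your plan hinges on a ``mixed comparison principle'' $\int_K\det(Hess_{\mathbb{O}}(v),Hess_{\mathbb{O}}(w))\leq\int_K\det(Hess_{\mathbb{O}}(\omega^*),Hess_{\mathbb{O}}(w))$ for $\omega^*\leq v$. This is problematic on several counts. First, the hypothesis $\omega^*\leq u_j$ for $u_j\in\mathscr{L}$ does not hold in general (both lie in $[-1,0)$ and there is no order between them off $K$). Second, and more fundamentally, if $v\geq\omega^*$ with $v=\omega^*$ outside a compact subset, then Corollary \ref{cor:compare} gives the \emph{equality} $\int_\Omega\det(Hess_{\mathbb{O}}(v),Hess_{\mathbb{O}}(w))=\int_\Omega\det(Hess_{\mathbb{O}}(\omega^*),Hess_{\mathbb{O}}(w))$, not an inequality; and there is no way to localize this to $K$ because $\det(Hess_{\mathbb{O}}(\omega^*),Hess_{\mathbb{O}}(w))$ does not vanish off $K$ for general $w$ (only $\det(Hess_{\mathbb{O}}(\omega^*))$ does). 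The paper's argument is genuinely different: it replaces both $u_j$ by $v_j:=\max\{(1+\varepsilon)\omega^*,(1-\varepsilon)u_j-\varepsilon/2\}$, which equals $(1+\varepsilon)\omega^*$ outside a compact and $(1-\varepsilon)u_j-\varepsilon/2$ on $K$; Corollary \ref{cor:compare} then gives $\int_\Omega\det(Hess_{\mathbb{O}}(v_1),Hess_{\mathbb{O}}(v_2))=(1+\varepsilon)^2 C(K)$, and nonnegativity of the mixed measure gives $\int_\Omega\geq\int_K=(1-\varepsilon)^2\int_K\det(Hess_{\mathbb{O}}(u_1),Hess_{\mathbb{O}}(u_2))$. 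No mixed comparison principle is needed (nor, in this form, is one available), and your proposal needs to be replaced by this construction. Consequently your proposed route for part (7) via the same mixed comparison also needs reworking, though that part is dismissed as routine in the paper.

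Your treatments of (4), (5), and (6) are essentially correct and match the paper's approach.
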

\begin{proof} (1) For any  $u\in \mathcal{U}^*(K,\Omega)$  in \eqref{eq:U*} and any $0<\varepsilon <1$, consider the open set
 \begin{equation*}
    O:= \left\{ \mathbf{x}\in \Omega; u( \mathbf{x})<(1-\varepsilon)\omega^*( \mathbf{x},K,\Omega)-\varepsilon/2 \right\}.
 \end{equation*}
 Since $\omega^* (\mathbf{x},K,\Omega)=-1$ for $\mathbf{x}\in K$ and is continuous by Proposition \ref{prop:maximal}, we have   $   O\supset K$. Here
 $-\varepsilon/2 $  promises that $O$ is relatively compact in $\Omega$, i.e.  $ \Omega\Supset O\supset K$.  Then
  apply the  comparison principle  to get
\begin{equation*}\begin{split}(1-\varepsilon)^2\int_{K}\det(Hess_{\mathbb{O}}(    \omega^*( \cdot,K,\Omega)))=
&  (1-\varepsilon)^2\int_{O}\det(Hess_{\mathbb{O}}(    \omega^*( \cdot,K,\Omega)))\\
   \leq&  \int_{O}\det(Hess_{\mathbb{O}}(    u))\leq \int_{\Omega}\det(Hess_{\mathbb{O}}(    u))
 \end{split}\end{equation*}
by   \eqref{eq:omega=0}. Letting $\varepsilon\rightarrow0$, we see that  the infimum on the R. H. S. of \eqref{capacity defi-K} is attained by
 $ \omega^*( \mathbf{x},K,\Omega) $.

(2) $C  (K)\leq C  (E)$ by monotonicity. Conversely, for   any $0<\varepsilon <1$, choose  $u\in\mathcal{U}^*(K,\Omega)$ such that $
\int_{\Omega}\det(Hess_{\mathbb{O}}(u  ) ) <C
(K)+\varepsilon$. Since $U:=\{\mathbf{x}\in\Omega; u(\mathbf{x})<-1+\varepsilon/2\}$ is  a neighborhood of the compact set $K$, there exists a  regular
compact subset $E$    such that $
K\subset  E\Subset  U$ by Corollary \ref{cor:regular}.  Consider
\begin{equation*}
   O:=\left\{\mathbf{x}\in \Omega; u(\mathbf{x})<(1-  \varepsilon)\omega^*(\mathbf{x},E,\Omega)-\varepsilon/2 \right\}  .
\end{equation*}Then, $ E\subset O\Subset  \Omega$ as above,
 and so
\begin{equation*}\begin{split}C  ( E)&= \int_{E}\det(Hess_{\mathbb{O}}(  \omega^*(\mathbf{x},E,\Omega)))\leq \int_{O}\det(Hess_{\mathbb{O}}(
\omega^*(\mathbf{x},E,\Omega)))\\
&\leq\frac 1{ (1- \varepsilon)^{2}}\int_{O}\det(Hess_{\mathbb{O}}(u))
    \leq \frac 1{ (1-  \varepsilon)^{2}}\int_{\Omega}\det(Hess_{\mathbb{O}}(u)) \leq \frac {C  (K)+\varepsilon}{ (1- \varepsilon)^{2}},
 \end{split}\end{equation*}by using  \eqref{eq:capacity-K} for    regular  compact subset $E$ and the  comparison principle. The result follows by
 letting
 $\varepsilon\rightarrow0$.

(3) $ C  (K)$ is less than or equal to  the right hand side of  \eqref{eq:capacity-K-2} by using  \eqref{eq:capacity-K}. On the other hand, for any $u_j\in
OPSH(\Omega)\cap
C(\Omega)$ with $ -1\leq u_j< 0$, consider
\begin{equation*}
   v_j( \mathbf{x}):=\max\left\{(1+\varepsilon)\omega^*( \mathbf{x},K,\Omega),(1-\varepsilon)u_j( \mathbf{x})-\varepsilon/2 \right\}.
\end{equation*}
  Then,
$v_j\in OPSH(\Omega)\cap C(\Omega)$ with $ -1\leq v_j< 0$, $\lim_{\mathbf{x}\rightarrow\partial\Omega}v_j( \mathbf{x})=0$, and
$v_j\equiv(1+\varepsilon)\omega^*(\cdot,K,\Omega)$ near the
boundary. We get
\begin{equation*}\begin{split} (1+\varepsilon)^{ 2}\int_{\Omega}\det(Hess_{\mathbb{O}}( \omega^*( \cdot,K,\Omega))) &= \int_{\Omega} \det(Hess_{\mathbb{O}}(v_1 ),
Hess_{\mathbb{O}}(v_2 )) \\&  \geq
(1-\varepsilon)^2 \int_{K} \det(Hess_{\mathbb{O}}(u_1 ), Hess_{\mathbb{O}}(u_2 )) .
 \end{split}\end{equation*}by using Corollary \ref{cor:compare} and $v_j\equiv(1-\varepsilon)u_j -\varepsilon/2 $ on $K$. Letting
 $\varepsilon\rightarrow0$, we
 get the another direction of inequality,
  since $\det(Hess_{\mathbb{O}} (\omega^* ))  =0$ on $\Omega\setminus  K $.

  (4) For any $u_1,u_2\in OPSH(\Omega)\cap C(\Omega)$ with $-1\leq  u_1,u_2<0$, we have
    \begin{equation*}
     C  (U) \geq C  (K)\geq  \int_{K}\det(Hess_{\mathbb{O}}(u_1), Hess_{\mathbb{O}}(u_2))
  \end{equation*} by (3). Then $C  (U)  \geq
  \int_{U}\det(Hess_{\mathbb{O}}(u_1), Hess_{\mathbb{O}}(u_2))$,
   since $K$ can be arbitrarily chosen  regular compact subset. Thus $C  (U)$ is larger than or equal to the R. H. S. of \eqref{capacity defi}.

On the other hand, since $\Omega$ is a strongly    hyperconvex domain, the relative    extremal function $ \omega^* (\mathbf{x} ,K,\Omega)$ admits an   $OPSH$
extension to a neighborhood of   $\overline{\Omega}$ by Proposition \ref{prop:relative-extremal} (4), and so it can be approximated in a neighborhood   of
$\overline{\Omega}$ by $OPSH \cap C^\infty $
  functions $\widetilde{v}_{j}\downarrow  \omega^* ( \cdot,K,\Omega)$. Hence, by Theorem A,
  \begin{equation*}\begin{split}C  ( K)&= \int_{K} \det(Hess_{\mathbb{O}}(   \omega^*( \cdot, K ,\Omega))) = \int_{\Omega}\det(Hess_{\mathbb{O}}(   \omega^*(
  \cdot, K ,\Omega) ))\\
&\leq \overline{\lim} _{j\rightarrow \infty} \int_{\Omega}\det(Hess_{\mathbb{O}}(\widetilde{ v}_{j}))   = \overline{\lim}_{j\rightarrow \infty} (1-\varepsilon)^{
-2}\int_{\Omega}\det(Hess_{\mathbb{O}}(\widetilde{ w}_{j} ))
 \end{split}\end{equation*}
if we denote $ \widetilde{w}_{j}=(1-\varepsilon)  \widetilde{v}_{j}-\varepsilon $. Here $-1\leq \widetilde{w}_{j}<0$  if $j$  is large. So $C  ( K)$ is controlled by the right hand side of
\eqref{capacity defi}
multiplying $(1-\varepsilon)^{
-2}$. Since $K$ is an arbitrarily chosen compact subset, the result follows by letting $\varepsilon\rightarrow0$.

  (5) The monotonicity of $C  ^*(E ) $ follows from the monotonicity of $C   ( K)$  for compact subsets $K$. If $E_j$'s are open sets, then by using (4), we have
  \begin{equation*}\begin{split}
C^*  ( \cup_j E_j)&=
 \sup_{u \in  \mathscr  L} \int_{\bigcup_j E_j}\det(Hess_{\mathbb{O}}(u))   \leq\sup_{u \in  \mathscr  L}  \sum_j\int_{ E_j}\det(Hess_{\mathbb{O}}(u))   \leq
 \sum_j C^*  (  E_j).
 \end{split}\end{equation*}
  In general, we can find an open set $U_j\supset E_j$ such that $C  ( U_j) -C  ^*(  E_j)\leq \varepsilon/2^j$ for each $j$. Then
  \begin{equation*}
  \varepsilon+   \sum_j C  ^*(  E_j)\geq \sum_j C  ( U_j)  \geq C  ( \cup_j U_j) \geq C  ( \cup_j E_j) .
  \end{equation*}
  We get the result by letting $\varepsilon\rightarrow0$.

   (6)-(7) They are  obvious by definition.
 \end{proof}

By (4) and (5), we get a useful  estimate: for  a octonionic  strongly hyperconvex domain $\Omega$, there exists a neighborhood $\Omega'\supset\overline{\Omega}$
such that
\begin{equation}\label{eq:outside}
\int_{U}\det(Hess_{\mathbb{O}}(u_1), Hess_{\mathbb{O}}(u_2))  \leq C  (U)
\end{equation} for any $u_j \in OPSH(\Omega')\cap C(\Omega')$ with $ -1\leq u_j < 0$ on $\Omega$ and $|u_j |\leq1$ on $\Omega'$, $j=1,2$.
 \begin{rem}
Several results about relative extremal functions  and     capacity  in Sadullaev-Abdullaev \cite{sadullaev1,sadullaev2} can also be
    proved by only
using Chern-Levine-Nirenberg inequalities for complex $m$-Hessian (cf. Nguyen \cite{Nguyen-C}). This method  does
not depend on the solution to the
Dirichlet problem  on the ball. So does it in the  octonionic  case.
\end{rem}

 \section{ The quasicontinuity of  locally  bounded    $OPSH$ functions}
 
\begin{lem}\label{lem: inner-product} \cite[Corollary 3.1]{WZ}
If $u,v\in C^2(\Omega)$ and let $ \omega$ be a nonnegative  continuous $\mathcal{ H}^2(\mathbb{O} )$-valued function. Then
\begin{equation}
\begin{aligned}
\left|\int_{\Omega}  \det\left( \mathcal{T} (d u \otimes d v),\omega  \right)\right|^2\leq  \int_{\Omega}\det\left( \mathcal{T} (d u \otimes d u),\omega
\right)\cdot
\int_{\Omega}\det\left( \mathcal{T} (d v \otimes d v),\omega  \right).
\end{aligned}
\end{equation}
\end{lem}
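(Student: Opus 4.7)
The plan is to interpret the expression as a Cauchy–Schwarz inequality for a symmetric real-bilinear form on the space of smooth real-valued functions, where the ``inner product'' is the pairing
\[
 B(u,v) := \int_{\Omega} \det\bigl(\mathcal{T}(du\otimes dv),\omega\bigr)\, dV.
\]
First I would verify that $B$ is symmetric and $\mathbb{R}$-bilinear. Symmetry is immediate from $\mathcal{T}(du\otimes dv)=\mathcal{T}(dv\otimes du)$, and $\mathbb{R}$-bilinearity comes from $d(u+tv)=du+t\,dv$ together with linearity of $\mathcal{T}$ and of the mixed determinant in each slot. In particular, expanding gives
\[
 \mathcal{T}\bigl((du+t\,dv)\otimes(du+t\,dv)\bigr)=\mathcal{T}(du\otimes du)+2t\,\mathcal{T}(du\otimes dv)+t^{2}\mathcal{T}(dv\otimes dv),
\]
so that $B(u+tv,u+tv)=B(u,u)+2t B(u,v)+t^{2}B(v,v)$ for every $t\in\mathbb{R}$.

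The next key step is to establish pointwise nonnegativity $B(w,w)\geq 0$ for any $w\in C^{2}(\Omega)$. Since $\mathcal{T}(dw\otimes dw)=dw\otimes dw^{*}$ is elementary strongly positive by Proposition \ref{prop:positive}(1), and $\omega$ is a nonnegative definite Hermitian matrix by hypothesis, Proposition \ref{prop:det-positive}(1) yields
\[
 \det\bigl(\mathcal{T}(dw\otimes dw),\omega\bigr)\geq 0
\]
pointwise on $\Omega$; integrating gives $B(w,w)\geq 0$.

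Finally, the Cauchy--Schwarz conclusion follows in the standard way from the nonnegativity of the quadratic polynomial $t\mapsto B(u+tv,u+tv)$ in $t\in\mathbb{R}$. If $B(v,v)>0$, the discriminant inequality reads $|B(u,v)|^{2}\leq B(u,u) B(v,v)$, which is exactly the claim. If $B(v,v)=0$, then $B(u,u)+2t B(u,v)\geq 0$ for all real $t$ forces $B(u,v)=0$, so the inequality is trivially satisfied; the symmetric case $B(u,u)=0$ is handled identically. There is no real obstacle here: the entire statement is a formal consequence of the fact that $\det(\cdot,\omega)$ is a nonnegative symmetric real-bilinear form on strongly positive Hermitian matrices, and the only substantive input is Proposition \ref{prop:det-positive}(1), which has already been recorded.
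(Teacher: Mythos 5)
Your proof is correct and follows essentially the same route as the paper: observe that $(u,v)\mapsto\int_{\Omega}\det\bigl(\mathcal{T}(du\otimes dv),\omega\bigr)$ is a symmetric real-bilinear form whose diagonal is nonnegative by Propositions \ref{prop:positive}(1) and \ref{prop:det-positive}(1), and then invoke Cauchy--Schwarz. You simply spell out the discriminant argument that the paper leaves implicit.
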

\begin{proof} Note that the map $(u, v) \rightarrow \det\left( \mathcal{T} (d u \otimes d v),\omega) \right)$ is real bilinear and symmetric, and by Proposition \ref{prop:positive}  (1) and Proposition \ref{prop:det-positive} (1),  $\det\left(
\mathcal{T} (d u \otimes d
u),\omega  \right)$  is
 nonnegative. Therefore  it
 satisfies
the Cauchy-Schwarz inequality.\end{proof}

\begin{proof}[Proof of Theorem \ref{thm:quasicontinuity}] (1) Since the capacity is countably subadditive by Proposition \ref{prop:capacity} (5), it is sufficient  to prove the theorem for the unit ball
$B^2\subset \Omega$ and show
that for any $\epsilon >0$ there exists an open set $U\subset B^2 $ such $C  (U\cap B',B)<\epsilon$ and $u$ is continuous on $B'\setminus U,$ where
$B'=B(\mathbf{0}, {1}/{2})$. Assume
$-1\leq   u\leq   0$.  As in the proof of  Proposition
\ref{prop:uniformly-bounded}, if replace $u$ by $\max\{u ,v \}$ with $v(\mathbf{x})=2(|\mathbf{x}|^2-\frac{3}{4}),$ then values inside $B'$
are unchanged, while   $u\equiv v$ in a neighborhood of the sphere $ \partial B^2$. Let  $u_p \downarrow u$, $v_p\downarrow  v$ be the standard
approximations. Note that
$u_p\equiv v_p$ in  a neighborhood of $ \partial B$ for $p>p_0$.
We can assume
\begin{equation}\label{eq:lim}
 \lim_{p\rightarrow +\infty}  \int_B u_p\det(Hess_{\mathbb{O}}(u_p)   )
\end{equation}
 exists by passing to a subsequence if necessary, since they are  bounded by  Proposition
\ref{prop:uniformly-bounded}.  For a fixed $\sigma>0$, consider
\begin{equation*}
   U_{p,N}(\sigma):=\{\mathbf{x}\in B' : \varphi_{p,N}(\mathbf{x}) := u_p(\mathbf{x})-u_{p+N}(\mathbf{x}) >\sigma\},
\end{equation*}
  then we have $
 U_{p,N}(\sigma)\subset U_{p,N+1}(\sigma),$ and
$\bigcup_{N=1}^\infty U_{p,N}=U_p(\sigma):=\{\mathbf{x}\in B':u_p(\mathbf{x})-u(\mathbf{x})>\sigma\}.
$
Then we have
 $ C  (U_p(\sigma))=
  C  \left(\bigcup_{N=1}^\infty U_{p,N}(\sigma)\right)=\lim\limits_{N\to\infty}C  (U_{p,N}(\sigma))
$
by    Proposition \ref{prop:capacity} (6).

Consider the class $\mathcal{L}:=\{u\in OPSH\cap C^\infty(B(\mathbf{0}, 1 + \delta))  ;|u| \leq 1\}$   for fixed  $\delta> 0$.  Since the open set $
U_{p,N}(\sigma)\subset B'\Subset {  B^2} $, it follows from
  Proposition \ref{prop:capacity} (4)  that
  \begin{equation}\label{eq:cap-p-N}
 \begin{split}
C  (U_{p,N}(\sigma))&\leq\sup_{\widetilde{u}\in \mathcal{L}} \int_{U_{p,N}(\sigma)}\det(Hess_{\mathbb{O}}(\widetilde{u})  )
   \leq   \sup_{\widetilde{u}\in \mathcal{L}}   \frac{1}{\sigma}\int_{B^2}\varphi_{p,N}\det(Hess_{\mathbb{O}}(\widetilde{u})  ) .
  \end{split}
  \end{equation}

(2) To estimate the R. H. S. of \eqref{eq:cap-p-N}, we establish first  an integral estimate for   functions  in $\mathcal{L}$. Let
$\varrho:=(|\mathbf{x}|^2-1)/2$ be a defining function of the unit ball
${  B^2} $.
Consider functions $\widetilde{  v},  \widetilde{u},  \widetilde{ u}_1,   \widetilde{ u}_2\in \mathcal{L} $ such that $\varphi_0= \widetilde{ v}-  \widetilde{
u}\geq  0$ in $B^2$ and $\varphi_0=$const. on the sphere $ \partial {  B^2} $. Then
\begin{equation}
\begin{aligned} &
\int_{  B^2}  \varphi_0\det(Hess_{\mathbb{O}}(\widetilde{ u}_1) ,Hess_{\mathbb{O}}(\widetilde{  u}_2)) \\
 =& \varphi_0|_{\partial {  B^2}}\int_{\partial B^2} \det\left(   \mathcal{T} (d \widetilde{ u}_1 \otimes d \varrho) ,Hess_{\mathbb{O}}( \widetilde{ u}_2) \right) dS-\int_{  B^2}
 \det\left( \mathcal{T} (d \varphi_0 \otimes d \widetilde{  u}_1 )  ,Hess_{\mathbb{O}}(\widetilde{  u}_2) \right)
   \\=&\varphi_0 |_{\partial {  B^2}}\int_{  B^2}    \det\left(  Hess_{\mathbb{O}}( \widetilde{ u}_1) ,Hess_{\mathbb{O}}( \widetilde{ u}_2) \right) -\int_{  B^2}
 \det\left( \mathcal{T} (d \varphi_0 \otimes d  \widetilde{ u}_1 )  ,Hess_{\mathbb{O}}( \widetilde{ u}_2) \right) \\
\leq &  C \|\varphi_0\|_{C(\partial {  B^2} )}- \int_ {  B^2}
 \det\left( \mathcal{T} (d \varphi_0 \otimes d \widetilde{  u}_1 )  ,Hess_{\mathbb{O}}(\widetilde{  u}_2) \right)  
\end{aligned}
\end{equation}by using integration by parts in Lemma \ref{lem:parts}   and $|{\rm grad} \varrho|=1$ on $\partial B^2$,
  where $ C $ is  an absolute constant  independent of $ \widetilde{ u}_1, \widetilde{  u}_2\in \mathcal{L} $ by  Proposition  \ref{prop:uniformly-bounded}.
Applying Lemma  \ref{lem: inner-product}    to $u=\varphi_0, v=\widetilde{  u}_1$ and smooth closed nonnegative function $\omega= Hess_{\mathbb{O}}(\widetilde{
u}_2)  $, and using integration by parts \eqref{lem:parts}
twice, we get
\begin{equation*}\label{eq:phi0}
\begin{aligned}
\left|\int_ {  B^2}  \det\left( \mathcal{T} (d \varphi_0 \otimes d  \widetilde{u}_1 )  ,\omega \right) \right|^2&\leq   \int_{  B^2}  \det\left( \mathcal{T} (d
\widetilde{ u}_1 \otimes d \widetilde{ u}_1 )  ,\omega
\right) \int_{  B^2}  \det\left( \mathcal{T} (d \varphi_0 \otimes d \varphi_0)  ,\omega \right) \\
 &\leq      C \left(\varphi_0|_{\partial {  B^2}} \int_{\partial {  B^2} }\det\left( \mathcal{T} (d \varphi_0 \otimes d\varrho  ,\omega \right)dS
 -\int_{  B^2} \varphi_0 \det(Hess_{\mathbb{O}}(\varphi_0) ,\omega)\right) \\&=
 C \left(\varphi_0|_{\partial {  B^2}}\int_{  B^2}  \det(Hess_{\mathbb{O}}(\varphi_0) ,\omega)-\int_{  B^2}  \varphi_0\det(Hess_{\mathbb{O}}(\varphi_0) ,\omega)\right)  \\
&\leq   C    \left(2C \|\varphi_0\|_{C(\partial{  B^2} )}+\int_B 2 \varphi_0\det\left(Hess_{\mathbb{O}}\left(\frac{\widetilde{  u}+ \widetilde{ v}}{2}\right)
-Hess_{\mathbb{O}}(  \widetilde{v})   ,\omega)  \right)   \right)\\
&\leq  2C \left( C \|\varphi_0\|_{C(\partial {  B^2} )}+ \int_{  B^2}  \varphi_0\det(Hess_{\mathbb{O}}(\varphi_0^{+}) ,\omega)  \right),
\end{aligned}
\end{equation*}
where $\varphi_0^{+}=\frac{ \widetilde{ u}+ \widetilde{v}}{2}\in \mathcal{L}$. The   second inequality follows from locally uniform estimate in  Proposition
\ref{prop:uniformly-bounded}, and
$\varphi_0\vert_{ \partial {  B^2}  }=\|\varphi_0\|_{C(\partial {  B^2} )}$,  and the third identity holds since
$$ \left|\int_{  B^2}  \det(Hess_{\mathbb{O}}(\varphi_0) ,\omega)\right|\leq    \int_{  B^2}   \det(Hess_{\mathbb{O}}( \widetilde{ u}+ \widetilde{ v} )
,\omega)    \leq
2C   ,$$
while the last inequality above follows from the fact $\varphi_0\geq  0$ and $Hess_{\mathbb{O}}( \widetilde{ u}+ \widetilde{ v }),\omega=
Hess_{\mathbb{O}}(  \widetilde{u}_2)  \geq  0$.

Applying this procedure again, we obtain
\begin{equation}\label{eq:kappa}
\begin{aligned}
\int_{  B^2} \varphi_0 \det(Hess_{\mathbb{O}}( \widetilde{ u}_1) ,Hess_{\mathbb{O}}( \widetilde{ u}_2)) \leq   C''\left(\|\varphi_0\|_{C(\partial B^2)}+\int_{  B^2}
\varphi_0\det(Hess_{\mathbb{O}}(\varphi_0^{+})  ) \right)^\frac 14,
\end{aligned}
\end{equation}
for some absolute constant  $C''>0$.

 (3) Apply the estimate \eqref{eq:kappa} to \eqref{eq:cap-p-N} with
$ \varphi_0 =u_p-u_{p+N}$ to get
   \begin{equation}\label{eq:capacity-estimate}
  \begin{aligned}
C  (U_{p,N}(\sigma))&
&\leq  \frac{C''}{\sigma}\left(\|v_p-v_{p+N}\|_{C(\partial {  B^2} )}+\int_{  B^2}  \varphi_{p,N}\det\left(Hess_{\mathbb{O}}\left( \varphi_{p,N}^{+} \right)   \right)\right)^\frac 14,\\
\end{aligned}
  \end{equation}
    by  $ u_p-u_{p+N} = v_p-v_{p+N} $   constant on $\partial {  B^2} $,
where $\varphi_{p,N}^+:=(u_p+u_{p+N})/2.$ Note that
   \begin{equation}\label{eq:capacity-estimate'}
  \begin{aligned}
4\det(Hess_{\mathbb{O}}( \varphi_{p,N}^{+} )   )& = \det(Hess_{\mathbb{O}}(u_p+ u_{p+N}  )   ) \\&  =\det(Hess_{\mathbb{O}}(u_p   )   )+ 2
\det(Hess_{\mathbb{O}}(u_p),
Hess_{\mathbb{O}}(  u_{p+N}  )       )+\det(Hess_{\mathbb{O}}(  u_{p+N}  )   ) .
\end{aligned}
  \end{equation}

Let us  prove
\begin{equation}\label{eq:difference-limit-1}
\int_{  B^2} (u_p-u_{p+N})\det(Hess_{\mathbb{O}}(u_p),  Hess_{\mathbb{O}}(  u_{p+N}  )      )\rightarrow 0,
\end{equation}
    uniformly  as $N\to\infty $ and then
  $p\to\infty $.
 If denote $\omega=Hess_{\mathbb{O}}(u_p) $, we have
   \begin{equation*}
  \begin{aligned}
\int_{  B^2}  u_p\det (  Hess_{\mathbb{O}}(  u_{p+N} ) ,\omega )
= &\int_{\partial {  B^2} }   u_p \det\left(   \mathcal{T} (d u_{p+N} \otimes d \varrho) ,\omega \right) dS -\int_ {  B^2}
 \det\left( \mathcal{T} (d u_p  \otimes d u_{p+N} )  ,\omega\right)
  \\
= &\int_{\partial {  B^2} }   u_p \det\left(   \mathcal{T} (d u_{p+N} \otimes d \varrho),\omega\right) dS -\int_{\partial {  B^2} } u_{p+N} \det\left(
\mathcal{T} (d u_p \otimes d \varrho) ,\omega \right) dS
\\
& +\int_ {  B^2}  u_{p+N}
 \det\left( Hess_{\mathbb{O}}(  u_{p } ) ,\omega\right)\\
\leq &   A_{p,N}+\int_{B^2} u_p \det\left( Hess_{\mathbb{O}}(  u_{p } ) ,Hess_{\mathbb{O}}(u_p)\right),
\end{aligned}
  \end{equation*} by using integration by parts in \eqref{lem:parts},
  where
  \begin{equation}
    A_{p,N}:=\int_{\partial {  B^2} } v_p \det\left(   \mathcal{T} (d v_{p+N} \otimes d \varrho)  ,Hess_{\mathbb{O}}(v_p) \right)dS -\int_{\partial {  B^2} }
    v_{p+N} \det\left(   \mathcal{T} (d v_p \otimes d \varrho)
    ,Hess_{\mathbb{O}}(v_p) \right)  dS,
  \end{equation}since $u_p=v_p$ in a neighborhood of $\partial {  B^2} $ for   $p>p_0$.
  Similarly, by exchanging $p$ and $p+N$, we get
     \begin{equation}
  \begin{aligned}
 \int_{  B^2}  u_{p+N}\det (  Hess_{\mathbb{O}}(  u_{p } ) , Hess_{\mathbb{O}}(  u_{p+N}  )   )    &=B_{p,N}+\int_{  B^2}  u_p \det (  Hess_{\mathbb{O}}(  u_{p+N}
 ) , Hess_{\mathbb{O}}(  u_{p+N}  )   )
\\& \geq  B_{p,N}+\int_{  B^2}  u_{p+N}\det (  Hess_{\mathbb{O}}(  u_{p+N} ) , Hess_{\mathbb{O}}(  u_{p+N}  )  ) ,
\end{aligned}
  \end{equation}
by $u_p\geq u_{p+N}$ and $  Hess_{\mathbb{O}}(  u_{p+N} )   \geq 0$,  where
  \begin{equation*}
     B_{p,N}:=\int_{\partial {  B^2} } v_{p+N} \det\left(   \mathcal{T} (d v_p \otimes d \varrho) , Hess_{\mathbb{O}}(  v_{p+N}  )  \right)dS -\int_{\partial {
     B^2} }  v_p \det\left(   \mathcal{T} (d v_{p+N}\otimes d \varrho)
     ,Hess_{\mathbb{O}}(  v_{p+N}  ) \right)dS .
  \end{equation*}

   Finally we get
     \begin{equation}\label{eq:difference-pN}
  \begin{aligned}
 &\int_{  B^2} (u_p-u_{p+N})\det(Hess_{\mathbb{O}}(u_p),  Hess_{\mathbb{O}}(  u_{p+N}  )   )   ) \\
\leq &  A_{p,N}-B_{p,N} +\int_{  B^2}  u_p\det(Hess_{\mathbb{O}}(u_p)   )-\int_B u_{p+N} \det   Hess_{\mathbb{O}}(  u_{p+N}  )     ).
\end{aligned}
  \end{equation}
  Because the sequence $\{v_p\}$ converges in   $C^2(\overline{  B^2} )$, we have
    \begin{equation}
   \begin{split}
 A_{p,N}&\rightarrow \int_{\partial {  B^2} }  v_p \det\left(   \mathcal{T} (d v  \otimes d \varrho)  ,Hess_{\mathbb{O}}(v_p) \right) -\int_{\partial {  B^2} } v
 \det\left(   \mathcal{T} (d v_p \otimes d \varrho)
 ,Hess_{\mathbb{O}}(v_p) \right)
\rightarrow0,
 \end{split}
  \end{equation}
as $N\to\infty$ and then $p\rightarrow \infty$. Similarly  $B_{p,N}\rightarrow 0$.
Since the sequence $\int_{  B^2}  u_p\det(Hess_{\mathbb{O}}(u_p)   )$ in \eqref{eq:lim} has a limit as  $p\rightarrow \infty$,  the right hand side of
\eqref{eq:difference-pN} tends to $0$, i.e. \eqref{eq:difference-limit-1} holds.
Similarly, we can show
\begin{equation}\label{eq:difference-limit-2}  \begin{split}&
   \int_{  B^2} (u_p-u_{p+N})\det(Hess_{\mathbb{O}}(u_p),  Hess_{\mathbb{O}}(  u_{p }  )     )\rightarrow 0, \\
    &\int_{  B^2} (u_p-u_{p+N})\det(Hess_{\mathbb{O}}( u_{p+N}),  Hess_{\mathbb{O}}(  u_{p+N}  )      )\rightarrow 0,
 \end{split}\end{equation}  as $N\to\infty $ and then
  $p\to\infty $.
 Apply \eqref{eq:difference-limit-1} and \eqref{eq:difference-limit-2} to \eqref{eq:capacity-estimate}-\eqref{eq:capacity-estimate'} to get
$$\lim\limits_{p\to\infty}C  (U_p(\sigma))=\lim\limits_{p\to\infty}\lim\limits_{N\to\infty}(U_{p,N}(\sigma))=0.$$

 Now   fixed $\epsilon>0$, for $\sigma=\frac{1}{j}$, there exists $p_j>0$ such that if we denote $U_{p_j}:=U_{p_j}( {1}/{j})$,   we have $C  (U_{p_j})\leq
 \frac{\epsilon}{2^j}$.
 Since $u_p(\mathbf{x})-u(\mathbf{x})<\frac{1}{j}$ for $p>p_j$ outside the set
 $U_{p_j}$, then we see that $u_p$ convergence to $u$ uniformly outside the open set $U=\cup_{j=1}^{\infty}U_{p_j}$. Since $u_p\in C^{\infty}({  B^2} ),$   $u$ is
 continuous outside $U$,
 and
  $C  (U )=C  \left(\bigcup_{j=1}^\infty U_{p_j}\right)\leq  \sum\limits_{j=1}^\infty C  (U_{p_j})\leq   \epsilon .$
 The theorem is proved. \end{proof}

 \appendix

\section{Automorphisms of the unit ball of $\mathbb{O}^2$  }

The octonionic Heisenberg group $\mathscr{H}$ is $\mathbb{O}\oplus{\rm Im }\ \mathbb{O}$ equipped
with the multiplication given by
\begin{align}\label{heiO}
(\mathbf{x},\mathbf{t})\cdot(\mathbf{x}',\mathbf{t}')=\left(\mathbf{x}+\mathbf{x}',\mathbf{t}+\mathbf{t}'+2{\rm{Im}} (\overline{ \mathbf{x} } \mathbf{x}' )\right),
\end{align}
where $(\mathbf{x},\mathbf{t}),(\mathbf{x}',\mathbf{t}')\in\mathbb{O}\oplus{\rm Im }\ \mathbb{O}.$
We have the following transformations   of $\mathscr{H}$: 
(1) \emph{dilations}:  $
D_{\delta} (\mathbf{x},\mathbf{t})=(\delta \mathbf{x},\delta^{2}\mathbf{t} ), 
$ for given  positive number $\delta $; 
(2) \emph{left translations}: $
\tau_{(\mathbf{x}',\mathbf{t}')}(\mathbf{x},\mathbf{t})=(\mathbf{x}',\mathbf{t}')\cdot(\mathbf{x},\mathbf{t})
$,   for given $(\mathbf{x}',\mathbf{t}')\in \mathscr H$;  
(3) \emph{rotations}:  $
S_\mu(\mathbf{x},\mathbf{t})=({\mu}\mathbf{x},  \mu  \mathbf{t} \bar{\mu})
$, for given unit imaginary octonion $\mu$;
(4) an \emph{inversion}.
 It is known that
these transformations generate the exceptional group ${\rm F}_{4(-20)}$. They can be naturally extended to automorphisms of the octonionic  Siegel upper half space
\eqref{eq:Siegel} (cf. e.g. \cite{All,SW}). In particular,
the left translate
can be extended to a mapping  $\tau_\zeta:\mathcal U\rightarrow \mathcal U$ for $\zeta\in\partial \mathcal U $ given by \eqref{eqLeft-translate}. This is  because
\begin{equation*}\begin{split}
2\operatorname{ Re} {\mathbf{y}}_2
 -|  { \mathbf{y} }_1|^2&= 2\operatorname{ Re} \mathbf{x}_2 +2\operatorname{ Re} \zeta_2 +2\operatorname{ Re}(\overline{\zeta}_1 \mathbf{x}_1
 )-|\mathbf{x}_1+\zeta_1|^2
 =2\operatorname{ Re} \mathbf{x}_2-|   \mathbf{x  }_1|^2 ,
\end{split}\end{equation*} which implies  $\tau_\zeta$ mappings the boundary $ \partial \mathcal U $ to itself.
The octonionic Heisenberg group can be identified with the boundary  $\partial\mathcal{ U}$ of the octonionic  Siegel upper half space.
Let
\begin{equation}\label{eq:delta-a}
   \delta _{\mathbf{a}}:=\frac {|1+\mathbf{a}_{2}|}{(1-| {\mathbf{a}} |^2 )^\frac 12} .
\end{equation}and
let $ \zeta_{\mathbf{a}}$ be the projection of $ \overline{C(\mathbf{a})}$ to $\partial \mathcal U $, i.e.
\begin{equation}\label{eq:zeta-a}
   \zeta_{\mathbf{a}}:= \left (-  [C(\mathbf{a})] _1, \frac 12 |[C(\mathbf{a}) ]_1|^2-\operatorname{ Im}[C(\mathbf{a})]_2\right)=\left(-\sqrt 2
   \mathbf{a}_{1}(1+\mathbf{a}_{2})^{-1} ,\frac { |\mathbf{a}_{1}|^2 }{|1+\mathbf{a}_{2}|^2}+\frac {  2\operatorname{ Im} \mathbf{a}_{2}   }{|1+\mathbf{a}_{2}|^2}
  \right).
\end{equation}
It is direct to check that
\begin{equation}\label{eq:zeta-a-1}
 \tau_{\zeta_{\mathbf{a}}}( C(\mathbf{a}))=  \left(0, \frac {1-| \mathbf{{a}} |^2 }{|1+\mathbf{a}_{2}|^2} \right),
\end{equation}by \eqref{eq:Siegel-diff}, and so  $D_{\delta _{\mathbf{a}}}  \circ  \tau_{ \zeta_{\mathbf{a}}}\circ  C(\mathbf{a})=(0,1 )$. Thus,  we have
$
   T_{\mathbf{a}}(\mathbf{a})=\mathbf{0}  
$ by \eqref{eq:C0}.

Since for  $ \mathbf{a} \in B^2$, $D_{\delta _{\mathbf{a}}} $ and $  \tau_{ \zeta_{\mathbf{a}}} $ are diffeomorphisms of  the octonionic  Siegel upper half space
 $ \mathcal U $ to itself and $C$ is a diffeomorphism  from the ball $B^2$  to  $ \mathcal U $,  $T_{\mathbf{a}}$  defined by
\eqref{eq:T--a}  is obviously a diffeomorphism of the unit ball $B^2$ to itself.
By definition \eqref{eq:T--a},
\begin{equation}\label{eq:T-G}\begin{split}
    T_{\mathbf{a}}(\mathbf{x})
&=  C^{-1}    \left(\delta _{\mathbf{a}}\left(\sqrt 2\mathbf{x}_{1}(1+\mathbf{x}_{2})^{-1}+[\zeta_{\mathbf{a}}]_1\right),  G_{\mathbf{a}}(\mathbf{x})-1\right) ,
 \end{split}  \end{equation}
 where
 \begin{equation}\label{eq:Ga}
   G_{\mathbf{a}}(\mathbf{x}): =  1+\delta _{\mathbf{a}}^{ 2} \Big( (1- \mathbf{x}_{2}) (1+\mathbf{x}_{2})^{-1}+   [\zeta_{\mathbf{a}}]_2 +
\overline{[\zeta_{\mathbf{a}}]}_1  \sqrt 2\left(\mathbf{x}_{1}(1+\mathbf{x}_{2})^{-1}\right)\Big)
  .
\end{equation}
We find that
 \begin{equation}\label{eq:T-a}\begin{split}
    [T_{\mathbf{a}}(\mathbf{x})]_1&=\frac 1{\sqrt 2}\delta _{\mathbf{a}}    \left\{\sqrt 2 \mathbf{x}_{1}(1+\mathbf{x}_{2})^{-1} + [\zeta_{\mathbf{a}}]_1   \right\}
    \left[ (1+\mathbf{x}_{2})
 \Psi_{\mathbf{a}}(\mathbf{x}) ^{-1}\right],
\\
    [T_{\mathbf{a}}(\mathbf{x})]_2&=-1+2
    (1+\mathbf{x}_{2})
   \Psi_{\mathbf{a}}(\mathbf{x}) ^{-1},
  \end{split}   \end{equation}by definition of $\Psi_{\mathbf{a}}$ in \eqref{eq:T-a-u}, from which we see that
\begin{equation}\label{eq:Psi-G}
   \Psi_{\mathbf{a}}(\mathbf{x}) = G_{\mathbf{a}}(\mathbf{x})  (1+\mathbf{x}_{2}),
\end{equation}by \eqref{eq:T-G}.
  It is easy to see that
  \begin{equation}\label{eq:T-G2}\begin{split}
   \operatorname{ Re}   G_{\mathbf{a}}(\mathbf{x} )
    &=1+\delta _{\mathbf{a}}^{ 2}\frac {1-|\mathbf{x} |^2  }{|1+\mathbf{x}_{2}|^2}+ \delta _{\mathbf{a}}^{ 2}\left|\mathbf{x}_{1}(1+\mathbf{x}_{2})^{-1}-
    \mathbf{a}_{1}(1+\mathbf{a}_{2})^{-1}\right |^2\geq 1.
  \end{split} \end{equation}

\begin{proof}[Proof of Proposition \ref{prop:Psi-a}]
(1) By the expression  \eqref{eq:Psi-G} of $ \Psi_{\mathbf{a}}(\mathbf{x})$ and \eqref{eq:Ga}, we can write
  \begin{equation}\label{eq:Psi-a}
    \Psi_{\mathbf{a}}(\mathbf{x}) = (1+\delta _{\mathbf{a}}^{ 2})+(1-\delta _{\mathbf{a}}^{ 2})\mathbf{x}_{2}+ \delta _{\mathbf{a}}^{ 2}
   [\zeta_{\mathbf{a}}]_2(1+\mathbf{x}_{2}) +\mathcal{R}(\mathbf{x})
 ,
\end{equation}where
\begin{equation}\label{eq:E}
  \mathcal{R}(\mathbf{x}):= \sqrt 2\delta _{\mathbf{a}}^{
   2}\left ( \overline{[\zeta_{\mathbf{a}}]}_1\left(\mathbf{x}_{1}(1+\mathbf{x}_{2})^{-1}\right)\right)(1+\mathbf{x}_{2}) .
\end{equation} 
For $\mathbf{a}\in \overline{B}(\mathbf{0}, 1-\varepsilon)$, $\zeta_{\mathbf{a}}$ belongs to a bounded subset of  $\partial \mathcal U $ by its expression in
\eqref{eq:zeta-a}, and then it is continuous in $\mathbf{a}$. So is $\delta_{\mathbf{a}}$. $ \Psi_{\mathbf{a}}(\mathbf{x})$ is obviously smooth for  $\mathbf{a}, \mathbf{x}\in \overline{B}(\mathbf{0}, 1-\varepsilon)$ by  expressions in \eqref{eq:Psi-a}-\eqref{eq:E}.

(2)
Set $\mathcal{R}(0,-1)=0$. Note that
\begin{equation*}
  \frac {\partial \mathcal{ R}}{\partial x_{2p}}= -\delta _{\mathbf{a}}^{
   2}\left ( \overline{[\zeta_{\mathbf{a}}]}_1\left(\mathbf{x}_{1}((1+\mathbf{x}_{2})^{-2}e_p)\right)\right)(1+\mathbf{x}_{2}) +\delta _{\mathbf{a}}^{
   2}\left ( \overline{[\zeta_{\mathbf{a}}]}_1\left(\mathbf{x}_{1}(1+\mathbf{x}_{2})^{-1}\right)\right)e_p
\end{equation*}is uniformly bounded by $  C| 1-\mathbf{x}_2| ^{-\frac 12}$, since $|\mathbf{x}_1|\leq( 1-|\mathbf{x}_2|^2)^\frac 12$, and so does the derivative with respect to $\partial x_{1p}$.
Thus $ \mathcal{ R}$ is $Lip^{\frac 12}$. In particular, the term $ \mathcal{ R}$ can be extended a continuous function on $\overline{B^2}$ for $\mathbf{a}\in B(\mathbf{0}, 1-\varepsilon)$.

Note that if  $|1+\mathbf{x}_{2}|< \eta_0 <1$, then $| \mathbf{x}_{1}|^2\leq 1-|\mathbf{x}_2|^2\leq
2(1-|\mathbf{x}_2|)\leq 2\eta_0$. Thus if $\eta_0$ is sufficiently small,
we see that $\mathcal{R}(\mathbf{x})$ is small and so $|\Psi_{\mathbf{a}}(\mathbf{x}) |>\delta _{\mathbf{a}}^{ 2}$ by \eqref{eq:Psi-a}-\eqref{eq:E}. While if
$|1+\mathbf{x}_{2}|\geq\eta_0 $, $ \Psi_{\mathbf{a}}(\mathbf{x})$ is obviously smooth. So it has a lower bound independent of $\mathbf{a}\in B(\mathbf{0},
1-\varepsilon) $.

(3) $\delta _{\mathbf{a}}$ and $\zeta_{\mathbf{a}}$ is smooth in $\mathbf{a}\in B(\mathbf{0}, 1-\varepsilon)$ by definitions \eqref{eq:delta-a}-\eqref{eq:zeta-a}.
So  for fixed $\mathbf{x}\in \partial B^2$, $\Psi_{\mathbf{a}}(\mathbf{x})$ are smooth on $\mathbf{a}\in B(\mathbf{0}, 1-\varepsilon)$  by definitions
\eqref{eq:Psi-a}-\eqref{eq:E}.
\end{proof}

 The Corollary \ref{cor:OPSH-Ta} follows from the expression \eqref{eq:T-a} of $T_{\mathbf{a}}(\mathbf{x})$ and Proposition \ref{prop:Psi-a}.
    \begin{rem} \label{rem:smooth} The term $\mathcal{ R}(\mathbf{x})$ in \eqref{eq:E} is only $Lip^{\frac 12}$ on $\overline{B^2}$. This is  because if we write $\mathbf{x}=(
    \mathbf{ c},\mathbf{ d} )$ for two quaternionic numbers $   \mathbf{ c},\mathbf{ d}  $, then by multiplication law \eqref{eq:octonionic} of octonions, $(  \mathbf{ c},\mathbf{ d} )^{-1}=(  \overline{\mathbf{c}},- \mathbf{ d}   )$ and
\begin{equation*}\begin{split}\{(0,\alpha ) [ (0,r)(  \mathbf{ c},\mathbf{ d} )^{-1}]\}(  { \mathbf{ c}}, \mathbf{ d} ) &=r\{(0,\alpha ) (  \overline{\mathbf{d}}, \mathbf{ c}   ) \}(  { \mathbf{ c}}, \mathbf{ d} ) \frac 1{|\mathbf{c}|^2+|\mathbf{d}|^2} \\
  &=r (- \overline{{\mathbf{ c}}} \alpha , \alpha {\mathbf{d}}   )  (   \mathbf{ c} , \mathbf{ d} )\frac 1{|\mathbf{c}|^2+|\mathbf{d}|^2}\\& =r(- \overline{\mathbf{ c}} \alpha
{ \mathbf{ c}}-\overline{\mathbf{d}}\alpha {\mathbf{d}} , -{\mathbf{d}} \overline{ {\mathbf{ c}}} \alpha +\alpha {\mathbf{d}}    \overline{ \mathbf{ c} } )\frac
  1{|\mathbf{c}|^2+|\mathbf{d}|^2}
 \end{split}  \end{equation*}for $r>0$, $\alpha\in\mathbb{ H}$. It does not have a limit at $(  \mathbf{ c},\mathbf{ d} )= (  \mathbf{ 0},\mathbf{ 0} )$ for $\alpha=\mathbf{j}$. 
 If take $\mathbf{x}_{1}= (0,r), 1+\mathbf{x}_{2}=(  \mathbf{ c},\mathbf{ d} ) $ in \eqref{eq:E} such that $r=(1-| \mathbf{x}_{2}|^2)^\frac 12$, we see that
  $\mathcal{ R}$ can only be $Lip^{\frac 12}$ at 
$(0,-1) $. 
 \end{rem}

\end{document}